\providecommand{\Kim}{\cat{Kim}}
\providecommand{\Kimstab}{\Kim^{\rm stab}}
\providecommand{\JLi}{\cat{Li}}
\providecommand{\JListab}{\JLi^{\rm stab}}
\providecommand{\KimACGS}{\Kim\ACGS}
\providecommand{\KimACGSstab}{\KimACGS^{\rm stab}}
\providecommand{\ACGS}{\cat{ACGS}}
\providecommand{\ACGSstab}{\ACGS^{\rm stab}}
\providecommand{\Log}{\cat{Log}}
\providecommand{\AF}{\cat{AF}}
\providecommand{\AFstab}{\cat{AF}^{\rm stab}}
\providecommand{\Cad}{\cat{Cad}}
\providecommand{\Kstab}{K^{\rm stab}}
\providecommand{\Lstab}{L^{\rm stab}}
\providecommand{\fMB}{\mathfrak{MB}}
\providecommand{\Mnd}{\fM^{\rm{nondeg}}}
\newcommand{\marg}[1]{\normalsize{{\footnote{{#1}}}}{\marginpar[\hfill\tiny\thefootnote$\rightarrow$]{{$\leftarrow$\tiny\thefootnote}}}}
\renewcommand{\marg}[1]{}
\newcommand{\steffen}[1]{\marg{{Steffen:} #1}}
\newcommand{\Jonathan}[1]{\marg{{Jonathan:} #1}}
\newcommand{\Dan}[1]{\marg{{Dan:} #1}}
\newtheoremstyle{named}%
	{}%
	{}%
	{\itshape}%
	{}%
	{\bfseries}%
	{.}%
	{.5em}%
	{\thmname{#1 #3}}
\newcommand{\namer}[3]{{#1} {#3} for {#2}}
\newtheorem{theorem}{Theorem}
\newtheorem{proposition}[theorem]{Proposition}
\newtheorem{corollary}[theorem]{Corollary}
\newtheorem{lemma}[theorem]{Lemma}
\newtheorem{mainlemma}{Lemma}
\theoremstyle{remark}
\newtheorem{remark}[theorem]{Remark}
\theoremstyle{named}
\newtheorem{nlem}{Lemma}
\theoremstyle{definition}
\newtheorem{definition}[theorem]{Definition}
\numberwithin{theorem}{subsection}
\begin{document}

\title[Comparison theorems for Gromov--Witten invariants]{Comparison theorems for Gromov--Witten invariants of smooth pairs and of degenerations}

\author{Dan Abramovich}

\author{Steffen Marcus}

\author{Jonathan Wise}

\address[Abramovich]{Department of Mathematics\\
Brown University\\
Box 1917\\
Providence, RI 02912\\
U.S.A.}
\email{abrmovic@math.brown.edu}

\address[Marcus]{Department of Mathematics\\
University of Utah\\
155 S 1400 E RM 233\\
Salt Lake City, UT 84112-0090\\
U.S.A.}

\email{marcus@math.utah.edu}

\address[Wise]{Department of Mathematics\\
University of Colorado at Boulder\\
Campus Box 395\\
Boulder, CO 80309-0395\\
U.S.A.}

\email{jonathan.wise@colorado.edu}

\thanks{Abramovich supported in part by NSF grant
  DMS-0901278. Marcus supported in part by funds from NSF grant
  DMS-0901278. Wise supported by an NSF postdoctoral fellowship.}
%\date{\today}
\begin{abstract} We consider four approaches to relative Gromov--Witten theory and Gromov--Witten theory of degenerations:  J. Li's original approach, B.\ Kim's logarithmic expansions, Abramovich--Fantechi's orbifold expansions, and a logarithmic theory without expansions due to Gross--Siebert and Abramovich--Chen.  We exhibit morphisms relating these moduli spaces and prove that their virtual fundamental classes are compatible by pushforward through these morphisms.  This implies that the Gromov--Witten invariants associated to all four of these theories are identical.
\end{abstract} 
\maketitle

\setcounter{tocdepth}{1}
\setcounter{secnumdepth}{4}
\tableofcontents

\section{Introduction}\label{sec:intro}

The extraordinary feature of Gromov--Witten invariants distinguishing them from the enumerative invariants with which they sometimes fraternize is their deformation invariance.  However, this feature is not as powerful as one might hope, as Gromov--Witten invariants were initially defined only for smooth targets.  Indeed, one would like to degenerate a complicated space to one that is singular but geometrically simpler, replacing geometric complexity by the combinatorics of the irreducible components.  

The very existence of a definition that is invariant under deformation suggests that it might be possible to extend that definition of Gromov--Witten invariants to one that applies also to singular targets, and indeed, many have sought such generalizations.  For the mildest of singularities---two smooth schemes meeting along a smooth divisor---there have been a number of successes.  Gromov--Witten invariants for such singularities were defined in the symplectic setting by A.\ M.\ Li--Ruan \cite{Li-Ruan} and Ionel--Parker \cite{Ionel-Parker, Ionel-Parker2}, and algebraically by J.\ Li \cite{Li1,Li2}.  These authors also proved a \emph{degeneration formula} that showed how these invariants could be recovered from \emph{relative Gromov--Witten invariants} depending on each of the two smooth schemes and the divisor along which they meet.

The impact of the degeneration formula has been profound, but efforts to generalize the theory to other degenerations have until recently met with limited success (see \cite{Parker,Parker1,Chen,GS,AC} for recent progress). One of the obstacles has been the difficulty of Li's theory, which has two rather subtle aspects:
\begin{enumerate*}[label=(\roman{*})]
\item to avoid badly behaved deformation spaces, Li expands the target variety and the choice of expansion is allowed to vary in families; and
\item the deformation theory of maps from curves into Li's expansions presents a number of complications.
\end{enumerate*}

With an eye towards generalizations, there have been several attempts to simplify Li's methods.  It has been clear for some time, beginning with an early lecture of Siebert \cite{Siebert}, that logarithmic geometry should play a role in a general theory of stable maps to degenerations and mildy singular targets.  However, Siebert's proposal awaited the development of logarithmic algebraic geometry, particularly the logarithmic cotangent complex, on which it relied to define virtual fundamental classes.\footnote{With the benefit of hindsight, we now know that, by working relative to a universal target, it is possible to give another construction of the virtual fundamental class that avoids the logarithmic cotangent complex.  The equivalence of this construction with the one based on the cotangent complex is one of our main tools in the proof of Theorem~\ref{thm:compare}.} \Dan{slight rephrase} \Jonathan{another slight rephrase}

In the interim, Cadman saw that orbifold Gromov--Witten theory~\cite{CR,AV,AGV} could be used for some of the same purposes as Li's relative Gromov--Witten theory~\cite{cadman}.  Abramovich and Fantechi adapted and generalized Cadman's method to apply to Li's expanded targets.  At the cost of reintroducing Li's expansions, they were able to develop a theory in which predeformability could be replaced by a more innocuous transversality condition~\cite{AF}.

After the logarithmic cotangent complex became available in the work of Olsson~\cite{olsson-log-cc}, Kim used it to study logarithmic maps from curves into expanded targets~\cite{Kim}, giving another means to avoid predeformability.  Li's expansions remained, though, in both the orbifold theory of Abramovich--Fantechi and the logarithmic theory of Kim.  They were not removed until Siebert's program was realized in the works of Chen, Gross--Siebert, and Abramovich~\cite{Chen,GS,AC}.  

Now we have five virtual counting theories for curves relative to a divisor and four virtual counting theories for curves on mildly singular targets (Cadman's approach does not apply at present to singular targets).  Our purpose here is to compare the four defined by Li, Abramovich--Fantechi, Kim, and Abramovich--Chen and Gross--Siebert, which we denote provisionally by $\JLi$, $\AF$, $\Kim$, and $\ACGS$, respectively (more specific notation will be introduced in the sequel).

\numberwithin{theorem}{section}
\begin{theorem} \label{thm:compare}
There are maps (see section~\ref{sec:maps}),
\[ \label{eqn:compare}
 \xymatrix{
     \AF  \ar[dr]^{\Psi} & & \Kim \ar[dl]_{\Theta} \ar[dr]^{\Upsilon} & \\
  & \JLi &  & \ACGS .
}
\]
such that, for each of the maps $p : K \rightarrow L$ above, $p_\ast [K]^{\vir} = [L]^{\vir}$.  That is,
\begin{align*}
\Psi_\ast [\AF]^{\vir} &= [\JLi]^{\vir} &
\Theta_\ast [\Kim]^\vir &= [\JLi]^\vir &
\Upsilon_\ast [\Kim]^\vir &= [\ACGS]^\vir .
\end{align*}
\end{theorem}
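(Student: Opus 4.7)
The plan is to exploit the alternative construction of the virtual fundamental class flagged in the footnote: rather than building $[K]^{\vir}$ from the logarithmic cotangent complex of $K$ alone, in each of the four theories I will present the virtual class as arising from a perfect obstruction theory for $K$ \emph{relative to a universal target stack} parametrising the possible targets (expansions, log enhancements, stack structures on the target). The first task, which will serve as a central technical input, is to show for each of $\JLi$, $\AF$, $\Kim$ and $\ACGS$ that this relative construction recovers the virtual class defined by its original author. This is precisely the equivalence promised in the footnote, and it will allow me to treat all four theories on equal footing by factoring each moduli stack through a common universal moduli of maps to the universal target.

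Once this uniform description is in place, I construct the maps $\Psi$, $\Theta$, $\Upsilon$ as morphisms respecting the universal target. I expect $\Psi$ to come from contracting the orbifold expansion of an $\AF$ object down to Li's predeformable expansion; $\Theta$ from stripping away the stack data of Kim's logarithmic expansion to land in Li's category; and $\Upsilon$ from forgetting the expansion of Kim's theory to retain only the logarithmic enhancement required by $\ACGS$. For each arrow $p : K \to L$ I need to verify that $p$ is proper and that the perfect obstruction theories on $K$ and $L$ both pull back from a single obstruction theory on the universal moduli space over the universal target. I also need to show that $p$ is birational in the appropriate stack-theoretic sense, typically by identifying an open dense locus over which $p$ is an isomorphism (the locus where no expansion is required) and carefully controlling the combinatorics of the boundary strata recording the various expansions.

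The principal obstacle, I expect, is matching the obstruction theories precisely across the three arrows, since each theory packages the same underlying deformation problem in rather different language (orbifold versus logarithmic with expansion versus logarithmic without expansion), and the comparison of Kim's expanded log theory with the ACGS theory without expansion is especially delicate: one must verify that the contraction of the Kim expansion never contributes to the obstructions, so that Kim's obstruction theory really is the pullback of the ACGS obstruction theory along $\Upsilon$. Once this matching is established, however, the conclusion follows from a single virtual pushforward principle in the style of Costello and Manolache: a proper morphism of Deligne--Mumford type that is birational and carries compatible perfect obstruction theories pulled back from a common universal one satisfies $p_\ast [K]^{\vir} = [L]^{\vir}$. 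Applying this lemma to each of the three arrows in turn yields the three desired equalities $\Psi_\ast [\AF]^{\vir} = [\JLi]^{\vir}$, $\Theta_\ast [\Kim]^{\vir} = [\JLi]^{\vir}$, and $\Upsilon_\ast [\Kim]^{\vir} = [\ACGS]^{\vir}$.
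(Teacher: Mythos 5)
Your proposal follows the same overall strategy as the paper: factor each moduli space through a universal moduli of maps to a universal target $\sX/\sV$, set up a cartesian square whose bottom arrow is a degree-one morphism of Deligne--Mumford type between stacks of maps to $\sX/\sV$, match the relative obstruction theories coming from $\fM(X/V)$ over $\fM(\sX/\sV)\fp_{\sV} V$, and invoke Costello's comparison theorem. Two substantial details you do not anticipate, and on which the paper spends most of its effort: first, showing that the virtual class of $\Kstab(X/V)$ \emph{relative to} $K(\sX/\sV)\fp_{\sV}V$ recovers each author's original absolute class, which requires exhibiting a compatible triple of obstruction theories $\sE' \to \sE \to \sE''$ and proving that the middle base is locally unobstructed (for $\AF$) or a local complete intersection (for the others) over $\sZ$ so that its own virtual class is its fundamental class; and second, for the arrow $\Upsilon$, replacing $\Kim(\sX/\sV)$ by the modified stack $\KimACGS(\sX/\sV)$ remembering both the curve and its contraction, because $\Upsilon$ collapses components of the source curve while the naive map $\Kim(\sX/\sV)\to\ACGS(\sX/\sV)$ does not, so the obvious square fails to commute.
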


\begin{corollary}
The primary and descendant Gromov--Witten invariants associated to $\AF$, $\JLi$, $\Kim$, and $\ACGS$ are all identical.
\end{corollary}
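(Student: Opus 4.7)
The plan is to deduce the corollary from Theorem~\ref{thm:compare} by a projection formula argument. Recall that the primary Gromov--Witten invariants attached to a moduli space $M$ of stable maps with $n$ marked points are integrals of the form $\int_{[M]^{\vir}} \prod_i \mathrm{ev}_i^\ast \alpha_i$, and the descendant invariants further include powers $\psi_i^{k_i}$ of the cotangent line classes at the marked points. To identify the invariants across the four theories it suffices to check that the evaluation maps and psi classes on the source $K$ are pullbacks of those on the target $L$ of each morphism $p\colon K\to L$ in the diagram, and then invoke the projection formula together with the equality $p_\ast [K]^{\vir} = [L]^{\vir}$ supplied by Theorem~\ref{thm:compare}.

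First, I would verify that each of $\Psi$, $\Theta$, $\Upsilon$ intertwines the evaluation maps. Marked points with prescribed contact orders are preserved by all four moduli problems, and the underlying pointed source curve together with its image in the target $X$ is unaffected by passing to expansions or logarithmic enhancements. Hence for each marked point $i$ the evaluation $\mathrm{ev}_i^K \colon K \to X$ factors through $\mathrm{ev}_i^L \colon L \to X$ via $p$, and similarly for the relative evaluations landing in the divisor. In the orbifold case $\AF \to \JLi$ a little extra care is needed: the $\AF$ evaluation at a contact marking lands in a cyclotomic gerbe over the divisor, but this gerbe maps canonically to the divisor itself, so integrands pulled back from the divisor agree on the nose.

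Second, I would check that the psi classes on $K$ pull back to those on $L$. Because none of the morphisms $\Psi$, $\Theta$, $\Upsilon$ alters the underlying pointed prestable source curve (expansions and log structures modify the target and extra structure, not the marked points themselves), the cotangent line bundle at the $i$th marking on the universal curve over $K$ is canonically the pullback of the corresponding cotangent line on $L$, whence $\psi_i^K = p^\ast \psi_i^L$.

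With these compatibilities in place, the projection formula yields
\[
\int_{[K]^{\vir}} p^\ast \omega \;=\; \int_{p_\ast [K]^{\vir}} \omega \;=\; \int_{[L]^{\vir}} \omega
\]
for any $\omega$ of the form $\prod_i \mathrm{ev}_i^\ast \alpha_i \cdot \psi_i^{k_i}$, and applying this to each of the three morphisms in the diagram delivers the corollary. I expect the main obstacle to be not a deep technicality but an accounting one: carefully matching contact data, evaluation targets, and marked-point structure across the four theories so that the integrands are genuinely pullbacks rather than merely numerically equal. In particular, the gerbe structure at contact markings of $\AF$ and the logarithmic enhancements of markings in $\Kim$ and $\ACGS$ must be tracked so that the identifications $\mathrm{ev}_i^K = \mathrm{ev}_i^L \circ p$ and $\psi_i^K = p^\ast \psi_i^L$ hold as written.
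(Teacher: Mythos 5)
Your overall strategy — deduce the corollary from Theorem~\ref{thm:compare} via the projection formula, after checking that evaluation maps and $\psi$-classes pull back along $\Psi$, $\Theta$, $\Upsilon$ — is exactly the route the paper takes in Section~\ref{sec:comp-GW}. Your treatment of the evaluation maps, including the observation that the $\AF$ evaluation at a contact marking is routed through the coarse divisor rather than a gerbe, matches the paper (which arranges for evaluations to land in $X$ or $D$, not in orbifold or logarithmic enhancements thereof).

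However, your justification for $\psi_i^K = p^\ast \psi_i^L$ contains a genuine gap. You assert that ``none of the morphisms $\Psi$, $\Theta$, $\Upsilon$ alters the underlying pointed prestable source curve.'' This is false: $\Upsilon : \Kim \to \ACGS$ explicitly contracts semistable rational components to produce the stabilized curve $\oC$, and $\Psi : \AF \to \JLi$ replaces the twisted source curve $\tC$ by its relative coarse space $C$. If one took $\psi$-classes from the full expanded (or twisted) source curve, they would \emph{not} pull back along these morphisms. The paper sidesteps this by defining $\psi_i = c_1\bigl((\Sigma_i^K)^\ast\omega_{\oC/\Kstab}\bigr)$ on the universal \emph{stabilized coarse} curve $\oC$ — precisely the meaning of the proviso in Corollaries~\ref{cor:GW1} and~\ref{cor:GW2} that the $\psi$-classes should ``come from the underlying \emph{unexpanded} stable maps to $X$.'' With that definition, $p^\ast\omega_{\oC/\Lstab} = \omega_{\oC/\Kstab}$ and $p^\ast\Sigma_i^L = \Sigma_i^K$ do hold, and the projection formula then goes through. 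Your argument should make this choice of $\psi$-class explicit rather than claiming the source curve is preserved.
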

\numberwithin{theorem}{subsection}

\noindent More precise statements of these results appear in Theorems \ref{thm:compare-pairs} and \ref{thm:compare-deg} and their corollaries below.

A comparison involving Cadman's approach is more elusive.  It is shown in \cite{ACW} that there is a morphism $\Phi:\AF \to \Cad$ such that $\Phi_*[\AF]^{\vir} = [\Cad]^{\vir}$, provided that the genus is restricted to $g=0$ and the orbifold structure is sufficiently twisted.  Counterexamples abound in the absence such conditions.

We hope that, apart from offering a demonstration of Theorem~\ref{thm:compare}, this paper will also serve as an illustration of two techniques used in the proof:  We have made systematic use throughout this paper of Costello's comparison theorem~\cite[Theorem~5.0.1]{costello} and the obstruction theory formalism of~\cite{obs}.
%Apart from offering a proof of Theorem~\ref{thm:compare}, we hope this paper will also serve as an illustration of two techniques:  we make systematic use of Costello's comparison theorem~\cite[Theorem~5.0.1]{costello} and the obstruction theory formalism of~\cite{obs}. 

We work over the complex numbers $\bC$.

\subsection{Gromov--Witten theories for smooth pairs}\label{sec:sm-pair}

\begin{definition} \label{def:sm-pair}
A \emph{smooth pair} is a pair $(X,D)$ such that $X$ is a smooth scheme and $D$ is a smooth divisor in $X$.
\end{definition}

The discrete invariants for the various moduli spaces above are described in \cite{Li1} (also, see \cite[Convention~3.1.1-3.1.2]{AF}) as follows:
\begin{itemize}
\item the genus $g\in\bZ_{\geq 0}$;
\item the curve class $\beta\in H_2(X,\bZ)$;
\item the number of marked points $m,n \in\bZ_{\geq 0}$\Jonathan{switched order of $m$ and $n$} (counting marked points inside of and outside of $D$, respectively); and
\item an integer partition $\alpha_1+\cdots+\alpha_m \:\vdash \:\beta. D$ prescribing contact order along $D$.
\end{itemize}
We bundle these invariants using the notation $\Gamma = (g,n,\beta,\alpha_1\ldots,\alpha_m)$. 

For each $K \in \set{\AF,\JLi,\Kim,\ACGS}$ and each choice of discrete data $\Gamma$ as above, we have a proper Deligne--Mumford stack $\Kstab_\Gamma(X,D)$ compactifying the space of maps from smooth curves of genus $g$ to $X$ with specified homology class $\beta$ and specified orders of contact along $D$.
\Jonathan{made more specific}

\begin{theorem}\label{thm:compare-pairs}
For any fixed choice of discrete invariants $\Gamma$, we have:
\begin{align*}
{\Psi}_\ast\left[\AFstab_\Gamma(X,D)\right]^{\rm vir}&=\left[\JListab_\Gamma(X,D)\right]^{\rm vir}\\
{\Theta}_\ast\left[\Kimstab_\Gamma(X,D)\right]^{\rm vir}&=\left[\JListab_\Gamma(X,D)\right]^{\rm vir}\\
{\Upsilon}_\ast\left[\Kimstab_\Gamma(X,D)\right]^{\rm vir}&=\left[\ACGSstab_\Gamma(X,D)\right]^{\rm vir}.\\
\end{align*}
\end{theorem}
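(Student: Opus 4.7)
The plan is to handle each of the three pushforward identities separately, each via Costello's virtual pushforward theorem \cite[Theorem~5.0.1]{costello}. For a proper morphism $p\colon K \to L$ of Deligne--Mumford stacks carrying compatible perfect obstruction theories, Costello's criterion asserts $p_\ast[K]^{\rm vir} = [L]^{\rm vir}$ provided $p$ is of pure degree one and the obstruction theory on $K$ is pulled back from that on $L$. My strategy is to reduce each of the three arrows $\Psi$, $\Theta$, $\Upsilon$ to the verification of these two hypotheses.

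First I would recast all four moduli spaces in a uniform framework following the suggestion in the footnote: rather than work directly with Olsson's logarithmic cotangent complex, I would realize each $K_\Gamma^{\rm stab}(X,D)$ as parametrizing maps from prestable curves to a \emph{universal target} --- an Artin stack of targets parametrizing the relevant expansions (for $\JLi$, $\AF$, $\Kim$) or logarithmic/tropical data (for $\ACGS$). The perfect obstruction theory then comes from the classical obstruction theory of maps of curves to an Artin stack, as in the formalism of \cite{obs}. A preliminary task is to identify this obstruction theory with the standard ones used by Li, Kim, and Abramovich--Fantechi. Once this uniformization is in place, each morphism $p\colon K \to L$ is induced by a morphism of universal targets, and the compatibility of obstruction theories follows essentially formally because both come from the deformation theory of the same underlying map of curves.

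Given the uniform framework, I would then verify the degree-one condition in each case. For $\Psi\colon \AF \to \JLi$ the fibre over a Li-stable map records orbifold thickening data along the singular locus of the expansion and contributes trivially once the normalization of \cite{AF} is accounted for. For $\Theta\colon \Kim \to \JLi$ one uses that the logarithmic enhancement of a predeformable map into an expanded target exists and is unique up to contributions which can be tracked combinatorially, so $\Theta$ is of pure degree one. For $\Upsilon\colon \Kim \to \ACGS$ the fibre over an $\ACGS$-map parametrizes choices of expansion compatible with the given log map; the aim is to show that this fibre has virtual pushforward equal to $1$ on each geometric point of $\ACGS$.

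The main obstacle, I expect, will be the case of $\Upsilon$. Unlike $\Psi$ and $\Theta$, the map $\Upsilon$ is not generically finite: a single $\ACGS$-map may lift to $\Kim$ in a positive-dimensional family, because the target expansion on the $\Kim$ side is itself part of the data. The compatibility of obstruction theories therefore cannot be read off from a direct comparison of tangent--obstruction complexes, since the $\Kim$ side carries deformations of the expansion that are absent on the $\ACGS$ side. I would attack this by identifying the fibre of $\Upsilon$ with a logarithmic modification of a toric stack associated to the tropical type of the $\ACGS$-map, showing that such modifications are virtually trivial over their base, and then combining this with the obstruction theory formalism of \cite{obs} and Costello's theorem to conclude. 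This is where the bulk of the technical work will lie, and where the reformulation of virtual classes via universal targets pays off most clearly.
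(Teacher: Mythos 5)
Your overall framework --- apply Costello's comparison theorem \cite[Theorem~5.0.1]{costello} via a cartesian square whose bottom row lives over the universal targets, with the obstruction theory on top pulled back from a naive one coming from maps of curves to Artin stacks --- is indeed the paper's strategy (Section~2). There are, however, two substantive misreadings that would derail the execution.

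\textbf{Where the difficulty for $\Upsilon$ actually sits.} You assert that $\Upsilon$ is not generically finite because ``a single $\ACGS$-map may lift to $\Kim$ in a positive-dimensional family.'' This is not the obstacle. On the dense open locus of totally nondegenerate maps (Definition~\ref{def:nondeg}) --- smooth source, trivial log structure, no expansion --- the map $\Kim(\sX/\sV) \to \ACGS(\sX/\sV)$ \emph{is} an isomorphism. The true difficulty is that the naive square
\[
\xymatrix@R=1.6pc{
\Kimstab(X/V) \ar[r]^{\Upsilon} \ar[d] & \ACGSstab(X/V) \ar[d] \\
\Kim(\sX/\sV) \ar[r] & \ACGS(\sX/\sV)
}
\]
fails to \emph{commute}: $\Upsilon$ must contract unstable rational components of the source curve, while the bottom map does not. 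The remedy (Definition~\ref{def:kimacgs}) is the stack $\KimACGS(\sX/\sV)$, which remembers the stabilized curve $\oC$; it is \'etale over $\Kim(\sX/\sV)$ (Lemma~\namerr{lem:maps}{$\Upsilon$}{maps:etale}) and generically an isomorphism to $\ACGS(\sX/\sV)$ (Lemma~\namerr{lem:maps}{$\Upsilon$}{maps:birat}). Your proposed detour through ``logarithmic modifications of a toric stack associated to the tropical type'' and virtual triviality of fibres is aimed at a problem that does not arise and would replace a clean cover-by-\'etale argument with something much harder.

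\textbf{Identifying the native virtual classes.} The other step you gesture at but leave unresolved is that the published virtual classes on $\JLi$, $\AF$, $\Kim$, $\ACGS$ are not defined relative to the spaces $K(\sX/\sV)$ of maps to universal targets: they are defined by obstruction theories $\sE$ relative to stacks $\sZ$ such as $\fM$, $V^{\bfr}$, $\fMB \fp_{\sV} V$, or $\Log(\fM)\fp_{\Log}\Log(V)$. What makes Costello's theorem bite is the compatibility sequence $0 \to \sE' \to \sE \to \sE'' \to 0$ for $\Kstab(X/V) \to K(\sX/\sV)\fp_{\sV}V \to \sZ$ together with the fact that $K(\sX/\sV)\fp_{\sV}V$ is either of Deligne--Mumford type and a local complete intersection over $\sZ$, or locally unobstructed over $\sZ$ (Lemma~\ref{lem:spaces}~\ref{spaces:unobs}); this is what yields the crucial equality~\eqref{eqn:16} letting one trade $\sE$ for $\sE'$. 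Saying ``identify this obstruction theory with the standard ones'' elides the work of constructing these sequences (which occupies most of Section~3) and the dichotomy between the $\AF$ case, where the base is an Artin stack and one must invoke the locally-unobstructed variant of functoriality from \cite{obs}, and the other three, where \cite{Manolache} suffices.

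Finally, a minor point: Costello's criterion is stated for the cartesian square, with the pure-degree-one requirement imposed on the \emph{bottom} map $q$ over the universal targets, not on $p$ itself; your opening paragraph conflates these, which is harmless as motivation but would need to be corrected when the theorem is invoked.
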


The following corollary is immediate.
\begin{corollary} \label{cor:GW1}
All primary relative Gromov--Witten invariants defined by the moduli spaces $\AFstab(X,D)$, $\JListab(X,D)$, $\Kimstab(X,D)$, and $\ACGSstab(X,D)$ coincide.  Descendant invariants also coincide if we use $\psi$-classes coming from the underlying \emph{unexpanded} stable maps to $X$ (see section \ref{sec:comp-GW} for the definitions).
\end{corollary}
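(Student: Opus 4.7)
This corollary is a formal consequence of Theorem \ref{thm:compare-pairs} via the projection formula. The plan is, for each morphism $p: K \to L$ in the diagram of Theorem \ref{thm:compare}, to express the Gromov--Witten integrand on the $K$-side as $p^{\ast}\alpha$ for a class $\alpha$ on $L$, and then apply
$$\int_{[K]^{\vir}} p^{\ast}\alpha \;=\; \int_{p_{\ast}[K]^{\vir}} \alpha \;=\; \int_{[L]^{\vir}} \alpha.$$

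For primary invariants, each of the four moduli spaces $K^{\rm stab}_{\Gamma}(X,D)$ carries evaluation maps $\mathrm{ev}_i$ at the marked points, landing in $X$ (non-contact marks) or in $D$ (contact marks). The morphisms $\Psi$, $\Theta$, $\Upsilon$ constructed in section \ref{sec:maps} are designed to send an object to one with the same underlying stable map and marked sections, and hence commute with the evaluation maps: $\mathrm{ev}_i^{L} \circ p = \mathrm{ev}_i^{K}$ for each $p \in \{\Psi,\Theta,\Upsilon\}$. Taking $\alpha = \prod_i (\mathrm{ev}_i^{L})^{\ast}(\gamma_i)$ in the displayed identity above therefore yields the coincidence of primary invariants across the four theories.

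For descendant invariants, the convention emphasized in the statement is essential: the $\psi$-classes are pulled back from Kontsevich's moduli stack of underlying (unexpanded) stable maps to $X$ along the natural stabilization morphism from each of the four moduli spaces. Because $\Psi$, $\Theta$, and $\Upsilon$ all factor through this common base -- they alter only the target expansion or the log/orbifold enhancement, never the underlying curve and its map -- the $\psi$-classes satisfy $p^{\ast}\psi_i^{L} = \psi_i^{K}$, and the same projection-formula calculation gives equality of descendant invariants. The only verification required is thus the commutativity of the evaluation maps and the stabilization morphisms with $\Psi$, $\Theta$, $\Upsilon$; this is immediate from the constructions of these comparison morphisms in section \ref{sec:maps}, making the corollary purely formal once Theorem \ref{thm:compare-pairs} is in hand.
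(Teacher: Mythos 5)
Your proof is correct and takes essentially the same approach as the paper: apply the projection formula after observing that the comparison morphisms $\Psi,\Theta,\Upsilon$ commute with the evaluation maps and with the $\psi$-classes when the latter are defined via the universal stabilized coarse curve. The paper's section~\ref{sec:comp-GW} makes the same reduction, spelling out the compatibilities $e_i^K = e_i^L\circ p$, $p^\ast\Sigma_i^L = \Sigma_i^K$, and $p^\ast\omega_{\oC/\Lstab}=\omega_{\oC/\Kstab}$ that you invoke.
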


\subsection{Gromov--Witten theories for acceptable degenerations}\label{sec:acc-deg}

At least morally, an acceptable degeneration is a family whose general fiber and total space are smooth and whose special fiber is the union of two smooth schemes along a smooth divisor.  However, it is preferable to have a definition that permits arbitrary changes of base, so we use the following:
\begin{definition} \label{def:acc-deg}
An \emph{acceptable degeneration} is a flat family $p : X \rightarrow V$ equipped with 
\begin{enumerate}[label=(\roman{*})]
\item line bundles and sections $(L_1, s_1)$ and $(L_2, s_2)$ on $X$, corresponding to a map $X \rightarrow \sA^2$,
\item a line bundle and section $(M, t)$ on $V$, corresponding to a map $V \rightarrow \sA$, and
\item an isomorphism $p^\ast (M,t) \simeq (L_1 \tensor L_2, s_1 \tensor s_2)$,
\end{enumerate}
subject to the condition that
\begin{enumerate}[label=(\roman{*}), resume]
\item the induced map $X \rightarrow \sA^2 \fp_{\sA} V$ be smooth.
\end{enumerate}
\end{definition}
\noindent In the definition, $\sA$ is the stack $[\,\bA^1 \,/\, \Gm\,]$.  See \cite[Section~2.2]{ACFW} for more about this definition.

The discrete data are slightly simpler here than they were for pairs.  They consist of
\begin{itemize}
\item the genus $g\in\bZ_{\geq 0}$;
\item the curve class $\beta\in H_2(X,\bZ)$; and
\item the number of marked points $n \in\bZ_{\geq 0}$.
\end{itemize}
We write $\Gamma=(g,n,\beta)$.

For each $K \in \set{\AF,\JLi,\Kim,\ACGS}$ let $\Kstab_\Gamma(X/V)$ 
\Jonathan{added $\Gamma$}
be the corresponding family of stable maps to the family $X/V$.

\begin{theorem}\label{thm:compare-deg}
For any fixed choice of discrete invariants $\Gamma$,
\begin{align*}
{\Psi}_\ast\left[\AFstab_\Gamma(X/V)\right]^{\rm vir}&=\left[\JListab_\Gamma(X/V)\right]^{\rm vir}\\
{\Theta}_\ast\left[\Kimstab_\Gamma(X/V)\right]^{\rm vir}&=\left[\JListab_\Gamma(X/V)\right]^{\rm vir}\\
{\Upsilon}_\ast\left[\Kimstab_\Gamma(X/V)\right]^{\rm vir}&=\left[\ACGSstab_\Gamma(X/V)\right]^{\rm vir}.\\
\end{align*}
\end{theorem}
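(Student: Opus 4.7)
The plan is to deduce Theorem \ref{thm:compare-deg} from Theorem \ref{thm:compare-pairs} by running the same obstruction-theoretic argument one level up, relative to the universal acceptable degeneration $\sA^2 \to \sA$. By Definition \ref{def:acc-deg}, an acceptable degeneration $X/V$ is exactly the data of a smooth map $X \to \sA^2 \fp_{\sA} V$, and the morphisms $\Psi, \Theta, \Upsilon$ of section \ref{sec:maps} depend on the target only through this structure. Consequently, the diagram of moduli stacks appearing in Theorem \ref{thm:compare-deg} will be obtained by base change from an analogous universal diagram of moduli of maps into $\sA^2 \to \sA$.

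First I would upgrade the absolute obstruction theories on the four moduli spaces $K^{\rm stab}_\Gamma(X/V)$ to relative obstruction theories over appropriate Artin stacks parametrizing source curves decorated with the enhancement (log structure, twisting, or expansion) required by $K$. Using the obstruction theory formalism of \cite{obs}, these relative obstruction theories should be pulled back, compatibly with $\Psi$, $\Theta$, $\Upsilon$, from corresponding universal moduli over $\sA^2 \to \sA$. Since compatibility of virtual classes is preserved under virtual pullback along smooth base change, the identities $p_\ast[K]^{\rm vir} = [L]^{\rm vir}$ of Theorem \ref{thm:compare-deg} will reduce to their counterparts at the universal target.

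Next I would check the universal assertion étale-locally on $\sA^2$. On the open locus $\{s_1 s_2 \neq 0\}$ the four theories all reduce to ordinary Gromov--Witten theory of a smooth target and the maps $\Psi$, $\Theta$, $\Upsilon$ become identities, so nothing is to be checked there. On an étale neighborhood of $\{s_1 s_2 = 0\}$, projection onto the non-vanishing coordinate factor will identify the problem with the universal smooth pair problem, and the desired compatibility will be precisely what is proved for Theorem \ref{thm:compare-pairs}. Costello's comparison theorem \cite[Theorem~5.0.1]{costello} will be used both in the reduction to the universal target and in the gluing of the local compatibilities into a global one.

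The hardest step will be handling $\JLi$. Li's expansions are a global construction on the target that is not transparently compatible with étale localization, and the discrete data $\Gamma = (g, n, \beta)$ of an acceptable degeneration omits the contact order partition $\alpha$ that features in the pair case; before applying Theorem \ref{thm:compare-pairs} at the universal target, the moduli stack $\JListab_\Gamma(X/V)$ will need to be decomposed into open-and-closed pieces indexed by splittings of $\beta$ and contact profiles along the singular divisor, and the compatibility of virtual classes verified summand by summand. The analogous verifications for $\AFstab$, $\Kimstab$, $\ACGSstab$ should be much less painful, since the expansions, twistings, and log enhancements in those theories are manifestly functorial in the target.
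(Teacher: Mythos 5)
Your plan is to derive Theorem~\ref{thm:compare-deg} from Theorem~\ref{thm:compare-pairs} by working relative to $\sA^2 \to \sA$ and then localizing étale-locally on the target, identifying a neighborhood of the degenerate locus with a universal smooth pair. This is not how the paper argues, and I do not think the reduction you sketch can be made to work.

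The paper does not deduce the degeneration theorem from the pair theorem at all. Section~\ref{sec:method} sets up a single axiomatized argument---Lemmas~\ref{lem:spaces} and~\ref{lem:maps} feeding into Costello's comparison theorem---that is run in parallel for both flavors of target, with $\sX/\sV$ equal to $(\sA,\,\mathrm{pt})$ in the pair case and to $(\sA^2,\sA)$ in the degeneration case. The two theorems are siblings, not parent and child. The reduction to the universal target is indeed present in the paper (this is the content of Lemma~\ref{lem:spaces}~\ref{spaces:cart}\ and~\ref{spaces:obs}\ together with the Costello square~\eqref{eqn:4}), and that part of your proposal is in the right spirit. But the next step, where you propose to finish by checking the universal statement étale-locally on $\sA^2$, cannot work: the objects being compared are virtual fundamental classes, which are global cycle classes on proper Deligne--Mumford stacks. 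A stable map has compact source and its image typically meets the degenerate fiber, so there is no open locus on the target over which ``nothing is to be checked,'' and no meaningful procedure for gluing local equalities of cycle classes on the moduli stack from an open cover of the target. The obstruction-theoretic comparison has to be carried out globally on the moduli stack, which is precisely what Costello's theorem plus the compatible-obstruction-theory machinery accomplishes.

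You have, in fact, noticed the symptom of this problem yourself: in your last paragraph you observe that the degeneration invariants (indexed by $\Gamma=(g,n,\beta)$) would have to be decomposed into pieces indexed by splittings of $\beta$ and contact profiles along the singular divisor. That decomposition is the \emph{degeneration formula}---a separate and substantially harder theorem relating $\Kstab(X/V)$ to the relative theories of the two halves of the special fiber. Needing it as an ingredient is a sign that your proposed reduction is actually proving something stronger and unrelated to the statement at hand. The paper avoids all of this: for each of $\AF$, $\JLi$, $\Kim$, $\ACGS$ it exhibits a compatible triple of obstruction theories for $\Kstab(X/V) \to K(\sX/\sV) \fp_{\sV} V \to \sZ$ (Lemma~\ref{lem:spaces}~\ref{spaces:obs}), shows that the middle stack is smooth or virtually smooth over $\sZ$ (Lemma~\ref{lem:spaces}~\ref{spaces:unobs}\ together with~\ref{spaces:dense}\ and \cite[Lemma~B.2]{ACW}), exhibits the cartesian square~\eqref{eqn:4} with $q$ of Deligne--Mumford type and generically an isomorphism (Lemma~\ref{lem:maps}), and then invokes \cite[Theorem~5.0.1]{costello} and \cite[Theorem~4.8]{Manolache} or \cite[Proposition~6.5]{obs}. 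The only things that change between the pair case and the degeneration case are the identity of $\sX/\sV$ and a few local verifications (e.g.\ in the proof of \hyperref[lem:jli-perf]{Lemma~\namer{\ref*{lem:spaces}}{$\JLi$}{\ref*{spaces:unobs}}}).
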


Once again we have an immediate corollary:
\begin{corollary} \label{cor:GW2}
All primary relative Gromov--Witten invariants defined by the moduli spaces $\AFstab(X/V)$, $\JListab(X/V)$, $\Kimstab(X/V)$, and $\ACGSstab(X/V)$ coincide.  Descendant invariants also coincide if we use $\psi$-classes coming from the underlying \emph{unexpanded} stable maps to $X$(again see section \ref{sec:comp-GW}).
\end{corollary}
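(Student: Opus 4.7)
The plan is to deduce the corollary directly from Theorem~\ref{thm:compare-deg} using the projection formula, so most of the work consists of organizing the definitions so that the classes against which we integrate are pulled back along the comparison morphisms $\Psi$, $\Theta$, $\Upsilon$.

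First I would recall that, for each theory $K \in \{\AF,\JLi,\Kim,\ACGS\}$, the $n$ marked points lying away from the singular divisor give evaluation maps $\operatorname{ev}_i : \Kstab_\Gamma(X/V) \to X$, and the primary invariants are defined by
\[
\langle \gamma_1,\dots,\gamma_n\rangle^{K}_\Gamma \;=\; \int_{[\Kstab_\Gamma(X/V)]^{\vir}} \prod_{i=1}^n \operatorname{ev}_i^\ast \gamma_i ,
\qquad \gamma_i \in H^\ast(X).
\]
Because the comparison morphisms in \eqref{eqn:compare} are defined by altering only the target or the combinatorial/logarithmic decoration while preserving the underlying source curve together with its marked points and its map to $X$, the evaluation maps satisfy $\operatorname{ev}^{K}_i = \operatorname{ev}^{L}_i \circ p$ for each $p : K \to L$. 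This compatibility is something I would verify directly from the descriptions in section~\ref{sec:maps}. Once this is in hand, Theorem~\ref{thm:compare-deg} combined with the projection formula gives
\[
p_\ast\!\left(\prod_i \operatorname{ev}_i^{K\,\ast}\gamma_i \cap [K]^{\vir}\right)
\;=\; \prod_i \operatorname{ev}_i^{L\,\ast}\gamma_i \cap p_\ast [K]^{\vir}
\;=\; \prod_i \operatorname{ev}_i^{L\,\ast}\gamma_i \cap [L]^{\vir},
\]
and taking degrees yields the equality of primary invariants for each of $\Psi$, $\Theta$, $\Upsilon$.

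For the descendant statement, the relevant $\psi$-classes are by hypothesis those pulled back from the moduli space of (unexpanded) stable maps to $X/V$ along the underlying-map stabilization $\Kstab_\Gamma(X/V) \to \overline{\mathcal{M}}_{g,n}(X/V,\beta)$. The point to check is that this stabilization morphism factors through each of the comparison morphisms, i.e., that $\Psi$, $\Theta$, $\Upsilon$ commute with the passage to underlying unexpanded stable maps. This is almost tautological: all four theories produce, after contracting the expansion and forgetting the log or orbifold structure, the same underlying stable map to $X$, and the construction of the comparison morphisms in section~\ref{sec:maps} respects this passage. Given this, each $\psi$-class on $L$ pulls back to the corresponding $\psi$-class on $K$, and another application of the projection formula using Theorem~\ref{thm:compare-deg} gives the descendant equality.

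The only mildly delicate point, and the one I would treat most carefully, is the compatibility of the comparison morphisms with the underlying-unexpanded stabilization; everything else is formal consequence of the projection formula and Theorem~\ref{thm:compare-deg}. This is why the corollary must restrict to $\psi$-classes coming from the unexpanded theory: $\psi$-classes constructed on the expanded source curves need not pull back in the required way and are in general different invariants.
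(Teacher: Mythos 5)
Your proposal is correct and follows essentially the same route as the paper's argument in Section~\ref{sec:comp-GW}: apply the projection formula to Theorem~\ref{thm:compare-deg} using the compatibilities $e_i^K = e_i^L \circ p$, $p^\ast \Sigma_i^L = \Sigma_i^K$, and $p^\ast \omega_{\oC/\Lstab} = \omega_{\oC/\Kstab}$ that hold because the comparison morphisms preserve the underlying stabilized map to $X$. The only detail the paper records that you pass over is that the $\ACGS$ evaluation maps naturally land in the logarithmic evaluation stacks $\wedge_{\alpha_i}X$ and must be composed with the projection to $X$ (or $D$) to make the comparison work; for acceptable degenerations this is harmless since all markings have $\alpha_i=0$.
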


\subsection{Obstruction theories}  \label{sec:obs}

Throughout this paper we use the obstruction theory formalism introduced in~\cite{obs}.  We summarize the definition and the properties we will use below.  This definition is shown in loc.\ cit.\ to be equivalent to the one given by Behrend and Fantechi~\cite{BF}, but the formulation we use here avoids reference to the cotangent complex and thereby simplifies many of the verifications we will need to make later.

Suppose that $X \rightarrow Y$ is a morphism of algebraic stacks.  A relative obstruction theory for $X$ over $Y$ consists of the following data:
\begin{enumerate}[label=(\roman{*})]
\item for each $X$-scheme $S$ and each quasi-coherent sheaf $J$ on $S$, an abelian $2$-group $\sE(S, J)$,
\item for each square-zero extension $S'$ of $S$ over $Y$ with ideal $I_{S/S'} = J$, an element, known as the {\em obstruction}, $\omega \in \sE(S,J)$, and
\item an identification between the sheaf $\uIsom_{\sE(S,J)}(0, \omega)$ and the sheaf of lifts of the diagram
\begin{equation} \label{eqn:13} \vcenter{\xymatrix{
S \ar[r] \ar[d] & X \ar[d] \\
S' \ar@{-->}[ur] \ar[r] & Y .
}} \end{equation}
\end{enumerate}
These data are required to satisfy a number of compatibility conditions that we suppress here.  See \cite{obs} for the complete definition.  Typically these conditions are easy to verify.\footnote{An exception to this rule is J.\ Li's obsruction theory for stable maps to expanded degenerations and expanded pairs.  See~\cite{Li2} or~\cite[Appendix]{CMW} for discussion of this obstruction theory and the verification of the relevant properties.}

\subsubsection{Constructing obstructions} \label{sec:con-obs}

All of the obstruction theories employed in this paper can be viewed as examples of the following abstract construction.  Suppose that there is a site $\fX$ and a morphism of sites $\fX \rightarrow \et(X)$ such that 
\begin{enumerate*}[label=(\roman{*})]
\item the lifting problem~\eqref{eqn:13} can be posed for $S \in \fX$,\footnote{There is not necessarily a unique way of making sense of the deformation problem in $\fX$, though there is often a natural one.}
\item the lifting problem always admits a solution locally in $\fX$, and
\item any two solutions to the lifting problem are locally isomorphic.
\end{enumerate*}
An example may help to illuminate these hypotheses.  If $X$ is the moduli space of stable maps to a smooth target $Z$ (and $Y$ is a point) we take $\fX$ to be the \'etale site of the universal curve (see \cite[Section~7.3]{obs}).  For J.\ Li's moduli space of relative stable maps $\fX$ will be a hybrid of the \'etale sites of the universal curve and the base.

In all reasonable situations (including moduli spaces parameterizing morphisms of schemes or flat families of schemes), it follows formally that when $S' = S[J]$ is the trivial square-zero extension, the stack of lifts of~\eqref{eqn:13} in $\fX$ is a stack of commutative $2$-groups\footnote{Commutative $2$-groups have gone by various names elsewhere, including ``strictly commutative Picard categories'' \cite[XVIII.1.4]{sga4-3}.} on $\fX$.  Denoting this stack of $2$-groups by $T$, it is equally formal to show that the lifts of~\eqref{eqn:13} for \emph{any} square-zero extension $S'$ of $S$ by $J$ form a torsor on $\fX$ under $T$.\footnote{See \cite[Section~6]{Breen} for the notion of a torsor under a stack of $2$-groups.  Note that $2$-groups are called gr-stacks in loc.\ cit.}  Therefore if $\sE(S,J)$ is defined to be the $2$-category of all torsors on $\fX$ under $T$, there is an obstruction $\omega \in \sE(S,J)$---the torsor itself---to the existence of a lift of~\eqref{eqn:13}.  That is $\sE$ forms a relative obstruction theory for $X$ over $Y$.

In general, the torsors under a $2$-group $G$ form a $2$-category, which possesses the structure of a commutative $3$-group if $G$ is commutative.  In this paper we will usually have a stability condition that ensures the $2$-category is actually a category and that the $3$-group is therefore a $2$-group.  However, at one point (section~\ref{sec:AF-vfc}) we will encounter an obstruction theory for a morphism of Artin type.  Fortunately, this obstruction theory turns out to be rather trivial and we can deal with it without using $2$-categories in an explicit way (see section~\ref{sec:loc-unobs} for more about this).

\subsubsection{Perfect obstruction theories}

An obstruction theory $\sE$ for $X$ over $Y$ will be called \emph{perfect} if on a smooth cover $\tX$ of $X$ it is possible to find a perfect complex $\bE^\bullet$ on $\tX$ in cohomological degrees $[-1,0]$ and a functorial equivalence
\begin{equation*}
\sE(S,J) \simeq \Ext(f^\ast \bE^\bullet, J),
\end{equation*}
for all $f : S \rightarrow \tX$ and quasi-coherent $J$ on $S$.  Here $\Ext$ denotes the category of extensions of complexes:  see, for example, \cite[Section~4.1]{obs}, where the notation $\Psi_{\bE^\bullet}$ is used for $\Ext(\bE^\bullet, -)$.

By the construction of Behrend and Fantechi~\cite{BF}, if $X$ is of Deligne--Mumford type over $Y$ then a perfect relative obstruction theory gives rise to a relative virtual fundamental class, which we denote $[X/Y]^{\vir}$.  Following Manolache~\cite{Manolache}, a virtual fundamental class for $X$ over $Y$ can be viewed as a recipe for pulling back cycles from $Y$ to $X$.  If $g$ denotes the map $X \rightarrow Y$ then the virtual pullback is usually denoted $g^!_{\sE}$ or sometimes $g^!$ if it is safe to leave the dependence on $\sE$ tacit.
% (with the dependence on $\sE$ tacit).

\subsubsection{Pullback of obstruction theories}

Suppose that $X \rightarrow Y$ is a morphism of algebraic stacks and $\sE$ is a relative obstruction theory for $X$ over $Y$.  Let $Y' \rightarrow Y$ be a morphism of algebraic stacks, and set $X' = Y' \fp_Y X$.  Denote by $p$ the projection $X' \rightarrow X$.  For any $S \rightarrow X'$, define $\sE'(S,J) = \sE(S,J)$ where we view $S$ as an $X$-scheme by composition with $p$.  Then $\sE'$ is a relative obstruction theory for $X'$ over $Y'$.

\subsubsection{Compatible obstruction theories}

Suppose that $X \xrightarrow{g} Y \xrightarrow{f} Z$ is a sequence of morphisms of algebraic stacks.  Assume that there are relative obstruction theories $\sE'$ for $X/Y$, $\sE$ for $X/Z$, and $\sE''$ for $Y/Z$.  A \emph{compatibility} datum among these obstruction theories is written as an exact sequence
\begin{equation*}
0 \rightarrow \sE' \rightarrow \sE \rightarrow \sE'' \rightarrow 0
\end{equation*}
and consists of 
\begin{enumerate}[label=(\roman{*})]
\item functorial morphisms $\sE'(S,J) \xrightarrow{\alpha} \sE(S,J) \xrightarrow{\beta} \sE''(S,J)$ for each $S \rightarrow X$ and quasi-coherent $J$ on $S$, and
\item an identification of $\beta \circ \alpha$ with the zero map.
\end{enumerate}
These are required to satisfy the following conditions:
%\Dan{Does this really cover all the requirement of exactness??}\Jonathan{it looks fine to me; is something missing?}
\begin{enumerate}[label=(\roman{*}), resume]
\item $\alpha$ and $\beta$ are compatible with the natural maps between deformation problems,
\item for each $S$ and $J$, the map $\beta : \sE(S,J) \rightarrow \sE''(S,J)$ is locally surjective in $S$,
\item $\alpha$ induces an equivalence $\sE'(S,J) \simeq \ker(\beta)$.
\end{enumerate}
These conditions are spelled out in detail in \cite[Section~6.2]{obs}.

By \cite[Theorem~4.8]{Manolache}, if $f$ and $g$ are both of Deligne--Mumford type (so that the relative virtual fundamental classes can be defined) then $g^!_{\sE'} [Y/Z]^{\vir} = [X/Z]^{\vir}$.  By \cite[Proposition~6.5]{obs}, the same conclusion holds if $Y$ is locally unobstructed over $Z$ (see below).

\subsubsection{Locally unobstructed stacks}
\label{sec:loc-unobs}

At one occasion in this paper (section~\ref{sec:AF-vfc}), we will be obliged to study an obstruction theory for a morphism of Artin type.  Such obstruction theories are naturally families of commutative $3$-groups.  Fortunately, the morphism we shall study is \emph{lcoally unobstructed}, affording the reader who is uncomfortable with $3$-groups another point of view.

A morphism of algebraic stacks $Y \rightarrow Z$ with relative obstruction theory $\sE$ is called \emph{locally unobstructed} if for each $(S,J)$ the stack $\sE(S,J)$ is a gerbe over $S$ in the \'etale topology.  This means that every lifting problem~\eqref{eqn:13} admits a solution \'etale-locally in $S$.  In particular, $Y$ must be smooth over $Z$.  As the lifts of diagram~\eqref{eqn:13} are automatically a torsor under the relative tangent bundle stack $T_{Y/Z} \tensor J$ (see Appendix~\ref{app:tangent} for more about the tangent bundles of stacks), it follows that $\sE(S,J)$ is a gerbe over $S$, banded by $T_{Y/Z} \tensor J$.  Since $\sE(S,J)$ comes equipped with a section---the zero section---it is isomorphic to the stack of $(T_{Y/Z} \tensor J)$-torsors.  To say that the obstruction theory $\sE$ makes $Y$ unobstructed over $Z$ is therefore the same as to say that $Y$ is smooth over $Z$ and $\sE$ is the canonical obstruction theory.

Suppose that $X \rightarrow Y \rightarrow Z$ is a sequence of morphisms of algebraic stacks and 
\begin{equation*}
0 \rightarrow \sE' \rightarrow \sE \rightarrow \sE'' \rightarrow 0
\end{equation*}
is a sequence of compatible obstruction theories.  If $Y$ is locally unobstructed over $Z$, the maps $\sE'(S,J) \rightarrow \sE(S,J)$ are \'etale-locally surjective in $S$.  We may therefore rotate the sequence:  let $\sT''(S,J)$ be the automorphism group of the zero section of $\sE''(S,J)$.  Then we have an exact sequence
\begin{equation} \label{eqn:15}
0 \rightarrow \sT'' \rightarrow \sE' \rightarrow \sE \rightarrow 0 .
\end{equation}
That is, for each $(S,J)$ we have an exact sequence on the \'etale site of $S$,
\begin{equation*}
0 \rightarrow \sT''(S,J) \rightarrow \sE'(S,J) \rightarrow \sE(S,J) \rightarrow 0 .
\end{equation*}
Provided that $X$ is of Deligne--Mumford type over $Y$, all of the terms appearing in this sequence will be commutative $2$-groups.  One may therefore avoid $3$-groups in this context by taking~\eqref{eqn:15} as the \emph{definition} of a compatible sequence of obstruction theories when $Y$ is locally unobstructed over $Z$ (and $X$ is of Deligne--Mumford type over $Y$).

\section{Method of Proof}\label{sec:method}

The results of this paper could be arranged in a matrix with one column for degenerations and one for pairs and one row each for the maps $\Psi$, $\Theta$, and $\Upsilon$.  The arguments for each of the matrix entries are very similar so we have axiomatized the situation in order not to have to repeat them too many times.  In this section we state two lemmas, one concerning the spaces $\AF$, $\JLi$, $\Kim$, $\ACGS$ and the other about the maps $\Psi$, $\Theta$, and $\Upsilon$.  Using these lemmas we show how to deduce Theorem~\ref{thm:compare}.  Then in sections~\ref{sec:spaces} and~\ref{sec:maps} we give the arguments necessary to prove the lemmas in each case.  At the end of this section, we reduce the statements of Corollaries \ref{cor:GW1} and \ref{cor:GW2} to the statement of our main theorem.

In what follows, we will simplify the notation for our moduli spaces by suppressing the various subscripts for locally constant data.  The reader may imagine either that these data have been fixed, or else that each moduli space is the disjoint union over discrete parameters of moduli spaces with appropriate decorations. 

\subsection{Universal targets}
\notn{$X/V$}{a smooth pair or acceptable degeneration}
\notn{$\sX/\sV$}{the universal smooth pair or acceptable degeneration}
Denote by $X\rightarrow V$ either a smooth pair with ambient space $X$ and $V$ being a point (Definition~\ref{def:sm-pair}) or an acceptable degeneration (Definition~\ref{def:acc-deg}).

A central point is that our main theorems would have been almost immediate if the virtual fundamental classes were simply the fundamental classes.  We are not so fortunate, but our misfortune may be attributed entirely to the global geometry of $X/V$.  It is possible to isolate this global geometry by comparing the space of maps to $X/V$ with the space of  maps to a {\em universal target} $\sX / \sV$.  This method was introduced in~\cite{ACW} and~\cite{ACFW} and has been used in \cite{CMW} as well.  

The universal targets are defined as follows:

\notn{$\sA$}{$[\,\bA^1\,/\,\Gm\,]$, moduli of line bundles with section}
\notn{$\sD$}{$[\,0\,/\,\Gm\,]$, substack of line bundles with zero section in $\sA$}
\begin{enumerate}
\item The universal smooth pair is $(\sA, \sD)$ where $\sA = [\,\bA^1\, /\, \Gm\,]$ and $\sD = [\,0\,/\,\Gm\,]$.  In this case, $\sX = \sA$ and $\sV$ is a point.
\item The universal acceptable degeneration is the multiplication morphism $\sA^2 \to \sA$.  Here $\sX = \sA^2$ and $\sV = \sA$.
\end{enumerate}
For more about the universal smooth pair and the universal acceptable degeneration, see \cite[Sections~2.1 and~2.2]{ACFW}.  

In either case, we have a commutative---but not cartesian---diagram
\begin{equation*} \xymatrix{
X \ar[r] \ar[d] & \sX \ar[d] \\
V \ar[r] & \sV .
} \end{equation*}
In both situations, $X$ is smooth over $\sX \fp_{\sV} V$.

\subsection{Comparison of virtual fundamental classes}
As in \cite{ACW} and \cite{CMW}, we will prove Theorem~\ref{thm:compare} using Costello's comparison technique~\cite[Theorem~5.0.1]{costello}.  Let $p:\Kstab(X/V)\to \Lstab(X/V)$ be one of the maps $\Psi, \Theta, \Upsilon$, where $\Kstab(X/V)$ and $\Lstab(X/V)$ are the relevant moduli spaces.  We construct a cartesian diagram
\begin{equation} \label{eqn:4} \vcenter{\xymatrix{
\Kstab(X/V) \ar[r]^p \ar[d]_f & \Lstab(X/V) \ar[d] \\
K^*(\sX/\sV) \ar[r]^q & L(\sX/\sV).
}} \end{equation}
Here the top arrow is the map $p=\Psi, \Theta,$ or $\Upsilon$ in question.  The spaces $K(\sX/\sV)$ and $L(\sX/\sV)$ are the moduli spaces of pre-stable maps to the universal family $\sX/\sV$, with the respective expanded, logarithmic or orbifold structures;  the star in $K^*(\sX/\sV)$ indicates a small modification to $K(\sX/\sV)$ in the case $p = \Upsilon$, necessary to ensure that the diagram commutes.  The use of such a modification goes back at least as far as Behrend's proof of the product formula for Gromov--Witten invariants~\cite{Beh-prod}.  The technique has also been used in~\cite{costello,ACW,AF,AJT,CMW}.

We reserve the notation  $\fM(X/V)$ and $\fM(\sX/\sV)$ for the usual moduli spaces of pre-stable maps to the given family $X/V$ or to the universal target $\sX/\sV$, without added expanded, logarithmic or orbifold structure:  see section~\ref{sec:naive}.

In section~\ref{sec:spaces} we will describe the spaces $\JLi$, $\AF$, $\Kim$ and $\ACGS$, and prove the following facts about each of them:
\begin{mainlemma} \label{lem:spaces}
For $K \in \set{\AF,\JLi,\ACGS,\Kim}$, 
\begin{enumerate}[label=(\roman{*}), ref=(\roman{*})]
\item \label{spaces:proper-DM} $\Kstab(X/V)$ is a proper,  Deligne--Mumford stack;
\item \label{spaces:artin} $K(\sX/\sV)$ is an Artin stack;
\item \label{spaces:dense} the locus of totally nondegenerate maps (see Definiton~\ref{def:nondeg}) in $K(\sX/\sV)$ is dense;
\item \label{spaces:cart} there is a cartesian diagram 
\begin{equation} \label{diag:naive} \vcenter{\xymatrix{
K(X/V) \ar[r] \ar[d] & \fM(X/V) \ar[d] \\
K(\sX/\sV) \ar[r] & \fM(\sX/\sV);
}} \end{equation}
%\item \label{spaces:obs2} the relative obstruction theory for $K(X/V)$ over $K(\sX/\sV) \fp_{\sV} V$ is the pullback of that for $\fM(X/V)$ over $\fM(\sX/\sV) \fp_{\sV} V$;
\item \label{spaces:obs} there are relative obstruction theories as indicated in the diagram below\footnote{When we need to indicate that these obstruction theories are associated to $K \in \set{\JLi,\AF,\Kim,\ACGS}$ we decorate them with a subscript.}
\begin{equation*}
\overbrace{\lefteqn{\overbrace{\phantom{\Kstab(X/V) \rightarrow K(\sX/\sV) \fp_{\sV}\vphantom{\fp^\sV} V}}^{\sE'}} \Kstab(X/V) \rightarrow \underbrace{K(\sX/\sV) \fp_{\sV} V \rightarrow \sZ}_{\sE''}}^{\sE} ,
\end{equation*}
fitting into an compatible sequence
\begin{equation*}
0 \rightarrow \sE' \rightarrow \sE \rightarrow \sE'' \rightarrow 0
\end{equation*}
in which 
\begin{enumerate*}[label=(\alph{*})]
\item $\sE'$ is pulled back from the relative obstruction theory for $\fM(X/V)$ over $\fM(\sX/\sV) \fp_{\sV} V$ (see section~\ref{sec:naive}) using diagram~\eqref{diag:naive}, and
\item the ``original'' virtual fundamental class of $\Kstab(X/V)$ is that defined using $\sE$;
\end{enumerate*}
\item \label{spaces:unobs} the map $K(\sX/\sV) \fp_{\sV} V \rightarrow \sZ$ is either 
\begin{enumerate}[label=(\alph{*})]
\item \label{unobs:smooth} (for $K = \AF$) locally unobstructed with respect to $\sE''$, or
\item \label{unobs:DM} (for $K \in \set{\JLi, \Kim, \ACGS}$) of Deligne--Mumford type and locally unobstructed with respect to $\sE''$ on the locus of totally nondegenerate maps.
\end{enumerate}
\end{enumerate}
\end{mainlemma}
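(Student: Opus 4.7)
The plan is to verify properties (i)--(vi) separately for each of the four theories $\JLi$, $\AF$, $\Kim$, $\ACGS$. Properties (i)--(iii) are essentially consequences of the foundational papers defining each theory: (i) properness and the Deligne--Mumford property are proved in \cite{Li2}, \cite{AF}, \cite{Kim}, and \cite{Chen, GS, AC} respectively; (ii) the Artin property of the universal-target version $K(\sX/\sV)$ without stability is a routine extension, since the extra expansion/log/orbifold data live on Artin stacks over the basic prestable-map stack; (iii) density of the nondegenerate locus reduces in each case to the observation that the stack of expansions (for $\JLi$, $\AF$, $\Kim$) or of log enhancements (for $\ACGS$) has its trivial stratum as a dense open, which pulls back to the nondegenerate locus of $K(\sX/\sV)$.

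For (iv), the cartesian diagram~\eqref{diag:naive} expresses the fact that the universal target $\sX/\sV$ captures exactly the combinatorial data entering into the enhanced moduli problem: a stable map to $X/V$ in any of the four theories is equivalent to a naive prestable map to $X/V$ together with a compatibly enhanced map to $\sX/\sV$, so the fiber product $K(\sX/\sV) \fp_{\fM(\sX/\sV)} \fM(X/V)$ recovers $K(X/V)$. This is essentially a tautology once the universal target is properly set up; the verification in each theory amounts to unwinding its discrete combinatorial decorations.

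The bulk of the work lies in (v) and (vi), which I would approach via the abstract construction recalled in section~\ref{sec:con-obs}: choose the site $\fX$ to be the \'etale site of the universal curve, enriched with the relevant enhancement data; verify that the lifting problem~\eqref{eqn:13} has local solutions and that any two solutions are locally isomorphic; then define $\sE(S,J)$ as the stack of torsors under the $2$-group of lifts over the trivial square-zero extension. The compatible sequence $0\to\sE'\to\sE\to\sE''\to 0$ reflects the natural decomposition of the deformation problem into (a) deforming the map to $X/V$ with combinatorial data fixed, governed by $\sE'$, and (b) deforming the combinatorial data itself, governed by $\sE''$. Identification of $\sE'$ with the pullback of the naive obstruction theory of $\fM(X/V)/\fM(\sX/\sV)\fp_{\sV} V$ along diagram~\eqref{diag:naive} is then automatic from the pullback formalism. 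For (vi), the $\AF$ case is locally unobstructed because the moduli of predeformable expansions satisfying the transversality condition is smooth; for $\JLi$, $\Kim$, $\ACGS$ the morphism to $\sZ$ is of Deligne--Mumford type because the log/tropical matching conditions or the predeformability condition rigidify infinitesimal automorphisms, and $\sE''$ is trivial on the nondegenerate locus, where no enhancement is present.

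The principal obstacle will be $\JLi$, whose obstruction theory is notoriously subtle (see the paper's footnote referring to~\cite{Li2} and~\cite{CMW}). Here one must match the abstract construction $\sE_{\JLi}$ with Li's original predeformable obstruction theory and carefully disentangle the factorization into map-plus-expansion pieces in the presence of the predeformability constraint. The $\ACGS$ case is conceptually the cleanest, leveraging Olsson's logarithmic cotangent complex~\cite{olsson-log-cc} to provide $\sE''$ directly; the $\Kim$ case interpolates between $\JLi$ and $\ACGS$, combining log-enhanced expansions with predeformability-type conditions, but is more tractable than $\JLi$ alone because its log structure already supplies enough rigidifying data to invoke the log-geometric formalism. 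In each case, the verification reduces ultimately to a local calculation on the universal curve together with standard smoothness statements about the relevant moduli of expansions or of logarithmic enhancements.
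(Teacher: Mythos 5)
Your overall strategy---verifying (i)--(vi) theory by theory, using the abstract torsor-of-lifts construction of section~2.3.1, and identifying $\JLi$ as the thorniest case---is in line with what the paper does. But there are two places where the sketch glosses over a real issue, and one is a genuine gap.

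The gap is in (iii) for $K=\JLi$. You argue that density of the nondegenerate locus in $K(\sX/\sV)$ ``pulls back'' from density of the trivial stratum in the stack of expansions. This works for $\AF$, $\Kim$, and $\ACGS$ precisely because those stacks are smooth (resp.\ logarithmically smooth, resp.\ \'etale) over the base $\sZ$, so preimages of dense opens are dense. But $\JLi(\sX/\sV)$ is \emph{not} smooth over $\sV^{\exp}$---predeformability is a genuinely singular condition---so density of the open stratum of $\sV^{\exp}$ tells you nothing about density of its preimage. The paper handles this by a different route: it first proves density for $\AF$ or $\Kim$, then proves that the natural maps $\AF(\sX/\sV)\to\JLi(\sX/\sV)$ and $\Kim(\sX/\sV)\to\JLi(\sX/\sV)$ are surjective, so the nondegenerate locus in $\JLi(\sX/\sV)$ is the image of a dense open and is itself dense. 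Without that detour, (iii) for $\JLi$ does not follow.

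The second, softer issue is in (v) for $\JLi$. You describe the identification of $\sE'_{\JLi}$ with the pullback of the naive obstruction theory $\sF$ as ``automatic from the pullback formalism.'' It is not automatic: Li's obstruction theory lives as torsors on the hybrid site $\et(C/S)$ (\'etale local in both the curve and the base), whereas $\sF$ lives as torsors on $\et(C)$. Identifying the two requires showing that pushforward along the closed embedding of sites $\et(C)\hookrightarrow\et(C/S)$ is exact (the paper's Lemma~\ref{lem:i-ex}) and deducing from this that the categories of torsors agree (Lemma~\ref{lem:E=F}). Similarly, the compatibility sequence $0\to\sE'\to\sE\to\sE''\to 0$ for $\JLi$ comes from an explicit short exact sequence of sheaves of groups on $\et(C/S)$, with right-exactness on passage to torsors governed by the vanishing of $H^2$ of a quasi-coherent sheaf on a relative curve. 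These are not formalities you can skip, and for $\AF$ there is a further wrinkle---the obstruction theory over $\sZ=V^{\bfr}$ takes values in $3$-groups, not $2$-groups, because the map is of Artin type, which forces the locally-unobstructed workaround of section~2.3.5.

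One small omission that would cause confusion if you tried to write this out carefully: the auxiliary stack $\sZ$ in (v) and (vi) is different in each case---$\fM$ for $\JLi$, $V^{\bfr}$ for $\AF$, $\fMB\fp_{\sV}V$ for $\Kim$, and $\Log(\fM)\fp_{\Log}\Log(V)$ for $\ACGS$---and the character of the obstruction theory $\sE''$ over $\sZ$ (trivial vs.\ trivial only on the nondegenerate locus, Deligne--Mumford vs.\ Artin type) is what distinguishes case (vi)(a) from (vi)(b). Your sketch treats $\sZ$ as if it were uniform.
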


In section~\ref{sec:maps} we construct the diagrams~\eqref{eqn:4} and verify the basic properties we need:
\begin{mainlemma} \label{lem:maps}
For $p \in \set{\Psi, \Theta, \Upsilon}$,
\begin{enumerate}[label=(\roman{*}), ref=(\roman{*})]
\item \label{maps:cart} diagram~\eqref{eqn:4} is cartesian;
\item \label{maps:etale} the projection from $K^*(\sX/\sV)$ to $K(\sX/\sV)$ is \'etale;
\item \label{maps:DM} $q$ is of Deligne--Mumford type;
\item \label{maps:birat} $q$ is generically an isomorphism;
\item \label{maps:compat} the relative obstruction theory $\sE'_K$ (see Lemma~\ref{lem:spaces}~\ref{spaces:obs}) for $\Kstab(X/V)$ over $K(\sX/\sV)$ is isomorphic to the pullback of $\sE'_L$.
\end{enumerate}
\end{mainlemma}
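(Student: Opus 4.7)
The plan is to treat each of the three maps $p \in \{\Psi, \Theta, \Upsilon\}$ separately, since the constructions of the corresponding ``universal target'' comparisons $q$ and the modifications $K^*(\sX/\sV)$ differ in character. For each, I would first construct $q: K^*(\sX/\sV) \to L(\sX/\sV)$ by applying the same forgetful or contraction procedure used to build $p$, but to pre-stable maps into $\sX/\sV$ rather than $X/V$. In the cases $p = \Psi$ (forgetting orbifold structure to recover expanded targets) and $p = \Theta$ (forgetting log structure on an expanded target), no modification is required and one can take $K^*(\sX/\sV) = K(\sX/\sV)$. In the case $p = \Upsilon$ (contracting expansions of Kim in favor of the ACGS log structure), the modification $\Kim^*(\sX/\sV) \to \Kim(\sX/\sV)$ is needed so that the log structures on the two sides agree strictly; it arises from a base change along an étale map on logarithmic data, following the pattern used in \cite{ACW, CMW}.

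For part~(i), the key point is that each of the extra structures---expansions, logarithmic enhancements, orbifold structures---is a structure imposed on the \emph{target and combinatorics along $D$} (respectively, the singular locus), and is pulled back from the map to the universal target via $X \to \sX$. Concretely, a $K$-object over a scheme $T$ mapping to $X/V$ amounts to a pre-stable map $C \to X$ together with a $K$-enhancement of its composition with $X \to \sX$; the cartesian property of \eqref{eqn:4} is a translation of this statement. Part~(ii) for $p = \Psi, \Theta$ is immediate by the choice $K^* = K$; for $\Upsilon$ one checks that the modification is a base change along an étale morphism of combinatorial moduli (essentially a toric étale cover) and hence étale.

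For parts~(iii) and~(iv), I would unwind, for each pair $(K,L)$, the combinatorial data separating the $K$-structure from the $L$-structure (expansion contraction data for $\Psi$ and $\Theta$; the difference between Kim's expansion and ACGS log data for $\Upsilon$). In every case these data are discrete on geometric points with finite automorphism groups, so $q$ is of Deligne--Mumford type. For~(iv) one observes that on the locus of totally nondegenerate maps---where no component of $C$ maps into $D$ or into the singular locus---all four moduli problems reduce to pre-stable maps with trivial expansion/log/orbifold enhancement, so $q$ restricts to an isomorphism there; by Lemma~\ref{lem:spaces}~\ref{spaces:dense} this is a dense open substack.

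Part~(v) should then be essentially formal: by Lemma~\ref{lem:spaces}~\ref{spaces:obs}, both $\sE'_K$ and $\sE'_L$ are the pullbacks of the \emph{same} relative obstruction theory for $\fM(X/V)$ over $\fM(\sX/\sV) \fp_{\sV} V$, through the two cartesian squares~\eqref{diag:naive} for $K$ and $L$. Since these two squares are related by the cartesian square~\eqref{eqn:4} (whose cartesianness is part~(i)), pullback-compatibility of obstruction theories yields the required identification of $\sE'_K$ with the pullback of $\sE'_L$. The main obstacle I anticipate is not~(v), which is formal, but the verification of~(iii) and in particular the construction and étaleness of the modification $\Kim^*(\sX/\sV)$ for $\Upsilon$: here one must carefully match the different conventions governing expansions in Kim's theory with the log markings of ACGS, and check that the discrepancy is captured exactly by an étale cover rather than a finite map with nontrivial ramification.
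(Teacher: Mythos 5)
Your plan correctly identifies the easy parts and where the real work lies, but several of the sketches for the hard parts do not match what actually has to be done, and a key construction is misdescribed.

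For~\ref{maps:compat} and for~\ref{maps:etale}, \ref{maps:birat} in the cases $p = \Psi, \Theta$, your reasoning is essentially the paper's: \ref{maps:compat} holds tautologically because both $\sE'_K$ and $\sE'_L$ are pullbacks of the obstruction theory $\sF$ for $\fM(X/V)$ over $\fM(\sX/\sV) \fp_{\sV} V$ through the squares~\eqref{diag:naive}; and for $\Psi,\Theta$ one takes $K^\ast = K$ so~\ref{maps:etale} is vacuous and~\ref{maps:birat} follows from density of totally nondegenerate maps. Good.

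However, your account of~\ref{maps:cart} misses the substantive step: cartesianness of the \emph{unstable} square (for $\Psi$, Lemma~\ref{lem:aftoli}; for $\Upsilon$, Lemma~\ref{lem:kimtoacgs}) is indeed essentially base change from the universal targets, but the diagram~\eqref{eqn:4} involves the \emph{stable} loci $\Kstab$ and $\Lstab$, and matching stability on both sides is not automatic. In the paper's proof for $\Psi$, one direction (stable $\Rightarrow$ image stable) is easy, but the converse uses the exact sequence of automorphism groups together with~\ref{maps:DM}: $\Aut_{\AF/\JLi}$ finite, so $\Aut_{\JLi}$ finite forces $\Aut_{\AF}$ finite. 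For $\Upsilon$ the stability bookkeeping is even more delicate and in fact motivates the specific stability condition~\ref{enum:stability} in Definition~\ref{def:kimacgs}; the proof then factors through a comparison of three automorphism groups. A ``translation'' argument for~\ref{maps:cart} omits all of this.

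The most serious gap is your description of $\Kim^\ast(\sX/\sV)$ for $\Upsilon$ and the proof of~\ref{maps:etale}. You propose that $\Kim^\ast$ arises as a base change along an \'etale map of ``combinatorial moduli (essentially a toric \'etale cover)'', which would make \'etaleness come for free. That is not the paper's construction: the modification is the stack $\KimACGS(\sX/\sV)$ of commutative diagrams~\eqref{eqn:8} that record \emph{both} the Kim-style source curve $C$ and a contracted curve $\oC$, subject to the condition that $C \to \oC$ be a stabilization. This is not a combinatorial base change, and \'etaleness over $\Kim(\sX/\sV)$ is a theorem, not a tautology: the paper's proof constructs the infinitesimal deformation of $\oC$ by pushing forward along $\tau : C \to \oC$ and requires $R^1\tau_\ast \pi^\ast J = 0$ (Lemma~\ref{lem:stabpf}) together with the logarithmic pushforward Theorem~\ref{thm:log-pf} from Appendix~\ref{app:chains}. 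Your proposal contains no mechanism for producing $\oC$ and its log structure from a deformation of $C$, which is the entire content of~\ref{maps:etale} for $\Upsilon$.

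Finally, for~\ref{maps:DM} your claim that the separating data are ``discrete with finite automorphism groups'' is too vague to constitute a proof. For $\Psi$ the paper's argument runs through the finiteness of $\Aut_C(\tC)$ for twisted curves plus an argument about sections of a finite unramified morphism $Z \to \tC$ being open and closed; for $\Theta$ it relies on the minimality (extended simple map) conditions in Proposition~\ref{prop:kim-min}~\ref{freesub} to bound automorphisms of the log structure; for $\Upsilon$ it reduces to the $\Theta$ case via the stability condition~\ref{enum:stability}. None of these reduces to an observation about discreteness.
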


Before we see how these lemmas imply our theorem, we note the following:
\begin{enumerate}
\item In view of Lemma~\ref{lem:spaces}~\ref{spaces:cart}, Lemma~\ref{lem:maps}~\ref{maps:compat} is true by construction; indeed, $\Kstab(X/V) \subset K(X/V)$ is open, and the obstruction theory on $\Kstab(X/V)$ is the restriction of that of $K(X/V)$.
\item When $p \not= \Upsilon$, Lemma~\ref{lem:spaces}~\ref{spaces:dense} implies Lemma~\ref{lem:maps}~\ref{maps:birat} because in those cases we will take $K^\ast(\sX/\sV) = K(\sX/\sV)$
\Jonathan{fixed typo}
and $q : K(\sX/\sV) \rightarrow L(\sX/\sV)$ restricts to an isomorphism on the dense open substacks of totally nondegenerate maps, by definition;
%\item in all cases, we will define the relative obstruction theory for $K(X/V)$ over $K(\sX/\sV)\times_\sV V$ using the cartesian diagram of Lemma~\ref{lem:spaces}~\ref{spaces:cart}, so Lemma~\ref{lem:spaces}~\ref{spaces:obs2} will be true by definition;\dan{This seems to contradict the statement at the end of ~\ref{lem:spaces}~\ref{spaces:obs}!!}\Jonathan{does the new arrangement alleviate the confusion?}
\item when $p \not= \Upsilon$, Lemma~\ref{lem:maps}~\ref{maps:etale} is obvious since $K^\ast(\sX/\sV) = K(\sX/\sV)$; and
\item Lemma~\ref{lem:maps}~\ref{maps:etale} implies that the obstruction theory for $\Kstab(X/V)$ over $K(\sX/\sV) \fp_{\sV} V$ is also an obstruction theory for $\Kstab(X/V)$ over $K^\ast(\sX/\sV) \fp_{\sV} V$.
\end{enumerate}

Lemmas~\ref{lem:spaces} and~\ref{lem:maps} immediately imply the hypotheses of \cite[Theorem~5.0.1]{costello}.  From Costello's theorem, we may conclude that
\begin{equation*}
p_\ast \Big[ \Kstab(X/V) \Big/ K^\ast(\sX/\sV) \fp_{\sV} V \Big]^{\vir} = \Big[ \Lstab(X/V) \Big/ L(\sX/\sV) \fp_{\sV} V \Big]^{\vir} .
\end{equation*}
Since $K^\ast(\sX/\sV)\to K(\sX/\sV)$ is \'etale we have
\begin{equation*}
\Big[ \Kstab(X/V) \Big/ K(\sX/\sV) \fp_{\sV} V \Big]^{\vir} = \Big[ \Kstab(X/V) \Big/ K^*(\sX/\sV) \fp_{\sV} V \Big]^{\vir},
\end{equation*} 
so this implies that
\begin{equation} \label{eqn:17}
p_\ast \Big[ \Kstab(X/V) \Big/ K(\sX/\sV) \fp_{\sV} V \Big]^{\vir} = \Big[ \Lstab(X/V) \Big/ L(\sX/\sV) \fp_{\sV} V \Big]^{\vir} .
\end{equation}

\begin{remark}
Had the virtual fundamental classes for each moduli space $K \in \set{\JLi,\AF,\Kim,\ACGS}$ been defined using the obstruction theories $\sE'_K$, would now be done.  This is not the case, but the virtual fundamental classes obtained from the obstruction theories $\sE_K$ and $\sE'_K$ are nevertheless the same, as the argument below demonstrates. 
\end{remark}

We now divide the argument according to the cases~\ref{unobs:smooth} and~\ref{unobs:DM} of Lemma~\ref{lem:spaces}~\ref{spaces:unobs}.  In case~\ref{unobs:DM}, there is by Lemma~\ref{lem:spaces}~\ref{spaces:dense} a dense open substack of $K(\sX/\sV)$ on which the relative obstruction theory over $\sZ$ vanishes.  It follows by \cite[Lemma~B.2]{ACW} that $K(\sX/\sV) \fp_{\sV} V$ is a relative local complete intersection over $\sZ$.  The relative virtual fundamental class therefore agrees with the fundamental class.  Now by \cite[Theorem~4.8]{Manolache}, the relative virtual fundmanental class of $K(X/V)$ over $\sZ$ is the virtual pullback of the virtual class of $K(\sX/\sV) \fp_{\sV} V$ over $\sZ$, which is just the fundamental class:
\Jonathan{added $\vir$ superscript and reformatted a little}
\begin{equation} \label{eqn:16}
\Bigl[ \Kstab(X/V) \Big/ \sZ \Bigr]^{\vir} = f^!_{\sE'} \Bigl[ \Kstab(\sX/\sV) \fp_{\sV} V \Big/ \sZ \Big] = \Bigl[ \Kstab(X/V) \Big/ \Kstab(\sX/\sV) \fp_{\sV} V \Bigr]^{\vir} .
\end{equation}

%The stack $K(\sX/\sV) \fp_{\sV} V$ has a dense open substack on which the relative obstruction theory over $\sZ$ vanishes (Lemma~\ref{lem:spaces}~\ref{spaces:unobs}).  It follows from \cite[Lemma~B.2]{ACW}  that $K(\sX/\sV) \fp_{\sV} V$ is a relative local complete intersection over $\sZ$.  Provided that $K(\sX/\sV) \fp_{\sV} V$ is of Deligne--Mumford type over $\sZ$  this implies that the relative virtual fundamental class for $K(\sX/\sV) \fp_{\sV} V$ over $\sZ$ is just the fundamental class.  By \cite[Theorem~4.8]{Manolache}, the relative virtual fundamental class of $K(X/V)$ over $\sZ$ is then the virtual pullback of the relative virtual class of $K(\sX/\sV) \fp_{\sV} V$ over $\sZ$.  

\noindent Unfortunately, virtual fundamental class technology has not yet progressed to the point where we can make the same argument when $K(\sX/\sV) \fp_{\sV} V$ is of Artin type over $\sZ$.  If, however, it is locally unobstructed over $\sZ$ (Lemma~\ref{lem:spaces}~\ref{spaces:unobs}~\ref{unobs:smooth}) we may apply \cite[Proposition~6.5]{obs} to obtain the same conclusion.  

%We therefore have
%\begin{equation*}
%\Big[ \Kstab(X/V) \Big/ K(\sX/\sV) \fp_{\sV} V \Big]^{\vir} = \Big[ \Kstab(X/V) \Big/ \sZ \Big]^{\vir} = \big[ \Kstab(X/V) \big]^{\vir} .
%\end{equation*}
Combining~\eqref{eqn:16} (and the corresponding fact for $L$) with~\eqref{eqn:17},  we get the conclusion of Theorem~\ref{thm:compare}:
\begin{equation*}
p_\ast \big[ \Kstab(X/V) \big] ^{\vir} = \big[ \Lstab(X/V) \big]^{\vir} .
\end{equation*}

\subsection{Comparison of GW invariants}\label{sec:comp-GW}
Corollaries~\ref{cor:GW1}~and~\ref{cor:GW2} reduce to our main comparison of virtual classes by an application of the projection formula.  For $X/V$ a smooth pair with divisor $D$, denote by $\alpha_i$ the prescribed contact order of the $i$-th marked point along the divisor.  Denote by $\Sigma_i^K:\Kstab(X/V)\rightarrow \oC$ the corresponding section of the universal stabilized coarse curve.  In the cases $K\in\set{\JLi, \Kim,\AF}$, we have evaluation maps
\begin{align*}
  e_i^K & :\Kstab(X/V) \rightarrow X && \text{for $\alpha_i = 0$, and} \\ 
  e_i^K & :\Kstab(X/V)\rightarrow D && \text{for $\alpha_i > 0$}.
\end{align*}
(Note that these are the correct targets for the Abramovich--Fantechi evaluation maps because because $X$ and $D$ were taken without orbifold structure.)  When $K=\ACGS$, we have slightly more refined evaluation maps
\begin{align*}
  \tilde e_i^\ACGS & :\ACGSstab(X/V) \rightarrow \wedge_{\alpha_i}X
\end{align*}
taking values in the logarithmic evaluation stack $\wedge_{\alpha_i} X$ (see \cite{ACGM}) of contact order $\alpha_i$.  This stack may be presented as
\begin{align*}
  \wedge_0 X\cong& X\times B\Gm && \text{for $\alpha_i = 0$, and} \\ 
  \wedge_{\alpha_i} X\cong& \bigl[\cO_X(D)|_D\big/\Gm\bigr] && \text{for $\alpha_i > 0$}.
\end{align*}
where the quotient is by the $\alpha_i$-th power of the natural $\Gm$ scaling action. At the expense of losing some logarithmic information, we compose with the natural projections to $X$ and to $D$ we again obtain evaluation maps 
\begin{align*}
  e_i^\ACGS & :\Kstab(X/V) \rightarrow X && \text{for $\alpha_i = 0$, and} \\ 
  e_i^\ACGS & :\Kstab(X/V)\rightarrow D && \text{for $\alpha_i > 0$}.
\end{align*}

In each case, we have descendant classes $\psi_i=c_1\bigl((\Sigma_i^K)^\ast\omega_{\oC/\Kstab}\bigr)$.  Our terms are defined so that
\begin{itemize}
\item $e_i^K=e_i^L\circ p$
\item $p^\ast \omega_{\oC/\Lstab} = \omega_{\oC/\Kstab}$, and
\item $p^\ast \Sigma_i^L = \Sigma_i^K$.
\end{itemize}
The projection formula gives
\begin{equation*}
 \int_{[\Kstab(X/V)]^{\vir}} \prod_i (\psi^K_i)^{a_i}(e^K_i)^\ast\gamma_i =  \int_{p_\ast[\Kstab(X/V)]^{\vir}} \prod_i (\psi_i^L)^{a_i}(e^L_i)^\ast\gamma_i.
\end{equation*}
where the integrals are taken over $\Kstab(X/V)$ and $\Lstab(X/V)$ respectively.
  Thus Theorem~\ref{thm:compare-pairs} implies Corollary~\ref{cor:GW1} and Theorem~\ref{thm:compare-deg} implies Corollary~\ref{cor:GW2}.

\section{Spaces of stable maps} \label{sec:spaces}

\subsection{The naive theory} \label{sec:naive}

\notn{$\fM(X/V)$}{moduli of maps from pre-stable curves to $X/V$}

Let $\fM(X/V)$ be the moduli space of commutative diagrams
\begin{equation} \label{eqn:10} \xymatrix{
C \ar[r] \ar[d] & X \ar[d] \\
S \ar[r] & V
} \end{equation}
in which $C$ is a family of pre-stable curves over $S$.

There is a map $\fM(X/V) \rightarrow \fM(\sX/\sV)$ by composition with the maps $X \rightarrow \sX$ and $V \rightarrow \sV$; there is also a map $\fM(X/V) \rightarrow V$ by forgetting the upper half of diagram~\eqref{eqn:10}.  We construct a relative obstruction theory for $\fM(X/V)$ over $\fM(\sX/\sV) \fp_{\sV} V$, using the formalism introduced in \cite{obs}.  Given a square-zero lifting problem
\begin{equation*} \xymatrix{
S \ar[r] \ar[d] & \fM(X/V) \ar[d] \\
S' \ar@{-->}[ur] \ar[r] & \fM(\sX/\sV) \fp_{\sV} V
} \end{equation*}
we obtain an extension problem
\begin{equation*} \xymatrix{
& & X \ar[d] \\
C \ar[r] \ar[d]_\pi \ar@/^10pt/[urr]^f & C' \ar@{-->}[ur] \ar[r] \ar[d] & \sX \fp_{\sV} V \ar[d] \\
S \ar[r] & S' \ar[r] & V
} \end{equation*}
which translates into the lifting problem
\begin{equation} \label{eqn:5} \vcenter{\xymatrix{
C \ar[r] \ar[d] & X \ar[d] \\
C' \ar@{-->}[ur] \ar[r] & \sX \fp_{\sV} V .
}} \end{equation}
Local lifts form a torsor on $C$ under the sheaf of groups $\uHom(f^\ast \Omega_{X / \sX \fp_{\sV} V}, \pi^\ast J)$, where $J = I_{S/S'}$ is the ideal of $S$ in $S'$.  In both situations, relative and degenerate, we have $\Omega_{X / \sX \fp_{\sV} V} = \Omega_{X/V}^{\log}$:  by Corollary~\ref{cor:log-stacks}, $\sX$ is log.\ \'etale over $\sV$, so $\sX \fp_{\sV} V$ is \'etale over $\Log(V)$; it follows that $\Omega_{X/\sX \fp_{\sV} V} = \Omega_{X/\Log(V)}$, which is $\Omega_{X/V}^{\log}$ by, for example, \cite[proof of~(4.6)]{Olsson_log}.  If we now define $\sF(S,J)$ to be the category of torsors on $C$ under $\uHom(f^\ast \Omega_{X/V}^{\log}, \pi^\ast J) = f^\ast T_{X/V}^{\log} \tensor \pi^\ast J$, then $\sF$ forms a relative obstruction theory for $\fM(X/V)$ over $\fM(\sX/\sV) \fp_{\sV} V$ as in section~\ref{sec:obs}.

\begin{proposition}
$\sF$ is a perfect relative obstruction theory for $\fM(X/V)$ over $\fM(\sX/\sV) \fp_{\sV} V$.
\end{proposition}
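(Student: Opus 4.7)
The plan is to produce the perfect complex explicitly and check the matching of extension categories locally. The natural candidate is
\[
  \bE^\bullet \;=\; \bigl(R\pi_\ast(f^\ast T_{X/V}^{\log})\bigr)^{\!\vee},
\]
where $\pi : \sC \to \widetilde{\fM}$ is the universal pre-stable curve over a smooth (or even étale) atlas of $\fM(X/V)$ and $f : \sC \to X$ is the universal map. I would first verify that this is a perfect complex of amplitude $[-1,0]$: because $\sX \to \sV$ is log étale (Corollary~\ref{cor:log-stacks}) and $X \to \sX \fp_{\sV} V$ is smooth, the composite $X \to V$ is log smooth, so $T_{X/V}^{\log}$ is a locally free sheaf of finite rank, hence so is its pullback $f^\ast T_{X/V}^{\log}$ on $\sC$. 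Since $\pi$ is proper, flat, Gorenstein, of relative dimension one, the derived pushforward $R\pi_\ast$ of a locally free sheaf is perfect of amplitude $[0,1]$, and the dual is therefore perfect of amplitude $[-1,0]$.

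Next I would check the functorial identification $\sF(S,J) \simeq \Ext(g^\ast\bE^\bullet, J)$ for each $g : S \to \widetilde{\fM}$. By construction, $\sF(S,J)$ is the Picard stack of $(f^\ast T_{X/V}^{\log}\otimes \pi^\ast J)$-torsors on the pulled-back curve $C = \sC \fp_{\widetilde{\fM}} S$. Under Deligne's dictionary between commutative Picard stacks and two-term complexes \cite[XVIII.1.4]{sga4-3}, this Picard stack corresponds to the truncated complex $\tau_{\le 1} R\Gamma(C,\,f^\ast T_{X/V}^{\log}\otimes \pi^\ast J)$, shifted so that $H^1$ gives isomorphism classes and $H^0$ gives automorphisms of the trivial torsor. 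On the other hand, the Picard stack of extensions $\Ext(g^\ast \bE^\bullet, J)$ corresponds to $R\Hom(g^\ast\bE^\bullet, J)$, with $\operatorname{Ext}^1$ as $\pi_0$ and $\operatorname{Ext}^0$ as $\pi_1$.

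To match these two complexes I would use cohomology and base change together with the projection formula: since $g^\ast \bE^\bullet$ is perfect,
\[
  R\Hom(g^\ast\bE^\bullet, J) \;\simeq\; g^\ast R\pi_\ast(f^\ast T_{X/V}^{\log}) \otimes^{L}\! J \;\simeq\; R\pi_{S,\ast}\bigl(f^\ast T_{X/V}^{\log}\otimes \pi^\ast J\bigr),
\]
where $\pi_S : C \to S$ is the base change of $\pi$. The first equivalence is just perfectness of $\bE^\bullet$; the second combines flat base change with the projection formula, applicable because $R\pi_\ast(f^\ast T_{X/V}^{\log})$ is perfect and $f^\ast T_{X/V}^{\log}$ is $\pi$-flat. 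Taking global sections over $S$ on both sides gives the desired quasi-isomorphism, and passing back through Deligne's dictionary yields the required equivalence of Picard stacks.

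The routine but most delicate point will be checking that this equivalence is genuinely functorial in the pair $(S,J)$ and is compatible with the obstruction elements defined in section~\ref{sec:obs}: the torsor assigned to a square-zero extension $S'/S$ by $\sF$ must correspond, under the equivalence, to the extension class in $\Ext^1(g^\ast\bE^\bullet, J)$ produced by the standard Kodaira--Spencer construction using $f^\ast \Omega^{\log}_{X/V}$. I expect this to follow from the identification, already made in the construction of $\sF$, that local lifts of~\eqref{eqn:5} form a torsor under $\uHom(f^\ast\Omega^{\log}_{X/V},\pi^\ast J)$, so that the classifying cocycle on $C$ is the Kodaira--Spencer cocycle; compatibility with $\bE^\bullet$ is then formal from the definition of the dual complex.
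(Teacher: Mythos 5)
Your proof takes essentially the same route as the paper's, which simply cites \cite[Section~7.3]{obs} and asserts in one line that $\sF$ is represented by the complex $R\pi_\ast(f^\ast T^{\log}_{X/V})^\vee[1]$, perfect in degrees $[-1,0]$ because $T^{\log}_{X/V}$ is a vector bundle and $\pi$ has $1$-dimensional fibers; your argument is a careful elaboration of that one-liner, spelling out why $T^{\log}_{X/V}$ is locally free, the amplitude computation, the Deligne dictionary between Picard stacks and two-term complexes, and the projection-formula/base-change step identifying $\RR\Hom(g^\ast\bE^\bullet,J)$ with $\RR\pi_{S,\ast}(f^\ast T^{\log}_{X/V}\tensor\pi^\ast J)$. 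One small point worth noting: the paper's displayed complex reads most naturally as $R\pi_\ast\bigl((f^\ast T^{\log}_{X/V})^\vee\bigr)[1]$, which differs from your $\bigl(R\pi_\ast(f^\ast T^{\log}_{X/V})\bigr)^\vee$ by a relative Serre-duality twist by $\omega_\pi$; it is your version that makes $\Ext(\bE^\bullet,J)\simeq R\Gamma\bigl(C, f^\ast T^{\log}_{X/V}\tensor\pi^\ast J\bigr)$ come out directly, so your formula is, if anything, the more precise one.
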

\begin{proof}
One argument may be found in \cite[Section~7.3]{obs}.  More directly one can see that $\sF$ is represented by the complex $R \pi_\ast (f^\ast T_{X/V}^{\log})^\vee[1]$, which is perfect in degrees $[-1,0]$ since the fibers of $\pi$ are curves and $T_{X/V}^{\log}$ is a vector bundle.
\end{proof}

\subsubsection*{Totally nondegenerate maps}

\notn{$\Mnd(X/V)$}{totally non-degenerate maps in $\fM(X/V)$}

The relative obstruction theory of $\fM(X/V)$ over $\fM(\sX/\sV)$ is useless by itself.  It can only be used to define virtual curve counting invariants relative to bases that themselves possess virtual fundamental classes;  $\fM(\sX/\sV)$ won't do as a base because its deformation theory is too badly behaved.  The role played alternately by the compactifications $\JLi$, $\AF$, $\Kim$, and $\ACGS$ considered below is to serve as an appropriate base.  

Nevertheless, the deformation theory of $\fM(\sX/\sV)$ is not so bad for \emph{totally nondegenerate objects}:
\begin{definition} \label{def:nondeg}
An object~\eqref{eqn:10} of $\fM(X/V)$ is called \emph{totally nondegenerate} if
\begin{enumerate}[label=(\roman{*})]
\item the source curve $C$ is smooth,
\item (for pairs) the pre-image in $C$ of the special divisor in $X$ is finite over $S$, and
\item (for acceptable degenerations) $X \fp_V S$ is smooth over $S$.
\end{enumerate}
We write $\Mnd(X/V) \subset \fM(X/V)$ for the stack of totally nondegenerate maps to the fibers of $X$ over $V$.
\end{definition}
We will see below that all of the spaces $K(\sX/\sV)$ for $K \in \set{\JLi,\AF,\Kim,\ACGS}$ contain $\Mnd(\sX/\sV)$ as a dense open substack.  
%Note that in each case, there are natural forgetful maps $K(X/V)\rightarrow \fM(X/V)$ and $K(\sX/\sV)\rightarrow \fM(\sX/\sV)$ making diagram~\eqref{diag:naive} commutative.

\subsection{Expanded targets:  the theory of J.\ Li} \label{sec:JLi}

\subsubsection{Pre-deformability and relative stable maps}

\notn{$X^{\exp}/V^{\exp}$}{universal expanded target of $X/V$}
\notn{$\sX^{\exp}/\sV^{\exp}$}{universal expansion of universal target}

Recall from~\cite[Sections~2.1 and~2.3]{ACFW} that there is a moduli space of expansions of the family $\sX / \sV$.  Here we denote the base of this family by $\sV^{\exp}$ and the universal expanion by $\sX^{\exp}$.  The moduli space of expansions of $X/V$ and its universal family are defined by base change:  $V^{\exp} = V \fp_{\sV} \sV^{\exp}$ and $X^{\exp} = X \fp_{\sX} \sX^{\exp}$.

J.\ Li defined a moduli space of stable maps into the fibers of $X^{\exp} / V^{\exp}$~\cite{Li1}.  An object of $\JLi(X / V)$ is a \textit{predeformable} commutative diagram
\begin{equation} \label{eqn:3} \xymatrix{
C \ar[r] \ar[d] & X^{\exp} \ar[d] \\
S \ar[r] & V^{\exp} 
} \end{equation}
(see below for the definition of predeformability).  Diagram~\eqref{eqn:3} is said to be \emph{stable} if its automorphism group over $X \rightarrow V$ is finite. 
\Jonathan{reworded}

We will state the predeformability condition in terms of the $S$-morphism $f : C \rightarrow X^{\exp}_S = X^{\exp} \fp_{V^{\exp}} S$.  We employ the following terminology:
\begin{enumerate}[label=(\roman{*})]
\item the \emph{special locus} of $X^{\exp}_S$ is the union of the non-smooth locus and, in the case of expanded pairs, the distinguished divisor;
\item a node of $C$ is called \emph{essential} or \emph{distinguished} if it is carried by $f$ into the special locus of $X^{\exp}_S$.
\end{enumerate}
Suppose that $s \in S$ is a point and $p$ is an essential node of $C_s$.  The map $C \rightarrow X^{\exp}_S$ is \emph{predeformable} at $p$ if \'etale-locally in $C$ and smooth-locally in $X^{\exp}_S$, it admits the following form:
\begin{equation*} 
A[x,y] / (xy - t) \leftarrow A[u,v] / (uv - w), 
\end{equation*}
in which, $w = t^n$, $u \mapsto x^n$, and $v \mapsto y^n$.  The map is predeformable if it is predeformable at every essential node.

In the case of pairs we have a similar local condition near points mapping to the distinguished divisor:  it should have the form
\begin{equation*}
A[x] \leftarrow A[u]
\end{equation*}
in which $u \mapsto x^n$ for some positive integer $n$ (that remains fixed in families).
\Jonathan{added paragraph}

\begin{definition}
\notn{$\JLi(X/V)$}{pre-stable maps to expansions of $X/V$}
We write $\JLi(X/V)$ for the stack of predeformable maps to the family $X / V$ and $\JListab(X/V)$ for its open substack of stable maps.
\end{definition}

\begin{nlem}[\namer{\ref{lem:spaces}}{$\JLi$}{\ref{spaces:proper-DM}}]
The stack $\JListab(X/V)$ is a proper, Deligne--Mumford stack.
\end{nlem}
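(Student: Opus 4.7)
The plan is to derive the three required properties---algebraicity, Deligne--Mumford type, and properness---by reproducing the key structural steps of J.\ Li's original construction in \cite{Li1}, rather than starting from scratch.

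For algebraicity and finite type I would combine three inputs: the moduli stack $V^{\exp}$ of expansions is algebraic, is stratified by expansion length, and carries a proper universal family $X^{\exp}$; the Artin stack of prestable curves is algebraic; and the stack of morphisms from a family of prestable curves to a proper target family is algebraic. The predeformability conditions at essential nodes and along the distinguished divisor are locally closed, and become closed once contact orders are fixed, so $\JLi(X/V)$ is algebraic. Fixing the discrete invariants $\Gamma$ bounds the curve class, the number of components of the source, and the expansion length, yielding a finite-type locus. For the Deligne--Mumford property, the stability condition---finiteness of automorphisms over the unexpanded pair $X \to V$, not over $X^{\exp} \to V^{\exp}$---is open, and is designed precisely to kill the continuous torus automorphisms of the accordion expansion. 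Hence $\JListab(X/V)$ is an open substack of $\JLi(X/V)$ with finite inertia.

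The hard part is properness, whose valuative criterion is the technical heart of Li's paper. Given a stable map to $X^{\exp}$ over the fraction field $K$ of a DVR $R$, I would need to extend it over the spectrum of $R$, uniquely and possibly after a finite ramified base change $R \to R'$. After such a base change the underlying curve specializes to a nodal curve and the map to $X$ extends, but the extension may fail predeformability: a smooth point of the general curve may specialize into the special locus of $X$ with multiplicity $n > 1$, or the two branches at an essential node may meet the special locus with unequal orders. Predeformability is restored by expanding $X$ into an accordion; the expansion length, the per-component contact orders, and the ramification index of $R'/R$ are simultaneously and uniquely determined by the vanishing orders of the limit map, in such a way that the resulting extension is canonical up to base change. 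This is carried out in \cite[\S 4]{Li1} and yields properness together with separatedness. Boundedness for fixed $\Gamma$ is then immediate.
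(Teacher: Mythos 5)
Your proposal is correct and follows the same route as the paper, which simply cites \cite[Theorems~0.1 and~0.2]{Li1} for this fact. You have essentially unpacked what Li's theorems establish (algebraicity via the stack of expansions and predeformability as a locally closed condition, Deligne--Mumford type via the openness of the stability condition, and properness via Li's valuative-criterion argument with base change and accordion insertion), but the substance is identical: the lemma is a direct consequence of Li's original results and no new argument is needed here.
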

\begin{proof}
See \cite[Theorems~0.1 and~0.2]{Li1}.
\end{proof}

\begin{nlem}[\namer{\ref{lem:spaces}}{$\JLi$}{\ref{spaces:artin}}]
The stack $\JLi(\sX/\sV)$ is an Artin stack.
\end{nlem}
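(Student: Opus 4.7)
The plan is to exhibit $\JLi(\sX/\sV)$ as a locally closed algebraic substack of the Artin stack $\fM(\sX^{\exp}/\sV^{\exp})$ of pre-stable maps to expansions of the universal target. The argument will have two ingredients: algebraicity of the ambient stack of pre-stable maps, and algebraicity of the predeformability condition cut out inside it.

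First I would verify that $\fM(\sX^{\exp}/\sV^{\exp})$ is an Artin stack. Both the base $\sV^{\exp}$ of the universal family of expansions and the universal expanded target $\sX^{\exp}$ are smooth Artin stacks with the explicit toroidal descriptions given in~\cite[Sections~2.1 and~2.3]{ACFW}. The Artin stack of pre-stable curves is standard, and the usual representability of the Hom stack from a flat proper curve into a reasonable Artin target then assembles $\fM(\sX^{\exp}/\sV^{\exp})$ as an Artin stack, by exactly the same mechanism that shows $\fM(X^{\exp}/V^{\exp})$ is Artin when $X/V$ is an ordinary smooth pair or acceptable degeneration.

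Next I would argue that predeformability cuts out an algebraic substack. For a given node of $C$, the condition that it be essential, i.e.\ mapped into the special locus of $\sX^{\exp}$, is closed on the base. On that closed locus, predeformability at the node is the local condition that, \'etale-locally on $C$ and smooth-locally on $\sX^{\exp}$, the map take the prescribed form $A[u,v]/(uv - t^n) \leftarrow A[x,y]/(xy - t)$ with $u \mapsto x^n$ and $v \mapsto y^n$; after stratifying by the integer $n$, this is a closed condition. The same applies to the contact-order condition $u \mapsto x^n$ at points mapping into the distinguished divisor in the case of pairs. A more conceptual packaging, which I would also mention, is to endow $\sX^{\exp}$ and the source curves with their canonical toroidal log structures: predeformability is then equivalent to the underlying map upgrading to a morphism of log stacks, from which algebraicity follows by the general theory of log Hom stacks.

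The main obstacle is precisely the algebraicity of the predeformability condition. Fortunately this verification is carried out in detail by Li in~\cite[Section~3]{Li1} for ordinary targets $X/V$, and the argument applies essentially verbatim to the universal target: $\sX^{\exp} \to \sV^{\exp}$ is locally of the same toroidal form as the expansions of an ordinary target used by Li, so predeformability can be cut out scheme-theoretically in precisely the same manner, producing $\JLi(\sX/\sV)$ as an algebraic substack of the Artin stack $\fM(\sX^{\exp}/\sV^{\exp})$.
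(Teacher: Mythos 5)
Your approach and the paper's are genuinely different, and the difference is worth spelling out.

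The paper does \emph{not} cut out predeformability inside $\fM(\sX^{\exp}/\sV^{\exp})$. Instead it uses the projection $\JLi(\sX/\sV) \to \sV^{\exp}$, base changes to the smooth cover $\sA^n \to \sV^{\exp}$ of~\cite[Proposition~8.3.1]{ACFW}, and then invokes the reinterpretation from~\cite[Section~8.2]{ACFW}: over $\sA^n$, a predeformable map to an expansion is just a point of an \emph{open} substack of the moduli space of commutative squares $C \to (\sA \times \sA)^n$ over $S \to \sA^n$. The algebraicity of that last moduli problem is elementary --- it comes down to parameterizing tuples $(C, L_1, L_2, s_1, s_2)$ with $(L_1 \otimes L_2, s_1 \otimes s_2)$ pulled back from $S$, which reduces to a $\Gm$-torsor and properness argument on the universal curve. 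The virtue of this route is that predeformability disappears from view entirely: once the target is $\sA \times \sA \to \sA$, predeformability becomes an openness condition built into the reinterpretation, and one never has to verify directly that the local form $A[u,v]/(uv - t^n) \leftarrow A[x,y]/(xy-t)$ cuts out a locally closed locus.

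Your route is to prove algebraicity of the ambient Hom stack $\fM(\sX^{\exp}/\sV^{\exp})$ and then argue that predeformability is an algebraic (locally closed) condition, as Li does in~\cite[Section~3]{Li1} for ordinary targets. This is defensible, but it transfers the burden onto exactly the two steps the paper avoids. First, ``usual representability of the Hom stack'' needs care here since both $\sX^{\exp}$ and $\sV^{\exp}$ are Artin stacks with $\Gm$-stabilizers and are only locally of finite type; one needs a Hom-stack theorem of Aoki/Olsson type and must check its hypotheses rather than invoke Li's result for schemes ``by the same mechanism.'' Second --- and more substantively --- the assertion that predeformability, after stratifying by contact order, is a closed condition is precisely the delicate point of Li's analysis, and it is not self-evident; indeed the whole motivation for passing to the $\sA$-description in~\cite{ACFW} is that it renders this condition transparent. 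Your final paragraph acknowledges this by deferring to Li, which is fine as an outline, but it means your proof is only as complete as that citation, whereas the paper's argument is self-contained once~\cite[Section~8.2]{ACFW} is granted. Your remark about the log Hom stack packaging is closer in spirit to what the paper is actually exploiting: $\sV^{\exp}$ is the moduli of aligned log structures and $\sX^{\exp}$ an open substack of pairs with a common alignment, and the $\sA$-picture is the concrete form of that.
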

\begin{proof}
Using the projection $\JLi(\sX/\sV) \rightarrow \sV^{\exp}$ and the fact that $\sV^{\exp}$ is algebraic \cite[Theorem~1.3.1]{ACFW}, it is sufficient to show that $\JLi(\sX/\sV) \fp_{\sV^{\exp}} U$ is algebraic after base change to a suitable cover $U \rightarrow \sV^{\exp}$.  By \cite[Proposition~8.3.1]{ACFW} there are maps $\sA^n \rightarrow \sV^{\exp}$ forming a cover, so it is sufficient to show that the stacks $\JLi(\sX/\sV) \fp_{\sV^{\exp}} \sA^n$ are algebraic.  Using \cite[Section~8.2]{ACFW} we can reinterpret $\JLi(\sX/\sV)$ as an open substack of the moduli space of commutative diagrams
\begin{equation*} \xymatrix{
C \ar[r] \ar[d]_\pi & (\sA \times \sA)^n \ar[d]^{m^n} \\
S \ar[r] & \sA^n
} \end{equation*}
where the map $m^n : (\sA \times \sA)^n \rightarrow \sA^n$ is defined by multiplication on each of the $n$ factors.  It is sufficient therefore to show that the stack of all such diagrams is algebraic, and to deduce this it is clearly enough to consider the case $n = 1$.  An $S$-point of this stack is a tuple $(C, L_1, L_2, s_1, s_2)$ where 
\begin{enumerate}[label=(\roman{*})]
\item $C$ is a family of curves over $S$, 
\item $L_1$ and $L_2$ are line bundles on $C$,
\item $s_i \in \Gamma(C, L_i)$, and
\item the line bundle and section pair $(L_1 \tensor L_2, s_1 \tensor s_2)$ is pulled back from a pair on $S$.
\end{enumerate}
The first three data are well-known to be parameterized by algebraic stacks over $S$.  We may therefore assume that they are all given, along with a line bundle and section $(M,t)$ on $S$.  The isomorphisms between $\pi^\ast M$ and $L_1 \tensor L_2$ form a $\Gm$-torsor $P$ over $C$ and the locus $Q \subset P$ parameterizing those maps carrying $\pi^\ast t$ to $s_1 \tensor s_2$ is a closed subscheme, and is in particular affine over $C$.  Since $C$ is proper over $S$, the sheaf $\pi_\ast Q$ is representable by an algebraic space and we are done.
\end{proof}

\subsubsection{Deformation and obstruction theory}

We begin with the relative obstruction theory over $\JLi(\sX/\sV) \fp_{\sV} V$.

\begin{nlem}[\namer{\ref{lem:spaces}}{$\JLi$}{\ref{spaces:cart}}]
The diagram \eqref{diag:naive} for $K=\JLi$ is cartesian.
\end{nlem}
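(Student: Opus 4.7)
The plan is to verify the universal property of the fiber product directly: an $S$-point of $\JLi(\sX/\sV) \fp_{\fM(\sX/\sV)} \fM(X/V)$ is canonically the same data as an $S$-point of $\JLi(X/V)$.

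First I would use base change to identify the underlying (not-yet-predeformable) data. By definition one has $V^{\exp} = V \fp_{\sV} \sV^{\exp}$ and $X^{\exp} = X \fp_{\sX} \sX^{\exp}$. Therefore giving a commutative diagram \eqref{eqn:3} for $X/V$ is the same as giving a commutative diagram of pre-stable maps to $\sX^{\exp}/\sV^{\exp}$ together with a compatible commutative diagram of pre-stable maps to $X/V$, both refining a common square to $\sX/\sV$. This already identifies the stack of (possibly non-predeformable) diagrams~\eqref{eqn:3} for $X/V$ with the fiber product of the corresponding stack for $\sX/\sV$ and $\fM(X/V)$ over $\fM(\sX/\sV)$.

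Second I would match the predeformability conditions on the two sides. Predeformability is a condition checked at essential nodes of $C$ via smooth-local standard models on the target. The crucial point is that the special locus of $X^{\exp}_S$ is the preimage of the special locus of $\sX^{\exp}_S$ under the smooth map $X^{\exp}_S \to \sX^{\exp}_S \fp_{\sV} S$ (for pairs, because the divisor $D \subset X$ is pulled back from $\sD \subset \sA$; for acceptable degenerations, because of the smoothness clause in Definition~\ref{def:acc-deg}, which makes $X \to \sA^2 \fp_{\sA} V$ smooth). Consequently the set of essential nodes is determined by the map to $\sX^{\exp}$, and the standard local forms $A[u,v]/(uv-w) \to A[x,y]/(xy-t)$ with $w = t^n$, $u \mapsto x^n$, $v \mapsto y^n$ (and the analogous $A[u] \to A[x]$, $u \mapsto x^n$, for points hitting the distinguished divisor) are stable under smooth base change on the target. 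Hence the map $C \to X^{\exp}$ is predeformable if and only if the induced map $C \to \sX^{\exp}$ is predeformable.

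Combining these two steps, the universal property of the fiber product is satisfied functorially in $S$, which gives the desired cartesian square. The main obstacle is the second step, the compatibility of predeformability with smooth base change, but this is a routine consequence of the way the special locus of $X$ is pulled back from that of $\sX$; the first step is a formality of base change.
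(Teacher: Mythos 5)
Your proof is correct and takes essentially the same approach as the paper: the paper's proof consists entirely of the observation that diagram~\eqref{eqn:3} is predeformable if and only if the induced diagram over $\sX^{\exp}/\sV^{\exp}$ is, while you have spelled out both the formal base-change identification (step~1) and the reason predeformability transfers (step~2, via smooth base change and the pullback of special loci). One small notational slip: the smooth map in question is simply $X^{\exp}_S \to \sX^{\exp}_S$; the ``$\,\fp_{\sV} S$'' you attach to the target is not needed, since unwinding $V^{\exp} = V \fp_{\sV} \sV^{\exp}$ and $X^{\exp} = X \fp_{\sX} \sX^{\exp}$ shows directly that the smoothness of $X \to \sX \fp_{\sV} V$ base-changes to smoothness of $X^{\exp}_S \to \sX^{\exp}_S$. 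This does not affect the substance of the argument.
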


\begin{proof}
It suffices to note that a diagram
\begin{equation*} \xymatrix{
C \ar[r] \ar[d] & X^{\exp} \ar[d] \\
S \ar[r] & V^{\exp}
} \end{equation*}
is predeformable if and only if the same property holds of the diagram
\begin{equation*} \xymatrix{
C \ar[r] \ar[d] & \sX^{\exp} \ar[d] \\
S \ar[r] & \sV^{\exp}
} \end{equation*}
induced by composition.
\end{proof}

It follows that the pullback of the relative obstruction theory $\sF$ (section~\ref{sec:naive}) gives a relative obstruction theory $\sE'$ for $\JLi(X/V)$ over $\JLi(\sX/\sV) \fp_{\sV} V$.

\subsubsection{The obstruction theory relative to the moduli space of curves}

In \cite{Li2}, Li described a relative obstruction theory for $\JLi(X/V)$ over the moduli stack of prestable curves $\fM$. To be more precise, Li describes an absolute obstruction theory, which is written explicitly as an extension of a relative obstruction theory by the tangent space of $\fM$.  We summarize Li's definition, in somewhat different terms, as follows.  A more detailed discussion of the issues involved appears in~\cite{CMW}.  

Consider a lifting problem,
\begin{equation*} \xymatrix{
S \ar[r] \ar[d] & \JLi(X/V) \ar[d] \\
S' \ar[r] \ar@{-->}[ur] & \fM ,
} \end{equation*}
in which $S'$ is a square-zero extension of $S$ by an ideal $J$.  This corresponds to an extension problem,
\begin{equation} \label{eqn:11} \xymatrix{
C \ar[r] \ar[d] \ar@/^15pt/[rr] & C' \ar@{-->}[r] \ar[d] & X^{\exp} \ar[d] \\
S \ar[r] \ar@/_15pt/[rr] & S' \ar@{-->}[r] & V^{\exp},
} \end{equation} 
\vskip2mm
\noindent where the right square is required to be {\em predeformable}.
Li shows that this problem has a solution, \emph{provided one is allowed to work \'etale locally in both $S$ and in $C$} \cite[Lemma~1.12]{Li2}.  If we define a site $\et(C/S)$ whose objects are commutative diagrams
\begin{equation*} \xymatrix{
U \ar[r] \ar[d] & C \ar[d] \\
V \ar[r] & S
} \end{equation*}
in which the horizontal arrows are \'etale (and families are covering if both collections of horizontal arrows are covering), then \cite[Lemma~1.12]{Li2} says that the deformation problem~\eqref{eqn:11} has a predeformable solution locally in $\et(C/S)$.  It follows formally that there is an abelian $2$-group $T(S, J)$ on $\et(C/S)$, depending on a quasi-coherent $\cO_S$-module $J$, such that the predeformable solutions to the lifting problem~\eqref{eqn:11} form a torsor under $T(S,J)$ (see section~\ref{sec:con-obs}).  If we define $\sE(S,J)$ to be the category of torsors on $\et(C/S)$ under $T(S,J)$ then $\sE$ forms a relative obstruction theory for $\JLi(X/V)$ over $\fM$.

\begin{lemma} \label{lem:jli-perf}
$\sE$ is a perfect relative obstruction theory.
\end{lemma}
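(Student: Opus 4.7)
The plan is to exhibit $\sE$ as the obstruction theory associated to the perfect two-term complex
\begin{equation*}
\bE^\bullet \;=\; \bigl(R\pi_\ast f^\ast T^{\log}_{X^{\exp}/V^{\exp}}\bigr)^\vee[1]
\end{equation*}
on a smooth cover of $\JLi(X/V)$, where $T^{\log}$ denotes the log tangent sheaf with respect to the canonical log structures on $X^{\exp}$ and $V^{\exp}$ (coming from the singular fibers of $X^{\exp}/V^{\exp}$ and, in the pair case, the distinguished divisor). Because $X^{\exp} \to V^{\exp}$ is log smooth, $T^{\log}_{X^{\exp}/V^{\exp}}$ is a vector bundle; since the fibers of $\pi$ are curves, $\bE^\bullet$ is perfect of amplitude $[-1,0]$.

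The key step is to identify the abelian $2$-group $T(S,J)$ on $\et(C/S)$ governing predeformable lifts of the trivial square-zero extension with the sheaf $f^\ast T^{\log}_{X^{\exp}/V^{\exp}} \otimes \pi^\ast J$, placed in degree zero. Away from the essential locus this is the standard computation: vertical infinitesimal automorphisms of $f$ form a torsor under $f^\ast T_{X^{\exp}/V^{\exp}} \otimes \pi^\ast J$, and off the special locus $T_{X^{\exp}/V^{\exp}}$ agrees with $T^{\log}_{X^{\exp}/V^{\exp}}$. At an essential node with local model $A[x,y]/(xy-t) \leftarrow A[u,v]/(uv-w)$ and $u \mapsto x^n$, $v \mapsto y^n$, $w \mapsto t^n$, a direct calculation shows that derivations preserving this normal form are generated by the log derivation $u\partial_u - v\partial_v$ pulled back along $f$ together with derivations arising from the remaining log coordinates of the ambient target. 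The analogous computation at a point mapping to the distinguished divisor (local model $u \mapsto x^n$) handles the pair case. This is precisely Li's local deformation calculation carried out in \cite[\S1]{Li2} and reviewed in~\cite{CMW}.

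Granting this identification, torsors on $\et(C/S)$ under $f^\ast T^{\log}_{X^{\exp}/V^{\exp}} \otimes \pi^\ast J$ are classified by $R^1\pi_\ast(f^\ast T^{\log} \otimes \pi^\ast J)$, with automorphism groups computed by $\pi_\ast$ of the same sheaf. These assemble into exactly the $\Ext^0$ and $\Ext^{-1}$ groups of $R\Hom(f^\ast \bE^\bullet, J)$, matching $\sE(S,J)$ with $\Ext(f^\ast \bE^\bullet, J)$ as required by the definition of a perfect obstruction theory; perfectness in amplitude $[-1,0]$ is then automatic from local freeness of $T^{\log}$ and the one-dimensional fibers of $\pi$.

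The main obstacle is the local identification at essential nodes: verifying that the rigid combinatorial predeformability constraint---which forces the branching order $n$ to be preserved in families---corresponds \emph{precisely} to sections of the pulled-back log tangent sheaf, with no extraneous automorphisms and no missed obstructions. Once this is in hand, the rest of the argument is formal from the site-theoretic construction of $\sE$ described in section~\ref{sec:con-obs} together with standard cohomology and base change for the family of curves $\pi$.
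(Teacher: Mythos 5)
The paper gives no argument here: the body of the proof is a single citation to \cite[Section~1.2]{Li2}, with an internal note that even this is a ``copout'' to be replaced by a proper reference to~\cite{obs}. So the comparison is really against what a correct proof would have to contain, and on that score your proposal has a genuine gap.

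You propose identifying the $2$-group $T(S,J)$ on the hybrid site $\et(C/S)$ with the sheaf $f^\ast T^{\log}_{X^{\exp}/V^{\exp}} \otimes \pi^\ast J$, so that $\sE$ would be governed by the perfect complex $R\pi_\ast\bigl(f^\ast \Omega^{\log}_{X^{\exp}/V^{\exp}}\bigr)[1]$. That sheaf is what the paper calls $T'$ in Lemma~\ref{lem:T-ex}, and it governs $\sE'$, the obstruction theory for $\JLi(X/V)$ over $\JLi(\sX/\sV) \fp_{\sV} V$---i.e., deformations of the map with the deformation of the expansion already chosen. But $\sE$ is the obstruction theory relative to $\fM$: a lift over $\fM$ must in addition choose how to deform the map $S \to V^{\exp}$, which is why Li works on $\et(C/S)$ rather than $\et(C)$ in the first place. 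The sheaf $T$ genuinely depends on the $V$-component of an object $(U,V)$ of $\et(C/S)$, and Lemma~\ref{lem:T-ex} realizes it as an extension $0 \to T' \to T \to T'' \to 0$ in which $T''$ records predeformable deformations of the expansion itself. The term $T''$ vanishes on the totally nondegenerate locus---this is exactly the content of \hyperref[lem:Li-compat]{Lemma~\ref*{lem:spaces}~\ref*{spaces:unobs}} for $\JLi$---but it is nonzero in general. The complex you write down therefore represents $\sE'$, not $\sE$, and the local analysis at an essential node has to produce the larger sheaf $T$ (with the gluing parameter of the expansion appearing as the $T''$-contribution), not just the pulled-back log tangent sheaf. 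The claim that predeformability ``corresponds precisely to sections of the pulled-back log tangent sheaf'' is the step that fails.

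A correct proof along Li's lines packages the predeformable tangent--obstruction theory into a two-term complex whose terms interweave $f^\ast T^{\log}$ with the node-smoothing parameters of the expanded target, and this is substantially more work than the formal cohomology-and-base-change argument you appeal to at the end. The paper itself sidesteps this by citing \cite[Section~1.2]{Li2}; a reference-free argument in the formalism of~\cite{obs} would have to exhibit $T$ as above, not as a pullback from $\et(C)$.
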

\begin{proof}
See \cite[Section~1.2]{Li2}.\Jonathan{this is a copout; I'll put a proper reference to \cite{obs} when it's ready}
\end{proof}

Let $\sE'$ denote the pullback of the relative obstruction theory $\sF$ (section~\ref{sec:naive})  to $\JLi(X/V)$; let $\sE''$ denote the relative obstruction theory for $\JLi(\sX/\sV)$ over $\fM$ obtained by applying the above construction in the case $X = \sX$ and $V = \sV$.  By definition, $\sE''(S,J)$ is the category of torsors under a sheaf of groups $T''(S,J)$ on $\et(C/S)$.

%\begin{proposition} \label{prop:Li-compat}
\begin{nlem}[\namer{\ref{lem:spaces}}{$\JLi$}{\ref{spaces:obs}}]  \label{lem:Li-compat}
There is a compatible sequence of obstruction theories
\begin{equation*} 
0 \rightarrow \sE' \rightarrow \sE \rightarrow \sE'' \rightarrow 0 .
\end{equation*}
for the sequence of maps
\begin{equation*}
\JLi(X/V) \rightarrow \JLi(\sX/\sV) \fp_{\sV} V \rightarrow \fM .
\end{equation*}
In the notation of \hyperref[spaces:obs]{Lemma~\ref*{lem:spaces}~\ref*{spaces:obs}}, $\sZ = \fM$.
\end{nlem}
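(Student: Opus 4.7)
The plan is to construct the functorial morphisms $\alpha : \sE' \to \sE$ and $\beta : \sE \to \sE''$ directly from the geometry of the three deformation problems, then verify the three compatibility conditions of section~\ref{sec:obs} one at a time.

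First, I would unpack the three theories at a square-zero datum $(S, S', J)$ sitting over $\fM$. By the construction of section~\ref{sec:con-obs} applied to~\cite[Lemma~1.12]{Li2}, $\sE(S,J)$ is the category of torsors on $\et(C/S)$ under the sheaf of $2$-groups $T$ of infinitesimal predeformable automorphisms of the target extension problem~\eqref{eqn:11}; similarly $\sE''(S,J)$ arises from the analogue $T''$ for the problem over $\sX^{\exp}/\sV^{\exp}$. The pulled-back theory $\sE'$, on the other hand, is, by section~\ref{sec:naive}, torsors on $\et(C)$ under $f^\ast T_{X/V}^{\log} \tensor \pi^\ast J$, which we view on $\et(C/S)$ via the natural morphism of sites.

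Second, I would define $\beta$ by composition with the projections $X^{\exp} \to \sX^{\exp}$ and $V^{\exp} \to \sV^{\exp}$; predeformability is preserved because the local model condition $u \mapsto x^n$, $v \mapsto y^n$ descends under a smooth morphism of targets. This produces a morphism $T \to T''$ of sheaves of $2$-groups and hence of their torsor categories. The map $\alpha$ is then induced by the cartesian factorizations $X^{\exp} = X \fp_{\sX} \sX^{\exp}$ and $V^{\exp} = V \fp_{\sV} \sV^{\exp}$: once a predeformable lift to $\sX^{\exp}/\sV^{\exp}$ is fixed, further predeformable lifts to $X^{\exp}/V^{\exp}$ projecting to it are the same as lifts of the underlying map $C \to X$ over $S \to V$ compatible with the given $\sX^{\exp}$-lift, which is precisely the deformation problem defining $\sF$, and hence $\sE'$. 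This identifies $\sE'$ with the subgroupoid of $\sE$ lying over the trivial torsor in $\sE''$ and supplies the identification $\beta \circ \alpha = 0$.

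Third, I would verify local surjectivity of $\beta$ and the identification of $\sE'$ with $\ker(\beta)$. Local surjectivity follows from the smoothness of $X$ over $\sX \fp_{\sV} V$ recorded in section~\ref{sec:method}, which base-changes to smoothness of $X^{\exp}$ over $\sX^{\exp} \fp_{\sV^{\exp}} V^{\exp}$: given a predeformable $\sX^{\exp}$-lift, this smoothness produces a lift to $X^{\exp}$ \'etale-locally on $C$, which is automatically predeformable since its image in $\sX^{\exp}$ already is. The identification $\sE' \simeq \ker(\beta)$ is then immediate from the description of $\alpha$ in the previous paragraph. The main obstacle will be the bookkeeping between the sites $\et(C/S)$ and $\et(C)$: concretely, the pullback of $f^\ast T_{X/V}^{\log} \tensor \pi^\ast J$ to $\et(C/S)$ must be shown canonically isomorphic to the kernel of $T \to T''$. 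Near an essential node this amounts to comparing the log tangent sheaves of the predeformable local models $A[u,v]/(uv-w) \to A[x,y]/(xy-t)$ with $w = t^n$, $u = x^n$, $v = y^n$; the identification reduces to Olsson's formula $\Omega_{X/\sX \fp_\sV V} = \Omega_{X/V}^{\log}$ used in section~\ref{sec:naive} and its counterpart for the universal target, which is trivial since $\sX \to \sV$ is log \'etale.
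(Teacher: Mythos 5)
Your overall strategy is the same as the paper's: unwind all three theories as torsors on the hybrid site $\et(C/S)$, construct the sheaf-level morphisms from the geometry, and use the smoothness of $X^{\exp} \rightarrow \sX^{\exp} \fp_{\sV^{\exp}} V^{\exp}$ to get surjectivity on sections. Your definitions of $\alpha$ and $\beta$ and your account of $\beta\circ\alpha = 0$ match the paper's.

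However, there is a genuine gap at the final step. What you verify with the smoothness argument is the \emph{sheaf-level} surjectivity $T \to T''$: given a predeformable $\sX^{\exp}$-lift over an object of $\et(C/S)$, one can lift it locally in $\et(C/S)$ to $X^{\exp}$. But the compatibility condition in section~\ref{sec:obs} demands more: the map $\beta : \sE(S,J) \to \sE''(S,J)$ on \emph{torsor categories} must be locally surjective \emph{in $S$}. A short exact sequence $0 \to T' \to T \to T'' \to 0$ of sheaves does not automatically give surjectivity at the level of torsors; the obstruction to lifting a $T''$-torsor to a $T$-torsor lives in $H^2(\et(C/S), T')$. The paper kills this $H^2$ by showing (Lemma~\ref{lem:i-ex}) that the pushforward $i_\ast : \et(C) \to \et(C/S)$ is exact, so that $H^2(\et(C/S), T') \cong H^2(\et(C), \uHom(f^\ast\Omega_{X/V}^{\log}, \pi^\ast J))$, and the latter vanishes locally in $S$ since the coefficient sheaf is quasi-coherent and $C$ is $1$-dimensional over $S$. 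You flag the comparison of sites as ``the main obstacle'' but treat it as mere bookkeeping needed to identify $\sE'$ with $\ker\beta$; in fact it is exactly the tool that closes the cohomological gap between the sheaf exact sequence and surjectivity of $\beta$ on torsor categories, and without it your argument does not yield condition (v) of the compatibility definition.
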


Before we give the proof, we give an alternate construction of $\sE'$ that will be more easily comparable to $\sE$.  Note that the lifting problem~\eqref{eqn:5} also gives rise to a torsor of lifts on the site $\et(C/S)$ under the sheaf of groups $T'$ whose value on $U/V$ is $\Hom_U(f^\ast \Omega_{X/V}^{\log} \rest{U}, \pi^\ast J \rest{U})$.  Therefore we have another obstruction theory, $\sF$, with $\sF(S,J)$ being the category of torsors on $\et(C/S)$ under $T'$.

\begin{lemma} \label{lem:E=F}
$\sE' = \sF$ .
\end{lemma}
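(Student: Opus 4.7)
The plan is to show that the two obstruction theories encode the same deformation problem presented via torsors on two different sites, and that the passage between these presentations is an equivalence. Concretely, $\sE'$ assigns to $(S,J)$ the category of torsors on the small \'etale site $\et(C)$ under the sheaf
\[
\sG := \uHom(f^\ast \Omega_{X/V}^{\log}, \pi^\ast J) = f^\ast T_{X/V}^{\log} \tensor \pi^\ast J,
\]
with obstruction the torsor of local lifts of~\eqref{eqn:5}; while $\sF$ assigns to $(S,J)$ the category of torsors on $\et(C/S)$ under the sheaf $T'$ with $T'(U/V) = \Hom_U(f^\ast\Omega_{X/V}^{\log}|_U, \pi^\ast J|_U)$, and obstruction again the torsor of local lifts of~\eqref{eqn:5}. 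My task is to produce a canonical equivalence $\sE'(S,J) \simeq \sF(S,J)$ intertwining the obstructions, functorially in $(S,J)$.

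First I would observe that the forgetful functor $\phi:\et(C/S) \to \et(C)$, $(U\to V)\mapsto U$, is a continuous morphism of sites: it sends \'etale covers in $\et(C/S)$ to \'etale covers in $\et(C)$, and every cover in $\et(C)$ pulls back to a cover in $\et(C/S)$ by the trivial extension $U\mapsto (U\to S)$. Moreover, by its very definition, $T' = \phi^{-1}\sG$; that is, $T'$ depends only on the $U$-coordinate of an object of $\et(C/S)$ and is identified with the restriction of $\sG$ along $\phi$.

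Second I would construct the equivalence. Pullback along $\phi$ gives a functor $\sE'(S,J)\to \sF(S,J)$. For the inverse, I use the section $\psi:\et(C)\to\et(C/S)$, $U\mapsto (U\to S)$, which satisfies $\phi\circ\psi = \id$; restriction along $\psi$ gives a functor $\sF(S,J)\to\sE'(S,J)$. That $\psi^{-1}\phi^{-1}\simeq\id$ is immediate. The nontrivial direction is $\phi^{-1}\psi^{-1}\simeq \id$: given a $T'$-torsor $Q$ on $\et(C/S)$, I must show it is canonically recovered from its restriction to the subsite of trivial families $(U\to S)$. This is a descent argument using that every object $(U\to V)$ of $\et(C/S)$ admits the canonical morphism from $(U\to S)$ induced by the \'etale structure map $V\to S$, and that $T'$ factors through $\phi$, so $Q$ carries no information beyond its values on trivial families.

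Third I would verify that this equivalence sends obstruction to obstruction. Both obstructions are presented by the same torsor of solutions to the lifting problem~\eqref{eqn:5}; the solutions to~\eqref{eqn:5} sheafify on $\et(C)$, and their $\et(C/S)$-torsor structure is precisely $\phi^{-1}$ applied to the $\et(C)$-torsor. The equivalence is functorial in $(S,J)$ by construction. The main obstacle is bookkeeping the descent argument in step two with enough care to guarantee strict compatibility with the $2$-group structure on both sides, but once $T'=\phi^{-1}\sG$ is recognized, this is a formal consequence of the fact that pullback along a continuous morphism of sites with a section is an equivalence on torsors under pulled-back sheaves.
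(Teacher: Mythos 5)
Your framing is the mirror image of the paper's: you view $T'$ as the pullback $\phi^{-1}\sG$ along the projection $\phi:\et(C/S)\to\et(C)$, while the paper views $T'$ as the pushforward $i_\ast\sG$ along the inclusion $i:\et(C)\to\et(C/S)$; these are the same operation. The functors you construct are the right ones, $\psi^{-1}\phi^{-1}\simeq\id$ is indeed immediate, and you correctly identify $\phi^{-1}\psi^{-1}\simeq\id$ (essential surjectivity) as the nontrivial step. But it is exactly there that the proposal breaks down.

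The principle you invoke to close this gap---that ``pullback along a continuous morphism of sites with a section is an equivalence on torsors under pulled-back sheaves''---is false. Take $\cD=\et(X)$ for any connected $X$ with a nontrivial $\sG$-torsor $P$, let $\cC=\et(X)\sqcup\et(X)$, let $\phi:\cC\to\cD$ be the fold and $\psi$ the inclusion of the first summand, so $\phi\psi=\id$. Then $\phi^{-1}\sG$-torsors on $\cC$ are pairs of $\sG$-torsors, and the pair $(\cO,P)$ is not in the essential image of $\phi^{-1}$. So the section hypothesis alone cannot yield the equivalence; one needs a genuine input about how the topologies of the two sites interact. The paper supplies this input as Lemma~\ref{lem:i-ex} ($i_\ast$ is exact), whose proof rests on the specific structure of $\et(C/S)$: a cover $\{U_\alpha\to U\}$ in $\et(C)$ lifts to a cover $\{(U_\alpha/V)\to(U/V)\}$ of \emph{any} object of $\et(C/S)$ \emph{without} refining $V$. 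It is this feature, not the existence of the section $\psi$, that forces $R^q i_\ast\sG=0$ for $q>0$, hence $H^\bullet(\et(C/S),T')\cong H^\bullet(\et(C),\sG)$, hence the equivalence on torsor categories. Your ``descent argument'' gestures at this but never supplies it: concretely, for $Q$ a $T'$-torsor you need $P(U):=Q(U/S)$ to be \emph{locally nonempty} on $\et(C)$, and local nonemptiness of $Q$ on $\et(C/S)$ only gives you a cover $\{(U_\alpha/V_\alpha)\to(U/S)\}$ with $Q(U_\alpha/V_\alpha)\neq\varnothing$---this does not a priori say anything about $Q(U_\alpha/S)$, since the restriction map points from the latter to the former. (Relatedly, the canonical morphism you use runs $(U/V)\to(U/S)$, not ``from $(U/S)$'' as written; a section $S\to V$ need not exist.) To repair the argument you would essentially have to prove the exactness lemma, so I would recommend adopting the paper's approach directly.
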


Before proving this, we need a lemma about the site $\et(C/S)$.  There is an embedding of sites $i : \et(C) \rightarrow \et(C/S)$ given by the functor $i_!(U) = U/S$.  The right adjoint, $i^\ast$, of $i_!$ is given by $i^\ast(U/V) = U$.  If $F$ is a sheaf on $\et(C)$ then $i_\ast(F)(U/V) = F(U)$.  Thus $T' = i_\ast \uHom(f^\ast \Omega_{X/V}^{\log}, \pi^\ast J)$.

\begin{lemma} \label{lem:i-ex}
$i_\ast$ is exact.
\end{lemma}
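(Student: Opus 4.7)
The plan is to verify exactness by splitting into left and right exactness, exploiting the pointwise formula $i_*(F)(U/V) = F(U)$.

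First I would establish left exactness. Since kernels of sheaves are computed pointwise (as they agree with presheaf kernels), and the formula $i_*(F)(U/V) = F(U)$ expresses $i_*$ sectionwise in terms of evaluation on $\et(C)$, exactness of a sequence $0 \to F_1 \to F_2 \to F_3$ at each object $U \in \et(C)$ immediately yields exactness of $0 \to i_*F_1 \to i_*F_2 \to i_*F_3$ at each $(U/V) \in \et(C/S)$. This is entirely formal and needs no use of the site structure.

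Next I would show $i_*$ preserves epimorphisms, which together with left exactness gives exactness in the abelian category of sheaves. The key observation is the following compatibility of covers: any étale cover $\{U_i \to U\}$ in $\et(C)$ lifts to a cover $\{U_i/V \to U/V\}$ in $\et(C/S)$ by keeping the base component equal to $V$ and using the identity maps $V \to V$ on the bottom, which is trivially a cover there. Given a surjection $F \to G$ on $\et(C)$ and a section $g \in G(U) = i_*G(U/V)$, by definition of sheaf surjectivity we can find an étale cover $\{U_i \to U\}$ on which $g$ lifts to $F$; the resulting family $\{U_i/V \to U/V\}$ is then a cover in $\et(C/S)$ on which $g$, viewed as a section of $i_*G$, lifts to $i_*F$.

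The main (and only) point where anything beyond formalities enters is this lifting of covers, but it is immediate from the definition of coverings in $\et(C/S)$ as pairs of covers in $\et(C)$ and in $\et(S)$. I do not anticipate any genuine obstacle: the exactness is essentially a reflection of the fact that $i_*$ is pullback along the continuous functor $i^* : \et(C/S) \to \et(C)$, and this continuous functor interacts trivially with the sheafification needed to compute cokernels. The two paragraphs above together give exactness of $i_*$.
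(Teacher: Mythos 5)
Your proof is correct and follows essentially the same route as the paper: left exactness is automatic for pushforward, and the real content is that local surjectivity in $\et(C)$ implies local surjectivity in $\et(C/S)$, which you verify directly by observing that a cover $\{U_i \to U\}$ in $\et(C)$ lifts to a cover $\{U_i/V \to U/V\}$ in $\et(C/S)$ over trivial covers of $V$. The paper states the same point more tersely, and additionally offers in a footnote an alternative view of $i$ as a closed embedding of topoi, but your spelled-out argument matches the main line of reasoning.
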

\begin{proof}
Since pushforward is left exact, we only need to show that it preserves local surjections of presheaves, and if $F \rightarrow F'$ is a locally surjective morphism of presheaves on $\et(C)$ then it is clear from the topology on $\et(C/S)$ that it is also locally surjective on $\et(C/S)$.\footnote{Another way to see the exactness is to remark that $\et(C) \subset \et(C/S)$ is the closed embedding complementary to the open embedding $\et(\varnothing / S) \subset \et(C/S)$, and pushforward is exact for a closed embedding.}
\end{proof}

\begin{proof}[Proof of Lemma~\ref{lem:E=F}]
It follows from Lemma~\ref{lem:i-ex} that the natural maps 
\begin{equation*}
H^i(\et(C/S), T') \rightarrow H^i(\et(C), \uHom(f^\ast \Omega_{X/V}^{\log}, \pi^\ast J))
\end{equation*}
are isomorphisms for all $i$.  Therefore the map $\sF \rightarrow \sE'$ is an equivalence.
\end{proof}

\begin{lemma} \label{lem:T-ex}
There is an exact sequence of sheaves of groups
\begin{equation*}
0 \rightarrow T' \rightarrow T \rightarrow T'' \rightarrow 0
\end{equation*}
on $\et(C/S)$.
\end{lemma}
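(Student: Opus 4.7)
The plan is to identify $T'$, $T$, $T''$ explicitly as sheaves of infinitesimal lifts, and then to deduce the exact sequence from the smoothness of $X$ over $\sX \fp_{\sV} V$ together with the observation that predeformability is a condition that depends only on the map into the universal expanded target $\sX^{\exp}$. In more detail, a section of $T(S,J)$ over an \'etale open of $\et(C/S)$ consists of a lift to the trivial square-zero extension of the map $S \to V^{\exp}$, together with a compatible predeformable lift of $C \to X^{\exp}$, each restricted to the given \'etale open. The sheaf $T''$ admits the identical description with $X^{\exp}, V^{\exp}$ replaced by their universal analogues $\sX^{\exp}, \sV^{\exp}$; and by Lemma~\ref{lem:E=F}, $T'$ is the subsheaf of $T$ consisting of those lifts that keep the $V^{\exp}$-component trivial and only move the map $C \to X^{\exp}$. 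The arrow $T' \to T$ is the tautological inclusion, and $T \to T''$ is induced by composition with the canonical morphisms $X^{\exp} \to \sX^{\exp}$ and $V^{\exp} \to \sV^{\exp}$.

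Injectivity of $T' \to T$ and the inclusion $T' \subseteq \ker(T \to T'')$ are immediate from these descriptions. For the reverse inclusion, a section of $T$ whose image in $T''$ is trivial has trivial $\sX^{\exp}$- and $\sV^{\exp}$-projections; since $V^{\exp} = V \fp_{\sV} \sV^{\exp}$ and the $V$-component of the $V^{\exp}$-lift is forced (by the trivial $\sV^{\exp}$-projection and the Cartesian description) to be the trivial lift as well, the full $V^{\exp}$-component is trivial. The remaining data is then precisely a deformation of $C \to X$ relative to $\sX \fp_{\sV} V$, which is classified by $\uHom(f^\ast \Omega_{X/\sX \fp_{\sV} V}, \pi^\ast J)$. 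Using the identification $\Omega_{X/\sX \fp_{\sV} V} = \Omega_{X/V}^{\log}$ recorded in Section~\ref{sec:naive}, this group coincides with $T'$, giving $T' = \ker(T \to T'')$.

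The main point is the surjectivity of $T \to T''$, which I would verify locally on $\et(C/S)$. Given a section of $T''$---consisting of a $\sV^{\exp}$-lift, a compatible $V$-lift, and a predeformable $\sX^{\exp}$-lift of the map---I would keep the $\sV^{\exp}$- and $V$-components unchanged (they already assemble to a $V^{\exp}$-lift under $V^{\exp} = V \fp_{\sV} \sV^{\exp}$), and upgrade the $\sX^{\exp}$-lift of the map to an $X^{\exp}$-lift. The existence of such an upgrade \'etale-locally on $C$ follows from smoothness of the base change $X^{\exp} \to \sX^{\exp} \fp_{\sV^{\exp}} V^{\exp}$, which is inherited from the smoothness of $X$ over $\sX \fp_{\sV} V$ recorded in Section~\ref{sec:method}. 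The obstacle, and the main content of the lemma, is to verify that the resulting $X^{\exp}$-lift is automatically predeformable. This uses the key observation that the local equations cutting out predeformability at essential nodes (and along the distinguished divisor in the pair case) involve only coordinates pulled back from the $\sA$-charts of $\sX^{\exp}$. Consequently, any $X^{\exp}$-lift of a predeformable $\sX^{\exp}$-map is automatically predeformable, since the defining local form is inherited from the universal target. This completes the surjectivity, and hence the short exact sequence.
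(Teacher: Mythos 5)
Your proposal is correct and follows essentially the same line as the paper: explicit descriptions of $T'$, $T$, $T''$ as sheaves of lifts, left exactness by inspection, and right exactness from the smoothness of $X^{\exp} \rightarrow \sX^{\exp} \fp_{\sV} V$. The paper compresses the final step to ``immediate,'' whereas you usefully spell out the implicit point that the lifted map to $X^{\exp}$ is automatically predeformable because predeformability is detected after composing with $X^{\exp} \to \sX^{\exp}$ (which is exactly what \hyperref[spaces:cart]{Lemma~\namer{\ref*{lem:spaces}}{$\JLi$}{\ref*{spaces:cart}}} records).
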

\begin{proof}
This will be immediate once we recall the definitions of the sheaves in question.  Note first that
\begin{align*}
T'(U/V) & = \left\{ \text{ lifts } \vcenter{\xymatrix{
U \ar[r] \ar[d] & X \ar[d] \\
U[\pi^\ast J] \ar[r]^<>(0.5)0 \ar@{-->}[ur] & \sX \fp_{\sV} V
}} \right\} 
 = \left\{ \text{ lifts } \vcenter{\xymatrix{
U \ar[r] \ar[d] & X^{\exp} \ar[d] \\
U[\pi^\ast J] \ar[r]^<>(0.5)0 \ar@{-->}[ur] & \sX^{\exp} \fp_{\sV^{\exp}} V^{\exp}
}} \right\} .
\end{align*}
We can therefore identify
\begin{align*} T'(U/V) & = 
\left\{ \text{ lifts of } \quad \vcenter{\xymatrix{
& & X^{\exp} \ar[d] \\
U \ar[r] \ar[d] \ar@/^10pt/[urr] & U[ \pi^\ast J \rest{U} ] \ar@{-->}[ur] \ar[r]^<>(0.5)0 \ar[d] & \sX^{\exp} \fp_{\sV} V \ar[d] \\
V \ar[r] & V[ J \rest{V} ] \ar[r]^0 & V^{\exp} 
}} \right\} \\ \\
T(U/V) & = \left\{ \begin{array}{c} \text{ predeformable } \\ \text{lifts of } \end{array} \qquad \vcenter{\xymatrix{
U \ar[r] \ar[d] \ar@/^15pt/[rr] & U[\pi^\ast J \rest{U}] \ar@{-->}[r] \ar[d] & X^{\exp} \ar[d] \\
V \ar[r] \ar@/_15pt/[rr] & V[J \rest{U}] \ar@{-->}[r] & V^{\exp}
}} \right\} \\
\\
T''(U/V) & = \left\{   \begin{array}{c} \text{predeformable} \\ \text{lifts of} \end{array} \vcenter{\xymatrix{
U \ar[r] \ar[d] \ar@/^15pt/[rr] & U[\pi^\ast J \rest{U}] \ar@{-->}[r] \ar[d] & \sX^{\exp} \fp_{\sV} V \ar[d] \\
V \ar[r] \ar@/_15pt/[rr] & V[J \rest{U}] \ar@{-->}[r] & V^{\exp}
}} \right\}
\end{align*}
\vskip 2mm
\noindent The maps in the sequence, as well as its left exactness, are now clear.  To prove the right exactness, we must show that any section of $T''(U/V)$ lifts locally in $\et(C/S)$ to $T(U/V)$.  This is immediate from the smoothness of the map $X^{\exp} \rightarrow \sX^{\exp} \fp_{\sV} V$.
\end{proof}

\begin{proof}[Proof of {\hyperref[lem:Li-compat]{Lemma~\namer{\ref*{lem:spaces}}{$\JLi$}{\ref*{spaces:obs}}}}]
Passing to the categories of torsors under the sheaves of groups in the exact sequence of Lemma~\ref{lem:T-ex}, we get the left exactness of a sequence
\begin{equation*}
0 \rightarrow \sE' \rightarrow \sE \rightarrow \sE'' \rightarrow 0,
\end{equation*} and it remains to show surjectivity on the right.
Obstructions to lifting a $T''$-torsor to a $T$-torsor lie in $H^2(\et(C/S), T')$, which by Lemma~\ref{lem:i-ex} may be identified with the group $H^2(\et(C), \uHom(f^\ast \Omega_{X/V}^{\log}, \pi^\ast J))$.  This vanishes locally in $S$ because $\uHom(f^\ast \Omega_{X/V}^{\log}, \pi^\ast J)$ is quasi-coherent and $C$ is $1$-dimensional over $S$.
\end{proof}

\subsubsection{Nondegenerate maps}

In order to obtain a virtual fundamental class from the relative obstruction theory constructed in the last section, we need a virtual fundamental class on $\JLi(\sX/\sV)$.  

\begin{nlem}[\namer{\ref{lem:spaces}}{$\JLi$}{\ref{spaces:unobs}}]
The stack $\Mnd(\sX/\sV) \subset \JLi(\sX/\sV)$ (Definition~\ref{def:nondeg}) is locally unobstructed (section~\ref{sec:loc-unobs}) with respect to the obstruction theory $\sE''$ .
\end{nlem}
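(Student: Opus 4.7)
The strategy is to show that on the totally nondegenerate locus the expansion $\sX^{\exp}$ is trivial, so $\JLi(\sX/\sV)$ agrees there with an open substack of $\fM(\sX/\sV)$, and that this substack is smooth over $\fM$ because $\sX$ is log étale over $\sV$. Under these conditions, unwinding the construction of $T''$ in Lemma~\ref{lem:T-ex} identifies $\sE''$ with the canonical (zero) obstruction theory of a smooth morphism.

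First I would verify that predeformability imposes no condition on $\Mnd$. By Definition~\ref{def:nondeg}, a totally nondegenerate map has smooth source $C$, so no essential nodes of $C$ exist; in the pairs case the preimage of $\sD \subset \sA$ is finite over $S$ and meets $C$ only at (already labeled) marked points, while in the degeneration case $\sX \fp_\sV S$ is smooth over $S$. In either setting the universal expansion $\sX^{\exp}$ restricts, in a neighborhood of this map, to $\sX$ itself, and the local form $u \mapsto x^n$ required at marked tangency points is simply part of the data of the map to the universal target $\sA$. Thus $\Mnd(\sX/\sV)$ embeds as an open substack of $\fM(\sX/\sV)$, on which $\JLi(\sX/\sV)$ and $\fM(\sX/\sV)$ coincide.

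Next I would compute the sheaf $T''$ on $\et(C/S)$ at a point of $\Mnd(\sX/\sV) \fp_\sV V$ using the explicit description at the end of Lemma~\ref{lem:T-ex}. Since the expansion is trivial on this locus, the set $T''(U/V)$ of predeformable lifts reduces to the set of ordinary first-order lifts of $U \to \sX \fp_\sV V$ across the square-zero extension $U[\pi^\ast J\rest{U}]$. By Corollary~\ref{cor:log-stacks}, $\sX$ is log étale over $\sV$; consequently $\sX \fp_\sV V$ is log étale over $V$, and its relative log tangent bundle $T_{\sX \fp_\sV V / V}^{\log}$ vanishes. It follows that $T''$ is the zero sheaf on $\et(C/S)$ in a neighborhood of any totally nondegenerate object, so $T''$-torsors are canonically trivial and $\sE''(S,J)$ is equivalent to a point (in particular a gerbe over $S$).

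The main technical obstacle is the first step: one has to verify carefully that the moduli of expansions assigns the trivial expansion to a totally nondegenerate map, including the claim that $\JLi(\sX/\sV)$ and $\fM(\sX/\sV)$ really do agree on this locus. In the pairs case this is straightforward because tangency at isolated smooth points does not require expansion, while in the degeneration case it uses that the level of expansion is governed by the intersection of the image with the singular fibers, which is empty by nondegeneracy. Once these geometric observations are in place, the log étaleness of $\sX/\sV$ furnishes the vanishing of $T''$ formally, and local unobstructedness of $\sE''$ follows.
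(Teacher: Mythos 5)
Your first reduction — that on the totally nondegenerate locus the expansion is trivial and the theory agrees with that of $\fM(\sX/\sV)$ over $\fM$ — matches the paper's first paragraph (which uses $i_\ast$ and the hybrid site $\et(C/S)$ to make this precise). The gap is in the second step: you then invoke the \emph{logarithmic} relative tangent bundle $T^{\log}_{\sX \fp_\sV V / V}$, which is zero by log \'etaleness, and conclude that $T''$ is zero and $\sE''$ is trivial. But $\sE''$ is the obstruction theory for $\JLi(\sX/\sV) \fp_\sV V$ over $\fM$, and $\fM$ carries no logarithmic structure in J.\ Li's theory; the deformation problem therefore includes deforming the logarithmic data (i.e.\ the divisor $D = \set{s=0}$ in the pairs case, or the degeneration in the degenerations case), not just the map $C\to\sX$ relative to a fixed log structure. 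The correct governing sheaf is the ordinary tangent bundle $T_\sX$ of the Artin stack $\sX$, which is emphatically nonzero: for $\sX=\sA$ it is represented by $\bF=[\cO \xrightarrow{s} L]$ in degrees $[-1,0]$ (Proposition~\ref{prop:TsA}). In fact, in the pairs case the map from the nondegenerate locus of $\JLi(\sX/\sV)$ to $\fM$ has positive relative dimension — its fibers are moduli of effective divisors on $C$ — so $\sE''$ \emph{cannot} be trivial; it is only \emph{locally unobstructed}, i.e.\ $H^1$ vanishes. Your argument, if correct, would prove the stronger (false) statement that the map is \'etale.

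The log-\'etaleness argument you are reaching for is exactly what works in the Kim and $\ACGS$ settings (\hyperref[lem:kim-unobs]{Lemma for $\Kim$~\ref*{spaces:unobs}} and \hyperref[lem:acgs-etale]{Lemma for $\ACGS$~\ref*{spaces:unobs}}), because there the base $\sZ$ (namely $\fMB$ or $\Log(\fM)\fp_\Log\Log(V)$) records the logarithmic structure, so $T^{\log}$ is the relevant tangent sheaf. Here, over the bare $\fM$, what the paper actually shows is:\ for degenerations the fiber $\sX_S\to S$ is an isomorphism and the obstruction theory is that of pre-stable curves, hence trivially unobstructed; for pairs one computes $\RR^1\pi_\ast(f^\ast\bF\tensor J)=0$ on fibers, using that $f^\ast s\colon \cO_C\to f^\ast L$ is \emph{generically} an isomorphism by total nondegeneracy, so $f^\ast\bF$ is quasi-isomorphic to the (finitely supported) cokernel of $f^\ast s$, which has no $H^1$. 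That geometric cohomological vanishing is the missing idea; log \'etaleness of $\sX/\sV$ does not substitute for it.
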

\begin{proof}
A totally nondegenerate $S$-point of $\JLi(\sX/\sV)$ is a commutative diagram
\begin{equation*} \xymatrix{
C \ar[r] \ar[d] & \sX^{\exp} \ar[d] \\
S \ar[r] & \sV^{\exp}
} \end{equation*}
such that the map $S \rightarrow \sV^{\exp}$ factors through the open point of $\sV^{\exp}$.  By the definition of $T$, this implies that if $U/V$ is in $\et(C/S)$ then $T(U/V)$ depends only on $U$.  That is, $T = i_\ast f^\ast T_{\sX}$ where $f : C \rightarrow \sX$ is the structural map and $i : \et(C) \rightarrow \et(C/S)$ is the inclusion of sites.  Since $i_\ast$ is exact (Lemma~\ref{lem:i-ex}), we may conclude that Li's obstruction theory for totally nondegenerate maps coincides with the usual obstruction theory for stable maps.\footnote{This conclusion holds for any $X/V$, not just the universal example.}

To complete the proof, we inspect the stable maps obstruction theories.  For acceptable degenerations, the totally nondegenerate case is completely trivial:  the natural projection $\sX_S \rightarrow S$ is an isomorphism so a totally nondegenerate map is nothing but a Deligne--Mumford--Knudsen pre-stable marked curve, with the obstruction theory being that of the moduli space of pre-stable maps to a point.  In particular, the moduli space is smooth and unobstructed.

For pairs, we have $\sX_S = \sA \times S$ in the totally nondegenerate case and the moduli space can be identified with the moduli space of pre-stable curves equipped with an effective divsor.  Again, this is a smooth stack.  Note that if $\sA$ is viewed as the moduli space of pairs $(L, s)$ where $L$ is a line bundle and $s$ is a section of $L$ then the tangent bundle of $\sA$ (which we notate $T_{\sA}$) can be represented by the complex $\bF = [\cO \xrightarrow{s} L]$, concentrated in degrees $[-1,0]$ (see Proposition~\ref{prop:TsA} in the appendix).  If $J$ is a quasi-coherent sheaf on $S$ then the value on $(S,J)$ of the relative obstruction theory for $\fM(\sA)$ over $\fM$ is the category of torsors on $C$ under $f^\ast T_{\sA} \tensor J$.  The isomorphism classes of these torsors can be identified with sections of $\RR^1 \pi_\ast (\bF \tensor J)$.  Using cohomology and base change, we can deduce that $\RR^1 \pi_\ast (\bF \tensor J) = 0$ by showing it vanishes on the fibers of $C$ over $S$.  We can therefore assume that $S$ is a point.  Now $f^\ast s : \cO_C \rightarrow f^\ast L$ is generically an isomorphism because $f$ is totally nondegenerate so $f^\ast \bF$ is quasi-isomorphic to the cokernel of $f^\ast s$, which is supported on a finite subscheme of $C$.  It follows that $H^1(C, \bF \tensor J) = 0$.\footnote{This also shows that there are no infinitesimal automorphisms, since $H^{-1}(C, \bF \tensor J) = 0$.}
\end{proof}

To complete the construction of the virtual fundamental class, we therefore only need to check that totally nondegenerate maps are dense in $\JLi(\sX/\sV)$.

\begin{nlem}[\namer{\ref{lem:spaces}}{$\JLi$}{\ref{spaces:dense}}]
The totally nondegenerate objects in $\JLi(\sX/\sV)$ form a dense open substack.
\end{nlem}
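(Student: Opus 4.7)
The plan is to treat openness and density separately. Openness is routine: the three conditions defining total nondegeneracy are each open in families. Smoothness of $\pi : C \to S$ is open by standard deformation theory of curves; in the case of pairs, the condition that the preimage of $\sD$ in $C$ be finite over $S$ is open because it amounts to saying that no fiber component of $C$ maps into $\sD$, which is an open locus; and in the degeneration case, smoothness of $\sX \fp_\sV S \to S$ is an open condition on $S$ (equivalently, on its image in $\sV^{\exp}$). Intersecting these gives an open substack $\Mnd(\sX/\sV) \subset \JLi(\sX/\sV)$.

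For density, I would take an arbitrary geometric point $\xi_0$ of $\JLi(\sX/\sV)$, corresponding to a predeformable map $C_0 \to \sX^{\exp}_{s_0}$ over a point $s_0 \in \sV^{\exp}$, and construct an explicit one-parameter deformation $\xi_t$ with $\xi_t \in \Mnd(\sX/\sV)$ for $t \ne 0$. The deformation has two coupled components: move the image point in $\sV^{\exp}$ into the dense open locus $\sV \subset \sV^{\exp}$ where no expansion has occurred (this is possible because $\sV^{\exp}$ admits smooth charts by $\sA^n$ in which the unexpanded locus is the preimage of $B\Gm^n$, which is dense), and simultaneously smooth the essential nodes and collapse any components of $C$ that mapped into the expanded bubbles. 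At an essential node, the predeformability condition gives local equations $xy = t$ upstairs and $uv = w$ downstairs with $w = t^n$, so turning on $t$ (hence $w = t^n$) in a one-parameter family simultaneously smooths the node of $C$ and the singularity of the target; non-essential nodes can be smoothed independently.

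The key point making this possible in the universal setting is the flexibility of $\sX^{\exp} \to \sV^{\exp}$: the target in a neighborhood of any geometric point is smooth-locally isomorphic to a standard model over $\sA^n$, and the universal base is smooth enough to extend the required one-parameter families. Formally, one can argue this by combining the explicit charts of $\sV^{\exp}$ from \cite[Section~8]{ACFW} with the local description of predeformable maps. The main obstacle is the bookkeeping required to show that the local deformations at each essential node glue to a global deformation of the map $C \to \sX^{\exp}$ compatible with predeformability; this is precisely where the unobstructedness of $\sE''$ on the nondegenerate locus (already established above) is invoked, since once the deformation has been pushed to the nondegenerate locus no further obstructions arise, and the existence of the smoothing follows from the smoothness of $\Mnd(\sX/\sV)$ over the unexpanded part of $\sV^{\exp}$.
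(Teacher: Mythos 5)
Your openness paragraph is fine, and the geometric picture you sketch for density (move the image in $\sV^{\exp}$ to the unexpanded locus while smoothing essential nodes via the local models $xy=t$, $uv=w$, $w=t^n$) is morally the right one. But the justification you give for the crucial step---that the local smoothings glue into a global one-parameter family---is not an argument. You invoke the local unobstructedness of $\sE''$ on $\Mnd(\sX/\sV)$ to claim that ``once the deformation has been pushed to the nondegenerate locus no further obstructions arise,'' and that ``the existence of the smoothing follows from the smoothness of $\Mnd(\sX/\sV)$ over the unexpanded part of $\sV^{\exp}$.'' This is circular: the point to be established is that a smoothing \emph{reaching} $\Mnd$ exists at all, and the relevant deformation theory lives over the degenerate point $\xi_0$, where $\sE''$ is \emph{not} unobstructed. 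Local unobstructedness over $\Mnd$ tells you about points already inside the nondegenerate locus, not about whether a degenerate predeformable map deforms into it. Indeed, the obstruction to gluing local smoothings is precisely the subtlety of Li's theory: if several essential nodes lie over the same singular divisor of $\sX^{\exp}_{s_0}$ with contact orders $n_1,\dots,n_k$, the predeformability constraints $t_i^{n_i}=w$ couple all the source-node parameters to a single target parameter and force branch choices; the resulting deformation theory of $\JLi(\sX/\sV)$ over $\fM$ is genuinely obstructed there, which is exactly why the space carries a nontrivial virtual class.

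The paper avoids this entirely by a transfer argument, which is structurally different from yours. It first proves density of $\Mnd$ inside $\AF(\sX/\sV)$, using that $\AF(\sX/\sV)$ is smooth over $\sV^{\bfr}$ \emph{everywhere}---the twisting resolves the obstructedness globally, not just on the nondegenerate locus---so the preimage of the dense open point of $\sV^{\bfr}$ is dense. (The parallel $\Kim$ argument uses log smoothness over $\fMB$.) It then observes that the maps $\AF(\sX/\sV)\to\JLi(\sX/\sV)$ and $\Kim(\sX/\sV)\to\JLi(\sX/\sV)$ are surjective and carry the dense open $\Mnd$ upstairs onto $\Mnd(\sX/\sV)\subset\JLi(\sX/\sV)$; density downstairs then follows formally. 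If you insist on a direct construction in $\JLi(\sX/\sV)$, you must actually exhibit the smoothing family (e.g.\ pick a common root parameter $s$ with $w=s^N$ for $N$ divisible by all relevant contact orders, set $t_i=s^{N/n_i}$, and check the extended local maps agree on overlaps) rather than appeal to unobstructedness of the target locus; as written, that step of your proof is a genuine gap.
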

\begin{proof}
This is a consequence of either
\Jonathan{changed reference below from deleted lemma to AF}
\begin{enumerate*}[label=(\alph{*})]
\item \hyperref[lem:af-dense]{Lemma~\namer{\ref*{lem:spaces}}{$\AF$}{\ref*{spaces:dense}}} and \cite[Lemmas~1.3.1 and~1.4.11]{AF} (which proves the surjectivity of $\AF(\sX/\sV) \rightarrow \JLi(\sX/\sV)$),
\emph{or} 
\item \hyperref[cor:kim-dense]{Lemma~\namer{\ref*{lem:spaces}}{$\Kim$}{\ref*{spaces:dense}}} and 
%\hyperref[lem:kim-li-dm]{Lemma~\namer{\ref*{lem:maps}}{$\Theta$}{\ref*{maps:DM}}}%
\hyperref[lem:kim-li-surj]{Lemma~\ref*{lem:kim-li-surj}}%
%{Lemma~\namer{\ref*{lem:maps}}{$\Psi$}{\ref*{maps:DM}}}%
,
\end{enumerate*}
which are proved below.  In either case, the two lemmas provide a {\em surjective} map $K(\sX/\sV) \rightarrow \JLi(\sX/\sV)$ such that $\Mnd(\sX/\sV) \subset \JLi(\sX/\sV)$ is the image of the dense open substack of $\Mnd(\sX/\sV) \subset K(\sX/\sV)$.
\end{proof}

\subsection{Twisted expansions:  the theory of Abramovich and Fantechi}

\notn{$\bfr$}{twisting choice}

Abramovich and Fantechi do not define a single moduli space of stable maps to expanded targets, but rather an infinite collection of moduli spaces $\AF(X/V)_{\bfr}$, one for each \emph{twisting choice} $\bfr$.  A twisting choice is a function
\begin{equation*}
\bfr : \bDelta \rightarrow \bZ_{>0} ,
\end{equation*}
where $\bDelta$ is the set of all finite multisets of positive integers, such that if $\{c_1,\ldots,c_k\} =  \bc \in \bDelta$ then each $c_i$ divides $\bfr(\bc)$ \cite[Definition~3.4.1]{AF}.  One can always take $\bfr = \lcm(c_i)$---this is known as the \emph{minimal twisting choice}---and it is natural to define $\AF(X/V)$ to be the associated moduli space, but in fact it is no more difficult to prove our comparison theorem for any $\bfr$.\footnote{One reason for considering non-minimal twisting choices is that when $\bfr$ is sufficiently large and divisible, $\AFstab_{g=0}(X/V)_{\bfr}$ is closely related to the moduli space of genus zero orbifold stable maps to a slight modification of $X/V$ (see \cite{ACW}).  This relationship is much less direct for the minimal twisting choice.}

\begin{definition} \label{def:AF} \notn{$\AF(X/V)$}{pre-stable maps to twisted expansions of $X/V$}
Let $\bfr$ be a twisting choice.  Let $\AF(X/V)_{\bfr}$ be the stack whose $S$-points are commutative diagrams
\begin{equation*} \xymatrix{
\tC \ar[rr] \ar[dr] & & \tX \ar[dl] \\
& S
} \end{equation*}
satisfying the following conditions:
\begin{enumerate}
\item $\tC/S$ is a family of twisted curves \cite{AV};
\item $\tX/S$ is a family of twisted expansions of $X/V$ \cite[Section~2.4]{ACFW};
\item $\tC \rightarrow \tX$ is representable and transverse to the special locus;
\item the pre-image of the smooth locus of $\tX / S$ contains the smooth locus of $\tC/S$;
\item for any geometric point $s$ of $S$, and any connected component $D$ of the special locus of $\tX_s$, take $\bc$ to be the multiset of contact orders of the map $\tC \rightarrow \tX_s$ along $D$; then $\bfr(\bc)$ is the order of twisting of $\tX_s$ along $D$.\footnote{With this choice of twisting of $D$, transversality is the same as predefomability.  A common generalization of the definitions of Li and Abramovich--Fantechi may be obtained by dropping the requirement that $c_i$ divide $\bfr(\bc)$ but retaining the predeformability condition in (3):  Li's moduli space then corresponds to the twisting choice in which $\bfr$ is identically $1$.  Note however that Abramovich and Fantechi's obstruction theory does not generalize to $\AF(X/V)_{\boldsymbol{1}}$, and that Li's obstruction theory, when adapted to $\AF(X/V)_{\bfr}$ does not obviously agree with Abramovich and Fantechi's.}
\end{enumerate}
An object of $\AF(X/V)_{\bfr}$ is called \emph{stable} if its automorphism group is finite.  We write $\AFstab(X/V)_{\bfr}$ for the open substack of stable objects of $\AF(X/V)_{\bfr}$.
\end{definition}

\subsubsection{Twisted expanded targets}

\notn{$X^{\bfr} / V^{\bfr}$}{universal $\bfr$-twisted expansion of $X/V$}
\notn{$\sX^{\bfr} / \sV^{\bfr}$}{universal $\bfr$-twisted expansion of universal target}

The analogues of $\sX^{\exp}$ and $\sV^{\exp}$ from the theory of untwisted expansions are more complicated to describe in the twisted theory.  We summarize their constructions below and refer the reader to \cite[Section~7]{ACFW} for a more thorough treatment.

Define a labelled twisted expansion of $X / V$ over $S$ to be a twisted expansion $\tX$ of $X / V$ over $S$, together with a locally constant function, called the \emph{label}, from the special locus of $\tX$ over $S$ to $\bDelta$.  Note that any $S$-point of $\AF(X/V)$ (and, indeed, any $S$-point of $\JLi(X/V)$) gives rise to a labelled twisted expansion of $S$ in which the labels are the orders of contact of the map along the singular locus (and distinguished divisor, if there is one).  

If $\bfr$ is a twisting choice, a labelled twisted expansion $\tX / S$ of $X / V$ with label $\bc$, is called $\bfr$-twisted if for each geometric point $x$ of the special locus of $\tX / S$, the order of twisting of $\tX$ at $x$ is $\bfr(\bc(x))$.  In order for the order of twisting to make sense even when $X \rightarrow V$ is not representable, we take it to mean the order of the relative automorphism group of $x$ in $\tX$ over $X \fp_V S$.

We write $V^{\bfr}$ for the stack of all $\bfr$-twisted expansions of $X/V$ and $X^{\bfr}$ for the universal $\bfr$-twisted expansion.  We have $X^{\bfr} = \sX^{\bfr} \fp_{\sX} X$ and $V^{\bfr} = \sV^{\bfr} \fp_{\sV} V$.

\begin{nlem}[\namer{\ref{lem:spaces}}{$\AF$}{\ref{spaces:proper-DM}}]
The stack $\AFstab(X/V)$ is a proper, Deligne--Mumford stack.
\end{nlem}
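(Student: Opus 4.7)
The plan is to deduce the lemma from the corresponding statement for J.\ Li's moduli space by constructing a representable, proper morphism
\[
\Psi : \AFstab(X/V)_\bfr \longrightarrow \JListab(X/V),
\]
which is essentially the map recorded in the comparison diagram of Theorem~\ref{thm:compare}. Concretely, given a datum $(\tC \to \tX, \tX/S)$ as in Definition~\ref{def:AF}, one takes coarse moduli on both the source and the target: the coarse moduli space $C$ of $\tC$ is a family of pre-stable curves, and the coarse moduli space of $\tX$ is an (untwisted) expansion of $X/V$ in the sense of~\cite{Li1}, into which $C$ maps. The main point to verify is that the transversality condition of Definition~\ref{def:AF}~(3), combined with the $\bfr$-twisting prescription of Definition~\ref{def:AF}~(5) matching the twisting of $\tX$ along each component of its special locus to the contact orders of $\tC$, forces the coarse map $C \to X^{\exp}$ to be predeformable in Li's sense. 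Stability of the coarse map follows from stability (finiteness of automorphism groups) of the twisted map.

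Granted such a morphism $\Psi$, I would first verify representability. The fibers of $\Psi$ over a predeformable stable map $(C \to X^{\exp})$ parameterize, for each node of $C$ mapping into the special locus with contact order $n$, a choice of root stack structure of order $\bfr(\bc)$ on the corresponding component of $X^{\exp}$, together with a compatible twisted structure on $\tC$ at the node --- all of which are rigid once the underlying map is fixed, because the gerbe structures of a twisted curve and of a twisted expansion are canonically determined by their coarse spaces together with the combinatorics of contact orders. This rigidity, which is the essential content of~\cite[Section~3]{AF}, makes $\Psi$ representable by algebraic spaces (indeed, finite). Second, I would check that $\Psi$ is proper. Since we are taking stable maps on both sides, and $\Psi$ is finite (hence quasi-finite and separated), the valuative criterion reduces to lifting a limiting datum in $\JListab(X/V)$ to $\AFstab(X/V)_\bfr$; this is again a matter of constructing canonical root stack structures of the prescribed order $\bfr(\bc)$ along the special locus.

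Once $\Psi$ is shown to be representable, finite, and proper, the conclusion follows: $\JListab(X/V)$ is a proper Deligne--Mumford stack by \hyperref[spaces:proper-DM]{\namer{\ref*{lem:spaces}}{$\JLi$}{\ref*{spaces:proper-DM}}} (that is, by~\cite[Theorems~0.1 and~0.2]{Li1}), and a stack which admits a representable, finite, proper morphism to a proper Deligne--Mumford stack is itself proper and Deligne--Mumford. I expect the main obstacle to be a clean verification that coarsening carries the transversality condition to predeformability: one must chase the twisting/contact-order bookkeeping through the local étale model of a node and of a point on the distinguished divisor, and this is where the hypothesis that each contact order $c_i$ divide $\bfr(\bc)$ is used. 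Alternatively, and probably preferably for brevity, the statement can simply be cited from~\cite[Proposition~4.1.1]{AF}, where precisely this result is proved; the sketch above records the geometric content behind that citation.
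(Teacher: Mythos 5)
The paper's proof of this item is a one-line citation to \cite[Section~3.3]{AF}; that is, the properness and Deligne--Mumford-ness of $\AFstab(X/V)$ are taken as established in the source paper. Your sketch takes a genuinely different route, attempting to rederive the result from the corresponding fact for $\JListab(X/V)$ via the comparison map $\Psi$. That strategy is coherent in outline but has a material gap where you claim rigidity.

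The gap: you assert that the fibers of $\Psi$ over a point of $\JListab(X/V)$ are rigid because ``the gerbe structures of a twisted curve and of a twisted expansion are canonically determined by their coarse spaces together with the combinatorics of contact orders,'' and you conclude that $\Psi$ is representable by algebraic spaces, indeed finite. This is false for the source side: a twisted curve $\tC$ over a fixed coarse curve $C$ with prescribed twisting index at each node is \emph{not} unique --- such twisted curves form a nontrivial gerbe, and $\Aut_C(\tC)$ is finite but generically nontrivial. This is exactly why the paper only proves that $\Psi$ is \emph{of Deligne--Mumford type} (see \hyperref[prop:DM:AFtoLi]{Lemma~\namer{\ref*{lem:maps}}{$\Psi$}{\ref*{maps:DM}}}, whose proof invokes the finiteness of $\Aut_C(\tC)$ from \cite[Proposition~7.1.1]{ACV}) rather than representability. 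So your claimed finiteness of $\Psi$ does not hold, and neither does the deduction of properness of $\Psi$ from it. In fact, once properness of $\AFstab(X/V)$ is known, properness of $\Psi$ is automatic (a separated morphism from a proper stack is proper), so trying to run the implication the other way requires an independent stable-reduction argument for the twisted data --- which is precisely the content of \cite[Section~3.3]{AF} that the paper cites. In short, the sketch is circling around the cited result without successfully reproducing it, and its rigidity claim is incorrect.
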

\begin{proof}
See \cite[Section~3.3]{AF}.
\end{proof}

\begin{nlem}[\namer{\ref{lem:spaces}}{$\AF$}{\ref{spaces:artin}}]
The stack $\AF(\sX/\sV)$ is an Artin stack.
\end{nlem}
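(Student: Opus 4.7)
The plan is to mirror the preceding proof for $\JLi(\sX/\sV)$. Using the forgetful map $\AF(\sX/\sV) \to \sV^{\bfr}$, whose target is algebraic by \cite[Section~7]{ACFW}, it suffices to show algebraicity after pulling back along a smooth cover of $\sV^{\bfr}$. Since $\sV^{\bfr}$ is obtained from $\sV^{\exp}$ by a root-stack construction along the universal special locus, the $\sA^n$-charts of \cite[Proposition~8.3.1]{ACFW} lift to smooth charts $U \to \sV^{\bfr}$, and it is enough to prove that each pullback $\AF(\sX/\sV) \fp_{\sV^{\bfr}} U$ is algebraic.

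Over such a chart we have the universal $\bfr$-twisted expansion $\sX^{\bfr} \fp_{\sV^{\bfr}} U$, which is a tame algebraic stack. An $S$-point of $\AF(\sX/\sV) \fp_{\sV^{\bfr}} U$ over $S \to U$ consists of a twisted curve $\tC/S$ together with a representable $S$-morphism $\tC \to \sX^{\bfr} \fp_{\sV^{\bfr}} S$ satisfying the remaining conditions of Definition~\ref{def:AF}. The stack of twisted curves is algebraic \cite{AV}, so after a further base change along a smooth cover of that stack we may assume $\tC/S$ has been fixed. What remains is to show that the stack of morphisms $\tC \to \sX^{\bfr} \fp_{\sV^{\bfr}} S$ over $S$ is algebraic; this follows from the standard algebraicity theorem for Hom stacks whose source is a proper flat tame twisted curve and whose target is an algebraic stack locally of finite presentation.

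Finally, each condition imposed in Definition~\ref{def:AF}---representability of $\tC \to \sX^{\bfr} \fp_{\sV^{\bfr}} S$, transversality to the special locus, containment of the smooth locus of $\tC/S$ in the preimage of the smooth locus of the target, and the matching of contact orders with the twisting prescribed by $\bfr$---is an open condition on families of morphisms; these openness verifications are essentially carried out in \cite[Section~3.3]{AF}. Hence $\AF(\sX/\sV)$ is an open substack of an algebraic stack and is therefore algebraic.

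The principal technical obstacle is establishing algebraicity of the Hom stack in the appropriate generality, since both the source twisted curve and the target root-stack expansion are non-representable tame algebraic stacks with orbifold structure along the special loci. An alternative route, closer in spirit to the explicit $(C,L_1,L_2,s_1,s_2)$ parametrization used in the $\JLi$ case, would be to describe maps into the root-stack target by tuples of line bundles equipped with $\bfr$-th roots and compatible sections along the special divisors, reducing the claim to the algebraicity of the moduli of such data on the universal twisted curve; either route is adequate.
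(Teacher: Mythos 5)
The paper's proof is a bare citation to \cite[Lemma~C.1.5]{AF}, so your argument is genuinely different: you give an actual construction by adapting the scheme of the preceding proof for $\JLi(\sX/\sV)$. Your route---reduce along a cover of $\sV^{\bfr}$, fix the twisted curve via algebraicity of the stack of twisted curves, then appeal to algebraicity of the Hom stack and openness of the defining conditions---is a perfectly reasonable way to reprove that result from scratch. It buys you a self-contained argument and makes visible the parallel with the $\JLi$ case, whereas the paper simply defers to Abramovich--Fantechi.

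Two places where you are sketchier than you acknowledge. First, the claim that $\sV^{\bfr}$ is obtained from $\sV^{\exp}$ by a root-stack along the universal special locus is not quite right as stated: $\sV^{\bfr}$ is a stack of \emph{labelled} twisted expansions, where the label is a locally constant function from the special locus into the infinite discrete set $\bDelta$ and the twisting order $\bfr(\bc)$ depends on that label. So one is really looking at a (typically infinite) disjoint union, over label assignments on each stratum, of root-stack modifications. This does not break the argument, but it means the statement that ``$\sA^n$-charts lift to smooth charts of $\sV^{\bfr}$'' needs the labels and the resulting root-stacks made explicit; the charts will be something like $\sqrt[\bfr(\bc)]{\sA^n}$ indexed by label data, not simply $\sA^n$. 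Second, the Hom-stack algebraicity you invoke is for a proper flat tame twisted curve mapping to a \emph{non}-Deligne--Mumford Artin stack (the fibers of $\sX^{\bfr}$ have $\Gm$-stabilizers). The result is true---Olsson's theorem on Hom-stacks applies---but it is worth being explicit that one is in the Artin-target regime of that theorem and that the representability of the map is then cut out as an open condition, rather than built into the Hom-stack. Your suggested alternative of parametrizing the map by tuples of line bundles with $\bfr$-th roots and compatible sections would avoid both of these delicacies and would actually track the paper's own $\JLi$ proof much more faithfully; if you want a cleanly self-contained argument, that second route is the one to flesh out.
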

\begin{proof}
See \cite[Lemma~C.1.5]{AF}.
\end{proof}

\subsubsection{The relative obstruction theory}

%\paragraph{Obstruction theory relative to the universal case} 

\begin{nlem}[\namer{\ref{lem:spaces}}{$\AF$}{\ref{spaces:cart}}]
The diagram \eqref{diag:naive} for $K=\AF$ is cartesian.
\end{nlem}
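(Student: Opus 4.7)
The plan is to mirror the argument used for $\JLi$ in Lemma~\namerr{lem:spaces}{$\JLi$}{spaces:cart}: unpack both sides of the square as moduli of diagrams, and observe that the additional data and conditions distinguishing $\AF$ from the naive moduli $\fM$ depend only on the composite with $\sX/\sV$. More precisely, an $S$-point of the fiber product $\AF(\sX/\sV)\fp_{\fM(\sX/\sV)} \fM(X/V)$ consists of an $\AF$-object $\tilde C \to \tilde{\mathcal X}$ over $S\to\sV$ together with a commutative lift $C\to X$ over $S\to V$, where $C$ is the coarse space of $\tilde C$. I would like to promote this to an $S$-point of $\AF(X/V)$ by forming $\tilde X := \tilde{\mathcal X}\fp_{\sX} X$ and using the induced map $\tilde C \to \tilde X$.

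First I would use the identity $X^{\bfr} = \sX^{\bfr}\fp_{\sX} X$ and $V^{\bfr} = \sV^{\bfr}\fp_{\sV} V$ recalled above from~\cite[Section~7]{ACFW} to identify an $\bfr$-twisted expansion $\tilde X/S$ of $X/V$ with the datum of an $\bfr$-twisted expansion $\tilde{\mathcal X}/S$ of $\sX/\sV$ together with the structure map $S\to V$ and the base-change isomorphism $\tilde X \simeq \tilde{\mathcal X}\fp_{\sX} X$. This already matches the fiber-product description on the level of targets; it remains to check the five conditions of Definition~\ref{def:AF}.

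Conditions (1) and (2) are then tautological: (1) depends only on $\tilde C$, and (2) is the base change identification just made. For (3), (4), and (5), the central point is that $X$ is smooth over $\sX\fp_{\sV} V$ (recorded in the Universal targets subsection), so the map $\tilde X \to \tilde{\mathcal X}\fp_{\sV} V$ is smooth and in particular carries the special locus of $\tilde X$ isomorphically onto the pullback of the special locus of $\tilde{\mathcal X}$, with the same orders of twisting. Representability and transversality of $\tilde C \to \tilde X$ at a point mapping to the special locus can therefore be checked after composition with $\tilde X \to \tilde{\mathcal X}$, since smooth base change does not affect either property; the same smooth base change argument identifies the smooth locus of $\tilde X/S$ as the preimage of the smooth locus of $\tilde{\mathcal X}/S$, giving (4); and multisets of contact orders along connected components of the special locus agree on $\tilde X$ and on $\tilde{\mathcal X}$, giving (5). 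Conversely, any $\AF$-object for $X/V$ yields an $\AF$-object for $\sX/\sV$ by composition, together with the underlying naive map in $\fM(X/V)$, and the two constructions are mutually inverse.

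The step I expect to require the most care is verifying that twisted expansions of the non-representable target $X/V$ are literally base changes of twisted expansions of $\sX/\sV$ in a manner that respects the labelling and twisting orders $\bfr(\bc)$ at geometric points of the special locus; once this is granted from \cite[Section~7]{ACFW}, the remaining checks of conditions (3)--(5) reduce to the smoothness of $X \to \sX\fp_{\sV} V$ and are formal.
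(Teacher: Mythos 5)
Your proposal is correct and takes essentially the same route as the paper, whose entire proof is to record the cartesian squares $X^{\bfr}=\sX^{\bfr}\fp_{\sX}X$ and $V^{\bfr}=\sV^{\bfr}\fp_{\sV}V$ and declare the result immediate. What you have done is unwind what ``immediate'' means by checking conditions (1)--(5) of Definition~\ref{def:AF} against the fiber-product description, using the smoothness of $X\to\sX\fp_{\sV}V$; this is precisely the content the paper leaves to the reader.
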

\begin{proof}
Immediate from the fact that the diagrams
\begin{equation*} \vcenter{\xymatrix{
X^{\bfr} \ar[r] \ar[d] & X \ar[d] \\
\sX^{\bfr} \ar[r] & \sX
}} \qquad \text{and} \qquad \vcenter{\xymatrix{
V^{\bfr} \ar[r] \ar[d] & V \ar[d] \\
\sV^{\bfr} \ar[r] & \sV
}} \end{equation*}
are cartesian.
\end{proof}

It follows that the relative obstruction theory for $\fM(X/V)$ over $\fM(\sX/\sV) \fp_{\sV} V$ pulls back to a relative obstruction theory $\sE'$ for $\AF(X/V)$ over $\AF(\sX/\sV) \fp_{\sV} V$.

\subsubsection{The virtual fundamental class} \label{sec:AF-vfc}

Now we recall the obstruction theory $\sE$ for $\AF(X/V)$ over $V^{\bfr}$ from \cite[Section~C.2]{AF}.  Note that the following constructions are only reasonable for transversal maps.  Consider the lifting problem
\begin{equation*} \xymatrix{
S \ar[r] \ar[d] & \AF(X/V) \ar[d] \\
S' \ar@{-->}[ur] \ar[r] & V^{\bfr} .
} \end{equation*}
This translates into
\begin{equation} \label{eqn:6} \vcenter{\xymatrix{
\tC \ar@{-->}[r] \ar[d]_\pi \ar@/^15pt/[rr] & \tC' \ar@{-->}[r] \ar@{-->}[d] & X^{\bfr} \ar[d] \\
S \ar[r] & S' \ar[r] & V^{\bfr} .
}} \end{equation}
Because the map $\tC \rightarrow X^{\bfr}$ is transverse to the singularities, a completion of this diagram exists locally in $\tC$.  Over $\tC$ there is a stack of abelian $2$-groups, $T(S,J)$, defined to be the category of completions of diagram~\eqref{eqn:6} when $S'$ is the trivial square-zero extension of $S$ by $J$ over $V^{\bfr}$.  Since solutions to the deformation problem~\eqref{eqn:6} exist locally in $\tC$, they form a torsor under $T(S,J)$.  If $\sE(S,J)$ is now defined to be the category of torsors on $\tC$ under $T(S,J)$ then $\sE$ forms a relative obstruction theory for $\AF(X/V)$ over $V^{\bfr}$.  Note that the restriction of $\sE(S,J)$ to $\AFstab(X/V)$ is a commutative $2$-group, but without a stability assumption it will be a $3$-group.

The same construction gives an obstruction theory $\sE''$ for $\AF(\sX/\sV)$ over $\sV^{\bfr}$, and by base change, an obstruction theory for $\AF(\sX/\sV) \fp_{\sV} V$ over $V^{\bfr}$.  There is no stability condition in this case, but it is still possible to avoid the use of $3$-groups with the rotation trick discussed in section~\ref{sec:loc-unobs}, combined with the following lemma.

\begin{nlem}[\namer{\ref{lem:spaces}}{$\AF$}{\ref{spaces:unobs}}] \label{lem:AF-unobs}
The stack $\AF(\sX/\sV)$ is locally unobstructed over $\sV^{\bfr}$ with respect to the obstruction theory defined above.
\end{nlem}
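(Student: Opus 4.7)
The plan is to unpack the definition of the obstruction theory $\sE''$ concretely and reduce local unobstruction to the vanishing of an $H^1$ on the curve $\tC$.

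First, I would observe that by the construction recalled in section~\ref{sec:AF-vfc}, an obstruction $\omega \in \sE''(S,J)$ is represented by a torsor on the \'etale site of $\tC$ under a sheaf $T''(S,J)$ of abelian groups, whose local sections are completions of the square-zero lifting diagram~\eqref{eqn:6}. The transversality condition in Definition~\ref{def:AF} ensures that such completions exist \'etale-locally on $\tC$ by a standard deformation-theoretic computation, so the torsor is locally trivial on $\tC$ and its isomorphism class is captured by an element of $H^1(\tC, T'')$. To prove local unobstruction, it therefore suffices to exhibit, \'etale-locally on $S$, the vanishing of this cohomology group.

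Second, I would identify $T''$ with $f^\ast T_{\sX^{\bfr}/\sV^{\bfr}} \tensor J$ and analyze its local structure. Using the description of $\sX^{\bfr}$ and $\sV^{\bfr}$ in \cite[Section~7]{ACFW}, the relative structure $\sX^{\bfr}/\sV^{\bfr}$ is \'etale-locally assembled from $\sX/\sV$ by root-stack constructions along the special locus. Its relative tangent is therefore representable by a two-term complex of line bundles, built out of the basic complex $[\cO \xrightarrow{s} L]$ computing $T_{\sA}$ (Proposition~\ref{prop:TsA} in the appendix) and twisted in accordance with the twisting choice $\bfr$ and the local contact orders of $f$.

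Third, because the map $\tC \rightarrow \sX^{\bfr}$ is transverse to the special locus, the pulled-back section $f^\ast s$ vanishes only on a finite subscheme of $\tC$. Consequently the pulled-back tangent complex $f^\ast T_{\sX^{\bfr}/\sV^{\bfr}}$ is quasi-isomorphic to a coherent sheaf concentrated in degree $0$, for which $H^1$ vanishes on the geometric fibers of $\pi$ since $\tC/S$ has relative dimension one. Cohomology and base change upgrades this to \'etale-local vanishing on $S$, exactly in parallel with the computation appearing in the proof of \hyperref[lem:spaces]{\namer{\ref*{lem:spaces}}{$\JLi$}{\ref*{spaces:unobs}}}. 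The main obstacle I anticipate is the precise identification of $f^\ast T_{\sX^{\bfr}/\sV^{\bfr}}$, as it requires combining the root-stack description of twisted expansions from~\cite[Section~C.2]{AF} with careful bookkeeping of the twisting choice and of contact orders at all special points; once this identification is in hand, the cohomological vanishing is essentially the one-dimensional calculation already performed in the totally nondegenerate case.
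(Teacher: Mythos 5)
The key structural point you've missed is that the obstruction theory $\sE''$ is \emph{not} defined by torsors under a sheaf of abelian groups.  As recalled in section~\ref{sec:AF-vfc}, the local solutions to the lifting problem~\eqref{eqn:6} form a \emph{stack of abelian $2$-groups} $T$ over $\tC$, not a sheaf: a completion of the diagram is a twisted curve $\tC'$ together with a representable map to $X^{\bfr}$, and such an object has nontrivial automorphisms.  Accordingly, $\sE''(S,J)$ is the $2$-category of $T$-torsors, and its homotopy-vanishing involves two separate cohomological contributions.  Your phrase ``a torsor $\ldots$ under a sheaf $T''(S,J)$ of abelian groups $\ldots$ captured by an element of $H^1(\tC,T'')$'' collapses the $2$-group to its sheaf of isomorphism classes, which silently discards a whole layer of the problem and is precisely what the lemma needs to control.

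The paper's argument splits $T$ into two pieces and treats them separately.  Let $T'$ be the sheafification of the presheaf of isomorphism classes of $T$, and let $T''$ be the kernel of $T\to T'$.  For $T'$, the paper does \emph{not} argue via a global computation with $f^\ast T_{\sX^{\bfr}/\sV^{\bfr}}$; instead, it observes that $\sX^{\bfr}\to\sV^{\bfr}$ is \'etale away from the special locus, so any two local solutions over the smooth locus of $\tC$ are locally isomorphic, forcing $T'$ to be a sheaf supported on the nodes, whence $H^1(\tC,T')=0$ for trivial dimension reasons.  For $T''$, which is a gerbe $BU$ with $U$ quasi-coherent, the relevant invariant is $H^2(\tC,U)$, which vanishes because $\tC$ is a curve.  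The long exact sequence in (nonabelian) cohomology then shows that every $T$-torsor is trivial.  Neither of these two steps is the step you propose.

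Your identification of the torsor object with $f^\ast T_{\sX^{\bfr}/\sV^{\bfr}}\tensor J$ and the ensuing claim that transversality makes it concentrated in degree $0$ is also not on firm ground.  You are modeling on the nondegenerate computation in the proof of \hyperref[lem:spaces]{Lemma~\ref*{lem:spaces} ($\JLi$) \ref*{spaces:dense}}, where the complex is $f^\ast[\cO\xrightarrow{s}L]$ in degrees $[-1,0]$ and $f^\ast s$ is generically an isomorphism.  Here the relevant complex is neither $T_{\sA}$ nor its analogue: the relative tangent complex of $\sX^{\bfr}$ over $\sV^{\bfr}$ has a contribution in positive degree coming from the nodes of the expanded fibers, which transversality does \emph{not} kill on pullback.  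Even setting aside the $2$-group issue, one would still have to argue the vanishing of an $H^2$-type term, which you do not do.  In short, the route you sketch is a genuinely different one, and its central reduction fails at the point where the stackiness of the deformation problem enters.
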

\begin{proof}
We show that the obstruction theory described above is trivial.  Consider the extension problem
\begin{equation*} \xymatrix{
\tC \ar@{-->}[r] \ar@/^15pt/[rr] \ar[d]^\pi & \tC[\pi^\ast J] \ar@{-->}[r] \ar@{-->}[d] & \sX^{\bfr} \ar[d] \\
S \ar[r] & S[J] \ar[r] & \sV^{\bfr} .
} \end{equation*}
Solutions to this problem form an abelian $2$-group $T$ over $\tC$.  Let $T'$ be the sheaf of groups obtained by sheafifying the presheaf of isomorphism classes in $T$ and let $T''$ be the kernel of the map $T \rightarrow T'$.  Since $\sX^{\bfr}$ is \'etale over $\sV^{\bfr}$ away from the non-smooth locus, a section of $T$ over the smooth locus of $\tC$ is determined by the curve $\tC'$;  as any two deformations of a smooth curve are locally isomorphic, this means that $T'$ is supported on the nodes of $\tC$.  In particular, $H^1(\tC, T')$ vanishes.

By the long exact sequence in cohomology, any $T$-torsor on $\tC$ is therefore induced from a $T''$-torsor.  But $T''$ is a gerbe over $\tC$:  any two sections of $T''$ are locally isomorphic.  Therefore $T'' = BU$ is the stack of $U$-torsors, where $U$ is the sheaf of automorphisms of the identity section of $T''$.  Then the isomorphism classes of $T''$-torsors can be identified with $H^2(\tC, U)$, which vanishes since $U$ is quasi-coherent and $\tC$ is a curve.
\end{proof}

\begin{nlem}[\namer{\ref{lem:spaces}}{$\AF$}{\ref{spaces:dense}}]\label{lem:af-dense}
The totally nondegenerate objects in $\AF(\sX/\sV)$ form a dense open substack.
\end{nlem}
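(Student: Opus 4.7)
The plan is to establish openness and density separately. For openness, I would note that the totally nondegenerate condition on an $S$-point $(\tC,\tX,f)$ of $\AF(\sX/\sV)$ is equivalent to two conditions: (a) the source twisted curve $\tC/S$ is smooth, and (b) the twisted expansion $\tX/S$ is the trivial (i.e.\ unexpanded) one. Condition (a) is open by the usual smoothness-is-open on flat families. Condition (b) cuts out an open substack because the stack $\sV^{\bfr}$ is stratified by the combinatorial type of the expansion, with the trivial-expansion stratum being the open stratum; this is explicit in the construction of $\sV^{\bfr}$ given in \cite[Section~7]{ACFW}. The totally nondegenerate locus is then the intersection of two open substacks.

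For density, I would exhibit, for each geometric point of $\AF(\sX/\sV)$, a one-parameter deformation whose generic fiber is totally nondegenerate. This is accomplished by composing two independent smoothings. First, one contracts the expansion: by the description of $\sV^{\bfr}$ in \cite[Section~7]{ACFW}, every geometric point of $\sV^{\bfr}$ admits a deformation over the spectrum of a DVR whose generic fiber is the trivial expansion (each boundary bubble can be independently smoothed). Second, once the target is the trivial expansion $\sX_S$, one smooths the nodes of $\tC$; since the target $\sX$ is smooth over $\sV$ (or smooth, in the pair case) and the nodes of $\tC$ lying over the smooth locus of the target deform unobstructedly in the moduli of twisted curves \cite{AV}, the map extends to the smoothing.

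The main obstacle is verifying that the two smoothings can be carried out while preserving the AF structure, in particular the transversality of condition~(3) in Definition~\ref{def:AF} and the compatibility between source and target orbifold orders required by condition~(5). Transversality is preserved because it is itself an open condition and because, over the trivial expansion, nodes of $\tC$ not mapping to the special divisor/singular locus are unconstrained, while nodes mapping to it are already transverse and hence remain so under small deformation. The compatibility between source orbifold structure and target twisting is preserved since, along the generic point of the deformation, the twisted structure on $\tC$ at each marking is dictated by the contact order along a chosen component of the special locus, which varies locally constantly. Combining the two deformations produces a family in $\AF(\sX/\sV)$ whose generic fiber is smooth-source and unexpanded-target, hence totally nondegenerate, completing the density claim.
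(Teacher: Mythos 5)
Your density argument has a gap at its central step. You propose to first deform the target from an expanded one to the trivial expansion, and then smooth the source curve, but you treat each of these deformations as if it automatically lifts to a deformation of the full AF-structure, including the map $\tC \to \tX$. Whether such a lift exists is precisely the content of local unobstructedness of $\AF(\sX/\sV)$ over $\sV^{\bfr}$, which the paper establishes as a separate lemma (the $\AF$ case of Lemma~\ref{lem:spaces}~\ref{spaces:unobs}, proved in section~\ref{sec:AF-vfc}) and then uses to make the density argument essentially immediate: once $\AF(\sX/\sV) \to \sV^{\bfr}$ is known to be smooth, the preimage of the dense open point of $\sV^{\bfr}$ is automatically dense, and no one-parameter families need to be built by hand. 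Your sketch instead spends its effort on the easier part---checking that conditions such as transversality and the orbifold compatibility persist along a deformation, which is a routine openness observation---while skipping the question of whether the deformation of the target lifts at all, which is the heart of the matter. To fill this in you would need to invoke or reprove the vanishing result underlying the local unobstructedness lemma, at which point your argument reduces to the paper's.

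There is also a smaller point worth noting in your openness discussion: the totally nondegenerate condition is defined (Definition~\ref{def:nondeg}) on the induced object of $\fM(\sX/\sV)$, not directly on the AF data $(\tC, \tX, f)$, so the claimed equivalence with ``smooth twisted source and trivial expansion'' needs a short justification via the transversality requirement in Definition~\ref{def:AF}; this is straightforward to supply, but as written it is asserted rather than argued.
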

\begin{proof}
As $\AF(\sX/\sV)$ is smooth over $\sV^{\bfr}$ (since it is locally unobstructed over $\sV^{\bfr}$), the pre-image of a dense open substack of $\sV^{\bfr}$ is dense in $\AF(\sX/\sV)$.  Applying this to the open point of $\sV^{\bfr}$, we find that the locus of maps to unexpanded targets in $\AF(\sX/\sV)$ is dense.  It is immediate from the smoothness of the moduli space of marked curves
\Jonathan{fixed word order problem}
that the totally nondegenerate maps are dense in the locus of maps to unexpanded targets.  
\end{proof}

\begin{lemma}
$\sE$ is a perfect relative obstruction theory for $\AF(X/V)$ over $V^{\bfr}$.\footnote{In fact, if we make use of the trick from section~\ref{sec:loc-unobs} and \hyperref[lem:AF-unobs]{Lemma~\ref*{lem:spaces}~\ref*{spaces:unobs}}, we will only need this lemma for $\AFstab(X/V)$ over $V^{\bfr}$, over which  $\sE$ takes values in commutative $2$-groups.}
\end{lemma}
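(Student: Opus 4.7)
My plan is to mimic, in the twisted setting, the proof of perfectness of the obstruction theory for $\fM(X/V)$ over $\fM(\sX/\sV) \fp_{\sV} V$ in section~\ref{sec:naive}: identify the band of the stack of $2$-groups $T(S,J)$ with the pullback of a relative logarithmic tangent bundle, then apply $R\pi_\ast$ to produce a representing complex perfect in cohomological degrees $[-1,0]$. By the footnote together with \hyperref[lem:AF-unobs]{Lemma~\namer{\ref*{lem:spaces}}{$\AF$}{\ref*{spaces:unobs}}}, the rotation trick of section~\ref{sec:loc-unobs} allows us to restrict to $\AFstab(X/V)$, on which $\sE(S,J)$ takes values in commutative $2$-groups, so the entire argument can be carried out with sheaves of abelian groups.

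The first step will be to analyze the local structure on $\tC$ of the completions of diagram~\eqref{eqn:6}. Endowing $\tC$ with its canonical log structure from nodes and marked points, and $X^{\bfr}$ with the log structure of its special locus, conditions (3)--(5) of Definition~\ref{def:AF} together say that $\tC \rightarrow X^{\bfr}$ is log smooth along the special locus; combined with the log smoothness of $X^{\bfr} \rightarrow V^{\bfr}$ from \cite[Section~7]{ACFW}, this forces the local completions of \eqref{eqn:6} to exist and to form a torsor under $f^\ast T^{\log}_{X^{\bfr}/V^{\bfr}} \tensor \pi^\ast J$.  Since $X^{\bfr}$ is log smooth over $V^{\bfr}$, the sheaf $T^{\log}_{X^{\bfr}/V^{\bfr}}$ is locally free of finite rank on $X^{\bfr}$, so its pullback along $f$ is a vector bundle on $\tC$.

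With the band identified, $\sE(S,J)$ becomes the category of torsors on $\tC$ under the vector bundle $f^\ast T^{\log}_{X^{\bfr}/V^{\bfr}} \tensor \pi^\ast J$.  Setting $\bE^\bullet = \bigl(R\pi_\ast f^\ast T^{\log}_{X^{\bfr}/V^{\bfr}}\bigr)^{\vee}[1]$, this complex is perfect in cohomological degrees $[-1,0]$ because the fibers of $\pi$ are twisted curves and the input is a vector bundle, and Serre duality on curves yields a functorial equivalence $\sE(S,J) \simeq \Ext(f^\ast \bE^\bullet, J)$, which is what the definition of a perfect obstruction theory requires.

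The hardest step will be the first: rigorously identifying the band with the logarithmic tangent bundle.  One must check that the transversality condition of Definition~\ref{def:AF} is genuinely equivalent to log smoothness of $\tC \rightarrow X^{\bfr}$ with its canonical log structure, and one must take care with the interaction between the twisted curve structure on $\tC$ and the orbifold twisting of $X^{\bfr}$ along its special divisor.  This deformation-theoretic computation is essentially carried out in \cite[Section~C.2]{AF}; the remaining work is to recognize the obstruction theory constructed there as a sheaf of torsors under a relative logarithmic tangent bundle, after which perfectness is immediate from the standard $R\pi_\ast$ argument.
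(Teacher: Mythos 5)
There is a genuine gap, and it occurs precisely where you say the hard step will be: the identification of the band of $T(S,J)$ with a sheaf of groups.

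In diagram~\eqref{eqn:6}, the objects of $T(S,J)$ are completions in which \emph{the curve $\tC'$ is itself unknown}: the base is $V^{\bfr}$, not a moduli stack of curves, so deforming the diagram means deforming $\tC$ and the map simultaneously.  Consequently $T(S,J)$ is a \emph{stack} of abelian $2$-groups on $\tC$, not a sheaf of abelian groups---deformations of $\tC'$ locally on $\tC$ have automorphisms, even over $\AFstab$ (stability only kills \emph{global} infinitesimal automorphisms, not the local ones that make $T(S,J)$ a $2$-group).  The torsor of completions of~\eqref{eqn:6} is therefore a torsor under a $2$-group, and your proposed band $f^\ast T^{\log}_{X^{\bfr}/V^{\bfr}} \tensor \pi^\ast J$, which lives in a single cohomological degree, cannot be correct.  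The log-tangent-bundle band you describe is exactly the band of \emph{Kim's} obstruction theory over $\fMB$, where the base does carry the curve and logarithmic data and only the map deforms (see Lemma~\ref{lem:kim-perf}).  Transporting that identification to $\AF(X/V)$ over $V^{\bfr}$ silently changes the deformation problem being obstructed.

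The paper's proof works with the cone $G$ of $f^\ast \LL_{X^{\bfr}/V^{\bfr}} \rightarrow \LL_{\tC/S}$, which is the cotangent complex $\LL_{\tC/X^{\bfr}_S}$ governing the coupled deformations of $(\tC', \tC' \rightarrow X^{\bfr})$, rather than a relative log cotangent sheaf.  Because $G$ is perfect in two degrees, $R\pi_\ast (G^\vee[1])^\vee$ is perfect in degrees $[-1,1]$, not $[-1,0]$, matching the fact that $\AF(X/V) \rightarrow V^{\bfr}$ is of Artin type.  Restricting to $\AFstab(X/V)$ shrinks $\sE(S,J)$ to a commutative $2$-group (as the footnote says), but it does so by killing a cohomology group of the pushforward, not by reducing $T(S,J)$ to a sheaf; your argument requires the latter.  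Your final paragraph defers the hard identification to \cite[Section~C.2]{AF}, but what is carried out there is a cotangent-complex computation of the sort the paper actually uses, not an identification of $T(S,J)$ with torsors under a relative log tangent bundle.
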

\begin{proof}
We note that $\sE(S,J)$ is the commutative $3$-group associated to the $3$-term complex $\RR \Hom(G[-1], \pi^\ast J)$ where $G$ is the cone of the morphism of cotangent complexes, $f^\ast \LL_{X^{\bfr} / V^{\bfr}} \rightarrow \LL_{\tC/S}$.  Hence $\sE$ is representable by the complex $R \pi_\ast (G^\vee[1])^\vee$, which is perfect in degrees $[-1,1]$ becaues $G[-1]$ is perfect in degrees $[0,1]$ and the category of quasi-coherent $\cO_{\tC}$-modules has cohomological dimension~$1$.
\end{proof}

\begin{nlem}[\namer{\ref{lem:spaces}}{$\AF$}{\ref{spaces:obs}}]
The obstruction theories defined above fit into a compatible sequence for the maps 
\begin{equation*}
\AF(X/V) \rightarrow \AF(\sX/\sV) \fp_{\sV} V \rightarrow V^{\bfr} .
\end{equation*}
In the notation of \hyperref[spaces:obs]{Lemma~\ref*{lem:spaces}~\ref*{spaces:obs}}, $\sZ = V^{\bfr}$.
\end{nlem}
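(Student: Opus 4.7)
The plan is to exhibit $\sE$, $\sE'$, and $\sE''$ as categories of torsors on the universal twisted curve $\tC$ under sheaves of $2$-groups $T$, $T'$, $T''$, and then to produce a short exact sequence of these sheaves. By construction $\sE(S,J)$ is the category of $T$-torsors on $\tC$, where $T$ is the $2$-group of local completions of diagram~\eqref{eqn:6}; applying the same construction in the case $X = \sX$, $V = \sV$ exhibits $\sE''(S,J)$ as torsors under a sheaf $T''$. The pullback obstruction theory $\sE'$ is identified with the category of torsors under the sheaf $T'$ of local lifts to $X^{\bfr}$ over a fixed $V^{\bfr}$-lift into $\sX^{\bfr} \fp_{\sV} V$, exactly as in Lemma~\ref{lem:E=F}.

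Next I would establish an exact sequence
\begin{equation*}
0 \to T' \to T \to T'' \to 0
\end{equation*}
on $\et(\tC/S)$. Left exactness is formal, mirroring Lemma~\ref{lem:T-ex}: a $T$-section is a lift to $X^{\bfr}$, which projects to a $T''$-section, and the fiber over a fixed $T''$-section is precisely a $T'$-section. Right exactness reduces to showing that every local section of $T''$ admits a refinement to $T$. Because $X$ is smooth over $\sX \fp_{\sV} V$, the base-changed map $X^{\bfr} \to \sX^{\bfr} \fp_{\sV} V$ is also smooth, and combined with the transversality hypothesis in Definition~\ref{def:AF} this guarantees the existence of the desired lift \'etale-locally on $\tC$.

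Passing to torsors yields a left-exact sequence $0 \to \sE' \to \sE \to \sE''$, together with the required compatibility between $\alpha$ and $\beta$ and the natural maps of deformation problems. To upgrade this to right exactness (local surjectivity of $\beta$), I would observe that the obstruction to lifting a $T''$-torsor to a $T$-torsor lies in $H^2(\tC, T')$; by the same trick as in Lemma~\ref{lem:i-ex}, this is isomorphic to the $H^2$ of the underlying quasi-coherent sheaf on $\tC$, which vanishes because $\tC$ is one-dimensional.

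The main subtlety is that $\sE''$ is naturally a $3$-group rather than a $2$-group because $\AF(\sX/\sV)$ carries no built-in stability condition. I propose to dispose of this by the rotation trick of section~\ref{sec:loc-unobs}: by Lemma~\ref{lem:AF-unobs} the stack $\AF(\sX/\sV)$ is locally unobstructed over $\sV^{\bfr}$, so the compatibility sequence may equivalently be recorded as
\begin{equation*}
0 \to \sT'' \to \sE' \to \sE \to 0,
\end{equation*}
with $\sT''$ the automorphism $2$-group of the zero section of $\sE''$; all three terms then take values in commutative $2$-groups on the stable locus. I expect the primary technical obstacle to be the local verification of right exactness of $T \to T''$ near nodes of $\tC$ mapped into the special locus of $X^{\bfr}$, where the transversality condition of Definition~\ref{def:AF} must be invoked explicitly to produce the smooth lift.
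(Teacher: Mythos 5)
Your proposal is essentially the same as the paper's own proof: identify the three obstruction theories as categories of torsors under sheaves of (2-)groups of deformations, exhibit a short exact sequence of these sheaves with left exactness by construction and right exactness from the smoothness of $X^{\bfr} \to \sX^{\bfr} \fp_{\sV} V$, then pass to torsors and kill the final $H^2$ because $\tC$ is a curve; the rotation trick via Lemma~\ref{lem:AF-unobs} is exactly what the paper records in a footnote. Two small inaccuracies: in the $\AF$ case the torsors live on the ordinary \'etale site of $\tC$ (completions exist locally on $\tC$ alone, by transversality), so there is no need for the hybrid site $\et(\tC/S)$ nor for the pushforward-exactness argument of Lemma~\ref{lem:i-ex}, which belong to the $\JLi$ case; and the right exactness of $T \to T''$ rests on smoothness alone, with transversality already having been used to set up the deformation functors rather than being needed again at this step.
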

\begin{proof}
Consider a commutative diagram
\begin{equation*} \xymatrix{
S \ar[r] \ar[d] & \AF(X/V) \ar[d] \\
S' \ar[r] & V^{\bfr}
} \end{equation*}
with $S' = S[J]$.  Let $T$ be the abelian group of lifts of diagram~\eqref{eqn:5} (with $C$ and $C'$ replaced respectively by $\tC$ and $\tC'$); let $T'$ be the abelian $2$-group of completions of~\eqref{eqn:6}; and let $T''$ be the abelian $2$-group of completions of~\eqref{eqn:6} with $X$ replaced by $\sX$ and $V$ replaced by $\sV$.  Then we have an exact sequence
\begin{equation*}
0 \rightarrow T \rightarrow T' \rightarrow T'' \rightarrow 0 .
\end{equation*}
As in the proof of Lemma~\ref{lem:T-ex}, the sequence is left exact by definition and the map $T' \rightarrow T''$ is surjective because $X^{\bfr} \rightarrow \sX^{\bfr} \fp_{\sV} V$ is smooth.  By pushforward, this gives us the left exactness of the sequence of groups of torsors, 
\begin{equation*}
0 \rightarrow \sE'(S,J) \rightarrow \sE(S,J) \rightarrow \sE''(S,J) \rightarrow 0 .
\end{equation*}
The exactness on the right comes from the vanishing of $R^2 \pi_\ast T$, which holds because $C$ is a curve and $T$ is quasi-coherent.

This shows that the obstruction theories are compatible.%
\footnote{To demonstrate directly that the obstruction theories are compatible in the sense of the equivalent definition of section~\ref{sec:loc-unobs} we note that we tautologically have a left exact sequence
\begin{equation*}
0 \rightarrow \pi_\ast T'' \rightarrow \sE' \rightarrow \sE \rightarrow 0.
\end{equation*}
The right exactness is precisely \hyperref[lem:AF-unobs]{Lemma~\ref*{lem:spaces}~\ref*{spaces:unobs}}.}
\end{proof}

\subsection{Logarithmic expansions:  the theory of B.\ Kim}\steffen{This seems to be the only section where we explicitly write log schemes as pairs with their log structures...  shouldn't we decide on a convention and stick with it throughout the rest of the paper?  I am happy not writing them, but that is not a strong opinion.  Our decision should probably be explained to the reader in section 1 or 2.}
\label{sec:kim}

Li's moduli spaces of expanded targets, as well as their universal expansions, may be equipped with logarithmic structures.  There are a number of ways to see this.  Here are two:
\begin{enumerate}
\item $\sV^{\exp}$ can be viewed as the moduli space of \emph{aligned logarithmic structures} and $\sX^{\exp}$ an open substack of the stack of all pairs of logarithmic structures with a common alignment \cite[Section~8.1]{ACFW};
\item $\sV^{\exp}$ is an open substack of the moduli space of $3$-pointed curves and $\sX^{\exp}$ is the quotient of the universal curve by a canonical $\Gm$-action, with respect to which the canonical logarithmic structure is equivariant:  see \cite[Sections~3.1 and~3.3]{ACFW}.
\end{enumerate}
We denote these logarithmic stacks by $\sV^{\exp}_{\log}$ and $\sX^{\exp}_{\log}$.  We use the notation $V^{\exp}_{\log} = (V^{\exp}, M_{V^{\exp}})$ and $X^{\exp}_{\log} = (X^{\exp}, M_{X^{\exp}})$
\Jonathan{added notation with monoids}
for expansions of a family $X/V$.  The maps $X^{\exp}_{\log} \rightarrow \sX^{\exp}_{\log}$ and $V^{\exp}_{\log} \rightarrow \sV^{\exp}_{\log}$ are strict.

%Kim demonstrated \cite[Main Theorem A]{Kim} that there is an algebraic stack $\Kim(\sX/\sV)$ parameterizing \emph{minimal} pre-stable logarithmic maps from logarithmic curves into the fibers of $\sX^{\exp} / \sV^{\exp}$ (which gives Lemma~\ref{lem:spaces}~\ref{spaces:artin}).  Loc.\ cit.\ also provides a proper Deligne--Mumford stack of stable logarithmic maps from logarithmic curves into the fibers of $X^{\exp} / V^{\exp}$ (giving Lemma~\ref{lem:spaces}~\ref{spaces:proper-DM}).

\begin{definition}[{\cite[Sections~5.2.2 and~6.3]{Kim}}] \label{def:kim}
\notn{$\Kim(X/V)$}{log.\ pre-stable maps to log.\ expansions of $X/V$}
Let $\Kim(X/V)$ be the stack whose $S$-point are logarithmically commutative diagrams
\begin{equation} \label{eqn:14} \vcenter{\xymatrix{
(C,M_C) \ar[r]^<>(0.5)f \ar[d] & X^{\exp}_{\log} \ar[d] \\
(S,M_S) \ar[r] & V^{\exp}_{\log}
}} \end{equation}
in which 
\begin{enumerate}
\item $(C,M_C)$ is a logarithmically smooth curve over $(S,M_S)$,
\item $f$ maps the smooth locus of $C/S$ into the smooth locus of $X^{\exp} / V^{\exp}$,
\item the logarithmic structures are fine and saturated, and
\item the diagram is minimal.
\end{enumerate}
We write $\Kimstab(X/V)$ for the open substack of diagrams with finite automorphism groups.  We also write $\Kim(X/V)_{\log}$ for the stack with its minimal logarithmic structure.
\end{definition}

In the definition above, minimality refers to the following categorical condition~\cite{Gillam}:  \emph{Suppose that $M_S \rightarrow M'_S$ is a morphism of logarithmic structures on $S$ and let $M'_C$ be the logarithmic structure on $C$ obtained by pullback of $(C, M_C)$ via the morphism $(S, M'_S) \rightarrow (S, M_S)$.  Suppose further that the $(S, M'_S)$-point of $\Kim(X/V)_{\log}$ is induced from an $(S, M''_S)$-point, for a third logarithmic structure $M''_S$ and a morphism $M''_S \rightarrow M'_S$.  Then there exists a unique morphism of logarithmic structures $M_S \rightarrow M''_S$ making everything in sight compatible.}  

The following proposition shows that Definition~\ref{def:kim} is equivalent to Kim's definition \cite[Section~5.2.2]{Kim}.  In the statement and proof, we write $M_{C/S}$ for the logarithmic structure on $S$ associated to a family $C/S$ of pre-stable curves.

\begin{proposition} \label{prop:kim-min}
An $(S, M_S)$ object of $\Kim(X/V)_{\log}$ admits a unique morphism to a minimal such object.  A diagram~\eqref{eqn:14} is minimal if and only if
\begin{enumerate}[label=(\roman{*})]
\item for every geometric point $s$ of $S$, the rank of the cokernel of $g^\ast M_{V^{\exp}} \rightarrow M_S$ is equal to the number of nondistinguished nodes of $C_s$, 
\item \label{freesub} there is no locally free submonoid $N \subset \oM_S$ containing the image of $\oM_{C/S}$ except $\oM_S$ itself, and
\item the image of each irreducible element of $\oM_{C/S}$ in $\oM_S$ is a multiple of an irreducible element in $\oM_S$.
\end{enumerate}
\end{proposition}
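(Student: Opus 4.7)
The plan is to reduce both assertions to a local combinatorial statement at geometric points of $S$ and then globalize. The universal minimalization will be realized as an explicit pushout of fs (fine and saturated) monoids, and the three numerical conditions (i)--(iii) will be recognized as the precise combinatorial shadow of this pushout.

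Fix a geometric point $s \in S$. By the presentation of $\sV^{\exp}_{\log}$ recalled in \cite[Section~8.1]{ACFW}, the characteristic monoid $\oM_{V^{\exp}, g(s)}$ is free of finite rank, with irreducible generators parametrizing the levels of the expansion. The sharp monoid $\oM_{C/S, s}$ is likewise free, with one irreducible generator $e_q$ per node $q$ of $C_s$. The logarithmically commutative diagram~\eqref{eqn:14} imposes, for each distinguished node $q$, a relation in $\oM_S$ of the form $n_q e_q = f^\flat(u_q)$, where $n_q$ is the contact order at $q$ and $u_q \in \oM_{V^{\exp}, g(s)}$ is determined by the map to the expanded target. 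I would define $\oM_S^{\min}$ at $s$ to be the fs pushout of $\oM_{V^{\exp}, g(s)}$ and $\oM_{C/S, s}$ modulo precisely those relations. By the universal property of this pushout, any $\oM_S$ supporting the same data receives a unique map from $\oM_S^{\min}$, producing the required minimalization morphism $(S, M_S) \to (S, M_S^{\min})$. The construction is functorial in $s$ and compatible with \'etale localization, so it globalizes to the statement of the proposition.

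To see that (i)--(iii) characterize $\oM_S^{\min}$: in the pushout, the relation $n_q e_q = u_q$ at each distinguished $q$ makes $e_q$ torsion in the cokernel of $g^\ast \oM_{V^{\exp}}^{\mathrm{gp}} \to \oM_S^{\min,\mathrm{gp}}$, while the $e_q$ at the nondistinguished nodes remain independent free generators, yielding~(i). Any locally free submonoid of $\oM_S^{\min}$ containing the image of $\oM_{C/S}$ must contain all the $u_q$ (forced by the defining relations via $\oM_{V^{\exp}}$) and hence equal $\oM_S^{\min}$, yielding~(ii). The pushout identifies $n_q e_q$ with $u_q$ but never divides $e_q$ itself, so each irreducible $e_q$ maps to an irreducible element of $\oM_S^{\min}$, yielding~(iii). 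Conversely, given any $\oM_S$ satisfying (i)--(iii), the universal map $\oM_S^{\min} \to \oM_S$ is surjective by construction; condition~(iii) ensures it is injective on the node-generators, (i) forces the correct cokernel rank, and (ii) forces saturation, so it is an isomorphism.

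The main obstacle is condition~(iii): one must verify that the fs pushout does not introduce any unintended divisibilities of the node-generators $e_q$---equivalently, that the defining relations $n_q e_q = u_q$ are imposed ``primitively'' in the sense that no further subdivision is forced by saturation. This is the step where the contact-order data interacts most substantially with the combinatorics, and where potential refinements coming from fs-saturation of the pushout of fine monoids must be ruled out by a direct indecomposability argument on the tropicalization of $C$.
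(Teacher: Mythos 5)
Your approach is essentially the paper's: both constructions realize the minimal characteristic monoid as a pushout of $\oM_{V^{\exp}}$ and $\oM_{C/S}$ by the contact-order relations at distinguished nodes, and both obtain the minimalization morphism from the universal property of this pushout. The paper performs the construction at the level of sheaves of monoids and recovers the log structure by the fiber product $M^m_S = \oM^m_S \fp_{\oM_S} M_S$; you work at a geometric point and then invoke \'etale localization, and you do not address the lift from characteristic monoids to actual log structures, which the paper handles explicitly via that fiber product. That lift is a short but non-decorative step and should be included.

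The genuine issue is in your verification of condition~(iii) and in the ``main obstacle'' you flag. You write that the pushout ``never divides $e_q$ itself, so each irreducible $e_q$ maps to an irreducible element of $\oM_S^{\min}$.'' This is a stronger statement than condition~(iii), and it can actually be false: with two distinguished nodes sharing a level, say $2e_1 = u = 3e_2$, the fs saturation of the pushout introduces a new primitive element $f$ with $e_1 = 3f$ and $e_2 = 2f$, so the images of $e_1$ and $e_2$ are \emph{not} irreducible. So your asserted reason for (iii) is incorrect, and your concern about saturation introducing divisibilities is well-founded as a worry about your own argument. However, the resolution is much easier than the ``direct indecomposability argument on the tropicalization of $C$'' you anticipate: condition~(iii) only requires the image of each $e_q$ to be a \emph{multiple} of an irreducible, not to \emph{be} irreducible. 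Since the defining relation places each distinguished $e_q$ on the same rational ray as its $u_{i(q)}$, and saturation only refines that ray to its primitive generator, the image of $e_q$ is automatically a multiple of the unique irreducible on that ray (and the non-distinguished $e_q$ stay free). Once you read~(iii) as written, the saturation worry evaporates. Finally, note that your relation $n_q e_q = u_q$ and the paper's $\odelta = c\orho$ differ in direction; you should reconcile this with Kim's conventions before relying on the verification of~(ii), since whether a locally free submonoid containing $\mathrm{im}(\oM_{C/S})$ is forced to contain the $u$'s depends on which way the relation goes.
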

\begin{proof}
Suppose diagram~\eqref{eqn:14} is an object of $\Kim(X/V)_{\log}$.  Then we have maps $M_{C/S} \rightarrow M_S$ and $g^\ast M_{V^{\exp}} \rightarrow M_S$ where $g$ denotes the structural map $S \rightarrow V^{\exp}$.  These must satisfy the following relation:  \emph{Let $\delta$ be the image in $M_S$ of the generator of $M_{C/S}$ corresponding to a node in a fiber of $C$ over $S$ that maps into the non-smooth locus of $X^{\exp}$ over $V^{\exp}$.  Let $\rho$ be the image of the corresponding generator of $M_{V^{\exp}}$.  Denote by $\odelta$ and $\orho$ the images of $\delta$ and $\rho$ in the characteristic monoid $\oM_S$.  Then $\odelta = c \orho$ for some positive integer $c$ (the contact order).}

Let $\oM^m_S$ be the fine saturated monoid generated by the elements $\odelta$ and $\orho$ corresponding to the non-smooth loci of $C$ and $X^{\exp}_S$ over $S$, with the relations $\odelta = c \orho$ as above.  Then there is a canonical map $\oM^m_S \rightarrow \oM_S$.  Define $M^m_S = \oM^m_S \fp_{\oM_S} M_S$.  Then the composition $M^m_S \rightarrow M_S \rightarrow \cO_S$ makes $M^m_S$ into a logarithmic structure on $S$.

Note that $\oM^m_S$ satisfies the properties of the proposition.  Moreover, if $\oM_S$ satisfied those properties already then $\oM^m_S \rightarrow \oM_S$ would be an isomorphism.

We check $M^m_S$ is minimal in the sense of Definition~\ref{def:kim}.  Suppose we have a map $M'_S \rightarrow M_S$ and an $(S, M'_S)$-point of $\Kim(X/V)_{\log}$ inducing~\eqref{eqn:14}.  Then we certainly obtain a map $\oM^m_S \rightarrow \oM'_S$ inducing the map $\oM^m_S \rightarrow \oM_S$ by composition with $\oM'_S \rightarrow \oM_S$ since the defining relations of $\oM^m_S$ must be satisfied in $\oM'_S$ as well as in $\oM_S$.  Since $M'_S = M_S \fp_{\oM_S} \oM'_S$ we obtain the map $M^m_S \rightarrow M'_S$ automatically.
\end{proof}

\begin{nlem}[\namer{\ref{lem:spaces}}{$\Kim$}{\ref{spaces:proper-DM}}]
The stack $\Kimstab(X/V)$ is a proper, Deligne--Mumford stack.
\end{nlem}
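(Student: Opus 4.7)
The main reference is Kim's paper \cite{Kim}, particularly Sections 5 and 6. My plan is to break the statement into three pieces: algebraicity with Deligne--Mumford diagonal, boundedness, and the valuative criterion for properness.

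\textbf{Algebraicity and the Deligne--Mumford property.} I would first exhibit $\Kim(X/V)$ as algebraic by factoring the moduli problem through the stack of pre-stable log curves and the stack $V^{\exp}_{\log}$. An object of $\Kim(X/V)$ is, by Definition~\ref{def:kim} and the characterization in Proposition~\ref{prop:kim-min}, an underlying diagram of algebraic stacks together with a minimal fine saturated log structure whose characteristic monoid is determined (up to canonical isomorphism) by the underlying combinatorial data of the map. Minimality therefore selects a \emph{unique} object in each isomorphism class of logarithmically lifted diagrams with fixed underlying data; this lets me present $\Kim(X/V)$ as an open substack of the moduli of morphisms from pre-stable log curves into the log stack $\sX^{\exp}_{\log} \fp_{\sV^{\exp}_{\log}} V^{\exp}_{\log}$, which is algebraic by Olsson's results. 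Imposing the stability condition $|\Aut| < \infty$ cuts out an open substack with unramified diagonal, hence a Deligne--Mumford open substack.

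\textbf{Boundedness.} Fixing discrete invariants $\Gamma = (g,n,\beta,\alpha_1,\dots,\alpha_m)$, I would bound the length of the expansion $X^{\exp}_S \to S$ in terms of $\Gamma$: as in \cite{Li1}, the length is controlled by the contact multiplicities $\alpha_i$ together with the intersection number $\beta \cdot D$. Combined with standard Gromov--Witten boundedness applied fiberwise to the (finitely many) strata of $V^{\exp}$, this shows $\Kimstab_\Gamma(X/V)$ is of finite type. Here minimality is again important because it ensures that the log structure on the base is no larger than combinatorially required, so we are not parameterizing spurious moduli.

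\textbf{Properness via the valuative criterion.} Let $R$ be a DVR with fraction field $K$ and suppose we are given an object of $\Kimstab(X/V)$ over $\Spec K$. For separatedness I would argue that any two $R$-valued extensions have the same underlying (expanded, pre-stable) stable map by J.~Li's separatedness for $\JListab(X/V)$, and then use minimality (Proposition~\ref{prop:kim-min}) to deduce uniqueness of the accompanying log structure. For existence I would first apply J.~Li's completeness to produce an expanded stable map over $R$ after a ramified base change, equipping the source curve with its canonical log structure and $X^{\exp}$ with the log structure from \cite[Sections~8.1,~3.3]{ACFW}; then invoke Proposition~\ref{prop:kim-min} to endow the base $\Spec R$ with the minimal characteristic monoid generated by the contact-order relations $\odelta = c\orho$ at essential nodes. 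Functoriality of minimality shows this extension agrees with the given $K$-point over the generic fiber, completing the valuative criterion.

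\textbf{Main obstacle.} The delicate point is the existence half of the valuative criterion: producing the limit log structure and checking that the resulting $R$-point really satisfies the minimality axiom of Proposition~\ref{prop:kim-min}~\ref{freesub}. Concretely, one must verify that the combinatorial recipe for the characteristic monoid from the contact data on the special fiber is compatible with the generic log structure coming from the given $K$-point, and that no further saturation or subdivision is needed. Once this log-theoretic stable reduction is in place, everything else follows from standard arguments and from Kim's original treatment.
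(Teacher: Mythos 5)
The paper's own proof of this lemma is a one-line citation to \cite[Main Theorem A]{Kim}; it treats Kim's properness and Deligne--Mumford result entirely as a black box. Your proposal, by contrast, sketches how Kim's theorem is actually proved, and the decomposition you chose (algebraicity/DM-ness, then boundedness, then the valuative criterion with separatedness handled through the forgetful map to $\JListab$ and existence handled through a log-enhanced stable reduction) is essentially the same strategy used in \cite{Kim} and in its antecedents \cite{Li1,Li2}. You have also correctly identified the genuinely delicate step, namely the existence half of the valuative criterion: constructing the limit log structure on $\Spec R$ so that it is fine, saturated, and minimal in the sense of Proposition~\ref{prop:kim-min}, and verifying compatibility with the generic fiber. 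That is where Kim's proof spends most of its effort.

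Two small cautions. First, in the boundedness paragraph you say ``the (finitely many) strata of $V^{\exp}$''; the stack $\sV^{\exp}$ has infinitely many strata, and it is only after bounding the expansion length in terms of $\Gamma$ (as you do in the preceding sentence) that the relevant strata become finite in number, so the two clauses should be reordered logically. Second, the separatedness argument via $\JListab(X/V)$ tacitly uses that forgetting log structures carries a Kim-stable object to a predeformable map; this is true (it is exactly what the map $\Theta$ of this paper does), but it is an extra check that deserves a sentence. With those caveats, the plan is sound, and the real content is in the reference you and the paper both ultimately rely on.
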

\begin{proof}
This follows from \cite[Main Theorem A]{Kim}.
\end{proof}

\begin{nlem}[\namer{\ref{lem:spaces}}{$\Kim$}{\ref{spaces:artin}}]
The stack $\Kim(\sX/\sV)$ is an Artin stack.
\end{nlem}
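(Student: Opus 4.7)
The plan is to follow the strategy of the preceding lemma, reducing algebraicity of $\Kim(\sX/\sV)$ to the already-established algebraicity of $\JLi(\sX/\sV)$ together with Olsson's theorem \cite{olsson-log-cc} on the algebraicity of the stack $\cat{Log}$ of fs logarithmic structures. Forgetting the logarithmic structures $M_S$ and $M_C$ in diagram~\eqref{eqn:14} leaves a predeformable square to an expansion of $\sX/\sV$: the relations $\odelta = c \orho$ between the generators of $\oM_{C/S}$ and $g^\ast \oM_{V^{\exp}}$ discussed in the proof of Proposition~\ref{prop:kim-min} encode predeformability at the corresponding nodes exactly. This gives a forgetful morphism $\Kim(\sX/\sV) \to \JLi(\sX/\sV)$, and since the target is Artin, it suffices to prove that this morphism is representable by algebraic stacks.

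To analyze the fibers, I would fix an $S$-point $(C, f : C \to \sX^{\exp})$ of $\JLi(\sX/\sV)$ and consider all pairs $(M_S, M_C)$ of fs log structures on $S$ and $C$, together with a logarithmic lift of $f$ to $\sX^{\exp}_{\log}$ over a logarithmic map $(S, M_S) \to \sV^{\exp}_{\log}$, such that $(C, M_C)/(S, M_S)$ is log smooth. Dropping the minimality requirement, each layer of this data is parameterized by an algebraic stack: the space of $M_S$ is $\cat{Log}(S)$, the space of log smooth enhancements $M_C$ of the curve $C/(S, M_S)$ is algebraic by Kato's description of log smooth curves combined with Olsson's result, and the space of log morphisms lifting $f$ to $\sX^{\exp}_{\log}$ is again algebraic by \cite{olsson-log-cc}.

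It remains to incorporate minimality. Proposition~\ref{prop:kim-min} constructs the minimal log structure $(M_S^m, M_C^m)$ canonically and uniquely from the underlying predeformable map and the accompanying $(M_S, M_C)$, and asserts that this minimal object admits a unique morphism into every other valid $(M_S, M_C)$. Thus $\Kim(\sX/\sV)$ is identified with the image of a canonical section of the forgetful map from the ambient algebraic stack of not-necessarily-minimal log enhancements, so algebraicity propagates. The main obstacle is verifying that minimality, as encoded by the three combinatorial conditions of Proposition~\ref{prop:kim-min} (equality of the cokernel rank with the number of nondistinguished nodes, absence of intermediate locally free submonoids of $\oM_S$, and divisibility of the images of irreducibles of $\oM_{C/S}$), defines a representable condition in families; but the explicit construction of $\oM_S^m$ in the proof of Proposition~\ref{prop:kim-min} from the combinatorics of the underlying map makes this transparent, since one can take $(M_S^m, M_C^m)$ itself as the universal family.
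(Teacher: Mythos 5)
The paper's own proof is a one-sentence citation to Kim's Main Theorem A. Your proposal takes a genuinely different, self-contained route: reducing algebraicity to that of $\JLi(\sX/\sV)$ (proved earlier in the paper) plus Olsson's algebraicity of $\Log$, via the forgetful morphism and an analysis of minimal log enhancements. This is a reasonable strategy and the skeleton is sound, but two steps need shoring up before it constitutes a proof.

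First, the assertion that forgetting the log structures in diagram~\eqref{eqn:14} yields a predeformable map to an expansion is correct but is not established merely by the characteristic-monoid relation $\odelta = c\orho$. Predeformability is a local statement about the scheme morphism near distinguished nodes; one must argue that log-smoothness of $(C,M_C)/(S,M_S)$ together with the log morphism structure forces the local form $u \mapsto x^c$, $v \mapsto y^c$ at such a node. That is part of what Kim proves, but it is not read off from Proposition~\ref{prop:kim-min} alone.

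Second, and more seriously, the minimality step is where the real content lives, and your treatment of it is circular. Proposition~\ref{prop:kim-min} constructs $M_S^m$ \emph{from a given} log enhancement $M_S$ of the $\JLi$ point; it is not a construction from the underlying schematic data, and it does not by itself show that the minimal objects form a representable (or open) substack of the ambient algebraic stack of all log enhancements. The sentence ``one can take $(M_S^m, M_C^m)$ itself as the universal family'' presupposes the algebraicity of exactly the locus you are trying to exhibit. The correct tool here is Gillam's theorem \cite{Gillam} (cited in the paper immediately after Definition~\ref{def:kim}), which shows that the fibered category of minimal objects over schemes is algebraic whenever the fibered category of all objects over fs log schemes is algebraic and every object admits a (universal) minimal model --- which is precisely what Proposition~\ref{prop:kim-min} supplies as a hypothesis. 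With that input, your reduction closes; without it, the representability of the minimality locus is a genuine gap.
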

\begin{proof}
This follows again from \cite[Main Theorem A]{Kim}.
\end{proof}

\subsubsection{The relative obstruction theory}

\begin{nlem}[\namer{\ref{lem:spaces}}{$\Kim$}{\ref{spaces:cart}}]
The diagram \eqref{diag:naive} for $K=\Kim$ is cartesian.
\end{nlem}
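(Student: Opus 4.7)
The plan is to unfold the definitions of the four corners and show that the universal property of $\Kim(X/V)$ matches that of the fiber product $\Kim(\sX/\sV) \fp_{\fM(\sX/\sV)} \fM(X/V)$. The three inputs I will lean on are:
\begin{enumerate}[label=(\roman{*})]
\item $X^{\exp} = X \fp_{\sX} \sX^{\exp}$ and $V^{\exp} = V \fp_{\sV} \sV^{\exp}$ by the definition of expansions;
\item the morphisms $X^{\exp}_{\log} \to \sX^{\exp}_{\log}$ and $V^{\exp}_{\log} \to \sV^{\exp}_{\log}$ are strict, so log structures on the former are pulled back from the latter; and
\item $X$ is smooth over $\sX\fp_\sV V$, from which it follows that the smooth locus of $X^{\exp}/V^{\exp}$ is the preimage of the smooth locus of $\sX^{\exp}/\sV^{\exp}$.
\end{enumerate}

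First, I would start with an $S$-point of $\Kim(\sX/\sV) \fp_{\fM(\sX/\sV)} \fM(X/V)$: this consists of a log-commutative diagram from a log smooth curve $(C,M_C)/(S,M_S)$ to $\sX^{\exp}_{\log}/\sV^{\exp}_{\log}$ satisfying the conditions of Definition~\ref{def:kim}, together with a pre-stable map $C \to X$ over $S \to V$ whose post-composition with $X \to \sX$, $V \to \sV$ agrees with the map to $\sX^{\exp}_{\log}/\sV^{\exp}_{\log}$ composed with $\sX^{\exp} \to \sX$, $\sV^{\exp} \to \sV$. By the universal property of fiber products and (i), these two pieces of data assemble into a commutative diagram on underlying schemes from $C/S$ to $X^{\exp}/V^{\exp}$. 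Equipping $C$ and $S$ with the log structures provided by the Kim datum, log-commutativity of the square to $X^{\exp}_{\log}/V^{\exp}_{\log}$ follows from log-commutativity of its composition with the strict morphisms in (ii).

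Next, I would verify that the remaining conditions of Definition~\ref{def:kim} transfer. Log smoothness of $(C,M_C)/(S,M_S)$, fine-saturation, and minimality (which by Proposition~\ref{prop:kim-min} is determined by $\oM_S$ together with the images of $\oM_{C/S}$ and of $g^\ast \oM_{V^{\exp}}$) depend only on the log structures and the map to $V^{\exp}_{\log}$; since the log structure on $V^{\exp}$ is pulled back from $\sV^{\exp}$, these conditions are preserved under the translation between the two diagrams. For the smooth-into-smooth condition, I would use (iii): since $X^{\exp}/V^{\exp}$ is smooth exactly over the preimage of the smooth locus of $\sX^{\exp}/\sV^{\exp}$, the condition that the smooth locus of $C/S$ map into the smooth locus of $X^{\exp}/V^{\exp}$ is equivalent to the corresponding condition for the map to $\sX^{\exp}/\sV^{\exp}$.

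Finally, I would show the construction above is inverse to the projection morphism $\Kim(X/V) \to \Kim(\sX/\sV) \fp_{\fM(\sX/\sV)} \fM(X/V)$, which sends a $\Kim(X/V)$ object to its post-composition with $X^{\exp} \to \sX^{\exp}$ and $V^{\exp} \to \sV^{\exp}$ together with its underlying map $C \to X$ over $S \to V$. Both composites with the constructions above are the identity by the universal property of the fiber products defining $X^{\exp}$ and $V^{\exp}$. The main (mild) subtlety is verifying that the smooth locus behaves well under base change in (iii); everything else is a direct unfolding of definitions.
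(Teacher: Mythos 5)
Your proof is correct and follows the same route as the paper's, which simply invokes the log-commutative cartesian squares for $X^{\exp}/\sX^{\exp}/X/\sX$ and $V^{\exp}/\sV^{\exp}/V/\sV$ and declares the statement immediate; you have filled in the unfolding of definitions that this terse assertion leaves to the reader. One small imprecision worth flagging: you state that minimality ``depends only on the log structures and the map to $V^{\exp}_{\log}$,'' but condition (i) of Proposition~\ref{prop:kim-min} also involves the partition of nodes into distinguished and nondistinguished, which is controlled by the map $C\to X^{\exp}$ (a node is distinguished when it lands in the non-smooth locus of $X^{\exp}_S$); this is rescued by your input (iii), since the non-smooth locus of $X^{\exp}/V^{\exp}$ is the preimage of that of $\sX^{\exp}/\sV^{\exp}$, so the node partition and contact orders agree in both formulations, but the parenthetical as written undersells what is actually being used.
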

\begin{proof}
This is immediate using the logarithmically commutative and cartesian diagrams
\begin{equation*} \vcenter{\xymatrix{
X^{\exp} \ar[r] \ar[d] & \sX^{\exp} \ar[d] \\
X \ar[r] & \sX
}} \qquad \text{and} \qquad \vcenter{\xymatrix{
V^{\exp} \ar[r] \ar[d] & \sV^{\exp} \ar[d] \\
V \ar[r] & \sV .
}} \end{equation*}
\end{proof}

As before, pulling back the relative obstruction theory $\sF$ for $\fM(X/V)$ over $\fM(\sX/\sV)  \fp_{\sV} V$ (section~\ref{sec:naive}) gives a relative obstruction theory $\sE'$ for $\Kim(X/V)$ over $\Kim(\sX/\sV) \fp_{\sV} V$.

\subsubsection{Kim's obstruction theory}

\notn{$\fMB$}{base for Kim's obstruction theory on $\Kim(X/V)$}

Kim constructed the virtual fundamental class on $\Kim(X/V)$ using an obstruction theory over the moduli space $\fMB$ (using Kim's notation \cite[Section~7.1]{Kim}) each of whose $S$-points consists of
\begin{enumerate}
\item a logarithmic structure $M_S$ on $S$,
\item a logarithmically smooth family of curves $(C, M_C)$ over $(S, M_S)$,
\item a logarithmic map $(S, M_S) \rightarrow \sV^{\exp}_{\log}$
\end{enumerate}
such that if we write $M'_S$ for the pullback of the logarithmic structure from $\sV^{\exp}$ then
\begin{enumerate}[resume]
\item the map $M'_S \rightarrow M_S$ is an \emph{extended simple map} \cite[Section~4.3]{Kim} of logarithmic structures, meaning that it satisfies the conditions of Proposition~\ref{prop:kim-min}.
\end{enumerate}
We will set $\sZ = \fMB \fp_{\sV} V$ in Lemma~\ref{lem:spaces}~\ref{spaces:obs}.

\begin{proposition}
$\fMB$ is smooth.
\end{proposition}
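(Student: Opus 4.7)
The plan is to identify $\fMB$ with (an open substack of) the product $\fM \times \sV^{\exp}$, where $\fM$ denotes the smooth algebraic stack of prestable curves. Once this is done, smoothness of $\fMB$ follows from smoothness of the two factors: $\fM$ is smooth by classical deformation theory of nodal curves, and $\sV^{\exp}$ is smooth as can be read off from the cover by the $\sA^n$'s produced in \cite[Proposition~8.3.1]{ACFW}, together with their description via \cite[Section~8.2]{ACFW}.

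The forgetful morphism $\fMB \to \fM \times \sV^{\exp}$ sends an object $\bigl((C,M_C)/(S,M_S),\,g:(S,M_S)\to \sV^{\exp}_{\log}\bigr)$ to the underlying prestable curve $C/S$ and the underlying morphism $\underline{g}:S\to \sV^{\exp}$. By Proposition~\ref{prop:kim-min}, the extended simple log structure $M_S$ is uniquely determined by this underlying data through the explicit construction recalled in the proof of that proposition: the characteristic monoid $\oM^m_S$ is generated by elements $\odelta$ indexed by the nondistinguished nodes of $C/S$ together with the pullback of $\oM_{V^{\exp}}$ along $\underline{g}$, subject to the contact order relations $\odelta = c\,\orho$ at the distinguished nodes, and $M^m_S$ is then recovered as the fiber product $\oM^m_S \times_{\oM_S} M_S$. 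The log structure $M_C$ on $C$ is similarly determined as the canonical log smooth structure associated to the prestable curve relative to $(S,M^m_S)$. These constructions being functorial in $S$, the forgetful morphism is an equivalence onto its image, which is open since the compatibility requirement between contact orders along the distinguished nodes and the map to $\sV^{\exp}$ is a locally constant discrete condition.

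The main obstacle will be verifying the functoriality of the minimal log structure construction under base change $T \to S$, so that one really obtains an equivalence of stacks and not just a pointwise bijection. This reduces to checking that the pointwise recipe for $\oM^m_S$, which depends only on the étale-local structure of the nodal curve and the combinatorial type of the morphism to $\sV^{\exp}$, commutes with pullback. This should follow from the compatibility of extended simple morphisms of log structures with pullback (a property already implicit in Proposition~\ref{prop:kim-min}) combined with the standard local structure theory of prestable curves, and once verified it delivers smoothness of $\fMB$.
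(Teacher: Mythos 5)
The central claim of your proposal---that the extended simple log structure $M_S$ (together with the maps $M_{C/S}\to M_S$ and $g^\ast M_{V^{\exp}}\to M_S$) is uniquely determined by the underlying prestable curve $C/S$ and the underlying morphism $\underline{g}\colon S\to\sV^{\exp}$---is not correct, and this invalidates the reduction to $\fM\times\sV^{\exp}$.  Proposition~\ref{prop:kim-min} concerns objects of $\Kim(X/V)_{\log}$, which come equipped with a map $f\colon C\to X^{\exp}$; it is precisely $f$ that singles out which nodes are distinguished, matches them to generators of $\oM_{V^{\exp}}$, and supplies the contact orders $c$ appearing in the relations $\odelta=c\,\orho$ used to build the minimal characteristic monoid $\oM^m_S$.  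An object of $\fMB$ has no such map.  The combinatorial data (the partition of nodes into distinguished and nondistinguished, the matching to strata of the expansion, and the contact orders) is carried only by the log structure $M_S$ and is genuinely additional information: for a fixed $(C/S,\underline{g})$ there are in general many nonisomorphic choices of extended simple $M'_S\to M_S$.  (This is visible in the paper's own proof, where the stack $\fG^0$ of simple extensions is étale, not isomorphic, over the stack of locally free log structures, precisely because of the extra data $\varphi$ assigning positive integers to generators.)  So the forgetful morphism $\fMB\to\fM\times\sV^{\exp}$ is not a monomorphism, and in particular not an open embedding; your argument does not establish smoothness.

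The paper avoids this by never forgetting the log structure.  It factors $\fMB\to\Log$ and shows smoothness of $\fMB$ over the smooth stack of locally free log structures, i.e.\ logarithmic smoothness.  The intermediate reduction $\fMB\to\fG\fp_{\Log}\Log(\fM)$ keeps the extended simple map $M'_S\to M_S$ as part of the data and peels off the alignment as the étale fiber; the remaining smoothness claim for $\fG$ is then handled by the product decomposition of $\oM_S$ and the étaleness of $\fG^0$ over locally free log structures.  If you want to salvage an argument in the spirit of yours, you would at minimum need to show that $\fMB\to\fM\times\sV^{\exp}$ is smooth (not an equivalence onto an open substack) by directly analyzing the infinitesimal lifting problem---but that analysis is essentially what the paper's logarithmic-smoothness argument does, in a cleaner way.
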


One proof may be found in \cite[sections~6.2.2 and~6.2.3]{Kim}; another is given below.

\begin{proof}
The logarithmic structure on $\fMB$ is locally free, so the map $\fMB \rightarrow \Log$ factors through the substack of locally free logarithmic structures.  This substack is smooth (Proposition~\ref{prop:loc-free}), so it is sufficient to show that $\fMB$ is smooth over $\Log$---i.e., that $\fMB$ is logarithmically smooth when it is given the logarithmic structure restricting to $M_S$ on an $S$-point.

Consider the stack $\fG$ whose $S$-points are extended simple maps $M'_S \rightarrow M_S$ of (locally free) logarithmic structures.  We have a projection $\fMB \rightarrow \fG \fp_{\Log} \Log(\fM)$ in which an $S$-point of the target is a scheme $S$ equipped with an extended simple map $M'_S \rightarrow M_S$ of log.\ structures and a family of log.\ smooth curves over $(S, M_S)$.  In fact, this projection is \'etale:  Recall from~\cite{ACFW} that $\sV^{\exp}$ is the moduli space of \emph{aligned} logarithmic structures.  Therefore the only additional data needed to lift a point of $\fG \fp_{\Log} \Log(\fM)$ is an alignment of the log.\ structure $M'_S$, and such alignments are parameterized by an scheme that is \'etale over $S$.

It will therefore be enough to show that $\fG \fp_{\Log} \Log(\fM)$ is smooth, and since $\Log(\fM)$ is smooth over $\Log$, it will even be enough to show that $\fG$ is smooth over $\Log$.  If $M'_S \rightarrow M_S$ is an extended simple map we can canonically identify $M_S$ with a product $M^{(1)}_S \times M^{(2)}_S$ where $M'_S \subset M^{(1)}_S$ is a simple extension and $M^{(2)}_S$ is free:  let $\oM^{(1)}_S$ be the submonoid consisting of those elements of $\oM_S$ which possess a multiple contained in $M'_S$ and let $M^{(1)}_S = \oM^{(1)}_S \fp_{\oM_S} M_S$; by definition of an extended simple map, $\oM^{(1)}_S$ is locally generated by a subset of the generators of the minimal set of generators of $\oM_S$; the complementary subset therefore generates a locally free sheaf of monoids $\oM^{(2)}_S$ and we take $M^{(2)}_S = \oM^{(2)}_S \fp_{\oM_S} M_S$.  Since the stack of locally free logarithmic structures is smooth (as we have remarked above), it is enough to show that the stack $\fG^0$ parameterizing simple extensions $M'_S \rightarrow M_S$ is smooth.

But the stack of pairs $(M', M'')$ where $M'$ is a free logarithmic structure and $M''$ is a simple extension of $M'$ is equivalent to the stack of pairs $(M'', \varphi)$ where $M''$ is a free logarithmic structure and $\varphi$ is a map from the generators of $\oM''$ to positive integers.  This is \'etale over the stack of locally free logarithmic structures, so it is smooth.
\end{proof}

The proposition implies that a virtual class for $\Kim(X/V)$ over $V^{\exp}$ can be defined by an obstruction theory relative to $\fMB \fp_{\sV} V$.  Kim's obstruction theory may be described as follows:  Consider a lifting problem
\begin{equation*} \xymatrix{
S \ar[r] \ar[d] & \Kim(X/V) \ar[d] \\
S' \ar@{-->}[ur] \ar[r] & \fMB
} \end{equation*}
corresponding to a \emph{logarithmic} lifting problem
\begin{equation*} \xymatrix{
(S, M_S) \ar[r] \ar[d] & \Kim(X/V) \ar[d] \\
(S',M_{S'}) \ar@{-->}[ur] \ar[r] & \fM_{\log} \fp V^{\exp}_{\log} .
} \end{equation*}
One translates this into the following logarithmic extension problem
\begin{equation*} \xymatrix{
(C, M_C) \ar[r] \ar[d] \ar@/^15pt/[rr]^<>(0.5)f & (C', M_{C'}) \ar@{-->}[r] \ar[d] & X^{\exp}_{\log} \ar[d]  \\
(S, M_S) \ar[r] & (S', M_{S'}) \ar[r] & V^{\exp}_{\log},
} \end{equation*}
which immediately simplifies to
\begin{equation*} \xymatrix{
(C,M_C) \ar[r]^<>(0.5)f \ar[d] & X^{\exp}_{\log} \ar[d]^{\varpi} \\
(C',M_{C'}) \ar@{-->}[ur] \ar[r] & V^{\exp}_{\log} .
} \end{equation*}
As $\varpi$ is logarithmically smooth, the lifts form a torsor under $\uHom(f^\ast \Omega_{\varpi}^{\log}, \pi^\ast J)$ on $C$ (where $J = I_{S/S'}$ is the ideal of $S$ in $S'$).  Therefore, if we define $\sE(S,J)$ to be the category of torsors under $\uHom(f^\ast \Omega_{\varpi}^{\log}, \pi^\ast J)$ we get a relative obstruction theory for $\Kim(X/V)$ over $\fMB \fp_{\sV} V$.

\begin{lemma} \label{lem:kim-perf}
$\sE$ is a perfect relative obstruction theory.
\end{lemma}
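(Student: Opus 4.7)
The plan is to mimic almost verbatim the proof of the corresponding statement for the naive obstruction theory $\sF$ in section~\ref{sec:naive}. The structural map $\varpi : X^{\exp}_{\log} \to V^{\exp}_{\log}$ is logarithmically smooth, so its logarithmic cotangent sheaf $\Omega_{\varpi}^{\log}$ is locally free; dualizing and pulling back by $f$ yields a vector bundle $f^\ast T_{\varpi}^{\log}$ on $C$. The candidate representing complex is
\begin{equation*}
\bE^\bullet \;=\; R\pi_\ast \bigl( f^\ast T_{\varpi}^{\log} \bigr)^{\!\vee}[1],
\end{equation*}
which is perfect in cohomological degrees $[-1,0]$ because $\pi$ has one-dimensional fibers and $f^\ast T_{\varpi}^{\log}$ is locally free.

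The bulk of the proof is then the functorial equivalence $\sE(S,J) \simeq \Ext(f^\ast \bE^\bullet, J)$. By construction, $\sE(S,J)$ is the Picard category of torsors on $C$ under the quasi-coherent sheaf $\uHom(f^\ast \Omega_{\varpi}^{\log}, \pi^\ast J) = f^\ast T_{\varpi}^{\log} \otimes \pi^\ast J$. The isomorphism classes in this category are classified by $H^1$ of this sheaf and the automorphism group of any object by $H^0$; via the projection formula and Grothendieck--Serre duality along the curve $\pi$, these groups are identified with $\Ext^1_{\cO_S}(\bE^\bullet, J)$ and $\Ext^0_{\cO_S}(\bE^\bullet, J)$ respectively, with the Picard structure matching on the two sides. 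I would spell this identification out exactly once, observing that it is compatible with base change in $S$ by flat base change for $\pi$, and hence assembles into a functorial equivalence of Picard stacks.

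The only step with any content is the verification of the axioms in section~\ref{sec:obs}, and in particular the compatibility of the identification with the obstruction element associated to a square-zero extension; this matches by construction the canonical class in $\Ext^1$ coming from the truncated logarithmic cotangent complex, because we constructed $\omega$ by solving the deformation problem on the site $\et(C)$ and $\bE^\bullet$ was obtained by pushing forward along $\pi$.

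The main obstacle---really the only one---will be bookkeeping: the various identifications (projection formula, base change, agreement of the obstruction class under derived pushforward) are individually standard but need to be threaded through the definitions carefully. As a shortcut, one may instead invoke the abstract framework in \cite[Section~7.3]{obs}, which produces perfect obstruction theories from torsor-of-lifts data exactly of this form; this reduces the lemma to checking that the torsor of lifts is the one associated to $f^\ast T_{\varpi}^{\log} \otimes \pi^\ast J$, which is immediate from the logarithmic smoothness of $\varpi$.
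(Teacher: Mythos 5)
Your proposal matches the paper's proof: same representing complex $R\pi_\ast\bigl(f^\ast T_\varpi^{\log}\bigr)^{\vee}[1]$, same reason for perfectness (locally free logarithmic tangent bundle and one-dimensional fibers of $\pi$), and the paper even makes the same alternative appeal to \cite[Section~7.3]{obs}. One small correction: the identification of $\sE(S,J)$ with $\Ext(f^\ast\bE^\bullet,J)$ uses only the derived projection formula and biduality for perfect complexes, not Grothendieck--Serre duality, which would needlessly drag in the relative dualizing sheaf of $\pi$.
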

\begin{proof}
Since $f^\ast \Omega^{\log}_{\varpi}$ is a vector bundle, we can identify 
\begin{equation*}
\RR \Hom(f^\ast \Omega^{\log}_{\varpi}, \pi^\ast J) = \RR \pi_\ast \bigl( f^\ast T^{\log}_{\varpi} \tensor \pi^\ast J \bigr) = \RR \pi_\ast \bigl( f^\ast T^{\log}_{\varpi} \bigr) \tensor J .
\end{equation*}
Thus, $\sE(S,J)$ is representable by $R \pi_\ast \bigl( f^\ast T^{\log}_{\varpi} \bigr)^\vee [1]$,
\Jonathan{added shift}
which is perfect in cohomological degrees $[-1,0]$ because $f^\ast T^{\log}_{\varpi}$ is a vector bundle and $C$ has cohomological dimension~$1$.
\end{proof}

\begin{nlem}[\namer{\ref{lem:spaces}}{$\Kim$}{\ref{spaces:unobs}}] \label{lem:kim-unobs}
The projection $\Kim(\sX/\sV) \rightarrow \fMB$ is \'etale and unobstructed.
\end{nlem}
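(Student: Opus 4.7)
The plan is to show that the relative obstruction theory for $\Kim(\sX/\sV)$ over $\fMB$ described in the paragraph immediately preceding the statement vanishes identically. Since both stacks are algebraic (by Lemma~$\Kim$~\ref{spaces:artin} for the source and by the preceding proposition that $\fMB$ is smooth for the target), identifying the obstruction theory as zero will yield formal étaleness and hence étaleness; unobstructedness is then automatic.

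First, I would unwind the obstruction theory in the special case $X = \sX$, $V = \sV$. By the explicit description given just above, $\sE(S,J)$ is the category of torsors on $C$ under the sheaf $\uHom\bigl(f^\ast \Omega^{\log}_{\varpi}, \pi^\ast J\bigr) = f^\ast T^{\log}_{\varpi} \tensor \pi^\ast J$, where now $\varpi : \sX^{\exp}_{\log} \rightarrow \sV^{\exp}_{\log}$ is the universal log expanded family. The key claim is therefore that $\varpi$ is logarithmically étale, so that $\Omega^{\log}_{\varpi} = 0$ and hence $\sE(S,J)$ is trivial for every $(S,J)$.

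Second, I would verify the log étaleness of $\varpi$. By Corollary~\ref{cor:log-stacks}, the universal target $\sX \rightarrow \sV$ (either $\sA \rightarrow \mathrm{pt}$ in the pairs case or $\sA^2 \rightarrow \sA$ in the degeneration case) is log étale. The expanded universal targets are built from this one by taking relative moduli of log expansions (see \cite[Sections~2.1, 2.3, 8.1]{ACFW}); locally on $\sV^{\exp}$ one has a presentation by products $\sA^n$ with $\sX^{\exp}_{\log}$ presented by $(\sA \times \sA)^n$ over $\sA^n$, with logarithmic structures strict over those of $\sV^{\exp}$ and $\sX^{\exp}$. Because each factor $\sA \times \sA \rightarrow \sA$ is the log étale multiplication map, the product is log étale, and strictness of the pullback of the log structure then gives that $\sX^{\exp}_{\log} \rightarrow \sV^{\exp}_{\log}$ is log étale. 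Consequently $\Omega^{\log}_{\varpi} = 0$.

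Third, the vanishing of $\Omega^{\log}_{\varpi}$ forces $\sE(S,J) = 0$, so every relative square-zero lifting problem
\begin{equation*}
\xymatrix{
S \ar[r] \ar[d] & \Kim(\sX/\sV) \ar[d] \\
S' \ar@{-->}[ur] \ar[r] & \fMB
}
\end{equation*}
admits a unique solution. This is exactly the infinitesimal criterion for formal étaleness, so the projection $\Kim(\sX/\sV) \rightarrow \fMB$ is formally étale, and being a morphism of algebraic stacks locally of finite presentation, it is étale. In particular it is locally unobstructed. The main point to verify carefully is the log étaleness of $\varpi$; everything else follows formally from the already-established description of Kim's obstruction theory and Lemma~\ref{lem:kim-perf}.
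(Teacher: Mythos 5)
Your argument is correct and rests on exactly the same key input as the paper's proof, namely the logarithmic \'etaleness of $\sX^{\exp}_{\log} \rightarrow \sV^{\exp}_{\log}$ from Corollary~\ref{cor:log-stacks}; you phrase it as the vanishing of Kim's relative obstruction theory (via $\Omega^{\log}_{\varpi}=0$), whereas the paper directly translates the square-zero lifting problem into a logarithmic extension problem and applies the corollary to get existence and uniqueness of the lift, but these are the same argument in slightly different packaging. The only inefficiency is your second step, which re-derives part of Corollary~\ref{cor:log-stacks} (the log \'etaleness of $\varpi$ via local charts $(\sA\times\sA)^n \to \sA^n$) rather than simply citing it, as both the corollary's statement and the paper's proof do.
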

\begin{proof}
Since the map is strict, it's the same to show it is logarithmically \'etale.  Consider a logarithmic lifting problem
\begin{equation*} \xymatrix{
(S,M_S) \ar[r] \ar[d] & \Kim(\sX/\sV) \ar[d] \\
(S',M_{S'}) \ar[r] \ar@{-->}[ur] & \fMB .
} \end{equation*}
This translates into a logarithmic extension problem
\vskip1pt
\begin{equation*} \xymatrix{
(C,M_C) \ar[r] \ar@/^15pt/[rr] \ar[d] & (C',M_{C'}) \ar[d] \ar@{-->}[r] & \sX^{\exp}_{\log} \ar[d] \\
(S,M_S) \ar[r] & (S',M_{S'}) \ar[r] & \sV^{\exp}_{\log} ,
} \end{equation*}
which we argue has a unique solution.  Indeed, $\sX^{\exp}_{\log} \rightarrow \sV^{\exp}_{\log}$ is logarithmically \'etale by Corollary~\ref{cor:log-stacks}.
\end{proof}

\begin{nlem}[\namer{\ref{lem:spaces}}{$\Kim$}{\ref{spaces:obs}}]
There is a compatible sequence of obstruction theories for the sequence of maps
\begin{equation*} 
\Kim(X/V) \rightarrow \Kim(\sX/\sV) \fp_{\sV} V \rightarrow \fMB \fp_{\sV} V
\end{equation*}
such that the relative obstruction theory for $\Kim(\sX/\sV)$ over $\fMB$ is the canonical one (whose relative virtual fundamental class is the fundamental class of $\Kim(\sX/\sV)$).
In the notation of \hyperref[spaces:obs]{Lemma~\ref*{lem:spaces}~\ref*{spaces:obs}}, $\sZ = \fMB \fp_{\sV} V$.
\end{nlem}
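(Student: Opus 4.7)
The plan is to argue in parallel to Lemma~\ref{lem:Li-compat}, but in the simpler setting where Kim's obstruction theories are naturally realized as torsors on the universal curve $C$ itself rather than on an enlarged site $\et(C/S)$; the cohomological complications that arose in the proof of Lemma~\ref{lem:Li-compat} will therefore not reappear here.  The reader should expect the third term $\sE''$ to degenerate: since $\Kim(\sX/\sV) \to \fMB$ is \'etale by Lemma~\ref{lem:kim-unobs}, its canonical obstruction theory (and that of its base change) is trivial.

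First I would identify each of the three obstruction theories with a category of torsors on $C$ under an explicit sheaf of abelian groups.  By the construction preceding Lemma~\ref{lem:kim-perf}, $\sE(S,J)$ is the category of torsors under $T := f^\ast T^{\log}_{\varpi} \tensor \pi^\ast J$, where $\varpi : X^{\exp}_{\log} \to V^{\exp}_{\log}$.  Applying the same construction to $\sX/\sV$, the obstruction theory $\sE''(S,J)$ is the category of torsors under $T'' := f^\ast T^{\log}_{\sX^{\exp}/\sV^{\exp}} \tensor \pi^\ast J$; since $\sX^{\exp}_{\log} \to \sV^{\exp}_{\log}$ is logarithmically \'etale by Corollary~\ref{cor:log-stacks}, the sheaf $T^{\log}_{\sX^{\exp}/\sV^{\exp}}$ vanishes, so $\sE''$ can be identified with the canonical (trivial) obstruction theory of the \'etale morphism $\Kim(\sX/\sV) \fp_{\sV} V \to \fMB \fp_{\sV} V$.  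The pulled-back obstruction theory $\sE'$ is, by the calculation in section~\ref{sec:naive}, the category of torsors under $T' := f^\ast T^{\log}_{X/V} \tensor \pi^\ast J$.

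Next I would establish the short exact sequence $0 \to T' \to T \to T'' \to 0$ on $C$ by dualizing the logarithmic relative cotangent sequence for the factorization
\begin{equation*}
X^{\exp}_{\log} \longrightarrow (\sX^{\exp} \fp_{\sV^{\exp}} V^{\exp})_{\log} \longrightarrow V^{\exp}_{\log},
\end{equation*}
where the central object carries the log structure pulled back strictly from $\sX^{\exp}_{\log}$.  The right morphism is log \'etale, being a strict base change of $\sX^{\exp}_{\log} \to \sV^{\exp}_{\log}$, so its sheaf of log differentials vanishes; the left morphism is strict smooth and its sheaf of log differentials equals the pullback of $\Omega^{\log}_{X/V}$ by the same identification used in section~\ref{sec:naive}.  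Since all terms are locally free, dualizing and pulling back by $f$ produces the claimed short exact sequence on $C$.

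Finally, passing to categories of torsors yields left exactness of $0 \to \sE' \to \sE \to \sE'' \to 0$, and right exactness follows from the vanishing of $H^2(C, T')$ (quasi-coherent cohomology on the $1$-dimensional fibers of $C/S$).  Because $\sE'' = 0$, the sequence in fact collapses to an equivalence $\sE' \simeq \sE$.  The hard part will be the bookkeeping of logarithmic structures on the expansions in the second step, particularly verifying $\Omega^{\log}_{X^{\exp}_{\log}/(\sX^{\exp}\fp_{\sV^{\exp}} V^{\exp})_{\log}} \simeq f^\ast \Omega^{\log}_{X/V}$ via the strict base changes that relate $X$, $V$, $\sX$, and $\sV$ to their expansions.
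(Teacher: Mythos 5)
Your proof is correct and follows essentially the same route as the paper's. The paper goes directly to the punchline, deducing $\Omega^{\log}_{X^{\exp}/V^{\exp}} = p^\ast\Omega^{\log}_{X/V}$ from the single observation that $X^{\exp} \to X\fp_V V^{\exp}$ is log.\ \'etale, and then notes $\sE'' = 0$ since $\Kim(\sX/\sV)\to\fMB$ is \'etale; you reach the same conclusion by factoring through $\sX^{\exp}\fp_{\sV^{\exp}}V^{\exp}$ instead of $X\fp_V V^{\exp}$ and packaging the argument as a cotangent-sequence/short-exact-sequence of tangent sheaves that then degenerates because $T''=0$. The two intermediate objects are complementary factorizations of the same square relating $X/V$ to its expansions and to the universal target, so both arguments rest on the same geometric fact (log.\ \'etaleness of the expansion map, ultimately Corollary~\ref{cor:log-stacks}); your version is marginally more verbose than the paper's one-liner but neither more nor less general.
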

\begin{proof}
In fact, the relative obstruction theory for $\Kim(X/V)$ over $\Kim(\sX/\sV)$ is \emph{identical} to the one over $\fMB$, since 
\begin{equation*}
\Omega_{X^{\exp} / V^{\exp}}^{\log} = p^\ast \Omega_{X/V}^{\log}
\end{equation*}
in virtue of the fact that $X^{\exp} \rightarrow X \fp_V V^{\exp}$ is logarithmically \'etale.  As the map $\Kim(\sX/\sV) \rightarrow \fMB$ is \'etale with a trivial relative obstruction theory, this means that the obstruction theories are compatible.
\end{proof}

\begin{nlem}[\namer{\ref{lem:spaces}}{$\Kim$}{\ref{spaces:dense}}] \label{cor:kim-dense}
The locus of totally nondegenerate maps is dense in $\Kim(\sX/\sV)$.
\end{nlem}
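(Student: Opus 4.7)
The plan is to leverage the preceding lemma, which says the structural map $\Kim(\sX/\sV) \to \fMB$ is étale, together with the smoothness of $\fMB$ (established in the proposition just above). Combining these gives that $\Kim(\sX/\sV)$ is itself smooth, so the density of any nonempty open substack reduces to the statement that it meets every connected (equivalently, every irreducible) component.

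First I would identify the totally nondegenerate locus explicitly. Unwinding Definition~\ref{def:nondeg} in the universal case: for pairs, a totally nondegenerate object of $\Kim(\sX/\sV)$ consists of a smooth source curve whose map to $\sA$ factors through the open point of $\sA$ away from finitely many marked sections; for degenerations, the map $S \to \sV = \sA$ must factor through the open point of $\sA$ and the source curve must be smooth. Using the characterization of minimality in Proposition~\ref{prop:kim-min}, both conditions translate to the statement that the logarithmic structure $M_S$ appearing in the $\fMB$-data is trivial. Hence the totally nondegenerate locus is exactly the preimage under $\Kim(\sX/\sV) \to \fMB$ of the open substack $\fMB^{\circ} \subset \fMB$ cut out by $M_S = \cO_S^\ast$, and since the étale projection preserves density it suffices to prove that $\fMB^\circ$ is dense in $\fMB$.

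For this, I would combine two observations. On one hand, $\fMB^\circ$ is by construction the preimage in $\fMB$ of the open point of $\sV^{\exp}$, intersected with the smooth locus of the universal curve; by the stratification of $\sV^{\exp}$ by the combinatorial type of the expansion (see \cite[Sections~2.1 and~8.1]{ACFW}), the open point is dense in $\sV^{\exp}$. On the other hand, within each stratum of $\fMB$ the proof of the preceding proposition factored $\fMB$ (étale-locally) through the stack of free logarithmic structures equipped with simple extensions, on which the trivial log structure is the generic point of every component. Putting these together, every component of $\fMB$ contains a point of $\fMB^\circ$, so $\fMB^\circ$ is dense by smoothness.

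The main obstacle is the bookkeeping in the second step: one must verify that every combinatorial stratum of $\fMB$ can be deformed \emph{within the same component} to a point of $\fMB^\circ$, which amounts to showing that smoothing nodes of the source curve, smoothing corresponding contractions of the expanded target, and trivializing the attached log structure are all compatible with the minimality conditions of Proposition~\ref{prop:kim-min}. This is precisely the content of the \'etale-local description of $\fMB$ in terms of $\fG \fp_{\Log} \Log(\fM)$ given in the proof of smoothness of $\fMB$, so the required compatibility is built into the construction.
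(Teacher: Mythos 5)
Your proposal shares the two key steps with the paper's proof: you identify the totally nondegenerate locus with the locus of trivial logarithmic structure, and you reduce via the \'etale projection $\Kim(\sX/\sV) \rightarrow \fMB$. Where you diverge is in the final step. The paper observes that since $\fMB$ is logarithmically smooth and the projection is \'etale and strict, $\Kim(\sX/\sV)$ is itself logarithmically smooth, and then invokes the general fact that a logarithmically smooth stack has a dense open locus where the log structure is trivial (\'etale-locally such a stack is toric, and the dense torus is precisely that locus). You instead try to establish the density of the trivial-log locus $\fMB^\circ$ in $\fMB$ directly, via a component-by-component deformation argument keyed to the combinatorial stratification of $\sV^{\exp}$ and the \'etale-local description of $\fMB$ in terms of $\fG \fp_{\Log} \Log(\fM)$. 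Your route is valid but considerably more laborious --- you yourself flag the bookkeeping required to show each stratum deforms within its component to a trivial-log point --- and it is precisely this bookkeeping that the abstract invocation of logarithmic smoothness short-circuits. So the proposal is correct, but the paper's one-line appeal to logarithmic smoothness replaces your third paragraph entirely and makes the density of the trivial locus immediate rather than something to be verified stratum by stratum.
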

\begin{proof}
This open substack can be identified as the substack where the natural logarithmic structure is trivial.  But by \hyperref[lem:kim-unobs]{Lemma~\namer{\ref*{lem:spaces}}{$\Kim$}{\ref*{spaces:unobs}}}, the stack $\Kim(\sX/\sV)$ is \'etale and strict over $\fMB$, which is logarithmically smooth.  Hence $\Kim(\sX/\sV)$ is logarithmically smooth, so the locus where its logarithmic structure is trivial is a dense open substack.
\end{proof}

\subsection{Unexpanded logarithmic targets:  the theory of Gross--Siebert and  Abramovich--Chen} \label{sec:acgs}

As in the last section, we give $\sX$ and $\sV$ their natural logarithmic structures.  In fact, $\sX$ and $\sV$ may each be interpreted as moduli spaces of logarithmic structures of certain types:  see \cite[Section~8]{ACFW}.  The spaces $X$ and $V$ possess logarithmic structures pulled back
\Jonathan{typo corrected}
via the maps $X \rightarrow \sX$ and $V \rightarrow \sV$.

Abramovich and Chen \cite{AC,Chen} and Gross and Siebert \cite{GS} have defined a moduli space of logarithmic stable maps from logarithmically smooth curves into a logarithmic target.

\begin{definition} \notn{$\ACGS(X/V)$}{maps from log.\ smooth curves to $X/V$}
A \emph{stable logarithmic map} \cite[Definiton~1.5]{GS} into $X/V$ is a logarithmically commutative diagram
\begin{equation*} \xymatrix{
C \ar[r] \ar[d] & X \ar[d] \\
S \ar[r] & V
} \end{equation*}
in which $C$ is a pre-stable logarithmically smooth curve over $S$ \cite[Definition~1.3]{GS}.  Such an object is called \emph{basic} \cite[Definition~1.19]{GS} (or \textit{minimal}
\Jonathan{italics added}
\cite[Definition~3.5.1]{Chen}) if its fibers over $S$ are basic.  The substack of basic logarithmic maps to $X/V$ will be denoted $\ACGS(X/V)$.  An object of $\ACGS(X/V)$ is called stable if its automorphism group is finite (\cite[Definition~1.3]{GS} and \cite[Definition~3.6.1]{Chen}).   We will write $\ACGSstab(X/V)$ for the substack of stable objects (\cite[Defintion~2.1]{GS} and \cite[Defintion~3.6.5]{Chen}).  We decorate the notation for these stacks with the subscript $\log$ to indicate the corresponding stack on the category of logarithmic schemes.
\end{definition}

\begin{nlem}[\namer{\ref{lem:spaces}}{$\ACGS$}{\ref{spaces:proper-DM}}]
The stack $\ACGSstab(X/V)$ is a proper, Deligne--Mumford stack.
\end{nlem}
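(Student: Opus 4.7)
The plan is to reduce the statement to the main theorems of Gross--Siebert~\cite{GS} and Abramovich--Chen~\cite{AC,Chen}, which address each required property separately. First I would verify algebraicity of $\ACGS(X/V)$. The key observation is that every logarithmic stable map admits, Zariski locally on the base, a canonical factorization through a \emph{basic} (minimal) log map, and the basicness condition is universal in the sense of Gillam. This universality implies that $\ACGS(X/V)$ represents a functor on ordinary schemes (rather than on logarithmic schemes), and the usual descent and local representability arguments upgrade it to an algebraic stack. This is the content of \cite[Theorem~1.20 and Proposition~1.24]{GS} and \cite[Theorem~3.5.4 and Proposition~4.1.1]{Chen}.

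Next I would pass to the open stable locus. The stability condition---finiteness of the automorphism group of a diagram over $X/V$---is open on $\ACGS(X/V)$ by standard semicontinuity of automorphism groups in the fibers, so $\ACGSstab(X/V)$ is open in $\ACGS(X/V)$ and inherits algebraicity. Moreover, since each fiber has only finitely many automorphisms, the inertia of $\ACGSstab(X/V)$ is finite and unramified, giving that $\ACGSstab(X/V)$ is Deligne--Mumford.

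The final and most substantial step is properness, which splits into boundedness, separatedness, and the valuative criterion. Boundedness (i.e.\ finite type, for a fixed choice of discrete data $\Gamma$) is nontrivial since in principle the combinatorial type of the basic log structure on $S$ can vary; it is handled in~\cite[Theorem~3.10]{GS} (see also \cite[Section~4]{Chen}) by bounding the possible characteristic monoids using the tropical picture together with the fixed curve class. Separatedness and the valuative criterion are then verified in \cite[Theorem~0.1]{GS} and \cite[Theorem~1.0.1]{AC} using the standard reduction to discrete valuation rings, extending the underlying stable map by properness of the moduli of ordinary stable maps and then extending the log enhancement uniquely using saturation and the minimality condition. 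The expected main obstacle is the boundedness step, since the algebraicity, openness, and valuative criterion are essentially formal given the minimality formalism, while boundedness requires genuine control over the tropical/combinatorial data; fortunately this is exactly what is established in the cited references, so we may simply invoke them.
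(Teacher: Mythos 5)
Your proposal takes essentially the same approach as the paper: the statement is a direct consequence of the main theorems of Gross--Siebert, Chen, and Abramovich--Chen, and the paper's proof is simply a one-line citation of \cite[Corollary~2.8 and Corollary~4.2]{GS}, \cite[Theorem~3.6.6 and Proposition~3.8.1]{Chen}, and \cite[Theorem~5.8]{AC}. Your expanded outline of what those references establish (algebraicity via minimality, openness of the stable locus, DM via finite inertia, and properness via boundedness plus the valuative criterion) is a reasonable unpacking of the content behind those citations, though the specific result numbers you cite do not all match the ones the paper uses.
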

\begin{proof}
See \cite[Corollary~2.8 and Corollary~4.2]{GS}, \cite[Theorem~3.6.6 and Proposition~3.8.1]{Chen}, and \cite[Theorem~5.8]{AC}.
\end{proof}

\begin{nlem}[\namer{\ref{lem:spaces}}{$\ACGS$}{\ref{spaces:artin}}]
The stack $\ACGS(\sX/\sV)$ is an Artin stack.
\end{nlem}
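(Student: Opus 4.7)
My plan is to reduce the statement to the algebraicity theorems for logarithmic moduli spaces established by Gross--Siebert, Chen, and Abramovich--Chen. The relevant references are \cite[Corollary~2.8]{GS}, \cite[Proposition~3.8.1]{Chen}, and \cite[Theorem~5.8]{AC}, each of which shows that the moduli stack of basic \emph{stable} logarithmic maps to a logarithmically smooth target is a Deligne--Mumford stack. Two adjustments are required for the present situation: first, dropping the stability condition, which enlarges the moduli space from a Deligne--Mumford to an Artin stack by permitting positive-dimensional automorphism groups on unstable contracted components; second, allowing the target to be the Artin log stack $\sX/\sV$ rather than a log scheme.

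For the first step, I would use the projection $\ACGS(\sX/\sV) \rightarrow \sV$ together with the algebraicity of $\sV$ (which is either a point or $\sA$) to reduce to establishing algebraicity after base change to a smooth cover of $\sV$; in the degeneration case one passes to the atlas $\bA^1 \rightarrow \sA$. I would then follow the strategy from the cited references: realize $\ACGS(\sX/\sV)$ as an open substack of the larger moduli stack parameterizing all logarithmically commutative diagrams of the shape~\eqref{eqn:14} with $C$ a pre-stable log smooth curve over $S$. The stack of pre-stable log smooth curves is itself algebraic (Kato/Olsson), and the fibers of the forgetful morphism classifying log morphisms into $\sX/\sV$ are algebraic by the local model arguments of \cite{GS,Chen,AC}.

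The basicness condition cuts out an open substack by \cite[Proposition~1.20]{GS} (equivalently \cite[Proposition~3.5.2]{Chen}), which characterizes basicness as a pointwise combinatorial condition on the stalks of the characteristic monoids. The main obstacle is verifying that the algebraicity proofs of Gross--Siebert, Chen, and Abramovich--Chen---which are typically formulated for log scheme targets---extend verbatim to the Artin log stack target $\sX/\sV$. This extension should be routine: the arguments in loc.\ cit.\ are \'etale-local on the target and rely only on its logarithmic smoothness, which is supplied here by Corollary~\ref{cor:log-stacks}. Once algebraicity is established, one notes that stability is used in the cited theorems only to conclude that automorphism groups are finite (giving the Deligne--Mumford property); dropping it leaves an Artin stack.
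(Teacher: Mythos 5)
Your citations point to the wrong theorems in the source papers, and the direction of your argument is inverted relative to the actual state of the literature. You cite \cite[Corollary~2.8]{GS}, \cite[Proposition~3.8.1]{Chen}, and \cite[Theorem~5.8]{AC}, which are the \emph{properness and Deligne--Mumford} results for the \emph{stable} loci; these are what the paper uses for the companion statement that $\ACGSstab(X/V)$ is a proper Deligne--Mumford stack. You then propose to work backwards and ``drop the stability condition'' to recover the Artin statement. That is not how those papers are organized: in both Gross--Siebert and Chen, algebraicity of the full (non-stable, pre-stable) moduli stack is established \emph{first}, as an Artin stack, and stability is subsequently imposed as an open condition to cut out the Deligne--Mumford locus. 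The relevant results are \cite[Corollary~2.6]{GS} and \cite[Corollary~3.5.4]{Chen}, which directly assert algebraicity of the stack of basic log maps without a stability hypothesis; this is precisely what the paper cites. Passing from a Deligne--Mumford conclusion to an Artin one by ``removing stability'' cannot be done formally, since the Deligne--Mumford theorem gives you no information about the non-stable points; you would in effect have to redo the algebraicity proof, at which point you may as well invoke the existing Artin-stack result.

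The secondary concern you raise---that the target $\sX/\sV$ is an Artin log stack rather than a log scheme and that the references might require extension---is reasonable to flag, but it is a non-issue for these particular references: the Gross--Siebert and Chen frameworks already accommodate targets such as $\sA$ and $\sA^2 \to \sA$ (in Chen's language, generalized Deligne--Faltings log structures), which is what makes the citation legitimate as stated. The internal sketch you give (open substack of a larger moduli of log maps, basicness as an open combinatorial condition, Olsson's $\Log$ for the curve moduli) accurately reflects the structure of those proofs, but it amounts to re-deriving the cited theorems rather than reducing to them.
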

\begin{proof}
See \cite[Corollary~2.6]{GS} and \cite[Corollary~3.5.4]{Chen}.
\end{proof}

\begin{nlem}[\namer{\ref{lem:spaces}}{$\ACGS$}{\ref{spaces:unobs}}] \label{lem:acgs-etale}
The projection $\ACGS(\sX / \sV)_{\log} \rightarrow \fM_{\log} \fp \sV_{\log}$ is logarithmically \'etale and unobstructed.
\end{nlem}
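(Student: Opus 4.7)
The key input is the logarithmic étaleness of $\sX_{\log} \to \sV_{\log}$ furnished by Corollary~\ref{cor:log-stacks}, exactly as in the proof of Lemma~\namerr{lem:spaces}{$\Kim$}{spaces:unobs}. Since the morphism $\ACGS(\sX/\sV)_{\log} \to \fM_{\log} \fp \sV_{\log}$ is strict (both sides carry their natural logarithmic structures and the second is pulled back from the first), it suffices to verify logarithmic étaleness, i.e., that the infinitesimal lifting problem for a strict square-zero extension of logarithmic schemes $(S, M_S) \hookrightarrow (S', M_{S'})$ has a unique solution.

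I will unpack this lifting problem. An $(S, M_S)$-point of $\ACGS(\sX/\sV)_{\log}$ is a basic logarithmic stable map
\[
\xymatrix{
(C, M_C) \ar[r]^f \ar[d] & \sX_{\log} \ar[d] \\
(S, M_S) \ar[r] & \sV_{\log},
}
\]
and an $(S', M_{S'})$-point of $\fM_{\log} \fp \sV_{\log}$ extending its image consists of a logarithmically smooth curve $(C', M_{C'})$ over $(S', M_{S'})$ pulling back strictly to $(C, M_C)$ together with a logarithmic morphism $(S', M_{S'}) \to \sV_{\log}$ extending the one over $(S, M_S)$. The data then translate into the strict logarithmic extension problem of lifting $f$ through the dashed arrow in
\[
\xymatrix{
(C, M_C) \ar[r]^f \ar[d] & \sX_{\log} \ar[d] \\
(C', M_{C'}) \ar@{-->}[ur] \ar[r] & \sV_{\log} ,
}
\]
where the lower horizontal arrow is the composite $(C', M_{C'}) \to (S', M_{S'}) \to \sV_{\log}$.

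By Corollary~\ref{cor:log-stacks}, the morphism $\sX_{\log} \to \sV_{\log}$ is logarithmically étale, so this lifting problem has a unique solution $\tilde f : (C', M_{C'}) \to \sX_{\log}$. To conclude that $\tilde f$ defines an $(S', M_{S'})$-point of $\ACGS(\sX/\sV)_{\log}$, it remains to check that $\tilde f$ is basic. This is a condition on geometric fibers, and since $(S, M_S) \hookrightarrow (S', M_{S'})$ is a strict thickening, every geometric point of $S'$ is supported on $S$ and its logarithmic structure restricts to that of the corresponding geometric point of $S$. The geometric fibers of $\tilde f$ therefore coincide with those of $f$ as logarithmic maps, and basicness is preserved.

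The main step where one must be careful is this last verification of basicness; everything else is a direct application of Corollary~\ref{cor:log-stacks}. Having established logarithmic étaleness, unobstructedness follows: the torsor of lifts for the logarithmic obstruction theory becomes trivial (every lifting problem has a unique solution), so all obstructions vanish and the relevant obstruction gerbes admit the canonical section, confirming that $\ACGS(\sX/\sV)_{\log}$ is locally unobstructed over $\fM_{\log} \fp \sV_{\log}$ in the sense of section~\ref{sec:loc-unobs}.
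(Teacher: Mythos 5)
Your proposal is correct and follows essentially the same route as the paper: set up the logarithmic lifting problem for a strict square-zero extension, observe it reduces to lifting across $\sX_{\log} \to \sV_{\log}$, and conclude from the logarithmic \'etaleness of that map (Corollary~\ref{cor:log-stacks}). The only difference is that you add an explicit check that the lift is basic, which the paper leaves tacit; this is a sensible precaution (and your argument via geometric fibers of the nilpotent thickening is the right one), though under the usual conventions for the $\log$-decorated stack the $(S', M_{S'})$-points of $\ACGS(\sX/\sV)_{\log}$ need not themselves be basic, so the verification is not strictly required.
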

\begin{proof}
The (logarithmic) lifting problem here is
\begin{equation*} \xymatrix{
C \ar[r] \ar[d]_\pi \ar@/^15pt/[rr] & C' \ar[d] \ar@{-->}[r] & \sX \ar[d] \\
S \ar[r] & S' \ar[r] & \sV .
} \end{equation*}
The existence and uniqueness of the dashed arrow are immediate from the fact that $\sX$ is logarithmically \'etale over $\sV$ (Corollary~\ref{cor:log-stacks}).
\end{proof}

\begin{nlem}[\namer{\ref{lem:spaces}}{$\ACGS$}{\ref{spaces:dense}}] \label{cor:acgs-dense} 
The locus of totally nondegenerate maps is dense in $\ACGS(\sX/\sV)$.
\end{nlem}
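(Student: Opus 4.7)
The plan is to mimic the argument of \hyperref[cor:kim-dense]{Lemma~\namer{\ref*{lem:spaces}}{$\Kim$}{\ref*{spaces:dense}}}. First, I would identify the totally nondegenerate locus with the locus where the natural (minimal) logarithmic structure on $\ACGS(\sX/\sV)$ is trivial. Indeed, by Definition~\ref{def:nondeg} a totally nondegenerate object has smooth source curve and either has the preimage of the special divisor finite over $S$ (in the pairs case) or $\sX_S$ smooth over $S$ (in the degeneration case); in either case the curve is log smooth over $S$ with trivial log structure on $S$ and the map factors through the open stratum of $\sX$, so the logarithmic structure contributed by $\sX$ and by the nodes/markings of the curve is trivial. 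Conversely, an object with trivial minimal log structure on $S$ cannot have nondistinguished nodes nor send any node or marked point of the curve into the special locus of $\sX$, so it is totally nondegenerate.

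Next, I would show that $\ACGS(\sX/\sV)_{\log}$ is logarithmically smooth over a point. By \hyperref[lem:acgs-etale]{Lemma~\namer{\ref*{lem:spaces}}{$\ACGS$}{\ref*{spaces:unobs}}}, the projection
\[
\ACGS(\sX/\sV)_{\log} \longrightarrow \fM_{\log} \fp \sV_{\log}
\]
is logarithmically \'etale. The stack $\fM_{\log}$ of pre-stable logarithmic curves is logarithmically smooth over a point (its log structure is locally free and the underlying stack $\fM$ is smooth), and $\sV_{\log}$ is logarithmically smooth over a point since $\sV \in \{\pt, \sA\}$ with its tautological (locally free) log structure (see Corollary~\ref{cor:log-stacks}). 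Composing, $\ACGS(\sX/\sV)_{\log}$ is logarithmically smooth over a point.

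Finally, the locus where a logarithmically smooth stack has trivial logarithmic structure is precisely its smooth locus in the classical sense, and this is a dense open substack (the complement is the closed stratum where some irreducible generator of the characteristic monoid is nontrivial). Combining this with the identification in the first step, the totally nondegenerate maps form a dense open substack of $\ACGS(\sX/\sV)$. The main subtlety is merely in verifying the first step in both the pairs case and the degeneration case, but this is a direct inspection of the minimality condition of \cite[Definition~1.19]{GS} and \cite[Definition~3.5.1]{Chen} against Definition~\ref{def:nondeg}.
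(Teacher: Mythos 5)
Your proposal is correct and follows essentially the same approach as the paper's proof, which simply observes that the totally nondegenerate locus is the locus of trivial log structure and that this is dense because $\ACGS(\sX/\sV)$ is logarithmically smooth. You have merely filled in the details that the paper leaves implicit, namely the identification of the nondegenerate locus with the trivial-log-structure locus and the deduction of logarithmic smoothness from \hyperref[lem:acgs-etale]{Lemma~\namer{\ref*{lem:spaces}}{$\ACGS$}{\ref*{spaces:unobs}}} combined with the logarithmic smoothness of $\fM_{\log} \fp \sV_{\log}$.
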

\begin{proof}
This locus is the same as the open substack of $\ACGS(\sX/\sV)$ where the logarithmic structure is trivial, which is dense because $\ACGS(\sX/\sV)$ is logarithmically smooth.  We thank Q.\ Chen for pointing out this simple proof.
\end{proof}

\subsubsection{The relative obstruction theory}

\begin{nlem}[\namer{\ref{lem:spaces}}{$\ACGS$}{\ref{spaces:cart}}]
The diagram \eqref{diag:naive} for $K=\ACGS$ is cartesian.
\end{nlem}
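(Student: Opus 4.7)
The plan is to exploit the strictness of the morphisms $X \to \sX$ and $V \to \sV$, which holds by definition of the logarithmic structures on $X$ and $V$ as pullbacks from $\sX$ and $\sV$ along the structural morphisms. Because of this strictness, specifying a logarithmic morphism to $(X, M_X)$ over $(V, M_V)$ with a prescribed underlying scheme-theoretic morphism to $X$ is the same as specifying a logarithmic morphism to $(\sX, M_\sX)$ over $(\sV, M_\sV)$ whose underlying morphism factors through $X \to \sX$ compatibly.

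First, I would unpack an $S$-point of the fibre product $\fM(X/V) \fp_{\fM(\sX/\sV)} \ACGS(\sX/\sV)$. This consists of a pre-stable curve $C/S$ together with an underlying commutative square for $X/V$ as in~\eqref{eqn:10}; plus a logarithmic structure $M_S$ on $S$, a log smooth curve structure $M_C$ on $C$ over $(S,M_S)$, and a basic logarithmic morphism $(C,M_C) \to (\sX, M_\sX)$ over $(S,M_S) \to (\sV, M_\sV)$ whose underlying morphism of schemes is the composition of $C \to X$ with $X \to \sX$ (and similarly on the base).

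Second, I would apply strictness: since $M_X$ is pulled back from $M_\sX$, the log morphism to $(\sX,M_\sX)$ together with the factorisation through $X$ canonically promotes to a log morphism $(C,M_C) \to (X,M_X)$, and likewise on the base to give a map $(S,M_S) \to (V,M_V)$. This produces from the fibre-product data a logarithmic commutative square for $X/V$; conversely, composition with the structure maps recovers the original data, and the two operations are 2-inverse to each other in the obvious way.

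The only remaining point is that the \emph{basicness} (minimality) condition is preserved under this identification. This is a condition on the characteristic monoids $\oM_C$ and $\oM_S$ together with the maps from $\oM_{C/S}$ and from the pullback of the target's characteristic sheaf (see \cite[Definition~1.19]{GS}, \cite[Definition~3.5.1]{Chen}); because $X \to \sX$ and $V \to \sV$ are strict, the pullback of $\oM_X$ to $C$ coincides with the pullback of $\oM_\sX$, so the combinatorial data determining basicness are identical in the two situations. The mild obstacle, then, is purely bookkeeping---checking that the equivalence of groupoids is functorial in $S$---but this is automatic from the universal property of strict morphisms of log schemes.
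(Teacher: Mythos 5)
Your proof is correct and uses essentially the same idea as the paper. The paper compresses the argument to a single line, observing that $X \to V \fp_{\sV} \sX$ is strict; since $M_V$ is itself pulled back from $M_{\sV}$, the log structure on $V \fp_{\sV} \sX$ is just the pullback of $M_{\sX}$, so this strictness is equivalent to the strictness of $X \to \sX$ and $V \to \sV$ that you invoke, and the remainder of your argument (lifting the log morphism by strictness and checking that basicness depends only on characteristic monoids, which are unchanged) is exactly the content the paper leaves implicit.
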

\begin{proof}
Immediate from the fact that $X \rightarrow V \fp_{\sV} \sX$ is strict.
\end{proof}

As usual, this implies that we get a relative obstruction theory $\sE'$ for $\ACGS(X/V)$ over $\ACGS(\sX/\sV) \fp_{\sV} V$ by pulling back the obstruction theory $\sF$ of $\fM(X/V)$ over $\fM(\sX/\sV) \fp_{\sV} V$ (section~\ref{sec:naive}).

\subsubsection{The obstruction theory of Gross and Siebert}

Now we recall the definition of the obstruction theory from \cite[Section~5]{GS}.

For any $X/V$ we have a logarithmic map $\ACGS(X/V)_{\log} \rightarrow \fM_{\log} \fp V_{\log}$.  This corresponds to a map of stacks over schemes $\ACGS(X/V) \rightarrow \Log(\fM) \fp_{\Log} \Log(V)$, where the $S$-points of the target
\Jonathan{changed a word}
consist of a logarithmic structure $M_S$ on $S$ and a pair of logarithmic maps $(S, M_S) \rightarrow \fM$ and $(S, M_S) \rightarrow V$.

The obstruction theory for $\ACGS(X/V)$ is defined relative to $\sZ := \Log(\fM) \fp_{\Log} \Log(V)$.  Indeed, a lifting problem
\begin{equation*} \xymatrix{
S \ar[r] \ar[d] & \ACGS(X/V) \ar[d] \\
S' \ar@{-->}[ur] \ar[r] & \Log(\fM) \fp_{\Log} \Log(V)
} \end{equation*}
corresponds to a logarithmic lifting problem
\begin{equation*} \xymatrix{
(C,M_C) \ar[r] \ar[d]_\pi \ar@/^15pt/[rr]^f & (C',M_{C'}) \ar[d] \ar@{-->}[r] & X \ar[d] \\
(S,M_S) \ar[r] & (S',M_{S'}) \ar[r] & V .
} \end{equation*}
Solutions to this problem naturally form a torsor on $C$ under $f^\ast T_{X/V}^{\log} \tensor \pi^\ast J$, where $J = I_{S/S'}$ is the ideal of $S$ in $S'$.  Therefore if we define $\sE(S,J)$ to be the category of torsors on $C$ under $f^\ast T_{X/V}^{\log} \tensor \pi^\ast J$, we obtain a relative obstruction theory $\sE$ for $\ACGS(X/V)$ over $\Log(\fM) \fp_{\Log} \Log(V)$.

\begin{lemma}
$\sE$ is a perfect relative obstruction theory.
\end{lemma}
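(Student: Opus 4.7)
The plan is to mimic the argument already used for the Kim obstruction theory (Lemma~\ref{lem:kim-perf}).  By construction, $\sE(S,J)$ is the category of torsors on $C$ under the sheaf $f^\ast T_{X/V}^{\log} \tensor \pi^\ast J$.  The isomorphism classes of such torsors are classified by $H^1(C, f^\ast T_{X/V}^{\log} \tensor \pi^\ast J)$ and the automorphism group of any torsor by $H^0$ of the same sheaf, so $\sE(S,J)$ is represented, in the sense of section~\ref{sec:obs}, by the object $\RR\pi_\ast(f^\ast T_{X/V}^{\log} \tensor \pi^\ast J)[1]$ of the derived category of $S$.

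First I would check that $T_{X/V}^{\log} = (\Omega_{X/V}^{\log})^\vee$ is locally free.  This follows from the fact that $X \to V$ is logarithmically smooth: by the cartesian description of the diagram in \ref{spaces:cart} and Corollary~\ref{cor:log-stacks}, $\Omega_{X/V}^{\log} = \Omega_{X / \sX\fp_\sV V}$, and the latter is locally free because $X$ is smooth over $\sX \fp_\sV V$ in both the smooth-pair and acceptable-degeneration settings.  With this in hand, the projection formula gives
\begin{equation*}
\RR\Hom_{\cO_C}(f^\ast \Omega_{X/V}^{\log}, \pi^\ast J) = \RR\pi_\ast\bigl(f^\ast T_{X/V}^{\log}\bigr) \tensor^{\LL} J,
\end{equation*}
so $\sE(S,J)$ is represented by the shifted dual $\RR\pi_\ast(f^\ast T_{X/V}^{\log})^\vee[1]$.

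The final step is to observe that this complex is perfect in cohomological degrees $[-1,0]$: since $f^\ast T_{X/V}^{\log}$ is a vector bundle and the fibers of $\pi$ are one-dimensional, $\RR\pi_\ast$ has cohomological amplitude $[0,1]$, and dualizing and shifting places the result in $[-1,0]$.  I do not foresee any substantive obstacle here; the main subtlety worth verifying carefully is the identification $\Omega_{X/V}^{\log} = \Omega_{X / \sX \fp_\sV V}$, which ensures that what superficially looks like a relative cotangent sheaf over a base with log structure is in fact a genuine locally free sheaf to which $R\pi_\ast$ can be applied in the standard way.
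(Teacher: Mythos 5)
Your proposal is correct and takes essentially the same approach as the paper, which simply states that ``the proof of perfection is identical to the proof of Lemma~\ref{lem:kim-perf}'' (the corresponding statement for Kim's theory). You have unpacked that reference: the identification $\RR\Hom(f^\ast\Omega^{\log}_{X/V},\pi^\ast J)=\RR\pi_\ast(f^\ast T^{\log}_{X/V})\tensor J$, the representability of $\sE$ by $R\pi_\ast(f^\ast T^{\log}_{X/V})^\vee[1]$, and the amplitude bound coming from the fact that $T^{\log}_{X/V}$ is locally free while the fibers of $\pi$ have cohomological dimension~$1$. The only piece you add beyond the paper's one-liner is the explicit justification that $\Omega^{\log}_{X/V}=\Omega_{X/\sX\fp_\sV V}$ is locally free, but this is exactly the reasoning the paper already carried out in section~\ref{sec:naive} when establishing that $\sF$ is perfect, so you are not departing from the authors' route---you are just restating a step they chose to leave implicit here.
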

\begin{proof}
We have already seen that it is an obstruction theory.  The proof of perfection is identical to the proof of Lemma~\ref{lem:kim-perf}.
\end{proof}
%The relative obstruction theory for $\ACGS(X/V)$ over $\ACGS(\sX/\sV)$ comes from the (non-logarithmic) lifting problem
%\begin{equation*} \xymatrix{
%& & X \ar[d] \\
%C \ar[r] \ar@/^10pt/[urr]^f \ar[d]_\pi & C' \ar[d] \ar@{-->}[ur] \ar[r] & \sX \fp_{\sV} V \ar[d] \\
%S \ar[r] & S' \ar[r] & V .
%} \end{equation*}
%These lifts correspond to lifts of
%\begin{equation*} \xymatrix{
%C \ar[r] \ar[d] & X \ar[d] \\
%C' \ar@{-->}[ur] \ar[r] & \sX \fp_{\sV} V ,
%} \end{equation*}
%which form a torsor on $C$ under $f^\ast T_{X / \sX \fp_{\sV} V} \tensor \pi^\ast J$ since $X$ is smooth over $\sX \fp_{\sV} V$ by hypothesis.  Therefore taking $\sF(S,J)$ to be the category of torsors under $f^\ast T_{X/ \sX \fp_{\sV} V} \tensor \pi^\ast J$ gives a relative obstruction theory over $\ACGS(\sX/\sV)$.

\subsubsection{Comparison of the obstruction theories}

\begin{nlem}[\namer{\ref{lem:spaces}}{$\ACGS$}{\ref{spaces:obs}}]
There is a compatible sequence of obstruction theories for the sequence of maps
\begin{equation*}
\ACGS(X/V) \rightarrow \ACGS(\sX/\sV) \fp_{\sV} V \rightarrow \Log(\fM) \fp_{\Log} \Log(V)
\end{equation*}
such that the relative obstruction theory for $\ACGS(\sX/\sV)$ over $\fM \fp_{\Log} \sV$ is the canonical one (whose relative virtual fundamental class is the fundamental class).  In the language of \hyperref[spaces:obs]{Lemma~\ref*{lem:spaces}~\ref*{spaces:obs}}, $\sZ = \Log(\fM) \fp_{\Log} \Log(V)$.
\Jonathan{added last sentence}
\end{nlem}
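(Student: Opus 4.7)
My plan is to adapt the template of the analogous statement for $\Kim$. The key observation is that $\ACGS(\sX/\sV) \fp_{\sV} V$ is étale over $\sZ = \Log(\fM) \fp_{\Log} \Log(V)$, so the associated relative obstruction theory $\sE''$ is trivial; combined with an identification of $\sE'$ with $\sE$ coming from that same étaleness, the compatible sequence falls out with essentially no further work.

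First I would deduce étaleness of $\ACGS(\sX/\sV) \fp_{\sV} V \to \sZ$ from Lemma~\ref{lem:acgs-etale}. Because $V \to \sV$ is strict, base change along it preserves logarithmic étaleness, so from the log étaleness of $\ACGS(\sX/\sV)_{\log} \to \fM_{\log} \fp \sV_{\log}$ we get log étaleness of $\ACGS(\sX/\sV)_{\log} \fp_{\sV_{\log}} V_{\log} \to \fM_{\log} \fp V_{\log}$, which at the level of $\Log$-stacks is exactly the desired étaleness. The canonical relative obstruction theory $\sE''$ of an étale map is trivial, so in particular $\ACGS(\sX/\sV) \fp_{\sV} V$ is locally unobstructed over $\sZ$.

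Second I would identify $\sE'$ with $\sE$. Both are defined as categories of torsors on $C$ under $f^\ast T_{X/V}^{\log} \tensor \pi^\ast J$: for $\sE'$ this comes from pulling back $\sF$ of section~\ref{sec:naive}, while for $\sE$ this is the Gross--Siebert construction. The natural forgetful map between the two lifting problems promotes a lift of the underlying scheme data over $\sX \fp_{\sV} V$ to a lift of the logarithmic data over $V_{\log}$, and by the étaleness of the first step this promotion exists and is uniquely determined, so the induced morphism $\sE' \to \sE$ is an equivalence.

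Third I would assemble the compatible sequence $0 \to \sE' \to \sE \to \sE'' \to 0$. With $\sE''$ trivial and $\sE' \to \sE$ an equivalence, the conditions of section~\ref{sec:obs} are immediate. Equivalently, using the rotation trick of section~\ref{sec:loc-unobs} (justified by the local unobstructedness just noted), compatibility becomes exactness of $0 \to \sT'' \to \sE' \to \sE \to 0$, where $\sT''$ is the automorphism group of the zero section of $\sE''$; étaleness forces $\sT'' = 0$ and we recover the same isomorphism. The final clause of the statement---that the relative obstruction theory for $\ACGS(\sX/\sV)$ over $\fM \fp_{\Log} \sV$ is the canonical one recovering the fundamental class---is then just a restatement of the triviality of $\sE''$ together with étaleness.

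The main obstacle will be the second step: carefully reconciling the two lifting problems underlying $\sE'$ and $\sE$. Concretely, for a square-zero extension $S \hookrightarrow S'$ over $\sZ$, one must verify that the log-structural enhancement needed to upgrade a scheme-theoretic deformation over $\sX \fp_{\sV} V$ to an $\ACGS(\sX/\sV) \fp_{\sV} V$-deformation exists and is uniquely determined. This is in the end a consequence of the log étaleness of $\sX \to \sV$ (Corollary~\ref{cor:log-stacks}) propagating to the moduli, and it is the essential content distinguishing the ACGS argument from its Kim analogue.
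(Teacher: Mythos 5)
Your proposal matches the paper's proof essentially step for step: both identify $\sE'$ with $\sE$ by noting they are (pulled back from) categories of torsors under the same sheaf $f^\ast T^{\log}_{X/V}\tensor\pi^\ast J$, and both kill $\sE''$ via the \'etaleness from Lemma~\ref{lem:acgs-etale}. The "main obstacle" you flag is treated by the paper as immediate, since the two obstruction theories are defined by literally the same formula, so your extra caution is safe but not needed.
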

\begin{proof}
Both $\sE(S,J)$ and $\sF(S,J)$ have been defined as the category of torsors under the sheaf of groups $f^\ast T_{X/V}^{\log} \tensor \pi^\ast J$.  The relative obstruction theories of $\ACGS(X/V)$ over $\ACGS(\sX/\sV) \fp_{\sV} V$ and over $\Log(\fM) \fp_{\Log} \Log(V)$ are therefore the same.  By \hyperref[lem:acgs-etale]{Lemma~\namer{\ref*{lem:spaces}}{$\ACGS$}{\ref*{spaces:unobs}}}, the projection $\ACGS(\sX/\sV) \fp_{\sV} V \rightarrow \Log(\fM) \fp_{\Log} \Log(V)$ is \'etale, so we may choose its relative obstruction theory to be trivial.\footnote{In fact, we don't have to \emph{choose} the relative obstruction theory for $\ACGS(\sX/\sV)$ over $\Log(\fM) \fp_{\Log} \Log(V)$ to be trivial:  if we were to apply the definition of the relative obstruction theory for $\ACGS(X/V)$ over $\Log(\fM) \fp_{\Log} \Log(V)$ to the case $X = \sX$ and $V = \sV$, we would discover that it is trivial because $\sX$ is logarithmically \'etale over $\sV$.}  We therefore obtain a compatible sequence of obstruction theories.
\end{proof}

\section{Maps between moduli spaces} \label{sec:maps}
\Jonathan{changed section title}

In this section, we describe the maps $\Psi, \Theta,$ and $\Upsilon$, along with their corresponding cartesian diagrams~\eqref{eqn:4}.  In each case we prove the remaining parts of Lemma~\ref{lem:maps}, thus completing the proof of Theorem~\ref{thm:compare}.

\subsection{Orbifold expanded stable maps to relative stable maps:  the cartesian square for $\Psi$}

Since $\Psi$ will not collapse any components in the source curves there is no need to modify $\AF(\sX/\sV)$ and diagram~\eqref{eqn:4} takes the form
 \begin{equation}\label{diag:psi} \vcenter{\xymatrix{
      \AFstab(X/V) \ar[r]^\Psi \ar[d] & \JListab(X/V) \ar[d] \\
      \AF(\sX/\sV) \ar[r] & \JLi(\sX/\sV) .
    }}
\end{equation}
We construct this diagram as follows.  To an orbifold expanded stable map
\begin{equation*} \xymatrix{
\tC \ar[r] \ar[d] & X^{\bfr} \ar[d] \\
S \ar[r] & V^{\bfr}
} \end{equation*}
the map $\Psi$ assigns a relative stable map
\begin{equation*} \xymatrix{
C\ar[r]\ar[d]& X^{\exp}\ar[d]\\
S\ar[r]& V^{\exp}\\
} \end{equation*}
in which $C$ is the relative coarse moduli space of the map $\tC \rightarrow X^{\exp} \fp_{V^{\exp}} S$ induced from the composition with the untwisting morphisms (see \cite[Section~2]{AF} or \cite[Section~7]{ACFW}):
\begin{equation*} \xymatrix{
\tC\ar[r] \ar[d]& X^{\bfr}\ar[r]\ar[d]& X^{\exp}\ar[d]\\
S\ar[r] & V^{\bfr}\ar[r]& V^{\exp}.
} \end{equation*}
Applying this with $X = \sX$ and $V = \sV$ gives the construction of $\AF(\sX/\sV) \rightarrow \JLi(\sX/\sV)$.
%Note that the map $\overline{C}\to X^{\exp}$ factors through $S\fp_{V^{\exp}}X^{\exp}$, so taking the coarse moduli space of $C$ relative to $S\fp_{V^{\exp}}X^{\exp}$ will produce the same result, ensuring that our map is well defined.  The construction of the map $\AF(\sX/\sV) \to \JLi(\sX/\sV)$ is analogous.

The left and right vertical arrows in diagram~\eqref{diag:psi} are given by the compositions
\begin{equation*} \vcenter{\xymatrix{
\tC \ar[r] \ar[d] & X^{\bfr} \ar[d] \ar[r]& \sX^{\bfr}\ar[d]\\
S \ar[r] & V^{\bfr}\ar[r] & \sV^{\bfr}
}} 
\qquad  \text{and} \qquad
\vcenter{\xymatrix{
\tC \ar[r] \ar[d] & X^{\exp} \ar[d] \ar[r]& \sX^{\exp}\ar[d]\\
S \ar[r] & V^{\exp}\ar[r] & \sV^{\exp}
}} \end{equation*}
respectively.  This diagram is commutative by construction.  In order to show it is cartesian, we begin with the following lemma.

\begin{lemma}\label{lem:aftoli}
The diagram
\begin{equation*} \xymatrix{
      \AF(X/V) \ar[r] \ar[d] & \JLi(X/V) \ar[d] \\
      \AF(\sX/\sV) \ar[r] & \JLi(\sX/\sV) .
    }
\end{equation*}
is cartesian.
\end{lemma}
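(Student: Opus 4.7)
The plan is a formal manipulation exploiting the Cartesian descriptions $X^{\bfr} = X \fp_{\sX} \sX^{\bfr}$ and $V^{\bfr} = V \fp_{\sV} \sV^{\bfr}$ (and likewise for $X^{\exp}$, $V^{\exp}$), together with the fact that the conditions defining $\AF$ are smooth-local on the target, hence preserved under the smooth base change $X^{\bfr} \to \sX^{\bfr}$.

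First I would exhibit the natural map from $\AF(X/V)$ into the fiber product. Given $\tC \to X^{\bfr}$ over $S \to V^{\bfr}$, composition with the projections $X^{\bfr} \to \sX^{\bfr}$ and $V^{\bfr} \to \sV^{\bfr}$ gives the $\AF(\sX/\sV)$-datum, while passage to the relative coarse moduli space $C$ followed by composition with the untwisting morphisms $X^{\bfr} \to X^{\exp}$, $V^{\bfr} \to V^{\exp}$ yields the $\JLi(X/V)$-datum; commutativity of the relevant square identifies their two images in $\JLi(\sX/\sV)$ canonically.

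For the inverse I would start from an $\AF(\sX/\sV)$-object $\tC \to \sX^{\bfr}$ over $S \to \sV^{\bfr}$, a $\JLi(X/V)$-object $C \to X^{\exp}$ over $S \to V^{\exp}$, and an isomorphism of their images in $\JLi(\sX/\sV)$. This isomorphism identifies $C$ with the relative coarse moduli space of $\tC$ and matches the two maps to $\sX^{\exp}$, so in particular the composites $\tC \to \sX^{\bfr} \to \sX$ and $\tC \to C \to X^{\exp} \to X \to \sX$ coincide; the universal property of $X^{\bfr} = X \fp_{\sX} \sX^{\bfr}$ therefore produces a unique lift $\tC \to X^{\bfr}$. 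Similarly the data $S \to \sV^{\bfr}$ and $S \to V^{\exp} \to V$ combine to give $S \to V^{\bfr}$, and the map $\tC \to X^{\bfr}$ is linear over $V^{\bfr}$.

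It then remains to check that this reconstructed $\tC \to X^{\bfr}$ over $S \to V^{\bfr}$ satisfies the conditions of Definition~\ref{def:AF} and that the two constructions are mutually quasi-inverse. The twisted-curve data, the twisting-choice condition along the special locus, and the transversality, representability, and smooth-locus conditions are all smooth-local on the target, so they descend from the $\AF(\sX/\sV)$-object via the smooth map $X^{\bfr} \to \sX^{\bfr}$. I do not expect a substantive obstacle anywhere; the content of the lemma is essentially the observation that the extra $X$-valued information needed to promote an $\AF(\sX/\sV)$-object to an $\AF(X/V)$-object is already visible through the $\JLi(X/V)$-object of the relative coarse moduli. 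The only point that requires care is using the given isomorphism in $\JLi(\sX/\sV)$ correctly to match the two incarnations of the target curve before invoking the universal properties above.
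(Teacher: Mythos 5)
Your argument is correct and is essentially the paper's: a formal manipulation exploiting the cartesian description of the twisted universal expansion $X^{\bfr}$, $V^{\bfr}$. The only cosmetic difference is that you invoke the squares $X^{\bfr} = X \fp_{\sX} \sX^{\bfr}$ and $V^{\bfr} = V \fp_{\sV} \sV^{\bfr}$ (and then pass through $X^{\exp} \to X$, $V^{\exp} \to V$), whereas the paper uses the equivalent squares $X^{\bfr} = X^{\exp} \fp_{\sX^{\exp}} \sX^{\bfr}$ and $V^{\bfr} = V^{\exp} \fp_{\sV^{\exp}} \sV^{\bfr}$, which match the $\JLi$-isomorphism directly (it is an identification over $\sX^{\exp}$, not over $\sX$) and therefore sidestep the extra composition; this is why the paper can call the conclusion ``immediate'' without the separate discussion of descent of the $\AF$-conditions you include.
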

\begin{proof}
The maps in the diagram are given by the same constructions as above.  The lemma is an immediate application of the cartesian diagrams
\begin{equation*} \vcenter{\xymatrix{
X^{\bfr} \ar[r] \ar[d] & X^{\exp} \ar[d] \\
\sX^{\bfr} \ar[r] & \sX^{\exp}
}} \qquad \text{and} \qquad \vcenter{\xymatrix{
V^{\bfr} \ar[r] \ar[d] & V^{\exp} \ar[d] \\
\sV^{\bfr} \ar[r] & \sV^{\exp}.
}} \end{equation*}
\end{proof}

\begin{nlem}[\namer{\ref{lem:maps}}{$\Psi$}{\ref{maps:DM}}]\label{prop:DM:AFtoLi}
The map $\AF(\sX/\sV) \rightarrow \JLi(\sX/\sV)$ is of relative Deligne--Mumford type.
\end{nlem}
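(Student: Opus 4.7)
The plan is to verify that $\AF(\sX/\sV) \rightarrow \JLi(\sX/\sV)$ is of Deligne--Mumford type by checking that its relative inertia is unramified. Concretely, for any geometric point $\xi$ of $\AF(\sX/\sV)$ over $\bC$ with image $\eta$ in $\JLi(\sX/\sV)$, I will show that the group of automorphisms of $\xi$ fixing $\eta$ is a finite \'etale group scheme.

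The point $\xi$ consists of a representable map $f : \tC \rightarrow \sX^{\bfr}$ from a twisted curve to a $\bfr$-twisted expansion, and the point $\eta$ consists of the induced map $C \rightarrow \sX^{\exp}$ on coarse moduli (see the construction of $\Psi$ and Lemma~\ref{lem:aftoli}). A relative automorphism of $\xi$ over $\eta$ is thus a pair $(\phi, \psi)$ where $\phi : \tC \rightarrow \tC$ is an automorphism over $S$ inducing the identity on $C$, $\psi : \sX^{\bfr} \rightarrow \sX^{\bfr}$ is an automorphism over $V^{\bfr}$ inducing the identity on $\sX^{\exp}$, and $\psi \circ f = f \circ \phi$.

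The key input is that both $\tC \rightarrow C$ and $\sX^{\bfr} \rightarrow \sX^{\exp}$ are obtained by root stack / cyclotomic gerbe constructions at the distinguished loci, with twisting indices determined by $\bfr$ and the contact orders recorded by $\eta$ (see Definition~\ref{def:AF} and \cite[Section~2.4]{ACFW} or \cite[Section~7]{ACFW}). Consequently, automorphisms of $\tC$ fixing $C$ and of $\sX^{\bfr}$ fixing $\sX^{\exp}$ each form finite products of cyclotomic groups $\mu_{r_i}$, which are finite \'etale group schemes over $\bC$. The compatibility condition $\psi \circ f = f \circ \phi$ cuts out a closed subgroup of the product (in fact, because $f$ is representable, the contribution from each distinguished node of $\tC$ is a graph-type subgroup of the form $\mu_{r_i} \hookrightarrow \mu_{r_i} \times \mu_{s_j}$), and closed subgroups of finite \'etale group schemes are again finite \'etale.

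I expect the main subtlety to be the bookkeeping of the compatibility condition at nodes of $\tC$ that map to distinguished loci of $\sX^{\bfr}$, and in particular the verification that representability forces the relative $\mu_{r_i}$-action on $\tC$ to be determined by, rather than independent of, the chosen automorphism of $\sX^{\bfr}$. Once this identification is made, however, the relative inertia of $\AF(\sX/\sV) \rightarrow \JLi(\sX/\sV)$ is a finite \'etale group scheme over the source, hence unramified, and the morphism is of Deligne--Mumford type.
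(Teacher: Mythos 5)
Your strategy is sound but differs from the paper's, and the paper's route is somewhat more robust. The paper does not compute the relative automorphism group as a product of cyclotomic groups; instead it sets up the exact sequence $1 \to K \to G \to \Aut_C(\tC)$, where $G$ is the group of automorphisms of the $\AF$-point over its image in $\JLi$. The quotient piece $\Aut_C(\tC)$ is disposed of by citing \cite[Proposition~7.1.1]{ACV}, and the kernel $K$ is identified with the group of $2$-morphisms $f \Rightarrow f$ of the map $\tC \to X^{\bfr}_S$ over $X^{\exp}_S$. Finiteness of $K$ is then extracted from a purely formal fact: such $2$-morphisms are sections of the pullback along $(f,f)$ of the diagonal $\Delta : X^{\bfr}_S \to W := X^{\bfr}_S \fp_{X^{\exp}_S} X^{\bfr}_S$, which is finite and unramified precisely because $X^{\bfr}_S \to X^{\exp}_S$ is of Deligne--Mumford type; sections of a finite unramified map over a connected base are finite in number.

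What your approach buys is explicit control---you see the inertia as a closed subgroup of $\prod_i \mu_{r_i} \times \prod_j \mu_{s_j}$---but it requires you to resolve the bookkeeping you flag, and, more seriously, it glosses over the $2$-categorical data: a relative automorphism of the $\AF$-point is not literally a pair $(\phi,\psi)$ satisfying the strict equation $\psi \circ f = f \circ \phi$, but includes a choice of $2$-isomorphism $\psi \circ f \Rightarrow f \circ \phi$ compatible with the identity downstairs. The paper's formulation via the kernel $K$ handles this data head-on (it \emph{is} the group of such $2$-morphisms), whereas in your formulation one must argue separately that the $2$-morphism is either determined or ranges over a finite set. That is fixable but is exactly the sort of detail the diagonal argument avoids. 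So: same statement, a genuinely different decomposition of the problem; yours is more explicit, the paper's is slicker and less dependent on the local structure of root stacks.
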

\begin{proof}
To see the map is of Deligne--Mumford type,  we must show that, \emph{given an $S$-point $\gamma$ of $\AF(\sX/\sV)$, corresponding to the diagram 
\begin{equation*} \xymatrix{
\tC\ar[r]\ar[d]& \sX^{\bfr}\ar[d]\\
S\ar[r]& \sV^{\bfr},
} \end{equation*}
with image $\alpha \in\JLi(\sX/\sV)(S)$, the group $G$ of automorphisms of $\gamma$ inducing the identity on $\alpha$ is finite.}  

We have a natural homomorphism $G \to \Aut_C(\tC)$ by taking the induced automorphism on the source twisted curves, and $\Aut_C(\tC)$ is finite (see \cite[Proposition~7.1.1]{ACV}).  We may consider an exact sequence 
\[1\to K \to G \to \text{Aut}_C(\tC)\]
where the kernel $K$ consists of automorphisms of $\gamma$ inducing the identity on $\alpha$ and furthermore inducing the identity automorphism of the twisted curve $\tC$ over $C$.  We are now left to show that the group $K$ of automorphisms
\[\Biggl\{ 
\UseTwocells
\xymatrix@C+2pc{
\tC \rtwocell^{f}_{f}{\;\;\;} & X^{\bfr}_S \ar[r]& X^{\exp}_S
}
\Biggr\}
\]
is finite.  Let $W = X^{\bfr}_S \fp_{X^{\exp}_S} X^{\bfr}_S$.\Jonathan{changed this; please check it's correct}
%Let $W = X^{\bfr} \fp_{X^{\exp} \fp_{V^{\exp}} V^{\bfr}} X^{\bfr}$.  

Our task is equivalent to showing that the set of lifts of the diagram
\[
\xymatrix{
&&X^{\bfr}\ar[d]^{\Delta}\\
\tC\ar@{-->}[urr] \ar[rr]_<>(0.5){(f,f)}&& W
}
\]
is finite.  Here $\Delta$ is the diagonal map, which is finite and unramified because $X^{\bfr}_S \rightarrow X^{\exp}_S$ is of Deligne--Mumford type.  Such a lift corresponds to a section of $Z := X^{\bfr}_S \fp_W \tC$ over $\tC$.  But sections of a separated, unramified map are open and closed; since $\tC$ is connected, this means that the number of sections of $Z$ over $\tC$ is bounded by, for example, the number of connected components of $Z$, which is finite because $Z$ is finite over $\tC$. 
\end{proof}

\begin{nlem}[\namer{\ref{lem:maps}}{$\Psi$}{\ref{maps:cart}}]
Diagram~\eqref{diag:psi} is cartesian.
\end{nlem}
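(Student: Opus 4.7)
The plan is to reduce the claim to the already-established cartesian square of Lemma~\ref{lem:aftoli} by showing that the stable loci correspond. Specifically, forming the pullback in two stages and applying Lemma~\ref{lem:aftoli}, we get
\[
\AF(\sX/\sV) \fp_{\JLi(\sX/\sV)} \JListab(X/V) = \AF(X/V) \fp_{\JLi(X/V)} \JListab(X/V),
\]
and since $\JListab(X/V)$ is an open substack of $\JLi(X/V)$, the right-hand side is an open substack of $\AF(X/V)$. It therefore suffices to show that this open substack agrees with $\AFstab(X/V)$: that an object $\gamma$ of $\AF(X/V)$ is stable if and only if $\Psi(\gamma)$ is stable.

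Both stability conditions amount to finiteness of the relevant automorphism group, so I would analyze the natural map $\mathrm{Aut}(\gamma) \to \mathrm{Aut}(\Psi(\gamma))$. The proof of Proposition~\ref{prop:DM:AFtoLi} already shows that the \emph{kernel} of this map is finite: it sits in an exact sequence whose cokernel lies in the finite group $\mathrm{Aut}_C(\tC)$ of~\cite[Proposition~7.1.1]{ACV}, and whose remaining piece is a group of sections of a finite unramified map over the connected curve $\tC$. This immediately gives the implication ``$\Psi(\gamma)$ stable $\Rightarrow$ $\gamma$ stable''. For the converse I would argue that the map has image of finite index: any automorphism of $C = \Psi(\gamma)$ fixing the map to $X^{\exp}$ lifts, up to a finite group of gerbe automorphisms along the marked points and nodes, to an automorphism of the twisted curve $\tC$ compatible with the map to $X^{\bfr}$. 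This is again controlled by $\mathrm{Aut}_C(\tC)$, which is finite.

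The main obstacle will be the lifting direction: carefully checking that the coarse-moduli construction $\tC \to C$ underlying $\Psi$ identifies $\mathrm{Aut}(\Psi(\gamma))$ with a subgroup of $\mathrm{Aut}(\gamma)$ modulo a finite group. This rests on the functoriality of relative coarse moduli for twisted curves and on the fact that, since the twisting choice $\bfr$ is fixed and the markings/nodes of $\tC$ are determined by those of $C$ together with the locally constant orders of twisting, automorphisms of $C$ fixing the map lift (uniquely up to $\mathrm{Aut}_C(\tC)$) to automorphisms of $\tC$ fixing the map to $X^{\bfr}$. Granting this, the two open substacks coincide, and the cartesian property of diagram~\eqref{diag:psi} follows at once from Lemma~\ref{lem:aftoli}.
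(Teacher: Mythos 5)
Your reduction to Lemma~\ref{lem:aftoli} and the identification of the two open substacks is exactly the right framing, and your treatment of the direction ``$\Psi(\gamma)$ stable $\Rightarrow \gamma$ stable'' via the exact sequence built from Lemma~\namerr{lem:maps}{$\Psi$}{maps:DM} matches the paper's argument. The divergence is in the other direction, and here your proposal has a real gap that the paper sidesteps.

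For ``$\gamma$ stable $\Rightarrow \Psi(\gamma)$ stable,'' you propose to show that the homomorphism $\Aut(\gamma)\to\Aut(\Psi(\gamma))$ has image of finite index, by lifting automorphisms of the coarse curve $C$ (commuting with the map to $X^{\exp}$) to automorphisms of the twisted curve $\tC$ (commuting with the map to $X^{\bfr}$) up to the finite group $\Aut_C(\tC)$. As you yourself flag, this lifting step is the crux and has not actually been carried out. It is not an application of the DM-type lemma, which only controls the \emph{kernel} of the map on automorphism groups; to bound the cokernel you would need a genuine descent-and-lifting argument for automorphisms along the relative coarse moduli map, including a verification that the lifted automorphism is compatible with the map to $X^{\bfr}$ and not merely with the map to the coarse target $X^{\exp}$. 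That is new content, not a formal consequence of what is already in the paper, and as written it remains a sketch.

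The paper avoids this entirely with a much lighter observation: stability in both $\AF(X/V)$ and $\JLi(X/V)$ admits the \emph{same} combinatorial characterization in terms of special points on rational components of the source curve. Since $\tC\to C$ is the coarse moduli map, the contracted rational components and their special points correspond one-to-one, so a stable $\AF$-object automatically yields a stable $\JLi$-object, with no automorphism-lifting required. This immediately gives the inclusion $\Psi\bigl(\AFstab(X/V)\bigr)\subset\JListab(X/V)$, and the exact-sequence argument (which you have correctly) supplies the reverse. You should replace the ``finite index image'' argument by this direct comparison of stability conditions; otherwise the proof is incomplete at precisely the point you identified as the main obstacle.
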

\begin{proof}
Note first that stability in both $\AF(X/V)$ and $\JLi(X/V)$ may be characterized in terms of the same conditions concerning special points on rational components of the source curve.  Therefore the image of a point of $\AFstab(X/V)$ under the map $\AF(X/V) \rightarrow \JLi(X/V)$ lies in $\JListab(X/V)$.  It remains to demonstrate the converse:  \textit{if $\alpha$ is an $S$-point of $\AF(X/V)$ whose image $\beta$ in $\JLi(X/V)$ is stable then $\alpha$ is stable.}

We have an exact sequence
\Jonathan{adjusted formatting slightly}
\begin{equation*}
1 \rightarrow \Aut_{\AF(X/V) \big/ \JLi(X/V)}(\alpha) \rightarrow \Aut_{\AF(X/V)}(\alpha) \rightarrow \Aut_{\JLi(X/V)}(\beta).
\end{equation*}
By hypothesis $\Aut_{\JLi(X/V)}(\beta)$ is finite and $\Aut_{\AF(X/V) \big/ \JLi(X/V)}(\beta)$ is finite by \hyperref[prop:DM:AFtoLi]{Lemma~\namer{\ref*{lem:maps}}{$\Psi$}{\ref*{maps:DM}}} so we may conclude that $\Aut_{\AF(X/V)}(\alpha)$ is finite as well.
\Jonathan{changed proof to avoid deleted lemma}
\end{proof}

\subsection{Expanded targets and expanded orbifold targets:
The cartesian square for $\Theta$}

Once again, $\Theta$  will not collapse any components in the source curves. Thus there is no need to modify $\Kim(\sX/\sV)$ and diagram~\eqref{eqn:4} takes the form
\begin{equation}\label{diag:theta} \vcenter{\xymatrix{
\Kimstab(X/V) \ar[r]^<>(0.5)\Theta \ar[d] & \JListab(X/V) \ar[d] \\
\Kim(\sX/\sV) \ar[r] & \JLi(\sX/\sV) .
}} 
\end{equation}
The arrow $\Theta$ is given by sending a diagram of logarithmic schemes
\begin{equation*} \xymatrix{
C \ar[r] \ar[d] & X^{\exp} \ar[d] \\
S \ar[r] & V^{\exp}
} \end{equation*}
to the corresponding diagram of underlying schemes, forgetting the logarithmic structures.  The bottom arrow likewise forgets logarithmic structures.  The vertical arrows in diagram~\eqref{diag:theta} are given by compositions in
\begin{equation*} \xymatrix{
C \ar[r] \ar[d] & X^{\exp} \ar[d] \ar[r]& \sX^{\exp}\ar[d]\\
S \ar[r] & V^{\exp}\ar[r] & \sV^{\exp} ,
} \end{equation*}
viewed appropriately as a diagram in the category of logarithmic schemes or of schemes.

\begin{nlem}[\namer{\ref{lem:maps}}{$\Theta$}{\ref{maps:cart}}]
Diagram~\eqref{diag:theta} is cartesian.
\end{nlem}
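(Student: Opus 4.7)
The plan is to exhibit, for each scheme $S$, a natural bijection between $S$-points of $\Kimstab(X/V)$ and $S$-points of the fiber product $\JListab(X/V) \fp_{\JLi(\sX/\sV)} \Kim(\sX/\sV)$, and then to check this bijection is compatible with the stack structure. I would first treat the unstable version, reducing to the claim that
\begin{equation*}
\Kim(X/V) \;\longrightarrow\; \JLi(X/V) \fp_{\JLi(\sX/\sV)} \Kim(\sX/\sV)
\end{equation*}
is an isomorphism, and then separately verify that the stability conditions transfer correctly on both sides.

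The central observation is that the logarithmic structures on $X^{\exp}_{\log}$ and $V^{\exp}_{\log}$ are by construction pulled back along the (strict) maps to $\sX^{\exp}_{\log}$ and $\sV^{\exp}_{\log}$. Consequently, a logarithmically commutative diagram
\begin{equation*}
\xymatrix{
(C, M_C) \ar[r] \ar[d] & X^{\exp}_{\log} \ar[d] \\
(S, M_S) \ar[r] & V^{\exp}_{\log}
}
\end{equation*}
is equivalent data to a logarithmically commutative square with $X^{\exp}_{\log}, V^{\exp}_{\log}$ replaced by $\sX^{\exp}_{\log}, \sV^{\exp}_{\log}$, together with a compatible lift of the underlying schematic diagram along $X^{\exp} \to \sX^{\exp}$ and $V^{\exp} \to \sV^{\exp}$. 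This immediately produces the inverse to the natural map. Given a Kim object over $\sX/\sV$ with log data $(M_S, M_C)$ and a Li object over $X/V$ whose underlying schematic diagram matches, strictness of $X^{\exp}_{\log} \to \sX^{\exp}_{\log}$ gives a unique log lift, and the predeformability condition (Definition of $\JLi$) depends only on the underlying schematic data, so it is preserved.

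The delicate point, and likely the main obstacle, is to check that minimality of the log structures on the $\sX/\sV$ side implies and is implied by minimality on the $X/V$ side. Here I would invoke the characterization of minimal objects in Proposition~\ref{prop:kim-min}: the conditions concerning the ranks of cokernels, the absence of locally free submonoids between the image of $\oM_{C/S}$ and $\oM_S$, and the irreducibility compatibility are phrased entirely in terms of $\oM_{C/S}$, $\oM_S$, and the map from $g^\ast M_{V^{\exp}}$. Since $V^{\exp}_{\log} \to \sV^{\exp}_{\log}$ is strict, $g^\ast M_{V^{\exp}}$ coincides with the pullback from $\sV^{\exp}_{\log}$, so the minimality condition on $\Kim(X/V)$ is literally the same as on $\Kim(\sX/\sV)$.

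Finally, for stability, both $\Kimstab(X/V)$ and $\JListab(X/V)$ are cut out by the same finite-automorphism condition on the underlying stable map (special points on rational components of the source curve), so an object of the fiber product with Li-stable image is automatically Kim-stable, and conversely the image of a $\Kim$-stable object in $\JLi$ is stable. Combined with the bijection above, this yields the cartesian property on stable loci. One last check is that isomorphisms correspond on the two sides, which follows from the same strictness argument applied to $\Aut$-sheaves.
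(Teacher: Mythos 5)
Your proof is correct and takes the same route as the paper; the paper's proof is just a one-line appeal to the fact that the squares
\begin{equation*}
\vcenter{\xymatrix{X^{\exp} \ar[r]\ar[d] & \sX^{\exp}\ar[d]\\ X\ar[r] & \sX}}
\qquad\text{and}\qquad
\vcenter{\xymatrix{V^{\exp}\ar[r]\ar[d] & \sV^{\exp}\ar[d]\\ V\ar[r] & \sV}}
\end{equation*}
are logarithmically cartesian with strict horizontal arrows, which is exactly the observation you make about log structures being pulled back. What you add that the paper leaves tacit is the explicit unraveling of the $S$-point bijection, the check that the minimality conditions of Proposition~\ref{prop:kim-min} match because they are phrased in terms of $g^\ast M_{V^{\exp}}$ and this sheaf is unchanged under the strict map $V^{\exp}_{\log}\to\sV^{\exp}_{\log}$, and the verification that the stable loci correspond. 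These are genuinely worth spelling out, since the displayed diagram involves $\Kimstab$ and $\JListab$ and the paper's terse proof only directly addresses the unstable version; your treatment of stability could be tightened by the exact-sequence argument the paper uses for $\Psi$ (finiteness of the relative automorphism group, here \hyperref[lem:kim-li-dm]{Lemma~\namerr{lem:maps}{$\Theta$}{maps:DM}}, plus the fact that any scheme-level automorphism lifts canonically to the minimal log structure), rather than the somewhat informal appeal to ``the same finite-automorphism condition on special points.''
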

\begin{proof}  This is immediate from the fact that the diagrams
\begin{equation*} \vcenter{\xymatrix{
X^{\exp} \ar[r] \ar[d] & \sX^{\exp} \ar[d] \\
X \ar[r] & \sX
}} \qquad \text{and} \qquad \vcenter{\xymatrix{
V^{\exp} \ar[r] \ar[d] & \sV^{\exp} \ar[d] \\
V \ar[r] & \sV ,
}} \end{equation*}
are logarithmically cartesian with strict horizontal arrows.
\end{proof}

\begin{lemma} \label{lem:kim-m-dm}
The projection $\Kim(X/V) \rightarrow \fM$ is of Deligne--Mumford type.
\end{lemma}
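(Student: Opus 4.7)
The plan is to establish that for any $S$-point $\alpha$ of $\Kim(X/V)$ with underlying pre-stable curve $C/S$, the automorphism group scheme $\Aut_\fM(\alpha)$ of automorphisms of $\alpha$ over the identity of $C/S$ is finite \'etale over $S$. This implies that the relative diagonal of $\Kim(X/V) \to \fM$ is unramified, which is precisely the condition for the morphism to be of Deligne--Mumford type.

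Such an automorphism is a compatible pair $(\phi_S, \phi_C)$ of log structure automorphisms over the identities of the underlying schemes $S$ and $C$, preserving the log maps $g : (S, M_S) \to V^{\exp}_{\log}$ and $f : (C, M_C) \to X^{\exp}_{\log}$, together with the canonical smoothing-parameter data implicit in the log smoothness of the family $(C, M_C)/(S, M_S)$. Concretely, each node $p$ of $C$ in each fiber determines a canonical element $t_p \in M_S$ (the smoothing parameter, characterized locally by the relation $\delta_1 \delta_2 = t_p$ for the two branch generators in $M_C$), and any automorphism $(\phi_S, \phi_C)$ of $\alpha$ must fix all such $t_p$. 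By Kim's minimality (Proposition~\ref{prop:kim-min}(i)), the images of these smoothing parameters---for distinguished as well as nondistinguished nodes---together with the image of $\underline{g}^{-1} \overline{M}_{V^{\exp}}^{\gp}$ span $\overline{M}_S^{\gp}$ up to finite index. Consequently the character part of $\phi_S$ (i.e.\ the component in $\Hom(\overline{M}_S^{\gp}, \cO_S^*)$) is torsion, and the permutation part on the characteristic is finite, so $\phi_S$ belongs to a finite \'etale group scheme.

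Once $\phi_S$ is pinned down, $\phi_C$ is determined on the pullback part $\pi^{-1} M_S$, and is further constrained at each node and marking by preservation of $f^*$ and by the log smoothness relations $\delta_1 \delta_2 = t_p$. The contact order conditions at distinguished nodes and markings contribute finite cyclic groups (roots of unity of orders equal to the contact orders); at nondistinguished nodes the branch generators $\delta_1, \delta_2$ are pinned down by the log smoothness data. Hence $\Aut_\fM(\alpha)$ is finite \'etale, and the lemma follows.

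The main subtlety is the rigidification imposed by the log-smooth family data---the fixing of the smoothing parameters $t_p \in M_S$ and of the branch generators $\delta_1, \delta_2 \in M_C$---which prevents naively continuous $\Gm$-scaling automorphisms of $M_C$ at nondistinguished nodes from contributing; this rigidification is precisely encoded in Kim's minimality condition.
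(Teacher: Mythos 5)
Your overall strategy matches the paper's: reduce to bounding the automorphisms of the log structure $M_S$ that fix the images of the node-smoothing parameters, then use Kim's minimality to show that this group is finite. However, your key step cites the wrong part of Proposition~\ref{prop:kim-min}. You invoke condition~(i) (the rank count) to conclude that the smoothing parameters together with $g^\ast\oM_{V^{\exp}}^{\gp}$ span $\oM_S^{\gp}$ up to finite index; but condition~(i) only says that the cokernel of $g^\ast\oM_{V^{\exp}}^{\gp}\to\oM_S^{\gp}$ has the right rank---it gives no information about whether the smoothing parameters of the nondistinguished nodes actually fill out that cokernel (a priori they could be linearly dependent in it). The finite-index assertion is instead a consequence of conditions~(ii) and~(iii): condition~(iii) forces each smoothing parameter to map to a positive multiple of an irreducible element of $\oM_S$, and condition~(ii) then forces every irreducible of $\oM_S$ to be so hit, since otherwise the free submonoid spanned by the remaining irreducibles would contain the image of $\oM_{C/S}$ and contradict minimality. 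The paper cites~(ii) for exactly this reason.

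A secondary point: your treatment of $\phi_C$ is more elaborate than needed and slightly off. Given $\phi_S$, the automorphism $\phi_C$ is in fact \emph{uniquely} determined, because it must commute with the structure map $M_C \to \cO_C$; since the images of the branch generators $\delta_i$ and of the marking generators $\sigma_j$ under the structure map are non-zero-divisors on a dense open of $C$, any scaling $\delta_i\mapsto\lambda_i\delta_i$ forces $\lambda_i=1$. So the finite cyclic groups of contact-order roots of unity you attribute to distinguished nodes do not actually contribute to the relative automorphism group over $\fM$---that ambiguity is already excluded by compatibility with the structure map, not by $f^\ast$. This is the content of the paper's remark that such automorphisms ``come entirely from'' automorphisms of $M_S$ respecting $M_{C/S}\to M_S$, so that the entire argument can be carried out on $M_S$ alone.
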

\begin{proof}
We must show that for a geometric point~\eqref{eqn:14} of $\Kim(X/V)$ (with $S$ the spectrum of a separably closed field), the group of automorphisms fixing $C$ is finite.  Such automorphisms come entirely from automorphisms of the logarithmic structure $M_S$ on $S$ that respect the map $M_{C/S} \rightarrow M_S$ (here $M_{C/S}$ is the log structure on $S$ canonically associated to the family of nodal curves $C/S$).  By Proposition~\ref{prop:kim-min}~\ref{freesub}, if $\overline{e}$ is a generator of $\oM_S$, some multiple $ke$ of $e$ lies in the image of $\oM_{C/S}$.  Therefore if $e$ is a lift of $\overline{e}$ to $M_S$, the only possible images of $e$ under automorphisms $M_S \rightarrow M_S$ fixing the image of $\oM_{C/S}$ send $e$ to $\zeta e$ where $\zeta$ is a $k$-th root of unity.  As $\oM_S$ has only finitely many generators, it follows that the automorphism group of $M_S$ fixing $M_{C/S}$ is finite.
\end{proof}

\begin{nlem}[\namer{\ref{lem:maps}}{$\Theta$}{\ref{maps:DM}}] \label{lem:kim-li-dm}
The map $\Kim(\sX/\sV) \rightarrow \JLi(\sX/\sV)$ is of Deligne--Mumford type.
\Jonathan{changed ``representable'' to ``of Deligne--Mumford type''}
\end{nlem}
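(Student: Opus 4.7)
The plan is to deduce the statement from the already-established Lemma~\ref{lem:kim-m-dm} by exploiting the factorization of the forgetful-to-curves map. We have a commutative triangle
\begin{equation*}
\xymatrix{
\Kim(\sX/\sV) \ar[r] \ar[dr] & \JLi(\sX/\sV) \ar[d] \\
& \fM
}
\end{equation*}
in which the diagonal arrow forgets the logarithmic structures and the maps to the target, while the right vertical arrow forgets just the map to the target. Applying Lemma~\ref{lem:kim-m-dm} to $X = \sX$, $V = \sV$, the diagonal arrow $\Kim(\sX/\sV) \to \fM$ is of Deligne--Mumford type.

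The key observation is the following formal fact: if $X \xrightarrow{f} Y \xrightarrow{g} Z$ is a sequence of morphisms of algebraic stacks with separated diagonals and $g \circ f$ is of Deligne--Mumford type, then $f$ is of Deligne--Mumford type. Indeed, for any $\alpha \in X(S)$, the relative automorphism sheaf $\uAut_{X/Y}(\alpha)$ is the kernel of the natural map $\uAut_{X/Z}(\alpha) \rightarrow \uAut_Z(f\alpha)$; since $Y$ has separated diagonal, the identity section of $\uAut_Y(f\alpha)$ is closed, and $\uAut_{X/Y}(\alpha)$ is therefore a closed subgroup sheaf of the unramified algebraic space $\uAut_{X/Z}(\alpha)$. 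Closed subgroup schemes of unramified algebraic spaces are themselves unramified algebraic spaces, which is exactly the Deligne--Mumford-type condition on $f$.

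Alternatively, one can give a direct argument echoing the proof of Lemma~\ref{lem:kim-m-dm}. At a geometric point corresponding to a diagram~\eqref{eqn:14}, an automorphism of the $\Kim$-object lying over the identity of its $\JLi$-image is a pair $(\phi_C, \phi_S)$ of automorphisms of the logarithmic structures inducing the identity on all the underlying schemes and scheme morphisms. In particular $\phi_S$ fixes the image of $M_{C/S}$ in $M_S$, and the argument of Lemma~\ref{lem:kim-m-dm}, which invokes property~\ref{freesub} of Proposition~\ref{prop:kim-min}, shows that the group of such $\phi_S$ is finite. The compatibility of $\phi_C$ with $\phi_S$ through the log smoothness of $(C,M_C)/(S,M_S)$ then determines $\phi_C$ from $\phi_S$ up to finite ambiguity, giving finiteness of the full automorphism group.

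The only subtle point in either approach is checking that no new infinitesimal or non-representable automorphisms are introduced by refining from $\fM$ to $\JLi(\sX/\sV)$ as the base. This is essentially automatic: passing to a stronger fixed base only cuts down the automorphism sheaf, so the properties ``finite'' and ``unramified'' are inherited by subgroup sheaves. Accordingly I expect no real obstacle beyond invoking the prior lemma; the cleanest write-up is the factorization argument above, since it reuses Lemma~\ref{lem:kim-m-dm} verbatim without needing to re-examine the minimality condition.
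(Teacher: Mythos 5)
Your argument is correct and is essentially the paper's own proof: the paper likewise deduces the statement in one line from Lemma~\ref{lem:kim-m-dm} via the factorization $\Kim(\sX/\sV) \to \JLi(\sX/\sV) \to \fM$, using exactly the formal fact that a map $f$ is of Deligne--Mumford type whenever some composite $g\circ f$ is. Your additional justification of that formal fact (with a small slip: the kernel description should involve $\uAut_{Y/Z}(f\alpha)$ rather than $\uAut_Z(f\alpha)$) and the alternative direct argument are both sound but not needed beyond what the paper writes.
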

\begin{proof}
By Lemma~\ref{lem:kim-m-dm}, $\Kim(\sX/\sV)$ is of Deligne--Mumford type over $\fM$ so it is also of Deligne--Mumford type over $\JLi(\sX/\sV)$.
\end{proof}

\begin{lemma}\label{lem:kim-li-surj}
The map $\Kim(\sX/\sV) \rightarrow \JLi(\sX/\sV)$ is surjective.
\end{lemma}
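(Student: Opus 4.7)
My plan is to reverse $\Theta$ at the level of $S$-points: given a predeformable diagram
\[\xymatrix{
C \ar[r]^<>(0.5)f \ar[d]_\pi & \sX^{\exp} \ar[d] \\
S \ar[r]^<>(0.5)g & \sV^{\exp}
}\]
representing an $S$-point of $\JLi(\sX/\sV)$, I will exhibit a minimal logarithmic enhancement giving an $S$-point of $\Kim(\sX/\sV)$ whose image under $\Theta$ is the given diagram. This suffices because $\Theta$ is defined by forgetting log structures.

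First I would read off the contact orders: at each essential node $p$ of a geometric fiber $C_s$, predeformability provides a local model $A[u,v]/(uv-w)\leftarrow A[x,y]/(xy-t)$ with $u\mapsto x^{n_p}$, $v\mapsto y^{n_p}$, $w\mapsto t^{n_p}$, where $n_p$ is the contact order; in the pairs setting, the analogous local model $A[x]\leftarrow A[u]$ with $u\mapsto x^{n_p}$ supplies contact orders at marked points mapping into the distinguished divisor. These integers are locally constant on the étale-local loci of essential points. Next I would construct the log structure on $S$ by the recipe in the proof of Proposition~\ref{prop:kim-min}: each $n_p$ prescribes a relation $\overline{\delta}_p = n_p\,\overline{\rho}_{f(p)}$ between the node-generator in $\overline{M}_{C/S}$ and the corresponding generator of $g^\ast \overline{M}_{\sV^{\exp}}$. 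I define $\overline{M}_S$ as the fine saturated pushout of $\overline{M}_{C/S}$ and $g^\ast \overline{M}_{\sV^{\exp}}$ modulo these relations, and lift to a genuine log structure $M_S$ on $S$ in the standard way. The log structure $M_C$ on $C$ is then assembled by combining $\pi^\ast M_S$, the canonical nodal log structure on $C$, and $f^\ast M_{\sX^{\exp}}$ via the same predeformability equations.

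Finally I would verify that the resulting data is an object of $\Kim(\sX/\sV)(S)$ mapping to the given diagram under $\Theta$. Logarithmic commutativity of the square holds because the relations imposed on $\overline{M}_S$ encode precisely the predeformability equations $w=t^{n_p}$; log smoothness of $(C,M_C)$ over $(S,M_S)$ follows from the standard log smooth model $A[u,v]/(uv-w)$ at each essential node; minimality follows from the universal property of $\overline{M}_S$, which is engineered to satisfy all three conditions listed in Proposition~\ref{prop:kim-min}. Since the construction only adds log structures and leaves the underlying scheme-theoretic maps intact, the image under $\Theta$ is the original diagram, witnessing surjectivity on $S$-points.

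The main technical obstacle is verifying that the local construction of $M_S$ globalizes coherently over $S$. I expect to handle this by observing that the essential nodes form an étale scheme over a closed subscheme of $S$, that the contact orders $n_p$ are locally constant in predeformable families, and that the universal monoidal pushout commutes with étale base change, so the étale-local log structures built above glue canonically to a global minimal log structure on $S$.
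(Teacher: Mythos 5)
Your approach is recognizably the right circle of ideas, but it works considerably harder than the paper and in doing so creates an obstacle for itself that the paper avoids entirely. The paper's proof begins by observing that surjectivity of a morphism of algebraic stacks need only be checked on geometric points, i.e.\ over $S = \Spec k$ with $k$ algebraically closed. Once $S$ is a geometric point, the entire final paragraph of your proposal --- the \'etale glueing of locally constructed log structures, the constancy of contact orders, the compatibility of the monoidal pushout with base change --- evaporates. Your plan instead tries to produce a lift for \emph{every} $S$-point, which is strictly stronger than surjectivity, and your closing paragraph correctly identifies this as a nontrivial commitment without noticing that it is an unnecessary one.

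Given that reduction, the paper then simply cites the construction you are redoing by hand: Li already builds canonical log structures on the expanded target, the source curve, and the base of a predeformable map over a point in \cite[Section~1.1 and Proposition~1.8]{Li2}. The one wrinkle the paper singles out is that Li's log structures need not be saturated, and must be replaced by their saturations to land in $\Kim(\sX/\sV)$ as defined here. You implicitly handle this by working with fine \emph{saturated} pushouts from the start, which is fine, but you do not flag it as the actual content; everything else in your construction (reading off $n_p$ from the local model $u\mapsto x^{n_p}$, imposing $\overline\delta_p = n_p\,\overline\rho$, freely adjoining generators for nondistinguished nodes, assembling $M_C$ from $\pi^\ast M_S$ and the nodal structure compatibly with $f^\ast M_{\sX^{\exp}}$) is a re-derivation of Li's Proposition~1.8. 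In short: your construction, restricted to a geometric point, is essentially correct and parallels Li's; but you should (i) reduce to geometric points first, which deletes your ``main technical obstacle'' paragraph, and (ii) either cite Li's construction or acknowledge that the saturation step is the only genuinely new observation beyond it.
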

\begin{proof}
One must check that if $S$ is the spectrum of an algebraically closed field, an $S$-point of $\JLi(\sX/\sV)$ can be lifted to an $S$-point of $\Kim(\sX/\sV)$.  In \cite[Section~1.1 and Proposition~1.8]{Li2}, Li constructs canonical log structures on the expanded target, source curve, and base scheme of a predeformable map over $S$.  These constructions do not guarantee that the resulting logarithmic structures are saturated.  However, replacing them with their saturations yields a point of $\Kim(\sX/\sV)$.
\end{proof}

\subsection{Expanded logarithmic stable maps to logarithmic stable maps:
the cartesian square for $\Upsilon$} 
The construction of diagram~\eqref{eqn:4} is more involved this time.  We define stacks $\KimACGS(X/V)$, $\KimACGS(\sX/\sV)$ and $\KimACGSstab(\sX/\sV)$, the last of which will play the role of $\Kim^\ast(\sX/\sV)$. 
 
\begin{definition} \label{def:kimacgs}
\notn{$\KimACGS(X/V)$}{modification of $\Kim(X/V)$ with contracted curve}
We denote by $\KimACGS(X/V)$ the stack of logarithmically commutative  diagrams
\begin{equation} \label{eqn:8} \vcenter{\xymatrix{
C \ar[r] \ar[d] & X^{\exp} \ar[d] \ar[dr] \\
\oC \ar[r] \ar[d]  & X \fp_{V} V^{\exp} \ar[r] \ar[d] & X \ar[d] \\
S \ar[r] & V^{\exp} \ar[r] & V 
}} \end{equation}
in which
\begin{enumerate}[label=(\roman{*})]
\item $C$ and $\oC$ are logarithmically smooth curves over $S$;
\item the stabilization $C' \rightarrow \oC$ of the map $C \rightarrow \oC$ is an isomorphism;
\item the logarithmic structure on $S$ is minimal.
\end{enumerate}
We write $\KimACGSstab(X/V) \subset \KimACGS(X/V)$ for the substack given by imposing the following relative stability condition:
\begin{enumerate}[resume, label=(\roman{*})]
\item \label{enum:stability} the map $C \rightarrow X^{\exp}_S \fp_{X_S} \oC$ has finite automorphism group.
\end{enumerate}
\end{definition} 
\noindent Here minimality of the logarithmic structure on $S$ means simply that the induced diagram
\begin{equation*} \xymatrix{
C \ar[r] \ar[d] & X^{\exp} \ar[d] \\
S \ar[r] & V^{\exp}
} \end{equation*}
is a point of $\Kim(X/V)$.  This gives a map $\KimACGS(X/V) \rightarrow \Kim(X/V)$ by forgetting all of diagram~\eqref{eqn:8} except the square above.

We wish to construct a cartesian diagram
\begin{equation} \label{eqn:7} \vcenter{\xymatrix{
\Kimstab(X/V) \ar[r]^<>(0.5)\Upsilon \ar[d] & \ACGSstab(X/V) \ar[d] \\
\KimACGSstab(\sX/\sV) \ar[r]^<>(0.5)\tUpsilon & \ACGS(\sX/\sV) .
}} \end{equation}
We begin by defining $\tUpsilon : \KimACGS(X/V) \rightarrow \ACGS(X/V)$ to be the map taking a diagram~\eqref{eqn:8} (with $X$ and $V$ replaced by $\sX$ and $\sV$) 
\Jonathan{added parenthetical}
to the square
\begin{equation*} \xymatrix{
\oC \ar[r] \ar[d] & \sX \ar[d] \\
S \ar[r] & \sV .
} \end{equation*}
The lower horizontal arrow of diagram~\eqref{eqn:7} is the restriction of $\tUpsilon$ to $\KimACGSstab(\sX/\sV)$.  The arrow $\Upsilon$ is described in \cite[Proposition~6.3]{GS}; we give an alternate construction using
\Jonathan{``in'' changed to ``using''}
Appendix~\ref{app:chains}.  It is given by the composition
\begin{equation*} \xymatrix{
C \ar[r] \ar[d] & X^{\exp} \ar[d] \ar[r] & X\ar[d] \\
S \ar[r] & V^{\exp}\ar[r] & V,
} \end{equation*}
followed by stabilization of the outside rectangle to $C \xrightarrow{\tau} \oC \rightarrow X$ (where $\oC$ has the log.\ structure $\tau_\ast M_C$:  see Theorem~\ref{thm:log-pf}), and finally by \emph{replacing the logarithmic structures on $S$ and $\oC$ with those that are minimal in the sense of \cite{Chen} (or, equivalently, basic in the sense of \cite{GS}}).  The left and right vertical arrows of diagram~\eqref{eqn:7} are given by the compositions
\begin{equation*} \vcenter{\xymatrix{
C\ar[r]\ar[d]&X^{\exp} \ar[r] \ar[d] & \sX^{\exp} \ar[d]\\%\ar[r]& \sX\ar[d] \\
S\ar[r]&V^{\exp} \ar[r] & \sV^{\exp}%\ar[r]&\sV
}} \qquad \text{and} \qquad \vcenter{\xymatrix{
C\ar[r]\ar[d]& X \ar[r] \ar[d] & \sX \ar[d] \\
S\ar[r]&V \ar[r] & \sV
}} \end{equation*}
respectively, and for the left vertical arrow, $\oC$ is constructed as the relative stabilization of the map $C \rightarrow X_S = X \fp_V S$.

\begin{remark}
One cannot simply take $\Kim(\sX/\sV)$ for $\Kim^\ast(\sX/\sV)$ because $\Kimstab(X/V) \rightarrow \ACGSstab(X/V)$ may collapse components of the source curve in order to stabilize, while $\Kim(\sX/\sV) \rightarrow \ACGS(\sX/\sV)$ will not collapse anything. 
\end{remark}

\begin{remark}
There are two natural maps $\KimACGS(\sX/\sV) \rightarrow \ACGS(\sX/\sV)$.  In addition to $\tUpsilon$, a second map $\tUpsilon'$ takes diagram~\eqref{eqn:8} to the square
\begin{equation*} \xymatrix{
C \ar[r] \ar[d] & \sX \ar[d] \\
S \ar[r] & \sV .
} \end{equation*}
This second map fits into a commutative triangle
\begin{equation*} \xymatrix{
\KimACGS(\sX/\sV) \ar[d] \ar[dr]^{\tUpsilon'} \\
\Kim(\sX/\sV) \ar[r] & \ACGS(\sX/\sV) 
} \end{equation*}
but we will have no use for it. The analogous triangle with $\tUpsilon'$ replaced by $\tUpsilon$ is \emph{not} commutative.
\end{remark}
% This is just a diagram of stacks, not of logarithmic stacks.  The logarithmic structures intervene as follows:  the large rectangle on the left, together with a logarithmic structure $M$ on $S$, is an $S$-point of $\Kim(\sX/\sV)$.  The large rectangle on the bottom, together with a logarithmic structure $M'$ on $S$, is an $S$-point of $\ACGS(\sX/\sV)$.  There is also specified a morphism of logarithmic structures $M' \rightarrow M$ making the whole thing compatible. \Jonathan{make this precise} 

\begin{lemma}\label{lem:kimtoacgs}
The diagrams
\begin{equation*} \vcenter{\xymatrix{
\KimACGS(X/V) \ar[r]^<>(0.5){\tUpsilon} \ar[d] & \ACGS(X/V) \ar[d] \\
\KimACGS(\sX/\sV) \ar[r]^<>(0.5){\tUpsilon} & \ACGS(\sX/\sV)
}} \quad \text{and} \quad \vcenter{\xymatrix{
\KimACGSstab(X/V) \ar[r]^<>(0.5){\tUpsilon} \ar[d] & \ACGS(X/V) \ar[d] \\
\KimACGSstab(\sX/\sV) \ar[r]^<>(0.5){\tUpsilon} & \ACGS(\sX/\sV)
}} \end{equation*}
are cartesian.
\end{lemma}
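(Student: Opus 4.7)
The plan is to deduce both cartesian diagrams formally from the cartesian squares of logarithmic stacks $X^{\exp} = X \fp_{\sX} \sX^{\exp}$ and $V^{\exp} = V \fp_{\sV} \sV^{\exp}$, together with the fact that $X^{\exp}_{\log} \to \sX^{\exp}_{\log}$ and $V^{\exp}_{\log} \to \sV^{\exp}_{\log}$ are strict.

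For the first diagram I will construct a two-sided inverse to the natural map
\[
\KimACGS(X/V) \longrightarrow \KimACGS(\sX/\sV) \fp_{\ACGS(\sX/\sV)} \ACGS(X/V).
\]
Given an $S$-point of the target, the cartesian square for $V^{\exp}$ produces a unique lift $S \to V^{\exp}$ from the data $S \to V$ and $S \to \sV^{\exp}$; similarly, since the maps $C \to \sX^{\exp} \to \sX$ and $C \to \oC \to X \to \sX$ agree by commutativity of the $\KimACGS(\sX/\sV)$ diagram together with the identification in $\ACGS(\sX/\sV)$, the cartesian square for $X^{\exp}$ produces a unique $C \to X^{\exp}$. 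The remaining arrows in diagram~\eqref{eqn:8} are reconstructed in the same manner.

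It remains to check the three conditions of Definition~\ref{def:kimacgs}. Conditions~(i) and~(ii) are inherited directly from the $\KimACGS(\sX/\sV)$ datum, since $C$, $\oC$ and the stabilization map $C \to \oC$ are unchanged. For~(iii), Kim-minimality of $M_S$ is defined intrinsically via the log structures pulled back from $V^{\exp}$ and $X^{\exp}$ to $S$ and $C$; because $V^{\exp}_{\log} \to \sV^{\exp}_{\log}$ and $X^{\exp}_{\log} \to \sX^{\exp}_{\log}$ are strict, these pullbacks coincide with the pullbacks from $\sV^{\exp}$ and $\sX^{\exp}$, so minimality for the $\KimACGS(X/V)$ datum is equivalent to the given minimality for the $\KimACGS(\sX/\sV)$ datum.

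For the second diagram, once the first is cartesian, it suffices to check that the stability condition~\ref{enum:stability} of Definition~\ref{def:kimacgs} is equivalent for an $S$-point of $\KimACGS(X/V)$ and its image in $\KimACGS(\sX/\sV)$. This follows from the identification
\[
X^{\exp}_S \fp_{X_S} \oC \cong \sX^{\exp}_S \fp_{\sX_S} \oC,
\]
obtained by base-changing the cartesian square $X^{\exp} = X \fp_{\sX} \sX^{\exp}$ and applying the pasting lemma, which respects the maps from $C$. The two automorphism groups in question are therefore canonically isomorphic. I expect the hard part to be the verification of~(iii)---tracing the compatibility of Kim-minimality with the strict morphisms $V^{\exp}_{\log} \to \sV^{\exp}_{\log}$ and $X^{\exp}_{\log} \to \sX^{\exp}_{\log}$; everything else is a formal manipulation of cartesian squares.
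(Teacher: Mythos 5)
Your proposal is correct and takes essentially the same approach as the paper: deduce both cartesian squares from the (logarithmically) cartesian squares $X^{\exp} = X \fp_{\sX} \sX^{\exp}$, $V^{\exp} = V \fp_{\sV} \sV^{\exp}$, and use the induced isomorphism $X^{\exp}_S \fp_{X_S} \oC \cong \sX^{\exp}_S \fp_{\sX_S} \oC$ for the stability transfer. The paper leaves the verification of conditions (i)--(iii) implicit as ``immediate,'' whereas you spell out the role of strictness of $X^{\exp}_{\log}\to\sX^{\exp}_{\log}$ and $V^{\exp}_{\log}\to\sV^{\exp}_{\log}$ in transferring Kim-minimality, which is a worthwhile clarification but not a different argument.
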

\begin{proof}
The logarithmically cartesian diagrams
\begin{equation*} \vcenter{\xymatrix{
X^{\exp} \ar[r] \ar[d] & \sX^{\exp} \ar[d] \\
X \ar[r] & \sX
}} \qquad \text{and} \qquad \vcenter{\xymatrix{
V^{\exp} \ar[r] \ar[d] & \sV^{\exp} \ar[d] \\
V \ar[r] & \sV 
}} \end{equation*}
immediately imply that the first diagram is cartesian.  This also implies that $X_S^{\exp} \fp_{X_S} \oC \to \sX_S^{\exp} \fp_{\sX_S} \oC$ is an isomorphism. It follows that the map $C \rightarrow X_S^{\exp} \fp_{X_S} \oC$ is stable if and only if $C \rightarrow \sX_S^{\exp} \fp_{\sX_S} \oC$ is stable, showing that the second diagram in the lemma is cartesian.  
\end{proof}

Unfortunately the first diagram of the lemma does not remain cartesian when $\ACGS(X/V)$ is replaced by $\ACGSstab(X/V)$ and $\KimACGSstab(X/V)$ is replaced by $\Kimstab(X/V)$.  The relative stability condition~\ref{enum:stability} was imposed to remedy this:  we obtain a cartesian diagram by replacing $\KimACGSstab(X/V)$ with $\Kimstab(X/V)$ and $\ACGS(X/V)$ with $\ACGSstab(X/V)$.

\begin{nlem}[\namer{\ref{lem:maps}}{$\Upsilon$}{\ref{maps:cart}}]
Diagram~\eqref{eqn:7} is cartesian.
\end{nlem}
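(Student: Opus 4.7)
The strategy is to leverage Lemma~\ref{lem:kimtoacgs}, which establishes that the square obtained by replacing $\ACGSstab(X/V)$ with $\ACGS(X/V)$ and $\Kimstab(X/V)$ with $\KimACGSstab(X/V)$ in diagram~\eqref{eqn:7} is cartesian. Once this reduction is in place, it suffices to show that the natural map
\begin{equation*}
\Kimstab(X/V) \longrightarrow \KimACGSstab(X/V) \fp_{\ACGS(X/V)} \ACGSstab(X/V)
\end{equation*}
is an equivalence. In other words, $\Kimstab(X/V)$ should agree with the open substack of $\KimACGSstab(X/V)$ consisting of those objects whose image under $\tUpsilon$ lies in $\ACGSstab(X/V)$.

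I would construct the equivalence via two mutually inverse operations. Given a $\Kim$-object $(C \to X^{\exp})/(S \to V^{\exp})$, form $\oC$ as the relative stabilization of the composition $C \to X^{\exp} \to X$ over $S$; this is precisely the stabilization used in defining $\Upsilon$ via Appendix~\ref{app:chains}, and the resulting $\KimACGS$-datum satisfies condition~(ii) of Definition~\ref{def:kimacgs} automatically. Conversely, any object of $\KimACGS(X/V)$ has an underlying $\Kim$-object given by the upper square of diagram~\eqref{eqn:8}. Condition~(ii) of Definition~\ref{def:kimacgs} guarantees that these operations compose to canonical isomorphisms, identifying the underlying groupoids of $\Kim(X/V)$ with the substack of $\KimACGS(X/V)$.

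It then remains to verify that the three stability conditions fit together. For this I would analyze the exact sequence
\begin{equation*}
1 \to \Aut\bigl(C \big/ X^{\exp}_S \fp_{X_S} \oC\bigr) \to \Aut_{\Kim}(C \to X^{\exp}) \to \Aut_{\ACGS}(\oC \to X),
\end{equation*}
where the right arrow is induced by functoriality of stabilization and by passage from the $\Kim$-minimal log structure on $S$ to the $\ACGS$-minimal one. The kernel is finite precisely when condition~(iv) of Definition~\ref{def:kimacgs} holds, and the target is finite precisely when the image under $\tUpsilon$ lies in $\ACGSstab(X/V)$. Thus $\Kim$-stability is equivalent to the conjunction of $\KimACGS$-stability and $\ACGS$-stability of the image, which is exactly the condition defining the fiber product on the right.

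The main obstacle will be the logarithmic bookkeeping needed to make the sequence above and the stabilization construction well-defined at the level of log structures: one must check that the Kim-minimal structure on $S$ projects canonically to the $\ACGS$-minimal one, so that Kim-automorphisms descend to $\ACGS$-automorphisms, and that an automorphism of $C$ over $X^{\exp}_S \fp_{X_S} \oC$ really determines a Kim-automorphism of the full log datum. Both should follow from the universal property of $\Kim$-minimality (Proposition~\ref{prop:kim-min}) and of $\ACGS$-minimality (\cite{Chen,GS}), together with log-étaleness of $X^{\exp}_{\log} \to X_{\log} \fp_{V_{\log}} V^{\exp}_{\log}$.
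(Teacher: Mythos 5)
Your proposal takes essentially the same route as the paper: reduce via Lemma~\ref{lem:kimtoacgs} to comparing $\Kimstab(X/V)$ with an open substack of $\KimACGSstab(X/V)$, and match stability conditions through the exact sequence of automorphism groups $1 \to G' \to G \to G''$ with $G' = \Aut(C / X^{\exp}_S \fp_{X_S} \oC)$.

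One step deserves more care. You assert that condition~(ii) of Definition~\ref{def:kimacgs} alone guarantees that the two operations (stabilize, forget) are mutually inverse, identifying $\Kim(X/V)$ with a substack of $\KimACGS(X/V)$. That is not quite true: the forgetful map $\KimACGS(X/V) \to \Kim(X/V)$ is \'etale but not an isomorphism, because condition~(ii) only requires $C \to \oC$ to contract semistable components, not that $\oC$ be the \emph{full} stabilization of $C \to X_S$. Starting from a diagram~\eqref{eqn:8}, forgetting $\oC$, and then re-stabilizing $C \to X_S$ will generally produce a smaller curve than the original $\oC$. What pins $\oC$ down is the $\ACGS$-stability of its image under $\tUpsilon$: if $\oC \to X_S$ is $\ACGS$-stable, then the stabilization of $C \to X_S$ coincides with that of $C \to \oC$, and the latter is $\oC$ by condition~(ii). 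This is precisely the compatibility check the paper performs against the projection to $\KimACGSstab(\sX/\sV)$. Also, the exact sequence by itself gives only ``$G'$ finite and $G''$ finite implies $G$ finite,'' together with the converse \emph{under the standing hypothesis} that $G''$ is finite; it does not give that $G$ finite implies $G''$ finite, so state the biconditional in that conditional form rather than as a free-standing equivalence. With those two refinements your argument matches the paper's proof.
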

\begin{proof}
It is enough to produce a map
\begin{equation*}
\ACGSstab(X/V) \fp_{\ACGS(\sX/\sV)} \KimACGSstab(\sX/\sV) \rightarrow \Kimstab(X/V)
\end{equation*}
that is compatible with the maps to $\ACGSstab(X/V)$, $\KimACGSstab(\sX/\sV)$, and $\ACGS(\sX/\sV)$.  Lemma~\ref{lem:kimtoacgs}, combined with the projection $\KimACGS(X/V) \rightarrow \Kim(X/V)$ provides us with a map
\begin{equation*}
\ACGSstab(X/V) \fp_{\ACGS(\sX/\sV)} \KimACGSstab(\sX/\sV) \subset \KimACGS(X/V) \rightarrow \Kim(X/V).
\end{equation*}  We must verify that the map factors through $\Kimstab(X/V)$ and that it has the requisite compatibilities.

The factorization through $\Kimstab(\sX/\sV)$ follows from the following slightly stronger statement:  \emph{Consider a diagram~\eqref{eqn:8} and assume that the induced point of $\ACGS(X/V)$ is stable.  Then the induced point of $\Kim(X/V)$ is stable if and only if the induced point of $\KimACGS(X/V)$ is stable.}  

Let $\xi$ denote the point of $\Kim(X/V)$ induced from diagram~\eqref{eqn:8}, and let $G$ be its automorphism group.  Let $G''$ be the automorphism group of its image in $\ACGS(X/V)$, which is finite by hypothesis.  Since $\Upsilon$ is a functor, we have a homomorphism of groups $G \rightarrow G''$.  The kernel consists of all automorphisms of diagram~\eqref{eqn:8} that induce the identity on $\Upsilon(\xi)$.  This is the automorphism group of $C$ over $\oC \fp_{X_S} X^{\exp}_S$.  But $\oC \fp_{X_S} X^{\exp}_S = \oC \fp_{\sX_S} \sX^{\exp}$, so the kernel is precisely the automorphism group of $C$ over $\oC \fp_{\sX_S} \sX^{\exp}$.  Denoting this latter group by $G'$ we obtain an exact sequence
\begin{equation*}
1 \rightarrow G' \rightarrow G \rightarrow G''
\end{equation*}
As we have assumed $G''$ is finite, it follows that $G'$ is finite if and only if $G$ is.

This gives us the factorization.  The compatibility with the maps to $\ACGSstab(X/V)$ and $\ACGS(\sX/\sV)$ is immediate.  Compatibility with the map to $\KimACGSstab(\sX/\sV)$ amounts to the assertion that, \emph{if diagram~\eqref{eqn:8} is in $\ACGSstab(X/V) \fp_{\ACGS(\sX/\sV)} \KimACGSstab(\sX/\sV)$ then $\oC$ can be recovered as the stabilization of the map $C \rightarrow X_S$.}  But $\oC \rightarrow X_S$ is stable by the stability condition of $\ACGSstab(X/V)$, so the stabilization of $C \rightarrow X_S$ is the same as the stabilization of $C \rightarrow \oC$, which is $\oC$, by the stability condition of $\KimACGSstab(\sX/\sV)$.
\end{proof}

\begin{proposition} \label{prop:kimacgs-lfp}
$\KimACGS(\sX/\sV)$ is an algebraic stack locally of finite presentation.
\end{proposition}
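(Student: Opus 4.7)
The plan is to exhibit $\KimACGS(\sX/\sV)$ as relatively representable over the algebraic stack $\Kim(\sX/\sV)$. Consider the forgetful morphism
\[
p : \KimACGS(\sX/\sV) \to \Kim(\sX/\sV)
\]
sending a diagram~\eqref{eqn:8} (with $X = \sX$ and $V = \sV$) to its top square. Since $\Kim(\sX/\sV)$ is algebraic and locally of finite presentation by \hyperref[spaces:artin]{Lemma~\namer{\ref*{lem:spaces}}{$\Kim$}{\ref*{spaces:artin}}}, it suffices to show that $p$ is representable by algebraic stacks locally of finite presentation.

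Given an $S$-point of $\Kim(\sX/\sV)$, an $S$-point of its fiber under $p$ consists of a log smooth curve $\oC / S$ together with morphisms $\pi : C \to \oC$ and $\bar q : \oC \to \sX \fp_\sV \sV^{\exp}$ such that $\bar q \circ \pi$ recovers the given composite $C \to \sX^{\exp} \to \sX \fp_\sV \sV^{\exp}$, and such that the stabilization of $\pi$ is an isomorphism. I would verify algebraicity in three stages. First, the moduli of pairs $(\oC/S, \pi)$ is algebraic and locally of finite presentation: $\oC$ varies in the algebraic stack of log smooth curves, while for each fixed $\oC$ the morphism $\pi : C \to \oC$ lies in a logarithmic Hom-stack between proper flat log curves over $S$, which is algebraic by the representability results underlying the proofs of algebraicity of $\Kim$ and $\ACGS$ in Section~\ref{sec:kim} and Section~\ref{sec:acgs}. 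Second, the condition that the stabilization of $\pi$ be an isomorphism is open in the base, since stabilization of a map of flat proper curves commutes with base change and being an isomorphism between flat families of curves is an open condition. Third, once $(\oC, \pi)$ satisfies the stabilization condition the morphism $\bar q$ is automatically unique (because $\pi$ is surjective with reduced target), and its existence cuts out a closed substack, namely the locus where the given map $C \to \sX \fp_\sV \sV^{\exp}$ is constant along the fibers of $\pi$.

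The main technical obstacle will be the first stage, where the logarithmic structures need to be tracked carefully enough to identify $\oC$ as a log smooth curve in its own right, with log structure pinned down by the data; this is essentially the content of the logarithmic stabilization material in Appendix~\ref{app:chains} (in particular Theorem~\ref{thm:log-pf}), which identifies $M_{\oC}$ with the pushforward $\pi_\ast M_C$. Invoking those results, the three stages combine to show that $p$ is representable by algebraic stacks locally of finite presentation, completing the proof.
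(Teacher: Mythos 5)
Your approach is genuinely different from the paper's, and the difference is instructive. The paper works relative to the algebraic stack $\Log(\ACGS(\sX/\sV)) \fp_{\Log(\sV)} \Kim(\sX/\sV)$, which means that \emph{both} $C$ and $\oC$ (with their log structures) come already parameterized, and the only remaining data is the morphism $\tau : C \rightarrow \oC$ together with the log commutativity of diagram~\eqref{eqn:12}. Your proposal works relative to $\Kim(\sX/\sV)$ alone, so you have to build $\oC$, $\pi$, and $\bar q$ from scratch. That is a heavier burden: your stage 1 needs the stack of pairs $(\oC/S, \pi)$ to be algebraic before any stabilization condition is imposed, which forces you into a log Hom-stack that is very large (constant maps, covers, etc.), and you then carve out the relevant locus in stage 2. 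Conceptually this can be made to work, and the citation of Chen's Hom-stack result plays the same role as in the paper, but the bookkeeping is considerably messier than the paper's ``missing-one-arrow'' reduction.

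The real gap is in stage 3. You assert that $\bar q$ is automatically unique ``because $\pi$ is surjective with reduced target.'' This does not hold: $\oC$ is a flat family of curves over an arbitrary base $S$ and is not reduced when $S$ is not; more importantly, the target $\sX \fp_{\sV} \sV^{\exp}$ is an algebraic \emph{stack}, so surjectivity of $\pi$ (even combined with $\tau_\ast \cO_C = \cO_{\oC}$ from Lemma~\ref{lem:stabpf}) does not directly give uniqueness or even well-posedness of the factorization. What actually makes the argument go is Corollary~\ref{cor:log-stacks}: $\sX$, $\sV$, $\sX^{\exp}$, $\sV^{\exp}$ are all \'etale over $\Log$, so for any of these stacks $F$ the sheaf $\uHom(C,F)$ of log morphisms is represented by an algebraic space \'etale over $C$. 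That \'etaleness is what gives rigidity (uniqueness of the commutativity datum once one exists) and shows that the locus where the diagram commutes is \emph{open} (the paper) or open-and-closed --- not merely ``closed,'' which is what you claim without justification. Your three-stage structure could likely be repaired by importing this \'etale-over-$\Log$ input at stage 3, but as written the crucial step is unsupported and the stated reason for uniqueness is incorrect.
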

\begin{proof}
We can prove this relative to the algebraic stack $\Log(\ACGS(\sX/\sV)) \fp_{\Log(\sV)} \Kim(\sX/\sV)$, which is locally of finite presentation by \cite[Theorem~1.1]{Olsson_log}.  It therefore suffices to show the following:  \emph{Suppose given a logarithmic commutative diagram
\begin{equation} \label{eqn:12} \vcenter{\xymatrix{
C \ar[r] \ar@{-->}[d]^\tau \ar@/_15pt/[dd]_\pi  & \sX^{\exp} \ar[dr] \ar[d]  \\
\oC \ar[r] \ar[d] & \sX \fp_{\sV} \sV^{\exp} \ar[r] \ar[d] & \sX \ar[d] \\
S \ar[r] & \sV^{\exp} \ar[r] & \sV .
}} \end{equation}
Then the dashed arrows $C \rightarrow \oC$ rendering the whole diagram an object of $\KimACGS(\sX/\sV)$ are parameterized by an algebraic space over $S$ that is locally of finite presentation.}

By \cite[Theorem~2.1.10 and section~2.4]{Chen}, the logarithmic maps $C \rightarrow \oC$ are parameterized by an algebraic space locally of finite presentation over $S$.  We can therefore assume that the dashed arrow in diagram~\eqref{eqn:12} is given.  The proposition therefore comes down to showing that the sheaf on $S$ parameterizing \emph{logarithmic} commutativity data for the diagram is representable by an algebraic space over $S$.  There is certainly such an algebraic space $Z$ parameterizing commutativity data over $C$, since all of the stacks appearing in the diagram are algebraic.

Now by Corollary~\ref{cor:log-stacks}, $\sX^{\exp}$, $\sX$, $\sV^{\exp}$, and $\sV$ all represent \'etale algebraic \emph{spaces} (not stacks) on logarithmic schemes.  That is, if $F$ is any of the stacks just named then $\uHom(C,F)$, the stack of logarithmic morphisms $C \rightarrow F$, is representable by an \emph{algebraic space} that is \emph{\'etale} over $C$.  Therefore if the diagram commutes, it commutes uniquely; moreover, $Z$ is the locus where some collections of sections of an \'etale morphism agree, hence is an open subscheme of $C$.  Since $\pi$ is proper, it follows that $\pi_\ast Z$ is open in $S$.  In particular, it is representable and locally of finite presentation.
\end{proof}

\begin{nlem}[\namer{\ref{lem:maps}}{$\Upsilon$}{\ref{maps:DM}}] \label{lem:kim-acgs-DM}
The map $\tUpsilon : \KimACGSstab(\sX/\sV) \to \ACGS(\sX/\sV)$ is of Deligne--Mumford type.
\end{nlem}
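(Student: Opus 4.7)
The plan is to verify that for every geometric point $\xi \in \KimACGSstab(\sX/\sV)(\Spec k)$ with $k$ algebraically closed and image $\alpha := \tUpsilon(\xi) \in \ACGS(\sX/\sV)(\Spec k)$, the automorphism group $G$ of $\xi$ in the fiber over $\alpha$ is finite; this suffices for $\tUpsilon$ to be of Deligne--Mumford type.

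First I would unpack what an automorphism in $G$ looks like. Since $\alpha$ is obtained from diagram~\eqref{eqn:8} (with $X, V$ replaced by $\sX, \sV$) by retaining only $\oC \to \sX$ and $S \to \sV$ equipped with the ACGS-minimal (basic) logarithmic structures, an element $\sigma \in G$ must fix the scheme $\oC$ and its map to $\sX$, the underlying map $S \to \sV$, and the basic log structures on $\oC$ and $S$. It may however act on the Kim-minimal log structure refinements on $S$, $\oC$, $C$, on the curve $C$ together with its maps to $\oC$ and $X^{\exp}_S$, and on the expansions $X^{\exp}_S \to X_S$ and $V^{\exp}_S \to V_S$ themselves.

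Next I would exploit the stability condition~\ref{enum:stability} of Definition~\ref{def:kimacgs}. Passage to underlying schemes produces a natural homomorphism
\[
G \longrightarrow H := \Aut\bigl( C \to X^{\exp}_S \fp_{X_S} \oC \bigr),
\]
where $H$ is the automorphism group of the indicated map in the stable-maps sense, that is, compatible pairs of automorphisms of the source $C$ and of the target $X^{\exp}_S \fp_{X_S} \oC$ over $\oC$; in particular $H$ absorbs the toric automorphisms of the expansion $X^{\exp}_S$ over $X_S$. Condition~\ref{enum:stability} asserts that $H$ is finite.

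Finally I would bound the kernel of $G \to H$. An element of this kernel acts trivially on all underlying schemes and scheme-theoretic morphisms in~\eqref{eqn:8}, hence can modify only the logarithmic structures $M_S$, $M_{\oC}$, $M_C$. The argument of Lemma~\ref{lem:kim-m-dm} then applies essentially verbatim: by Proposition~\ref{prop:kim-min}~\ref{freesub}, every generator of $\oM_S$ has a positive multiple in the image of $\oM_{C/S}$, and $\oM_S$ is finitely generated, so log-structure automorphisms fixing $\oM_{C/S}$ (and the ACGS refinement below it) are root-of-unity actions on finitely many generators; analogous reasoning controls the automorphisms of $M_C$ and $M_{\oC}$. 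Consequently $G$ is an extension of a finite subgroup of $H$ by a finite kernel, and is therefore finite.

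The main obstacle I expect is verifying that stability condition~\ref{enum:stability} genuinely captures all the scheme-theoretic ambiguity in choosing a lift of $\alpha$ to $\KimACGSstab(\sX/\sV)$ -- particularly that the toric automorphisms of the expansion $X^{\exp}_S$ are truly absorbed into $H$ rather than leaking into some extra log-structural residue -- and in cleanly aligning the Kim-minimality finiteness argument on log structures with the basic log structures on $\oC$ and $S$ already pinned down by $\alpha$.
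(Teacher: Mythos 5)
Your proposal is correct and follows essentially the same route as the paper. The paper's proof is a terse two-sentence argument that observes (i) condition~\ref{enum:stability} is precisely finiteness of the automorphism group of the underlying curve-and-map fixing the image in $\ACGS(\sX/\sV)$, and (ii) by \hyperref[lem:kim-li-dm]{Lemma~\namerr{lem:maps}{$\Theta$}{maps:DM}} (which bottoms out in Lemma~\ref{lem:kim-m-dm} via Proposition~\ref{prop:kim-min}~\ref{freesub}), finiteness of the underlying automorphism group implies finiteness of the full automorphism group including log structures; your explicit decomposition $1 \to K \to G \to H$ with $H$ bounded by stability and $K$ bounded by the Kim-minimality argument is exactly this reasoning spelled out. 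The concern you raise at the end about the expansion's automorphisms being absorbed into $H$ is resolved by noting that an automorphism of the map $C \to \sX^{\exp}_S \fp_{\sX_S} \oC$ is by definition a compatible pair of automorphisms of source and target over $\oC$ and $S$, so the expansion's toric automorphisms are already accounted for in $H$; the paper treats this as implicit in the meaning of "automorphism group of the underlying curve and map."
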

\begin{proof}
Suppose that $S$ is the spectrum of an algebraically closed field and suppose that $\xi$ is an $S$-point of $\KimACGSstab(\sX/\sV)$ corresponding to a diagram~\eqref{eqn:8}.  The stability condition~\ref{enum:stability} of Definition~\ref{def:kimacgs} is precisely the condition that the automorphism group of the underlying curve and map of $\xi$ (ignoring the logarithmic structures) that fixes its image in $\ACGS(\sX/\sV)$ be finite.  But by \hyperref[lem:kim-li-dm]{Lemma~\namerr{lem:maps}{$\Theta$}{maps:DM}}, if the underlying curve and map of $\xi$ have finite automorphism group, then so does $\xi$.
\Jonathan{I rewrote this a little---please make sure it is okay; maybe more details should be included}
%
%Let $S$ be the spectrum of an algebraically closed field.  It is enough to show that given an $S$ point of $\ACGS(\sX/\sV)$, the objects of $\KimACGSstab(\sX/\sV)\fp_{\ACGS(\sX/\sV)} S$ are all stable.  As in the proof of Proposition~\ref{prop:kimacgs-lfp}, this boils down to showing that for any $S$-scheme $T_S$, the group $G$ of automorphisms of an object $\gamma$ in $\KimACGSstab(\sX/\sV)(T_S)$ inducing the identity automorphism on its image $p(\gamma)$ in $\ACGS(\sX/\sV)(T_S)$ is finite.  Since the log.\ structure on the base is minimal, by \cite[Lemma~3.8.3]{Chen} and \cite[Section~6.3.1, Remark~2]{Kim}, $G$ is finite when the group $\uG$ of automorphisms of the underlying diagrams of schemes is finite.  This is given by the stability of $C \rightarrow \sX^{\exp}_S \fp_{\sX_S} \oC$.
\end{proof}

Unlike $\Psi$ and $\Theta$, for which the proofs of Lemma~\ref{lem:maps}~\ref{maps:etale} and~\ref{maps:birat} were automatic, $\Upsilon$ will require us to do some work to check the rest of Lemma~\ref{lem:maps}.  These
\Jonathan{changed ``the'' to ``these''}
remaining parts will follow easily from results proved in section~\ref{sec:kim} and Appendix~\ref{app:chains}.

\begin{nlem}[\namer{\ref{lem:maps}}{$\Upsilon$}{\ref{maps:etale}}] \label{prop:kimacgs-kim-etale}
The natural projection $\KimACGS(\sX/\sV)  \rightarrow \Kim(\sX/\sV)$ is \'etale and strict.
\end{nlem}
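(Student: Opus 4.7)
The plan is to verify that the forgetful functor $\KimACGS(\sX/\sV) \to \Kim(\sX/\sV)$ is an equivalence of categories on $S$-points (hence formally étale) and then invoke Proposition~\ref{prop:kimacgs-lfp} to promote this to étaleness. The data added by $\KimACGS(\sX/\sV)$ on top of a point of $\Kim(\sX/\sV)$ (the top square of diagram~\eqref{eqn:8}) is a log smooth curve $\oC \to S$, a factorization $C \to \oC \to \sX \fp_{\sV} \sV^{\exp}$ of the composite $C \to \sX^{\exp} \to \sX \fp_{\sV} \sV^{\exp}$, and the condition that the stabilization of $C \to \oC$ is an isomorphism.

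The first step is to apply the logarithmic pushforward theorem of Appendix~\ref{app:chains}, namely Theorem~\ref{thm:log-pf} (the same input used in the construction of $\Upsilon$), to the composition $C \to \sX^{\exp} \to \sX$ over $S \to \sV$. This produces a canonical contraction $\tau : C \to \oC$ with $\oC$ equipped with the pushforward log structure $\tau_\ast M_C$, together with a canonical factorization $\oC \to \sX$; by construction the stabilization of $C \to \oC$ is already an isomorphism. Composing the map $\oC \to S \to \sV^{\exp}$ with $\oC \to \sX$ supplies the required factorization through $\sX \fp_{\sV} \sV^{\exp}$, and the universal property of Theorem~\ref{thm:log-pf} pins down $(\oC, \tau)$ uniquely. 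Finally, uniqueness of the lift $\oC \to \sX \fp_{\sV} \sV^{\exp}$ above the given data follows from Corollary~\ref{cor:log-stacks}: since $\sX$ is logarithmically étale over $\sV$, any such promotion of $\oC \to \sV^{\exp}$ to a map to $\sX \fp_{\sV} \sV^{\exp}$ is determined by its composition with the projections, which are already fixed.

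Running the same construction over a square-zero extension of $S$ produces the unique lift through the extension, which gives formal étaleness. Combined with Proposition~\ref{prop:kimacgs-lfp} this yields étaleness. For strictness, observe that the construction above never alters the log structure $M_S$ on $S$: the curve $\oC$ and the contraction $\tau$ are built purely from the Kim data, the log structure on $\oC$ is pulled back from $C$ by $\tau_\ast$, and no new generators or relations are imposed on $\oM_S$. In particular the minimality of $M_S$ as a point of $\Kim(\sX/\sV)$ (Proposition~\ref{prop:kim-min}) is precisely the minimality as a point of $\KimACGS(\sX/\sV)$, so the log structure on $\KimACGS(\sX/\sV)$ coincides with the pullback from $\Kim(\sX/\sV)$.

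The main technical hurdle is the appeal to Theorem~\ref{thm:log-pf}: one must check that the output of the logarithmic stabilization procedure really does match the hypotheses of Definition~\ref{def:kimacgs} (in particular that $\oC/S$ is log smooth and that the stabilization condition $C' \xrightarrow{\sim} \oC$ is automatic). Once this is granted, the rest is bookkeeping about universal properties and the log étaleness of $\sX/\sV$.
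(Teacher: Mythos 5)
Your proof strategy is incorrect: the forgetful functor $\KimACGS(\sX/\sV) \to \Kim(\sX/\sV)$ is \emph{not} an equivalence of categories on $S$-points, because the partial stabilization $\oC$ is genuinely extra data, not determined by the underlying object of $\Kim(\sX/\sV)$. Condition (ii) of Definition~\ref{def:kimacgs} only requires that the stabilization of $\tau : C \to \oC$ be an isomorphism --- i.e., that $\tau$ contract only unstable rational components --- but it does not force $\tau$ to contract \emph{all} such components. Concretely, if $C$ has an unstable rational tail on which the Kim map to $\sX^{\exp}$ is constant, then both $\oC = C$ (the trivial contraction) and $\oC = $ (the curve with that tail collapsed) are valid choices compatible with the same Kim datum, so the forgetful map has a fiber with more than one point. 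This is consistent with étale (which allows discrete fibers) but rules out the map being a monomorphism. The very existence of the modification $\KimACGS(\sX/\sV)$, introduced precisely because $\Kim(\sX/\sV)$ alone does not remember which components get collapsed, already signals that you should not expect an equivalence. Your appeal to Theorem~\ref{thm:log-pf} to ``produce a canonical contraction'' also overreaches: that theorem presupposes the contraction $\tau : C \to \oC$ and tells you how to push forward log structures and factor log morphisms through it, but it does not single out a preferred $\oC$.

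The correct argument verifies the infinitesimal lifting criterion directly, which is weaker than what you claim and does not require choosing a canonical $\oC$. Fix an object $\xi$ of $\KimACGS(\sX/\sV)$ over $S$ --- in particular, fix the contraction $\tau : C \to \oC$ --- and a square-zero extension to $S'$ of its image $\eta \in \Kim(\sX/\sV)(S)$. The extension gives a deformation $C'$ of $C$; one must show there is a \emph{unique} compatible deformation $\oC'$ of $\oC$. This is where the real work lies: one sets $\cO_{\oC'} = \tau_\ast \cO_{C'}$ and $M_{\oC'} = \tau_\ast M_{C'}$, and the vanishing $R^1 \tau_\ast \pi^\ast J = 0$ from Lemma~\ref{lem:stabpf} (using that the fibers of $\tau$ are points or chains of rational curves) guarantees this is a square-zero extension of $\cO_{\oC}$ with ideal $\opi^\ast J$; Theorem~\ref{thm:log-pf} supplies the log structure and the factorization, and the universal property of pushforward gives uniqueness of the lift. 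Your strictness argument is also unnecessarily elaborate: strictness is built into Definition~\ref{def:kimacgs} via the requirement that $M_S$ be the Kim-minimal log structure, so it holds by construction.
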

\begin{proof}
Strictness was part of the definition.  Since we already know both stacks are algebraic and locally of finite presentation, we only have to check that it is formally \'etale.

Suppose given an $S$-point $\xi$ of $\KimACGS(\sX/\sV)$, inducing an $S$-point $\eta$ of $\Kim(\sX/\sV)$, and suppose that $\eta'$ is an infinitesimal extension of $\eta$ to $S'$.  Let $\tau : C \rightarrow \oC$ be the projection coming from $\xi$
\Jonathan{added a few words}
and let $C'$ be the infinitesimal extension of $C$ corresponding to $\eta'$.  Write $\pi : C \rightarrow S$ and $\opi : \oC \rightarrow S$ for the two projections, so we have $\pi = \opi \circ \tau$.

Let $J$ be the ideal of $S$ in $S'$; thus $\pi^\ast J$ is the ideal of $C$ in $C'$ and we would like to construct an extension $\oC\vphantom{C}'$ with ideal $\opi^\ast J$.
\Jonathan{added ``we would like''}
By Lemma~\ref{lem:stabpf}, we have $R^1 \tau_\ast \pi^\ast J = \opi^\ast J \tensor R^1 \tau_\ast \tau^\ast \cO_{C} = 0$ since the fibers of $C$ over $C'$ are either points or rational curves.  This implies that $\tau_\ast \cO_{C'}$ is a square-zero extension of $\tau_\ast \cO_{C} = \cO_{\oC}$ by $\tau_\ast \pi^\ast J = \opi^\ast J$.  This is the structure sheaf of a scheme $\oC\vphantom{C}'$ over $S'$.  Moreover, by Theorem~\ref{thm:log-pf}, $M_{\oC\vphantom{C}'} = \tau_\ast M_{C'}$ is a log.\ structure on $\oC$ extending $M_{\oC}$ and we get a logarithmically commutative diagram,
\Jonathan{changed $\oC'$ to $\oC\vphantom{C}'$}
\begin{equation*} \xymatrix{
(\oC\vphantom{C}', M_{\oC\vphantom{C}'} ) \ar[r] \ar[d] & \sX \ar[d] \\
(S', M_{S'}) \ar[r] & \sV 
} \end{equation*}
where $M_{S'}$ is the logarithmic structure on $S'$ induced from the map $S' \rightarrow \Kim(\sX/\sV)$.  This gives the sought after lift to $\KimACGS(\sX/\sV)$ and proves that the projection is smooth.  

The universal property of push-forward ensures that there is a morphism between any two such lifts; as a morphism of extensions of algebras or of monoids with a fixed kernel is necessarily an isomorphism, the lift constructed above is unique up to a unique isomorphism.  The projection is therefore unramified as well.
\end{proof}

\begin{nlem}[\namer{\ref{lem:maps}}{$\Upsilon$}{\ref{maps:birat}}]
The map $\KimACGS(\sX/\sV) \rightarrow \ACGS(\sX/\sV)$ is generically an isomorphism.
\end{nlem}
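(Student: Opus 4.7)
The plan is to show that $\tUpsilon$ restricts to an isomorphism between the dense open substacks of totally nondegenerate maps on either side, which is the standard interpretation of ``generically an isomorphism'' here. Density on the $\ACGS$ side is provided by \hyperref[cor:acgs-dense]{Lemma~\namer{\ref*{lem:spaces}}{$\ACGS$}{\ref*{spaces:dense}}}. For $\KimACGS(\sX/\sV)$, the dense totally nondegenerate locus of $\Kim(\sX/\sV)$ supplied by \hyperref[cor:kim-dense]{Lemma~\namer{\ref*{lem:spaces}}{$\Kim$}{\ref*{spaces:dense}}}, combined with the fact that $\KimACGS(\sX/\sV) \to \Kim(\sX/\sV)$ is \'etale (\hyperref[prop:kimacgs-kim-etale]{Lemma~\namer{\ref*{lem:maps}}{$\Upsilon$}{\ref*{maps:etale}}}), yields a dense open totally nondegenerate substack on the $\KimACGS$ side as well.

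Next, I would argue that on the totally nondegenerate locus of $\KimACGS(\sX/\sV)$, the structural map $S \to \sV^{\exp}$ in diagram~\eqref{eqn:8} factors through the open immersion $\sV \hookrightarrow \sV^{\exp}$, so that $\sX^{\exp} \fp_{\sV^{\exp}} S = \sX \fp_{\sV} S$ canonically. This forces the upper map $C \to \sX^{\exp}_S$ to agree with the composition $C \to \oC \to \sX_S$. The defining requirement of $\KimACGS$ that the stabilization of $C \to \oC$ be an isomorphism then implies that $\tau : C \to \oC$ is itself an isomorphism, since $C$ is already smooth and carries the marked structure of $\oC$. Consequently, the entire diagram~\eqref{eqn:8} is determined by its bottom square, which is precisely its image under $\tUpsilon$.

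For the inverse, given a totally nondegenerate $S$-point $(C \to \sX,\ S \to \sV)$ of $\ACGS(\sX/\sV)$, I would set $\oC := C$ and build the upper row of diagram~\eqref{eqn:8} by composing with the open inclusions $\sX \hookrightarrow \sX^{\exp}$ and $\sV \hookrightarrow \sV^{\exp}$; minimality of the induced logarithmic structure on $S$ is automatic because it is trivial. These constructions are manifestly functorial and mutually inverse on totally nondegenerate points, so $\tUpsilon$ restricts to an isomorphism on the relevant dense opens. The only step requiring real care is verifying that the unexpanded targets $\sX,\sV$ sit inside $\sX^{\exp},\sV^{\exp}$ as the expected open substacks parameterizing trivial expansions; once that is noted, the whole identification is formal.
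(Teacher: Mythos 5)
Your argument is correct and matches the paper's own approach: reduce to the dense totally nondegenerate loci on both sides (density on the $\KimACGS$ side obtained by pulling back the dense locus of $\Kim(\sX/\sV)$ along the \'etale projection) and observe that $\tUpsilon$ restricts to an isomorphism there. The paper's proof is terser---it merely asserts that density of the totally nondegenerate locus in $\KimACGS(\sX/\sV)$ suffices, leaving the isomorphism on that locus implicit---whereas you spell out why $\tau : C \to \oC$ collapses to an isomorphism and why diagram~\eqref{eqn:8} degenerates to its bottom square; both are the same argument.
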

\begin{proof}
It is sufficient to argue that the totally nondegenerate objects of $\KimACGS(\sX/\sV)$ form a dense open substack.  But these are the pullback, via the \'etale projection $\KimACGS(\sX/\sV) \rightarrow \Kim(\sX/\sV)$, of the totally nondegenerate objects of $\Kim(\sX/\sV)$, which are dense by \hyperref[cor:kim-dense]{\namer{Lemma~\ref*{lem:spaces}}{$\Kim$}{\ref*{spaces:dense}}}.
\end{proof}

\appendix
\numberwithin{theorem}{section}

\section{Logarithmic structures}

\begin{proposition} \label{prop:loc-free}
The stack of locally free logarithmic structures is smooth and connected.
\end{proposition}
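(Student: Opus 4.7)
The plan is to cover $\mathcal{L}$, the stack of locally free logarithmic structures, by open substacks $\mathcal{L}_{\le n}$ of rank bounded by $n$, and exhibit smooth surjective morphisms $\sA^n \to \mathcal{L}_{\le n}$ from smooth connected stacks. Since smoothness is smooth-local on the source and connectedness of a directed union of open substacks through a common point is automatic, this will give both conclusions simultaneously.

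First I would define the morphism $\sA^n \to \mathcal{L}_{\le n}$: an $S$-point of $\sA^n$ is an ordered tuple $((L_1,s_1),\ldots,(L_n,s_n))$ of line bundles with sections, and pulling back the tautological log structure on $\sA^n$ produces a log structure $M_S$ whose characteristic at a geometric point $p$ is $\mathbb{N}^{\{i\,:\,s_i(p)=0\}}$, hence locally free of rank $\le n$. Note that $\sA^n = [\,\mathbb{A}^n/\mathbb{G}_m^n\,]$ is smooth and connected.

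Next I would verify that this map is smooth and surjective. Surjectivity is essentially the definition of locally free: any locally free log structure of rank $\le n$ admits étale-locally a chart $\mathbb{N}^n \to M_S$ (padding with trivial generators if necessary), and a chart is the same data as $n$ global sections of $M_S$, i.e. an $S$-point of $\sA^n$. Formal smoothness is the statement that any chart $\mathbb{N}^n \to M_S$ extends over any square-zero thickening $S \hookrightarrow S'$ equipped with a compatible extension of the log structure; since characteristic monoids of fine log structures are invariant under square-zero extensions, $\bar M_{S'}=\bar M_S$, so each chart generator $s_i \in M_S$ lifts to $M_{S'}$ once the underlying line bundle and section lift, and this is étale-locally unobstructed because étale-locally the line bundles trivialize and it reduces to lifting an element of $\cO_S$ to $\cO_{S'}$.

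Finally, for connectedness, the embeddings $\sA^n \hookrightarrow \sA^{n+1}$ adjoining a trivial generator $(\cO,1)$ are compatible with the maps to $\mathcal{L}$, and the distinguished trivial point of every $\sA^n$ maps to the trivial log structure in $\mathcal{L}$. Hence the (connected) images of the $\sA^n$ in $\mathcal{L}$ share a common point and cover $\mathcal{L} = \bigcup_n \mathcal{L}_{\le n}$, so $\mathcal{L}$ is connected. The main technical point to pin down carefully is the formal smoothness of $\sA^n \to \mathcal{L}_{\le n}$, which comes down to the rigidity $\bar M_{S'} = \bar M_S$ for fine log structures under square-zero extensions together with the local triviality of the line bundles $L_i$ in the étale topology.
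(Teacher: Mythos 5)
Your approach --- covering the stack by the atlases $\sA^n$ and descending --- is genuinely different from the paper's. The paper verifies the infinitesimal lifting criterion directly: \'etale-locally a locally free log structure has a chart $\bN^r\to A$, which lifts to $A'$ because $A'\to A$ is surjective; for connectedness it shows the trivial log structure is \emph{dense} (a geometric point $\bN^r\to k$ extends to $\bN^r\to k[t_1,\ldots,t_r]$, $e_i\mapsto t_i$, which is trivial at the generic point). Your connectedness argument by overlapping connected open images sharing the trivial point is a valid alternative, and the \'etale surjectivity of $\sA^n\to\Log$ that drives your smoothness argument is precisely the content of Corollary~\ref{cor:sA-etale}, so in outline your strategy works.

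Your direct verification of formal smoothness of the atlas, though, is shaky. An $S$-point of $\sA^n$ lying over $(S,M_S)$ is an $n$-tuple of global sections of $\overline{M}_S$, not a chart $\bN^n\to M_S$: a chart induces such a tuple, but to go back one must \'etale-locally choose lifts along the $\cO_S^*$-torsors $\alpha^{-1}(\overline{\sigma}_i)\subset M_S$ --- that discrepancy is exactly why $\sA^n\to\Log$ is \'etale rather than an isomorphism. Once one knows $\overline{M}_{S'}=\overline{M}_S$ and that the small \'etale sites of $S$ and $S'$ agree, the lifting problem for $\sA^n\to\Log$ along $S\hookrightarrow S'$ is \emph{tautological}: the required $S'$-point is literally the same tuple of characteristic sections. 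Your step ``it reduces to lifting an element of $\cO_S$ to $\cO_{S'}$'' is therefore a red herring and in fact misleading --- an arbitrary lift of $s_i$ as a function on $S'$ would produce \emph{some} extension of $M_S$ to $S'$, not necessarily the prescribed $M_{S'}$. If one insists on lifting charts rather than working at the characteristic level, the correct justification is that $M_{S'}\twoheadrightarrow M_S$ is a surjection of sheaves on the common small \'etale site (its kernel on units is $1+J$), so chart generators lift \'etale-locally, and the lifted tuple automatically charts $M_{S'}$ again because $\overline{M}_{S'}=\overline{M}_S$ --- which, applied to $\Log$ itself, is just the paper's one-line proof of smoothness.
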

\begin{proof}
Suppose $S' = \Spec A'$ is an infinitesimal extension of $S = \Spec A$.  Let $M$ be a locally free logarithmic structure on $S$.  After \'etale localization, we can assume that $M$ has a chart $\bN^r \rightarrow A$.  Since $A' \rightarrow A$ is surjective, this lifts to $\bN^r \rightarrow A'$.

To see the connectedness, consider a point of the stack of locally free logarithmic structures.  It corresponds to a map $\bN^r \rightarrow k$ for some field $k$.  This extends to a map $\bN^r \rightarrow k[t_1, \ldots, t_r]$, which is generically trivial.  Therefore the trivial logarithmic structures are dense in the stack of all locally free logarithmic structures.
\end{proof}

\begin{proposition}
The stack $\sA$, with its natural logarithmic structure, represents the functor $(S, M_S) \mapsto \Gamma(S, \oM_S)$ on the category of logarithmic schemes.
\end{proposition}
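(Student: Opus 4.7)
The plan is to unwind the definition of a logarithmic morphism $(S, M_S) \to (\sA, M_{\sA})$ and show that the data is equivalent to a global section of $\oM_S$, using that the natural log structure on $\sA$ is the Deligne--Faltings log structure generated by a single element.

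First I would describe the log structure on $\sA$ concretely. Pull back to $\bA^1 \to \sA$: the standard log structure on $\bA^1$ is given by the chart $\bN \to \cO_{\bA^1}$, $1 \mapsto x$, and this chart is $\Gm$-equivariant (with weight $1$), so the log structure descends to $\sA$ with the same chart. Equivalently, $M_{\sA}$ is the Deligne--Faltings structure associated to the monoidal functor $\bN \to \Div(\sA)$ sending the generator to the universal pair $(\cO_{\sA}(1), s_{\mathrm{univ}})$, where $s_{\mathrm{univ}}$ is the tautological section of the universal line bundle on $\sA$.

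Next I would construct the map in one direction. A logarithmic morphism $f : (S, M_S) \to (\sA, M_{\sA})$ consists of a morphism $f : S \to \sA$---that is, a pair $(L, s)$ with $L$ a line bundle and $s$ a section---together with a morphism of log structures $\varphi : f^\ast M_{\sA} \to M_S$. Passing to characteristic sheaves, and using that $\overline{f^\ast M_{\sA}}$ is the constant sheaf $\bN$, the morphism $\varphi$ induces a map $\bN \to \oM_S$, i.e., a global section $\alpha = \bar\varphi(1) \in \Gamma(S, \oM_S)$.

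Conversely, given $\alpha \in \Gamma(S, \oM_S)$, I would construct a log morphism as follows. Locally $\alpha$ lifts to some $m \in M_S$, well-defined up to multiplication by an element of $\cO_S^\times$. Let $P$ be the $\cO_S^\times$-torsor of such lifts; equivalently, $P = \alpha^{-1}(M_S)$. The image map $M_S \to \cO_S$ carries $P$ to an $\cO_S^\times$-equivariant map into $\cO_S$. Associating to $P$ the line bundle $L = P \times^{\cO_S^\times} \cO_S$ and to the image map the induced section $s \in \Gamma(S, L)$, one obtains a morphism $S \to \sA$. A local lift $m$ of $\alpha$ trivializes $L$, in which trivialization $s$ corresponds to the image of $m$ in $\cO_S$; the chart $\bN \to \cO_S$, $1 \mapsto \text{image of }m$, for $f^\ast M_{\sA}$ then lifts canonically to $\bN \to M_S$, $1 \mapsto m$, producing the morphism $f^\ast M_{\sA} \to M_S$. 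Since changing the local lift $m \mapsto u m$ with $u \in \cO_S^\times$ changes the chart compatibly, this glues to a global morphism of log structures.

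Finally I would check these two constructions are mutually inverse. In each direction the verification is immediate from the construction: starting from a section $\alpha$, the associated log morphism recovers $\alpha$ by composing $\varphi$ with the unit $1 \in \bN$; starting from a log morphism, the associated line bundle and section are canonically identified with $(L, s)$ by the choice of lifts of $\alpha$. The main (very mild) obstacle is simply keeping the Deligne--Faltings formalism straight and verifying $\Gm$-equivariance so that everything actually descends from $\bA^1$ to $\sA$; once that is set up, both constructions and their compatibility are tautological.
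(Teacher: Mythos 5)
Your proof is correct, and it takes a more explicit route than the paper's. The paper's proof is a two-sentence reduction: it observes that $\bA^1$ with its standard log structure represents $(S, M_S) \mapsto \Gamma(S, M_S)$, and that passing to the quotient by the $\Gm$-action on both sides gives the statement, since $\oM_S = M_S/\cO_S^\ast$. You instead unpack the claim into a direct bijection using the Deligne--Faltings/torsor-of-lifts picture: a log map to $\sA$ yields a section of $\oM_S$ via characteristic monoids, and conversely a section $\alpha$ yields the $\cO_S^\times$-torsor $P = \alpha^{-1}(M_S)$, hence a line bundle with section, and the chart $\bN \to M_S$, $1 \mapsto m$, is then used to build the morphism $f^\ast M_{\sA} \to M_S$. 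Both arguments are sound; the paper's is slicker but tacitly relies on the reader being comfortable with quotients of representable functors on log schemes by $\Gm$-actions, while yours is longer but self-contained and makes the bijection concrete, which is arguably more illuminating if one has not seen this identification before.
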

\begin{proof}
With its usual logarithmic structure $\bA^1$ represents the functor $(S, M_S) \mapsto \Gamma(S, M_S)$.  This carries an action of $\Gm$, by which $\oM_S$ is the quotient.
\end{proof}

\begin{remark}
Even though the underlying ``space'' of $\sA$ is a stack, it represents a sheaf on the category of logarithmic schemes!  See \cite[Proposition~5.17]{Olsson_log} for a more general statement.
\Jonathan{added second sentence}
\end{remark}

\begin{corollary} \label{cor:sA-etale}
For all $n$, the stacks $\sA^n$ are \'etale over $\Log$ when equipped with their natural logarithmic structures.
\end{corollary}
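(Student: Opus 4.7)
The plan is to deduce the corollary directly from the preceding proposition, reducing étaleness over $\Log$ to the representability of a constructible étale monoid sheaf by an étale algebraic space.

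Since $\sA^n$ is the $n$-fold Cartesian product of $\sA$, equipped with the product of pullback logarithmic structures from the factors, the preceding proposition shows that $\sA^n$ represents the functor on logarithmic schemes $(S, M_S) \mapsto \Gamma(S, \overline{M}_S)^n$. The natural map $\sA^n \to \Log$ simply forgets the $n$-tuple of sections.

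Fix a morphism $S \to \Log$ classifying a fine logarithmic structure $M_S$ on $S$. By the above, the base change $\sA^n \fp_\Log S$ represents the functor on $S$-schemes sending $f : T \to S$ to $\Gamma(T, f^\ast \overline{M}_S)^n$, that is, the étale sheaf of sets $\overline{M}_S^n$ on $S$. Now $\overline{M}_S$ is a constructible étale sheaf of finitely generated monoids, étale-locally constant along each stratum of a stratification of $S$; it is therefore representable by an étale algebraic space over $S$, described on each stratum as a disjoint union of copies of that stratum indexed by the stalk of $\overline{M}_S$ there. The $n$-fold product of this étale algebraic space represents $\overline{M}_S^n$, yielding the conclusion.

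The main point to handle carefully, rather than a real obstacle, is that the representing algebraic space is not of finite type in general---the fiber monoids are typically infinite---so one must work with algebraic spaces (with possibly infinite disjoint unions of étale pieces) rather than schemes; étaleness itself is not affected.
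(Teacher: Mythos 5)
Your proof is correct and takes essentially the same route as the paper: base change $\sA^n \to \Log$ along an arbitrary $S \to \Log$, identify the resulting functor on $S$-schemes with the \'etale sheaf $\oM_S^n$, and invoke the representability of \'etale sheaves by algebraic spaces \'etale over $S$. The paper simply appeals to ``$\oM_S$ is an \'etale sheaf'' without spelling out the constructibility/stratification picture or the finite-type caveat; your added detail is accurate but not needed, since the equivalence between sheaves on the small \'etale site and algebraic spaces \'etale over the base already gives the conclusion directly.
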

\begin{proof}
Form the fiber product $\sA^n \fp_{S} \Log$ for any logarithmic scheme $(S, M_S)$.  If $f : T \rightarrow S$ is a morphism of schemes, viewed as a strict morphism of logarithmic schemes, then $\Hom_S(T, \sA^n \fp_S \Log) = \Gamma(T, f^\ast \oM_S^n)$.  As $\oM_S$ is an \'etale sheaf, this functor is represented by an algebraic space \'etale over $S$.
\end{proof}

\begin{corollary} \label{cor:sA-open}
$\sA$ is open in $\Log$.
\end{corollary}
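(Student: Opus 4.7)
By Corollary \ref{cor:sA-etale} applied with $n=1$, the morphism $\sA \to \Log$ is \'etale. The plan is to show that $\sA \to \Log$ is additionally a monomorphism; an \'etale monomorphism of algebraic stacks is an open immersion, which gives the conclusion.

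First I would characterize the category of lifts of a given $T$-point of $\Log$ to $\sA$. For a log structure $M$ on $T$, such a lift consists of a pair $(L,s)$ together with an isomorphism $M_{(L,s)} \simeq M$. The key observation is that $\oM_{(L,s)}$ is locally either trivial or isomorphic to $\bN$, and in the latter case the generator $1 \in \bN$ is canonical. Consequently, whenever $M$ has $\oM$ locally $0$ or $\bN$, the preimage $\cO_T^*$-torsor $\alpha^{-1}(\bar 1) \subset M$ is intrinsic to $M$; it determines a line bundle $L$ together with a section $s$ obtained from the structure map $\alpha : M \to \cO_T$. This both recovers $(L,s)$ from $M$ and exhibits the lift as unique up to unique isomorphism, which already handles essential surjectivity and object-level uniqueness.

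To complete the verification that $\sA(T) \to \Log(T)$ is fully faithful, I would check the analogous statement on morphisms: any isomorphism $\phi : M_{(L_1,s_1)} \to M_{(L_2,s_2)}$ of log structures lifts uniquely to an isomorphism $\tilde\phi : (L_1,s_1) \to (L_2,s_2)$ in $\sA(T)$. Since $\bN$ admits no nontrivial automorphism, $\phi$ induces an isomorphism of characteristic monoids preserving the canonical generators, and therefore restricts to an isomorphism of the $\cO_T^*$-torsors $\alpha^{-1}(\bar 1)$; this produces the isomorphism of line bundles $L_1 \to L_2$, and compatibility of $\phi$ with $\alpha$ forces it to carry $s_1$ to $s_2$. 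Uniqueness follows because $\tilde\phi$ is determined by its value on a section generating $L_i$ (or freely by $\phi$ when $s_i = 0$, where the automorphism groups on both sides coincide with $\cO_T^*$).

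The main subtlety is that the decomposition of $T$ into the loci where $\oM = 0$ and where $\oM \cong \bN$ is only \'etale-local, but the torsor $\alpha^{-1}(\bar 1)$ has an intrinsic global description (using that the minimal generator of a stalk of $\oM$ is canonical) that makes the gluing automatic. With these observations in place, $\sA \to \Log$ is an \'etale monomorphism, hence an open immersion.
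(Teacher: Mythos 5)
Your strategy agrees with the paper's at the top level: étaleness from Corollary~\ref{cor:sA-etale} together with the monomorphism property gives an open immersion. Where you diverge is in how you verify full faithfulness. The paper reduces, by étaleness, to geometric points and then simply matches automorphism groups at the two points of $\sA$: trivial at the open point, and $\Gm$ at the closed point, where the log structure is $\bN \times \cO^\ast \rightarrow \cO$. You instead work over an arbitrary base $T$ and reconstruct $(L,s)$ directly from the log structure $M$ via the $\cO^\ast$-torsor $\alpha^{-1}(\bar 1) \subset M$. This is more constructive and makes the inverse on $T$-points explicit, at the cost of needing an observation that you gesture at but do not justify: the stalkwise element $\bar 1$ (the generator of $\oM_x$ where $\oM_x \cong \bN$, and $0$ where $\oM_x$ is trivial) must glue to an honest global section of $\oM$. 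This does hold: since the stalks of $\oM$ are $0$ or $\bN$, every cospecialization map is a localization of $\bN$ at a face, hence is either the identity or the zero map, and the stalkwise prescription is therefore compatible with restriction. With that point made explicit, your argument is a correct and somewhat more hands-on alternative to the paper's two-point automorphism-group check.
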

\begin{proof}
We have just seen that $\sA$ is \'etale over $\Log$, so we only need to check $\sA \rightarrow \Log$ is fully faithful.  On the open point, this is entirely obvious.  The automorphism group of the closed point is $\Gm$, which is the same as the automorphism group of the logarithmic structure $\bN \times \cO^\ast \rightarrow \cO$ sending $(n, \lambda)$ to $\lambda \, 0^n$.
\end{proof}

\begin{corollary} \label{cor:log-stacks}
The stacks $\sX$, $\sV$, $\sX^{\exp}$, $\sV^{\exp}$, and $\sA^n$ for all $n$ are \'etale over $\Log$.  All logarithmic morphisms between these stacks are logarithmically \'etale.
\end{corollary}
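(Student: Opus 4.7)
The plan is to reduce every assertion to Corollary~\ref{cor:sA-etale}, which already establishes that $\sA^n$ is \'etale over $\Log$, and then to deduce the second claim from the cancellation property of \'etale morphisms.

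First I would dispose of $\sX$ and $\sV$. In both the smooth pair and the acceptable degeneration cases, these stacks are either a point (with trivial logarithmic structure, which is open in $\Log$) or an instance of $\sA^n$ with $n \leq 2$. So these cases follow immediately from Corollary~\ref{cor:sA-etale} together with the triviality of the point case.

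Next I would treat $\sV^{\exp}$ and $\sX^{\exp}$. The key input is the existence of \'etale covers by products of copies of $\sA$: by \cite[Proposition~8.3.1]{ACFW} there is an \'etale surjection $\coprod_i \sA^{n_i} \rightarrow \sV^{\exp}$, and \cite[Section~8.2]{ACFW} provides an analogous cover of $\sX^{\exp}$ by stacks of the form $(\sA \times \sA)^n$. In each case the covering maps are strict with respect to the natural logarithmic structures, so the composition of the cover with the classifying morphism $\sV^{\exp} \rightarrow \Log$ (respectively $\sX^{\exp} \rightarrow \Log$) coincides with the classifying morphism of the covering stack, which is \'etale by Corollary~\ref{cor:sA-etale}. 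Since \'etaleness descends along \'etale surjections, I conclude that $\sV^{\exp}$ and $\sX^{\exp}$ are \'etale over $\Log$.

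The second assertion is then essentially formal: any logarithmic morphism $f : \sU \rightarrow \sW$ between two of these stacks is, by definition, compatible with the classifying maps to $\Log$, yielding a commutative triangle $\sU \rightarrow \sW \rightarrow \Log$ in which both slanted arrows are \'etale. The cancellation property of \'etale morphisms then yields that $\sU \rightarrow \sW$ is \'etale over $\Log$, which is equivalent to $f$ being logarithmically \'etale by Olsson's characterization~\cite[Theorem~4.6]{Olsson_log}. The main obstacle, modest as it is, will be to confirm that the covers of $\sV^{\exp}$ and $\sX^{\exp}$ furnished by \cite{ACFW} really are strict, so that the factorizations through $\Log$ work as claimed; this should be built into the constructions of those covers.
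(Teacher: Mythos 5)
Your argument for the first assertion is sound and in the same spirit as the paper's (both reduce to Corollary~\ref{cor:sA-etale} via \'etale covers by powers of $\sA$, with \'etaleness over $\Log$ descending along the cover). The second assertion is where there is a genuine gap. You write that a logarithmic morphism $f : \sU \rightarrow \sW$ ``is, by definition, compatible with the classifying maps to $\Log$, yielding a commutative triangle $\sU \rightarrow \sW \rightarrow \Log$.'' This is false unless $f$ is \emph{strict}. The composite $\sU \rightarrow \sW \rightarrow \Log$ classifies $f^\ast M_{\sW}$, while the classifying map $\sU \rightarrow \Log$ classifies $M_{\sU}$; the morphism $f^\flat : f^\ast M_{\sW} \rightarrow M_{\sU}$ furnished by $f$ need not be an isomorphism, and when it isn't, the two maps to $\Log$ are different. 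The morphisms you actually need to handle --- the multiplication $\sA^2 \rightarrow \sA$, the structure map $\sX^{\exp} \rightarrow \sV^{\exp}$, etc.\ --- are precisely non-strict, so your triangle does not commute for them and the cancellation step has nothing to apply to.

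The paper sidesteps this by a direct infinitesimal argument (the lemma following the corollary): if $F$ and $G$ are both logarithmically \'etale over a point, then for any strict square-zero extension $S \hookrightarrow S'$ the map $S \rightarrow F$ extends uniquely to $S' \rightarrow F$ and likewise for $G$, which is exactly the lifting criterion for $F \rightarrow G$ to be log \'etale. If you insist on a cancellation-style argument, the correct fix is to replace $\sW$ by Olsson's relative log stack $\Log_{\sW}$: the triangle $\sU \rightarrow \Log_{\sW} \rightarrow \Log$ (the second arrow being the projection remembering the new log structure) does commute with the classifying map of $\sU$, and $\Log_{\sW} \rightarrow \Log$ is itself \'etale by another application of Olsson's theorem (being $\Log(g)$ for the log \'etale structure map $g : \sW \rightarrow \mathrm{pt}$). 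Cancellation then gives $\sU \rightarrow \Log_{\sW}$ \'etale, which is Olsson's criterion for $f$ to be log \'etale. Either way, some input beyond \'etaleness of the two classifying maps is essential; the step from ``$\sU, \sW$ both \'etale over $\Log$'' directly to ``$\sU \rightarrow \sW$ \'etale'' is not available.
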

\begin{proof}
All of the stacks in question are \'etale-locally isomorphic over $\Log$ to products of copies of $\sA$.  Therefore they are all \'etale over $\Log$.  Hence each of these stacks is logarithmically \'etale over a point.  Maps between logarithmically \'etale stacks are necessarily logarithmically \'etale, as the following lemma demonstrates.
\Jonathan{changed proof}
\end{proof}

\begin{lemma}
Let $F$ and $G$ be logarithmically \'etale algebraic stacks.  Then any logarithmic morphism $F \rightarrow G$ is logarithmically \'etale.
\Jonathan{added lemma}
\end{lemma}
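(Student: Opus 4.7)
The plan is to deduce the lemma formally from the two-out-of-three property for \'etale morphisms of algebraic stacks, applied relative to Olsson's stack $\Log$ of fine logarithmic structures.

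As already used tacitly in the proof of Corollary~\ref{cor:log-stacks}, ``logarithmically \'etale'' is synonymous in this paper with ``\'etale over $\Log$''; more generally, a logarithmic morphism $F \to G$ of logarithmic algebraic stacks is logarithmically \'etale if and only if the induced morphism of underlying algebraic stacks over $\Log$ is \'etale in the usual sense. For morphisms of log schemes this equivalence is Olsson's theorem~\cite{Olsson_log}, and the extension to algebraic stacks is immediate by smooth descent from atlases.

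Granting this dictionary, the proof is immediate: the morphism $F \to G$ fits into a commutative triangle of algebraic stacks
\[\xymatrix{
F \ar[rr] \ar[dr] & & G \ar[dl] \\
& \Log &
}\]
in which the two slanted arrows are \'etale by hypothesis. Since \'etale morphisms of algebraic stacks satisfy two-out-of-three, it follows that the top arrow $F \to G$ is \'etale over $\Log$, hence logarithmically \'etale.

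The main (and in fact only) point of substance is verifying the dictionary between ``logarithmically \'etale'' and ``\'etale over $\Log$'' in the stacky setting; once this is in hand the remainder of the argument is purely formal. There is no genuine obstacle to overcome beyond this translation.
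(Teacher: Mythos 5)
There is a genuine gap in the triangle you draw. The map $F \to \Log$ sends a $T$-point of $F$ to the log structure $M_T^F$ pulled back from $F$, while the composite $F \to G \to \Log$ sends it to the log structure $M_T^G$ pulled back from $G$ via $T \to F \to G$. The log morphism $F \to G$ provides a morphism $M_T^G \to M_T^F$, but this is an isomorphism only when $F \to G$ is \emph{strict}. So your triangle commutes (even in the $2$-categorical sense) only under a strictness hypothesis that the lemma does not impose. Indeed the paper needs the lemma precisely for non-strict morphisms: the $n$-th power map $\sA \to \sA$ is log \'etale with source and target both \'etale over $\Log$, yet it is not strict, and the underlying morphism of stacks is ramified at the origin. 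This same example shows your proposed dictionary---``$F \to G$ log \'etale iff the underlying morphism is \'etale over $\Log$''---is false as stated: Olsson's criterion characterizes log \'etaleness of $F \to G$ by \'etaleness of $F \to \Log(G)$, not by \'etaleness of $F \to G$ over $\Log$, and these differ for non-strict maps.

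The paper avoids this entirely by working directly with the infinitesimal lifting criterion: given a strict square-zero extension $S \hookrightarrow S'$ and a lifting square against $F \to G$, log \'etaleness of $F$ over a point produces the unique extension $S' \to F$ (so the upper triangle commutes by construction), and log \'etaleness of $G$ over a point forces the composite $S' \to F \to G$ to agree with the given $S' \to G$, since both extend $S \to G$. No strictness of $F \to G$ is needed, only strictness of $S \hookrightarrow S'$, which is part of the definition of log \'etale. If you want to keep a cancellation-style argument, the correct triangle would have $\Log(G)$ at the bottom-right rather than $\Log$, and you would then need to know that $\Log(G) \to \Log$ is \'etale, which is a nontrivial extra step; the direct lifting argument is simpler.
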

\begin{proof}
Suppose we have a logarithmic lifting problem
\begin{equation*} \xymatrix{
S \ar[r] \ar[d] & F \ar[d] \\
S' \ar@{-->}[ur] \ar[r] & G
} \end{equation*}
where $S'$ is a strict infinitesimal extension of $S$ then there is a unique extension of the map $S \rightarrow F$ to a map $S' \rightarrow F$ because $F$ is logarithmically \'etale over a point.  This gives the commutativity of the upper triangle.  The commutativity of the lower triangle follows for the same reason:  there is a unique extension of the map $S \rightarrow G$ to a map $S' \rightarrow G$ because $G$ is logarithmically \'etale over a point.
\end{proof}

%This means that if $S$ is a log.\ scheme and $F$ is any one of the stacks above, then $F \fp_{\Log} S$ is representable on the category of strict log.\ schemes over $S$ by an algebraic space that is \'etale over $S$:  there is an algebraic space $F_S$ over $S$ such that if $T \rightarrow S$ is a strict morphism of log.\ schemes, to give a map $T \rightarrow F$ is the same as to give a section of $F_S$ over $T$.

%Let $F$ and $G$ be two of the stacks considered above and $F \rightarrow G$ a logarithmic morphism.  Consider an infinitesimal lifting problem
%\begin{equation*} \xymatrix{
%S \ar[r] \ar[d] & F \ar[d] \\
%S' \ar@{-->}[ur] \ar[r] & G
%} \end{equation*}
%in which $S \subset S'$ is a strict infinitesimal logarithmic extension.  Let $F_{S'}$ and $G_{S'}$ be as in the last paragraph.  As these are both \'etale over $S'$, a section of $F_{S'}$ or $G_{S'}$ over $S$ extends uniquely to one over $S'$.  It follows that the lifting problem considered above has a unique solution and therefore that the map $F \rightarrow G$ is \'etale.\Jonathan{I tried rewriting this to make it clearer; probably needs to be rewritten again}

\section{Chains of rational curves}
\label{app:chains}

Let $C$ be a pre-stable curve and $\oC$ a partial stabilization of $C$ over a base $S$.  We have a projection $\tau : C \rightarrow \oC$ that contracts chains of rational curves.  

\begin{lemma} \label{lem:stabpf}
The natural map $\cO_{\oC} \rightarrow \RR \tau_\ast \cO_{C}$ is a quasi-isomorphism.
\end{lemma}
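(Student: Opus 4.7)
The assertion is local on $\oC$, and $\tau$ is proper of relative dimension at most one, so only the vanishing $R^1 \tau_\ast \cO_C = 0$ and the natural isomorphism $\cO_{\oC} \cong \tau_\ast \cO_C$ need to be established.  The plan is to verify both pointwise, using the theorem on formal functions, and then conclude by Nakayama's lemma.

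The fiberwise input is the following:  for any geometric point $p \to \oC$, the scheme-theoretic fiber $E = \tau^{-1}(p)$ is either a reduced point of $C$ or a chain of rational curves, and in either case $H^0(E, \cO_E) = k(p)$ and $H^1(E, \cO_E) = 0$.  The point case is trivial; for a chain $E = E_1 \cup \cdots \cup E_n$ with nodes $p_1, \ldots, p_{n-1}$, I would use the normalization exact sequence
\[
0 \to \cO_E \to \bigoplus_{i=1}^n \cO_{E_i} \to \bigoplus_{j=1}^{n-1} k(p_j) \to 0
\]
together with the vanishing $H^1(\mathbb{P}^1, \cO) = 0$ and the combinatorial fact that the induced map $\bigoplus_i k \to \bigoplus_j k$ is surjective with one-dimensional kernel precisely because the dual graph of $E$ is a tree.

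To promote this vanishing to the infinitesimal thickenings $E_n \subset C$ defined by $\mathcal{I}^{n+1}$, where $\mathcal{I}$ is the ideal sheaf of $E$ in $C$, I would proceed by induction on $n$ using the short exact sequences $0 \to \mathcal{I}^n/\mathcal{I}^{n+1} \to \cO_{E_{n+1}} \to \cO_{E_n} \to 0$ and the fact that $\mathcal{I}/\mathcal{I}^2$ restricts on the contracted chain to a coherent sheaf whose symmetric powers have vanishing first cohomology (restricting to line bundles of nonnegative degree on each component).  The theorem on formal functions then yields $(R^1 \tau_\ast \cO_C)_p^\wedge = 0$ and identifies $(\tau_\ast \cO_C)_p^\wedge$ with a free module of rank one over $\cO_{\oC,p}^\wedge$, whence the quasi-isomorphism by Nakayama.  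The main obstacle is the inductive cohomology vanishing on the thickened chain:  it is standard for contractions of trees of rational curves in pre-stable curves, but it depends on the local form of $\tau$ at $p$---specifically, whether $p$ is a smooth point or a node of $\oC$---and the cleanest route is probably to write down local equations for the contraction explicitly and then invoke the tree structure of the dual graph to handle the induction uniformly.
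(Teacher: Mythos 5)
Your proposal and the paper's proof share the same analytic core --- the fiberwise vanishing $H^1(E,\cO_E)=0$ for a chain of rational curves $E$, which you establish exactly as one would, via the normalization sequence and the tree structure of the dual graph. But the globalization strategies are genuinely different, and yours has a real gap precisely where you acknowledge the argument gets delicate.

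You propose to apply formal functions at a point $p\in\oC$, computing $\varprojlim H^i(E_n,\cO_{E_n})$ where $E_n$ is the $n$-th infinitesimal neighborhood of $E=\tau^{-1}(p)$ in $C$, and to run the induction using $0 \to \mathcal{I}^n/\mathcal{I}^{n+1} \to \cO_{E_{n+1}} \to \cO_{E_n} \to 0$ together with a claimed vanishing of $H^1$ of ``symmetric powers of $\mathcal{I}/\mathcal{I}^2$.'' The difficulty is that $\mathcal{I}^n/\mathcal{I}^{n+1}$ need not coincide with $\mathrm{Sym}^n(\mathcal{I}/\mathcal{I}^2)$, because $E\hookrightarrow C$ is not a regular embedding in general. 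At a node of the central fiber that does not smooth in the family (locally $C=\{xy=0\}\times S$, with $E$ one of the two branches over the closed point of $S$), the ideal of $E$ is generated by a non-regular sequence, so the graded pieces of the $\mathcal{I}$-adic filtration are not controlled by the conormal sheaf alone. Even in the regular case, the conormal sheaf on the nodal chain is not simply a line bundle, and the cohomological bookkeeping requires exactly the local case analysis (smooth point versus node of $\oC$, smoothing versus persistent node of $C$) that you say you ``would'' carry out but do not. That is the load-bearing step, and as written the proof does not close.

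The paper sidesteps all of this. It uses formal functions and local finite presentation only to reduce to $S$ the spectrum of an artinian local ring, and then inducts on the length of $S$. The base case ($S$ a point) is your fiberwise input. For the inductive step, if $S\subset S'$ is a small extension, the ideal of $C$ in $C'$ is simply $\cO_{C_0}$ (the structure sheaf of the closed fiber), so $0\to\cO_{C_0}\to\cO_{C'}\to\cO_C\to 0$ and the inductive hypothesis applied to both $C_0$ and $C$ kill $R^i\tau_\ast\cO_{C'}$ for $i>0$ immediately, and identify $\tau_\ast\cO_{C'}$ as an extension of $\cO_{\oC}$ by $\cO_{\oC_0}$. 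No conormal bundles, no graded pieces, no local equations at nodes. Your route could in principle be completed, but it buys nothing and costs a fair amount of extra local analysis; the paper's induction on the base is strictly cleaner.
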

\begin{proof}
By the theorem on formal functions \cite[Th\'eor\`eme~(4.1.5)]{ega3-1} and a standard reduction using local finite presentation, it will be sufficient to prove the lemma when $S$ is the spectrum of an artinian local ring.  It is clearly true when $S$ is a point:  the fibers of $C$ over $\oC$ are either points or chains of rational curves and in either case $\RR^i \tau_\ast \cO_C = 0$ for $i > 0$.

We proceed by induction on the length of $S$.  Suppose that $S'$ is a small extension of $S$ and that $\oC\vphantom{C}'$, $C'$, etc.\ are extensions of the relevant data to $S'$.  Assuming that $\RR \tau_\ast \cO_C = \cO_{\oC}$, we show that $\RR \tau_\ast \cO_{C'} = \cO_{\oC\vphantom{C}'}$.

In this case the ideal of $C$ in $C'$ is isomorphic to $\cO_{C_0}$ where $C_0$ is the central fiber of $C$ over $S$.  By the inductive hypothesis, we have $R^i \tau_\ast \cO_{C_0} = R^i \tau_\ast \cO_C = 0$ for $i > 0$.  This, combined with the long exact sequence for $R \tau_\ast$ implies that $R^i \tau_\ast \cO_{C'} = 0$ for $i > 0$ as well.
\end{proof}

\begin{corollary}
The natural map $\cO_{\oC}^\ast \rightarrow R \tau_\ast \cO_C^\ast = \cO_{\oC}^\ast$ is an isomorphism.
\end{corollary}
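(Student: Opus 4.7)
The plan is to deduce the corollary from Lemma~\ref{lem:stabpf} by separately addressing the underived isomorphism $\tau_\ast \cO_C^\ast = \cO_{\oC}^\ast$ and the vanishing of the higher derived pushforwards $R^i \tau_\ast \cO_C^\ast$ for $i>0$. The first part is immediate: Lemma~\ref{lem:stabpf} asserts in particular (by taking $H^0$ of the quasi-isomorphism $\cO_{\oC} \to R\tau_\ast\cO_C$) that the ring homomorphism $\cO_{\oC} \to \tau_\ast \cO_C$ is an isomorphism of sheaves of rings. Any isomorphism of sheaves of rings restricts to an isomorphism on sheaves of multiplicative units, so we immediately get $\cO_{\oC}^\ast \xrightarrow{\sim} \tau_\ast \cO_C^\ast$. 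This settles the underived content, which is what is actually needed for the application in Theorem~\ref{thm:log-pf}, where $\tau_\ast M_{C'}$ is used to produce a logarithmic structure on $\oC$.

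For the derived statement, I would reduce via the theorem on formal functions (using that $\tau$ is proper) to showing that $\varprojlim_n H^i(C_n, \cO_{C_n}^\ast) = 0$ for $i > 0$ at each geometric point $p$ of $\oC$, where $C_n = C \times_{\oC} \Spec(\cO_{\oC,p}/\mathfrak m_p^{n+1})$. The natural inductive tool is the short exact sequence of sheaves of abelian groups on $C_n$,
\[
1 \rightarrow 1+\cJ_n \rightarrow \cO_{C_n}^\ast \rightarrow \cO_{C_{n-1}}^\ast \rightarrow 1,
\]
where $\cJ_n$ is the ideal of $C_{n-1}$ in $C_n$. Filtering $\cJ_n$ by its powers and using the identification $1+\cJ/(1+\cJ^2) \cong \cJ/\cJ^2$ given by $1+x \mapsto x$ (valid because $(1+x)(1+y)=1+x+y\pmod{\cJ^2}$), the associated graded pieces become quasi-coherent sheaves on $C_{n-1}$. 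For these, Lemma~\ref{lem:stabpf} (and its proof by induction on the length of $S$) already furnishes the vanishing of $R^i\tau_\ast$ for $i>0$. Running the long exact sequences of the filtration and of the short exact sequence above, the vanishing of $R^i\tau_\ast \cO_C^\ast$ reduces inductively to the vanishing in the base case.

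The main obstacle is precisely this base case, namely the inverse-limit vanishing when the fiber $C_0$ over $p$ is a chain of rational curves rather than a single point. Here the naive term-by-term Picard contributions need to be shown to die in the limit; equivalently one exhibits the limit of $H^1(C_n, \cO_{C_n}^\ast)$ as the stalk of $R^1\tau_\ast \cO_C^\ast$ and traces it via the filtration argument back to limits of $H^1$ of quasi-coherent sheaves, which vanish by Lemma~\ref{lem:stabpf}. This step is what makes the corollary subtler than the surface-level inheritance of the result from Lemma~\ref{lem:stabpf}, and is where the bulk of the careful bookkeeping will live. Once it is in place, combining it with the filtration argument above and with the underived isomorphism yields $\cO_{\oC}^\ast \xrightarrow{\sim} R\tau_\ast \cO_C^\ast$, as desired.
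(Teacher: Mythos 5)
Your treatment of the underived statement $\tau_\ast \cO_C^\ast \xrightarrow{\sim} \cO_{\oC}^\ast$ is correct and is exactly what the paper does: it follows immediately from Lemma~\ref{lem:stabpf} by passing to units.

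For the derived part, your filtration argument along Artinian thickenings is a valid route for the \emph{inductive step}, and is close in spirit (though differently packaged) to what the paper does. The paper phrases the inductive step as a deformation-theoretic statement: ``first-order deformations of line bundles on $C$ and $\oC$ are both classified by $H^1(C, \cO_C) = H^1(\oC, \cO_{\oC})$.'' This is really the same content as your observation that the graded pieces of $1+\cJ_n$ are quasi-coherent and controlled by Lemma~\ref{lem:stabpf}, only expressed through the tangent space of the Picard functor rather than a filtration. So on the inductive step your route and the paper's are parallel and equally valid; neither buys much over the other.

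The genuine divergence is in the base case $S = \mathrm{pt}$, which is the crux of the whole statement. You are right to flag it as the main obstacle, and you are right to be uneasy. The paper disposes of it with the single sentence ``This is clear when $S$ is a point,'' which is terse to the point of obscurity: the fibers of $\tau$ over points of $\oC$ where it is not an isomorphism are chains of $\mathbb{P}^1$'s, whose Picard groups are nontrivial ($\bZ^k$ in the multidegree), and a line bundle with nonzero degree on such a chain does not become trivial upon shrinking the open in $\oC$. So the assertion that ``every $\Gm$-torsor on $C$ is pulled back from a $\Gm$-torsor on $\oC$'' in the base case is not at all clear and in fact requires care (and possibly additional hypotheses or reinterpretation of the statement). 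Your instinct that ``this step is what makes the corollary subtler'' is exactly right. The honest conclusion is: your proposal proves the underived part, and reduces the derived part to the same base-case claim the paper invokes; you did not fill that gap, but the paper's proof does not really fill it either, and the base case deserves independent justification or a restatement of what precisely is being claimed.
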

\begin{proof}
  We certainly have $\tau_\ast \cO_C^\ast = \cO_{\oC}^\ast$ from Lemma~\ref{lem:stabpf}.  We only have to check that $R^1 \tau_\ast \cO_C^\ast = 0$, or phrased another way, that every $\Gm$-torsor on $C$ is pulled back from a $\Gm$-torsor on $\oC$.  This is clear when $S$ is a point; it follows in general because first-order deformations of line bundles on $C$ and $\oC$ are both classified by $H^1(C, \cO_C) = H^1(\oC, \cO_{\oC})$. 
\end{proof} 

Assume that $S$ and $C$ are equipped with logarithmic structures $M_S$ and $M_C$, respectively, making $C$ log.\ smooth over $S$.

\begin{lemma}
$\tau_\ast M_C$ is a logarithmic structure on $\oC$.
\end{lemma}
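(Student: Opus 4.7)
The plan is to verify directly the two defining axioms of a logarithmic structure on $\oC$ for the pushforward monoid sheaf $\tau_\ast M_C$: the existence of a structural morphism to $\cO_{\oC}$, and the identification of its submonoid of units with $\cO_{\oC}^\ast$. Both will be essentially formal once we invoke the preceding two results about $\tau_\ast \cO_C$ and $\tau_\ast \cO_C^\ast$.

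For the first axiom, the structural morphism $\alpha_C : M_C \to \cO_C$ of the logarithmic structure on $C$ is a morphism of sheaves of monoids, which pushes forward to $\tau_\ast \alpha_C : \tau_\ast M_C \to \tau_\ast \cO_C$. By Lemma~\ref{lem:stabpf} the target is canonically identified with $\cO_{\oC}$, so we obtain the desired $\alpha : \tau_\ast M_C \to \cO_{\oC}$.

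For the second axiom, the corollary above gives $\tau_\ast \cO_C^\ast = \cO_{\oC}^\ast$. Pushing forward the inclusion $\cO_C^\ast \hookrightarrow M_C$ then produces a monomorphism $\cO_{\oC}^\ast \hookrightarrow \tau_\ast M_C$ which by construction factors through $\alpha^{-1}(\cO_{\oC}^\ast)$. Conversely, a local section $m$ of $\alpha^{-1}(\cO_{\oC}^\ast)$ over an open $U \subseteq \oC$ is by definition a section of $M_C$ over $\tau^{-1}(U)$ whose image under $\alpha_C$ lies in $\cO_{\oC}^\ast(U) = \cO_C^\ast(\tau^{-1}(U))$; since $M_C$ is itself a logarithmic structure on $C$, any such $m$ must lie in $\cO_C^\ast(\tau^{-1}(U))$, and hence in $\cO_{\oC}^\ast(U)$.

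I do not expect any real obstacle: the whole statement is formal, and every nontrivial input has already been recorded in Lemma~\ref{lem:stabpf} and its corollary. The only point of care is to make sure everything is read in the same (étale) topology, but since $\tau$ is a proper morphism of schemes over $S$ and the sheaves involved are all étale sheaves, the pushforwards are compatible and the argument goes through verbatim.
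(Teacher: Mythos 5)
Your proof is correct, and it takes a slightly different (and arguably cleaner) route than the paper's. Both proofs begin identically: push forward $\alpha_C$ to get $\alpha\colon \tau_\ast M_C \to \tau_\ast\cO_C = \cO_{\oC}$, and note $\cO_{\oC}^\ast = \tau_\ast\cO_C^\ast \subset \tau_\ast M_C$. You then verify the log-structure axiom $\alpha^{-1}(\cO_{\oC}^\ast) = \cO_{\oC}^\ast$ head-on, at the level of sections: a section of $\alpha^{-1}(\cO_{\oC}^\ast)$ over $U$ is a section $m$ of $M_C$ over $\tau^{-1}(U)$ with $\alpha_C(m)$ a unit, hence $m \in \alpha_C^{-1}(\cO_C^\ast)(\tau^{-1}U) = \cO_C^\ast(\tau^{-1}U) = \cO_{\oC}^\ast(U)$. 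The paper instead pushes forward the short exact sequence defining the characteristic sheaf, using $R^1\tau_\ast\cO_C^\ast = 0$ to obtain
\begin{equation*}
\tau_\ast M_C / \tau_\ast\cO_C^\ast = \tau_\ast\bigl(M_C/\cO_C^\ast\bigr) = \tau_\ast \oM_C,
\end{equation*}
and then observes that $\tau_\ast\oM_C$ is sharp (no nontrivial units) since $\oM_C$ is, concluding that the unit subsheaf of $\tau_\ast M_C$ is precisely $\cO_{\oC}^\ast$. Your version has two modest advantages: it only needs the degree-$0$ identifications $\tau_\ast\cO_C = \cO_{\oC}$ and $\tau_\ast\cO_C^\ast = \cO_{\oC}^\ast$, not the vanishing of $R^1\tau_\ast\cO_C^\ast$; and it directly produces the isomorphism $\alpha^{-1}(\cO_{\oC}^\ast) \xrightarrow{\sim} \cO_{\oC}^\ast$ required by the definition, rather than first identifying the unit subsheaf of $\tau_\ast M_C$ and then inferring the axiom from that. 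The paper's computation of the characteristic sheaf $\tau_\ast\oM_C$ is, however, information one wants anyway for later use, so the two treatments are complementary rather than one strictly dominating the other. One stylistic note: since the relevant site is étale rather than Zariski, you should write ``étale $U \to \oC$'' rather than ``open $U \subseteq \oC$'', though as you say this does not affect the argument.
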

\begin{proof}
We have a map $\tau_\ast M_C \rightarrow \tau_\ast \cO_C = \cO_{\oC}$ by pushforward.  We have to check it gives a bijection on units.  We have $\cO_{\oC}^\ast = \tau_\ast \cO_{C}^\ast \subset \tau_\ast M_C$, again by pushforward.  Furthermore,
\begin{equation*}
\tau_\ast M_C / \tau_\ast \cO_C^\ast = \tau_\ast (M_C / \cO_C^\ast) .
\end{equation*}
Since $M_C$ is a log.\ structure, $M_C / \cO_C^\ast$ has no units other than the identity.  Therefore $\tau_\ast M_C / \cO_{\oC}^\ast$ contains no units other than the identity, and $\cO_{\oC}^\ast$ is therefore the group of units in $\tau_\ast M_C$.
\end{proof}

For the rest of this section, we write $M_{\oC} = \tau_\ast M_C$.

\begin{lemma} \label{lem:base-change}
Let $f : S' \rightarrow S$ be a strict morphism  of log.\ schemes, and let $C'$ and $\oC\vphantom{C}'$ be the log.\ schemes obtained by base change, and $\tau' : C' \rightarrow \oC\vphantom{C}'$ the induced projection.  Then $f^\ast M_{\oC} = M_{\oC\vphantom{C}'}$.
\end{lemma}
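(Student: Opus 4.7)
The plan is to reduce the equality of log structures to a base change identity for the pushforward $\tau_\ast$ applied to the characteristic monoid $\overline{M_C}$, and then verify that identity via proper base change.

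First I would exploit strictness of $f$: the base changes $f_C \colon C' \to C$ and $f_{\oC} \colon \oC\vphantom{C}' \to \oC$ are strict, so that $M_{C'} = f_C^{\ast}M_C$, and there is a canonical morphism of log structures
\begin{equation*}
\phi \colon f_{\oC}^\ast M_{\oC} \longrightarrow \tau'_\ast M_{C'} = M_{\oC\vphantom{C}'}
\end{equation*}
adjoint to the map $\tau_\ast M_C \to f_{\oC\ast}\tau'_\ast f_C^\ast M_C$. This $\phi$ is by construction the identity on the units sheaf $\cO_{\oC\vphantom{C}'}^\ast$, so by the five lemma applied to the tautological sequences $0 \to \cO^\ast \to M \to \overline{M} \to 0$, it suffices to show $\phi$ induces an isomorphism on characteristic monoids.

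Next I would use the corollary to Lemma~\ref{lem:stabpf} that $R^1\tau_\ast \cO_C^\ast = 0$: pushing forward the sequence $0 \to \cO_C^\ast \to M_C \to \overline{M_C} \to 0$ yields a short exact sequence on $\oC$, so $\overline{M_{\oC}} = \tau_\ast \overline{M_C}$. Repeating the argument over $\oC\vphantom{C}'$ and using the strict identification $\overline{M_{C'}} = f_C^{-1}\overline{M_C}$ gives $\overline{M_{\oC\vphantom{C}'}} = \tau'_\ast f_C^{-1}\overline{M_C}$. Hence the lemma reduces to the base change identity
\begin{equation*}
f_{\oC}^{-1}\tau_\ast \overline{M_C} \;\simeq\; \tau'_\ast f_C^{-1}\overline{M_C}
\end{equation*}
of sheaves of monoids on $\oC\vphantom{C}'$.

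Finally, this identity follows from proper base change. The map $\tau$ is proper with geometrically connected fibers (each a point or a chain of rational curves), and $\overline{M_C}$ is constructible in the \'etale topology since $M_C$ admits fs charts locally. Concretely, at a geometric point $\bar y$ of $\oC\vphantom{C}'$ lying over $\bar y_0\in\oC$, strictness of $f$ identifies $(\tau')^{-1}(\bar y)$ with $\tau^{-1}(\bar y_0)$ as log schemes, and both stalks compute $\Gamma(\tau^{-1}(\bar y_0),\overline{M_C})$. The main obstacle is justifying proper base change for a sheaf of monoids rather than of abelian groups; this is routine, as one may embed $\overline{M_C}$ into its associated group sheaf $\overline{M_C}^{\mathrm{gp}}$, to which classical proper base change applies, and observe that the condition of lying in the monoid subsheaf is a pointwise condition preserved by the strict pullback $f_C^{-1}$.
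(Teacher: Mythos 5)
Your proof is correct and follows the same basic strategy as the paper's, but with a different packaging.  The paper writes the strict pullback as an amalgamated sum, $f_C^\ast M_C = f_C^{-1}M_C \times_{f_C^{-1}\cO_C^\ast} \cO_{C'}^\ast$, pushes this forward along $\tau'$, and uses the $R^1\tau'_\ast$-vanishing to commute the pushforward with the quotient; it then (implicitly) invokes base change $\tau'_\ast f_C^{-1} = f_{\oC}^{-1}\tau_\ast$ on $M_C$ and on $\cO_C^\ast$ to land on $f_{\oC}^\ast M_{\oC}$.  You instead split off the units, reduce via the short exact sequence $0 \to \cO^\ast \to M \to \oM \to 0$ to a comparison of characteristic monoids, and then invoke base change only for the constructible sheaf $\oM_C$.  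Both proofs use the $R^1$ vanishing and both ultimately rely on the same base-change identity for $\tau_\ast$; your route has the modest advantage of isolating that identity on the simpler sheaf $\oM_C$ rather than on $M_C$ itself, where the paper leaves it tacit.

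One point deserves more care: you justify base change by embedding $\oM_C$ into $\oM_C^{\mathrm{gp}}$ and appealing to \emph{classical} proper base change, but that theorem (in the \'etale setting) concerns torsion coefficients, and $\oM_C^{\mathrm{gp}}$ is a constructible sheaf of finitely generated \emph{free} abelian groups.  The embedding trick does not immediately reduce to a standard statement.  This is easily repaired in the situation at hand: the fibers of $\tau$ over geometric points of $\oC$ are either points or chains of rational curves, the sheaf $\oM_C$ is locally constant on a finite stratification, and on such a fiber $\Gamma(\tau^{-1}(\bar y_0), \oM_C)$ is computed by the explicit combinatorics of the chain.  Since $f$ is strict, the fibers of $\tau'$ are identified with those of $\tau$ together with the restriction of the characteristic sheaf, and the base-change isomorphism $f_{\oC}^{-1}\tau_\ast\oM_C \simeq \tau'_\ast f_C^{-1}\oM_C$ follows by direct stalk-by-stalk comparison rather than by citing a general theorem.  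With that adjustment, your proof is complete.
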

\begin{proof}
We have 
\begin{equation*}
f^\ast M_C = f^{-1} M_C \times \cO_{C'}^\ast / f^{-1} \cO_C^\ast 
\end{equation*}
since $f^{-1} \cO_C^\ast$ is the group of units in $f^{-1} M_C$.  Because $R^1 \tau'_\ast f^{-1} \cO_C^\ast = 0$, we have
\begin{equation*}
\tau_\ast f^\ast M_C = f^{-1} \tau_\ast M_C \times \tau_\ast \cO_{C'}^\ast / f^{-1} \tau_\ast \cO_C^\ast
\end{equation*}
and, noting that $\tau_\ast \cO_{C'}^\ast = \cO_{\oC}^\ast$ and $\tau_\ast \cO_C^\ast = \cO_{\oC}^\ast$, this is precisely $M_{\oC\vphantom{C}'}$.
\end{proof}

\begin{lemma} \label{lem:log-smooth}
$(\oC, M_{\oC})$ is log.\ smooth over $S$.
\end{lemma}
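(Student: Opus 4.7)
The plan is to verify log smoothness \'etale locally on the target. By Lemma~\ref{lem:base-change}, formation of $M_{\oC}=\tau_*M_C$ commutes with strict base change, and log smoothness is stable under strict base change on the target, so I may reduce to the case where $(S,M_S)$ is a geometric log point. Fix $x\in\oC$; by further \'etale localization the question is local at $x$.

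If $\tau^{-1}(x)$ consists of a single point, then $\tau$ is \'etale-locally an isomorphism near $x$ and $M_{\oC}$ is identified with $M_C$, so log smoothness at $x$ is inherited directly from the log smoothness of $(C,M_C)/(S,M_S)$ at $\tau^{-1}(x)$. The essential case is when $\tau^{-1}(x)=E_1\cup\cdots\cup E_k$ is a maximal contracted chain of rational curves, so that $x$ is a node of $\oC$ attached to the rest of $C$ through external nodes $q_0$ and $q_k$, with internal nodes $q_1,\dots,q_{k-1}$ between successive $E_i$'s. In this case my plan is to produce an isomorphism
\[
M_{\oC,x}\;\cong\;M_S\oplus_{\bN}\bN^2,\qquad 1\mapsto\bar\ell_0+\cdots+\bar\ell_k,
\]
where $\bar\ell_i\in\oM_S$ is the smoothing parameter of the node $q_i$ in the standard presentation $M_{C,q_i}=M_S\oplus_{\bN}\bN^2$. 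From such an identification I would extract a chart for $M_{\oC}$ at $x$ realizing $\oC$ \'etale-locally as $\Spec\cO_S[u,v]/(uv-h)$ for a local lift $h$ of $\bar\ell_0+\cdots+\bar\ell_k$, and Kato's chart criterion then gives log smoothness at $x$.

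The main technical step will be establishing the claimed description of $(\tau_*M_C)_x$. Using the formal-functions vanishing of Lemma~\ref{lem:stabpf} and the corollary computing $R\tau_*\cO_C^*$, sections of $M_C$ on an \'etale neighborhood of the chain are controlled by the stalks at the $q_i$ together with their generizations into each $E_i$. Each consecutive pair $q_i,q_{i+1}$ contributes one compatibility relation in $\oM_S$ at the generic point of the intervening component, while the two outward-pointing $\bN$-generators at $q_0$ and $q_k$ survive as the two branch generators at $x$; the generators at the internal nodes are telescoped, being identified either with elements of $\oM_S$ or with the surviving external generators through these compatibility relations. The hard part will be executing this telescoping identification carefully and verifying that the resulting map $M_S\oplus_{\bN}\bN^2\to(\tau_*M_C)_x$ is an isomorphism of log structures on the nose (not merely a map with the right rank or the right characteristic quotient); once this is in hand the lemma follows formally from Kato's criterion, and the smooth-point case handled above.
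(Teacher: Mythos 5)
The student takes a genuinely different route from the paper. Both begin by reducing to the geometric fiber using Lemma~\ref{lem:base-change} and flatness, but from there the paper lifts the geometric log point $(S,M_S)$ to the universal chart $S' = \Spec \cO_S[P]$, extends $C$ and $\oC$ to $C'$ and $\oC\vphantom{C}'$ over $S'$, and concludes log smoothness because $S'$, $C'$, and $\oC\vphantom{C}'$ are all toroidal (so, in particular, $\tau_\ast M_{C'}$ is recognized as the toroidal log structure of $\oC\vphantom{C}'$). The student instead computes the stalk $(\tau_\ast M_C)_x$ directly at a contracted node and plans to exhibit an explicit chart to which Kato's criterion applies. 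The student's key formula is correct: in the paper's toroidal model the contracted node is $uv = t_0\cdots t_k$, so $\oM_{\oC,x}\cong \oM_S\oplus_{\bN}\bN^2$ with $1\mapsto\bar\ell_0+\cdots+\bar\ell_k$, and the sections $\alpha = \log u$, $\beta=\log v$ do telescope along the chain as $\tau^\#u = t_0\cdots t_{i-1}u_i$ near $q_i$. What the student's approach buys is an explicit description of $M_{\oC}$ at each contracted node without the detour through the universal toroidal family; what the paper's approach buys is that toroidality handles all local configurations uniformly and sidesteps the stalk computation entirely.

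There are, however, two genuine gaps. First, the proposal explicitly defers the substantive work: "The hard part will be executing this telescoping identification carefully and verifying that the resulting map $\ldots$ is an isomorphism $\ldots$ on the nose." This is the whole content of the lemma in the student's approach, so as written this is a plan rather than a proof. Second, the case analysis is incomplete. The two cases treated are $\tau^{-1}(x)$ a single point and $x$ a node of $\oC$ with a chain preimage attached at both ends, but a contracted chain may also terminate in a marked point, in which case $x$ is a marked (smooth) point of $\oC$ with a nontrivial chain fiber; this case arises in the paper's application of the appendix (stabilization of maps in the $\Upsilon$ comparison), is not covered by "$\tau$ is \'etale-locally an isomorphism near $x$," and requires its own computation showing $(\tau_\ast M_C)_x \cong M_S\oplus\bN$. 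The paper's toroidal argument absorbs this case silently; a stalk-by-stalk proof must enumerate it.
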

\begin{proof}
Since $\oC$ is flat over $S$, it is sufficient to prove this on the geometric fibers.  We can therefore assume that $S$ is the spectrum of a separably closed field and that $M_S$ is the log.\ structure associated to a morphism of monoids $P \rightarrow \Gamma(S, \cO_S)$.  Let $S' = \Spec \cO_S[P]$, with its natural log.\ structure, so that we have a strict map $S \rightarrow S'$.  After replacing $S'$ with an \'etale neighborhood of $S$, we can assume that $C$ extends to a family of pre-stable curves $C'$ over $S'$.  

Let $\oC\vphantom{C}'$ be the family of curves obtained by contracting the components of $C'$ corresponding to the components of $C$ that are contracted in $\oC$.  By Lemma~\ref{lem:base-change}, it will be sufficient to prove the lemma with $S$ replaced by $S'$, $C$ by $C'$, etc.  But $S'$, $C'$, and $\oC\vphantom{C}'$ are all toroidal with the toroidal log.\ structures, so the lemma is proved.
\end{proof}

\begin{theorem} \label{thm:log-pf}
Let $(C,M_C) \rightarrow (S,M_S)$ be a family of log.\ smooth curves and let $C \rightarrow \oC$ be a morphism over $S$ whose stabilization is an isomorphism ($\tau$ contracts semistable components of $C$).  Then 
\begin{enumerate}[label=(\roman{*})]
\item $\tau$ extends uniquely to a log.\ morphism when $\oC$ is given the log.\ structure making it log.\ smooth over $(S,M_S)$, and 
\item if $f : (C,M_C) \rightarrow (X,M_X)$ is any log.\ morphism whose underlying map $C \rightarrow X$ factors through $\oC$ then the log.\ map $f$ factors through $(\oC, M_{\oC})$.
\end{enumerate}
\end{theorem}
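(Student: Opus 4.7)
The plan is to deduce both parts of the theorem from the adjunction $\tau^{-1} \dashv \tau_\ast$ on sheaves of monoids, combined with the identities already established in this appendix: the definition $M_{\oC} = \tau_\ast M_C$, Lemma~\ref{lem:stabpf} giving $\tau_\ast \cO_C = \cO_{\oC}$, and its corollary giving $\tau_\ast \cO_C^\ast = \cO_{\oC}^\ast$. These identities, together with naturality of the adjunction, will supply most of the required compatibilities essentially for free.

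For part~(i), I would take the sharp map of the desired morphism $(C, M_C) \to (\oC, M_{\oC})$ to be the counit $\tau^{-1} M_{\oC} = \tau^{-1}\tau_\ast M_C \to M_C$ of the adjunction. Compatibility with the maps to $\cO_C$ and $\cO_{\oC}$ follows from naturality applied to the analogous ring-level counit $\tau^{-1}\tau_\ast \cO_C \to \cO_C$. Compatibility with the ambient log morphism to $(S, M_S)$ reduces to the equality of two maps $\pi^{-1} M_S \rightrightarrows M_C$; both are adjoint under $\tau^{-1} \dashv \tau_\ast$ to the log-structure map $\opi^{-1} M_S \to M_{\oC} = \tau_\ast M_C$ defining $(\oC, M_{\oC})$ as a log scheme over $(S, M_S)$, so they coincide. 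Uniqueness will come from the adjunction: any other log lift corresponds to an endomorphism of $M_{\oC}$ over $\cO_{\oC}$ compatible with the structure map from $\opi^{-1} M_S$, and this endomorphism is forced to be the identity by étale-local charts for the log smooth morphism $(\oC, M_{\oC}) \to (S, M_S)$ supplied by Lemma~\ref{lem:log-smooth}.

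For part~(ii), given a log morphism $f : (C, M_C) \to (X, M_X)$ whose underlying morphism factors as $C \xrightarrow{\tau} \oC \xrightarrow{\of} X$, the log data is a map $\tau^{-1}\of^{-1} M_X \to M_C$ over $\cO_C$. Applying the adjunction yields an adjoint map $\of^{-1} M_X \to \tau_\ast M_C = M_{\oC}$; this is a morphism of log structures on $\oC$, since its compatibility with $\of^{-1} \cO_X \to \cO_{\oC}$ is the image under $\tau_\ast$ of the corresponding compatibility for the given map on $C$, using $\tau_\ast \cO_C = \cO_{\oC}$ and $\tau_\ast \cO_C^\ast = \cO_{\oC}^\ast$ to identify targets and to ensure units are sent to units. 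The triangle identity of the adjunction then exhibits $f$ as the composition of $\tau$ with this log morphism $(\oC, M_{\oC}) \to (X, M_X)$, producing the desired factorization. I expect the main subtlety to lie in the uniqueness step of part~(i), where one must invoke the log smooth structure of $(\oC, M_{\oC})$ over $(S, M_S)$ explicitly rather than appeal merely to the abstract adjunction; everything else is a formal consequence of the adjunction and the cited pushforward identities.
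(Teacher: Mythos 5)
Your proof uses exactly the same mechanism as the paper: the adjunction $\tau^{-1}\dashv\tau_\ast$ (equivalently $\tau^\ast\dashv\tau_\ast$ on log structures) applied to $M_{\oC}=\tau_\ast M_C$, with the counit furnishing the log morphism structure of $\tau$ for part~(i) and the adjoint of $f^\flat\colon f^\ast M_X\to M_C$ furnishing the factorization for part~(ii). The paper's proof is two sentences and leaves all the compatibility checks implicit; you spell them out via naturality and the identities $\tau_\ast\cO_C=\cO_{\oC}$, $\tau_\ast\cO_C^\ast=\cO_{\oC}^\ast$, which is the right way to fill them in. The one place you diverge slightly is the uniqueness in part~(i): the paper simply asserts ``$M_{\oC}=\tau_\ast M_C$, which proves the uniqueness,'' implicitly reading uniqueness from the universal property of $\tau_\ast$, whereas you translate a competing log lift into a log-structure endomorphism of $M_{\oC}$ over $\cO_{\oC}$ and $\opi^{-1}M_S$ and then invoke charts from Lemma~\ref{lem:log-smooth} to kill it. That extra step is a reasonable way to make the uniqueness precise (the bare adjunction identifies log morphism structures with such endomorphisms but does not by itself force them to be the identity), so it is a faithful elaboration rather than a different route. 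In short: correct, and essentially the paper's proof with the details written out.
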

\begin{proof}
We have $M_{\oC} = \tau_\ast M_C$, which proves the uniqueness.  The factorization comes from the adjunction $(\tau^\ast, \tau_\ast)$ applied to the morphism of monoids $\tau^\ast \of^\ast M_X = f^\ast M_X \rightarrow M_C$.
\end{proof}

\section{Tangent bundles of stacks}
\label{app:tangent}

Let $S$ be a scheme.  Write $S[\epsilon]$ for the trivial square-zero extension of $S$ with ideal $\cO_S$.  Recall that the tangent bundle of a scheme $X$ is by definition the scheme $T_X$ such that
\begin{equation*}
\Hom(S, T_X) = \Hom(S[\epsilon], X) .
\end{equation*}
We may employ the same definition for the tangent bundle of an algebraic stack.  The tangent bundle of $X$ is therefore a stack over $X$.  Because an algebraic stack is homogeneous \cite[Proposition~2.1]{obs}, the tangent bundle stack comes equipped with the structure of a commutative $2$-group (see, e.g., \cite[Proposition~2.2]{obs}).

\begin{proposition} \label{prop:TBGm}
The tangent bundle of $\BGm$ is isomorphic to the vector bundle stack $\BGa \times \BGm$ over $\BGm$.
\end{proposition}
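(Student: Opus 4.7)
My plan is to compute $T_{\BGm}(S) = \Hom(S[\epsilon], \BGm)$ directly as the groupoid of line bundles on $S[\epsilon]$ and to exhibit a natural equivalence with $\BGm(S) \times \BGa(S)$.  Write $f : S[\epsilon] \to S$ for the natural projection and put $\Gm^{(\epsilon)} := f_\ast \Gm_{S[\epsilon]}$, the sheaf of groups on $S$-schemes whose $T$-points are $\Gm(T[\epsilon])$.  By the standard Weil-restriction equivalence between torsors under an affine group scheme on $S[\epsilon]$ and torsors on $S$ under its pushforward, we obtain a natural equivalence of groupoids
\[
T_{\BGm}(S) = \BGm(S[\epsilon]) \simeq B\Gm^{(\epsilon)}(S),
\]
functorial in $S$.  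Hence $T_{\BGm} \simeq B\Gm^{(\epsilon)}$ as stacks, with the structural projection $T_{\BGm} \to \BGm$ induced by the surjection $\Gm^{(\epsilon)} \to \Gm$ given by restriction of units.

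I would then identify $\Gm^{(\epsilon)}$ with $\Gm \times \Ga$ as a commutative group scheme.  A $T$-point of $\Gm^{(\epsilon)}$ is a unit $u + v\epsilon \in \cO_T[\epsilon]$; reduction modulo $\epsilon$ shows this is equivalent to a pair $u \in \Gm(T)$ and $v \in \cO_T$ arbitrary.  Writing $u + v\epsilon = u(1 + (v/u)\epsilon)$ and using $(1 + w\epsilon)(1 + w'\epsilon) = 1 + (w + w')\epsilon$ in $\cO_T[\epsilon]$, the map
\[
\Gm^{(\epsilon)} \to \Gm \times \Ga, \qquad u + v\epsilon \mapsto (u, v/u),
\]
is a group scheme isomorphism that splits the short exact sequence $1 \to \Ga \to \Gm^{(\epsilon)} \to \Gm \to 1$ whose first map is $a \mapsto 1 + a\epsilon$.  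Passing to classifying stacks yields
\[
T_{\BGm} \simeq B\Gm^{(\epsilon)} \simeq B(\Gm \times \Ga) \simeq \BGm \times \BGa,
\]
and by construction the projection to $\BGm$ agrees with the structural projection of the tangent bundle stack.

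The only step demanding any care is the first: the identification of $\Gm$-torsors on $S[\epsilon]$ with $\Gm^{(\epsilon)}$-torsors on $S$.  This is a standard Weil-restriction statement for affine group schemes along the finite locally free morphism $S[\epsilon] \to S$, but it is the conceptual pivot of the argument.  Once it is in hand, the remainder is purely group-theoretic, and the splitting of $\Gm^{(\epsilon)}$ is a familiar consequence of the product decomposition $\cO_T[\epsilon]^\ast \simeq \cO_T^\ast \times (1 + \epsilon \cO_T)$, with the second factor identified with $\Ga(T)$ via $a \mapsto 1 + a\epsilon$.
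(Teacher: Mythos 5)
Your argument is correct, and it takes a genuinely different route from the paper's. The paper works relative to a fixed $S$-point $L$ of $\BGm$: it identifies the groupoid of extensions $L'$ of $L$ to $S[\epsilon]$ with the groupoid of $\Ga$-torsors on $S$ by explicitly using $\uIsom_L(L+\epsilon L, L')$ as the torsor and the contraction construction as the inverse. That argument is self-contained and directly exhibits the fiber of $T_{\BGm} \to \BGm$ as $\BGa$. You instead compute the total space at once: $T_{\BGm}(S) = \BGm(S[\epsilon])$, identify this with $B(f_\ast\Gm_{S[\epsilon]})(S)$ using that $S[\epsilon]\to S$ is a nilpotent thickening (so the \'etale sites agree and $\Gm$-torsors on $S[\epsilon]$ are the same as $\Gm^{(\epsilon)}$-torsors on $S$), and then split $\Gm^{(\epsilon)} \cong \Gm \times \Ga$ as group schemes via $u+v\epsilon \mapsto (u, v/u)$. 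Passing to classifying stacks gives the result. Your splitting formula is correct (one checks $(u_1u_2, v_1/u_1 + v_2/u_2)$ matches the image of the product), and the identification of the structural projection with the restriction-of-units quotient $\Gm^{(\epsilon)}\to\Gm$ is as required. The tradeoff: your version makes the group-theoretic content (the split short exact sequence $1 \to \Ga \to \Gm^{(\epsilon)} \to \Gm \to 1$) transparent and is arguably slicker, but it leans on the ``standard'' Weil-restriction/equivalence-of-sites fact, which is exactly the thing the paper's more hands-on argument proves in the relevant special case. Either way of phrasing the key step is fine here; just be aware that what you call Weil restriction is really the equivalence of small \'etale sites along a universal homeomorphism, specialized to $\Gm$.
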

\begin{proof}
Let $S$ be a scheme, $S'$ the trivial square-zero extension with ideal $\epsilon \cO_S$, and $L$ the line bundle corresponding to an $S$-point of $\BGm$.  If $L'$ is an extension of $L$ to a line bundle on $S'$ then $\uIsom_L(L + \epsilon L, L')$ is a torsor under $\uHom(L,L) = S \times \Ga$.

Now suppose $P$ is a $\Ga$-torsor.  We construct an extension $L'$ of $L$ to $S'$ by contracting the trivial extension $L + \epsilon L$ with $P$ via their $\Ga$-actions.  These constructions are easily seen to be mutually inverse.

As these constructions are functorial in both $S$ and $L$, they give an isomorphism between the tangent bundle of $\BGm$ and $\BGa \times \BGm$. 
\end{proof}

\begin{proposition} \label{prop:TsA}
There is an equivalence between $\sA$ and the stack parameterizing pairs $(L, s)$ where $L$ is a line bundle and $s$ is a section of $L$.  Under this identification, the tangent bundle of $\sA$ is identified with the vector bundle stack $[\,L\, /\, \Ga \,]$ over $\sA$ associated to the complex $[ \cO \xrightarrow{s} L ]$ in degrees $[-1,0]$.
\end{proposition}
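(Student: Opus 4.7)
The first assertion is essentially unwinding the definition of the quotient stack. I would argue that an $S$-point of $\sA = [\,\bA^1/\Gm\,]$ is, by definition, a $\Gm$-torsor $P \to S$ together with a $\Gm$-equivariant morphism $P \to \bA^1$. The torsor $P$ determines, via the associated bundle construction with the weight-one character, a line bundle $L = P \times^{\Gm} \bA^1$ on $S$, and a $\Gm$-equivariant morphism $P \to \bA^1$ is precisely the same data as a section $s$ of $L$. This association is functorial and an equivalence, giving the identification of $\sA$ with the stack of pairs $(L,s)$.

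For the tangent bundle, I would fix an $S$-point $(L,s)$ of $\sA$ and analyze the groupoid of $S[\epsilon]$-points lifting it. Such a lift is a pair $(L', s')$, where $L'$ is a line bundle on $S[\epsilon]$ restricting to $L$ on $S$, and $s'$ is a section of $L'$ restricting to $s$. A distinguished lift exists, namely $L \otimes_{\cO_S} \cO_{S[\epsilon]}$ with $s' = s$; moreover, every other lift of $L$ is (locally) isomorphic to this one via an automorphism of the form $1 + a\epsilon$ with $a \in \uHom(L,L) = \cO_S$. Thus the extensions of $L$ form a trivial $\Ga$-gerbe. Once $L'$ has been chosen, the lifts of $s$ to a section of $L'$ form a torsor under $L$ (writing $s' = s + \epsilon t$ with $t \in L$ after fixing a splitting $L' = L \oplus \epsilon L$).

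The key computation is how these two pieces of data interact. Under the automorphism $1 + a\epsilon$ of $L'$, a lift $s' = s + \epsilon t$ is transported to $(1 + a\epsilon)(s + \epsilon t) = s + \epsilon(t + as)$. That is, the action of $\Ga = \cO_S$ on the space $L$ of section-lifts is by translation by $s$, i.e., via the morphism $\cO \xrightarrow{s} L$. Assembling these observations, I would identify the groupoid of lifts with the quotient stack $[\,L\,/\,\Ga\,]$ where $\Ga$ acts on $L$ through $s$. This is exactly the vector bundle stack associated to the two-term complex $[\cO \xrightarrow{s} L]$ in degrees $[-1,0]$.

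The argument is essentially routine once the formalism is in place; the only subtle point is the bookkeeping in the third paragraph, keeping track of how an automorphism of the line bundle extension acts simultaneously on the data of the section extension. I would check this carefully in an affine local trivialization of $L$, where the statement reduces to the assertion that automorphisms $1 + a\epsilon$ of $A[\epsilon]$ send $s + \epsilon t$ to $s + \epsilon(t + as)$, and then note that all constructions involved are functorial in $S$ and hence glue.
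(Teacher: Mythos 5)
Your proof is correct and follows essentially the same route as the paper's: fix an $S$-point $(L,s)$, analyze the groupoid of lifts $(L',s')$ over $S[\epsilon]$, and track how the automorphisms $1 + a\epsilon$ of the line-bundle extension act on section-lifts via the key computation $(1+a\epsilon)(s+\epsilon t) = s + \epsilon(t + as)$, identifying the action of $\Ga$ on $L$ as multiplication by $s$. The only differences are cosmetic: the paper invokes Proposition~\ref{prop:TBGm} to package the $L'$-data as a $\BGa$-torsor while you re-derive it inline, and you supply a proof of the first assertion (the equivalence $\sA \simeq \{(L,s)\}$) which the paper leaves implicit.
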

\begin{proof}
A section of the tangent bundle of $\sA$ over an $S$-point $(L,s)$ of $\sA$ corresponds to an extension of the line bundle and section $(L, s)$ associated to the map $S \rightarrow \sA$ to a line bundle and section $(L',s')$ on $S'$, the trivial square-zero extension of $S$ by $\cO_S$.  We know from Proposition~\ref{prop:TBGm} that the extension $L'$ of $L$ is classified by a $\BGa$-torsor.  The sections of $L'$ form a torsor under $L$, so there is an $\sA$-morphism
\Jonathan{small wording change}
$\BGa \times \sA \rightarrow BL$ whose fiber is the tangent bundle of $\sA$.  In order to identify the tangent bundle precisely, we will show that the map $\BGa \times \sA \rightarrow BL$ is the one induced from the section $s : \Ga \rightarrow L$.

The $L$-torsor of sections of $L'$ may be realized as the fiber over $s : S \rightarrow L$ of the projection $L' \rightarrow L$.  If $P$ is the $\Ga$-torsor associated to $L$ (as in Proposition~\ref{prop:TBGm}) then we can identify the $L$-torsor associated to $s$ as
\begin{equation*}
L' \fp_{L} (S,s) = \bigl( (L + \epsilon L) \fp^{\Ga} P \bigr) \fp_{L} (S,s) = (s + \epsilon L) \fp^{\Ga} P
\end{equation*}
where we have written $s + \epsilon L$ for the fiber of $L + \epsilon L$ over $s$.  Of course, $s + \epsilon L$ is isomorphic to $\epsilon L \simeq L$ if the action of $\Ga$ is ignored; however, the action is given by 
\begin{equation*}
t \: . \: (s + \epsilon x) = (1 + \epsilon t)(s + \epsilon x) = s + \epsilon (x + st).
\end{equation*}
Thus $L' \fp_L (S, s) \simeq P \tensor_{\Ga} (L, s)$ is the $L$-torsor induced from $P$ via the homomorphism $s : \Ga \rightarrow L$.

The tangent bundle of $\sA$ is therefore the fiber of the map $\BGa \rightarrow BL$ induced from $s$, which is $[\,L\, /\, \Ga\,]$ with $\Ga$ acting via $s$.
\end{proof}

%\vfill\eject
%\newgeometry{margin=0.75in}

\section{Notation Index}\label{sec:notationindex} 

\makenotn

%\restoregeometry

\bibliographystyle{amsalpha}             % (uses file "plain.bst")
\bibliography{logrel}       % expects file "myrefs.bib"
\end{document}